\documentclass[12pt]{amsart}

\usepackage[headings]{fullpage}

\usepackage{amsmath,amssymb}
\usepackage[utf8,utf8x]{inputenc}
\usepackage[pagebackref=true,
pdftitle = {Supersolvable simplicial Arrangements},
pdfkeywords = {},
pdfsubject = {},
pdfauthor = {}]
{hyperref}
\usepackage{hyperref}
\usepackage{longtable}
\usepackage{graphicx}
\usepackage{pict2e}
\usepackage{tikz}
\usetikzlibrary{calc} 
\usetikzlibrary{intersections}

\usepackage{xspace}

\usepackage{subcaption}



\newtheorem{propo}{Proposition}[section]
\newtheorem{corol}[propo]{Corollary}
\newtheorem{theor}[propo]{Theorem}
\newtheorem{lemma}[propo]{Lemma}
\newtheorem{conje}[propo]{Conjecture}

\theoremstyle{definition}
\newtheorem{defin}[propo]{Definition}
\newtheorem{examp}[propo]{Example}

\theoremstyle{remark}
\newtheorem{remar}[propo]{Remark}

\numberwithin{equation}{section}

\newenvironment{psmallmatrix}
  {\left(\begin{smallmatrix}}
  {\end{smallmatrix}\right)}


\newcommand{\NN }{\mathbb{N}}
\newcommand{\CC }{\mathbb{C}}
\newcommand{\RR }{\mathbb{R}}

\newcommand{\ZZ }{\mathbb{Z}}
\newcommand{\KK }{\mathbb{K}}
\newcommand{\LL }{\mathbb{L}}

\newcommand{\id }{\mathrm{id}}

\newcommand{\Rc }{\mathcal{R}}
\newcommand{\Ac }{\mathcal{A}}
\newcommand{\Eg }{\mathcal{E}}
\newcommand{\Bc }{\mathcal{B}}
\newcommand{\Dc }{\mathcal{D}}
\newcommand{\Kc }{\mathcal{K}}
\newcommand{\Vg }{\mathcal{V}}

\newcommand{\Cc }{\mathcal{C}}
\newcommand{\Gc }{\mathcal{G}}

\DeclareMathOperator{\diag}{diag}

\DeclareMathOperator{\GL}{GL}

\DeclareMathOperator{\PGAL}{P\Gamma L}
\DeclareMathOperator{\rank}{rank}

\DeclareMathOperator{\md}{mod}

\newcommand{\sss}{super\-sol\-vable sim\-plicial\xspace}
\newcommand{\isss}{ir\-re\-du\-cible\- super\-sol\-vable sim\-plicial\xspace}

\title[Supersolvable simplicial arrangements]
{Supersolvable simplicial arrangements}


\author{M.~Cuntz}
\address{Michael Cuntz,
Institut für Algebra, Zahlentheorie und Diskrete Mathematik,
Fakultät für Mathematik und Physik,
Leibniz Universität Hannover,
Welfengarten 1,
D-30167 Hannover, Germany}
\email{cuntz@math.uni-hannover.de}

\author{P.~M\"ucksch}
\address{Paul M\"ucksch,
Fakult\"at f\"ur Mathematik, Ruhr-Universit\"at Bochum,
D-44780 Bochum, Germany}
\email{paul.muecksch@rub.de}


\begin{document}

\keywords{simplicial arrangements, supersolvable arrangements, Coxeter graph, hyperplane arrangements, reflection arrangements, root system}
\subjclass[2010]{20F55, 52C35, 14N20}

\begin{abstract}
Simplicial arrangements are classical objects in discrete geometry. Their classification remains an open problem but there is a list conjectured to be complete at least for rank three. A further important class in the theory of hyperplane arrangements with particularly nice geometric, algebraic, topological, and combinatorial properties are the supersolvable arrangements. In this paper we give a complete classification of supersolvable simplicial arrangements (in all ranks). For each fixed rank, our classification already includes almost all known simplicial arrangements. Surprisingly, for irreducible simplicial arrangements of rank greater than three, our result shows that supersolvability imposes a strong integrality property; such an arrangement is called crystallographic. Furthermore we introduce Coxeter graphs for simplicial arrangements which serve as our main tool of investigation.
\end{abstract}

\maketitle

\section{Introduction}

A simplicial arrangement is a finite set of hyperplanes, i.e.\ codimension one subspaces, in a finite dimensional real vector space such that the ambient space is cut into simplicial cones by these hyperplanes.
They were introduced by E.~Melchior \cite{MR0004476} in 1941 by the means of triangulations of the projective plane by a finite set of projective lines.

B.\ Gr\"unbaum \cite{MR2485643} gave a list of rank $3$ simplicial arrangements, the slightly extended list \cite{Cun12} is conjectured to be complete.
But not much is known about simplicial arrangements of higher rank.
In a series of papers I.~Heckenberger and the first author investigate a class of objects called finite Weyl groupoids, a generalization of Weyl groups. Their work results in a complete classification of these objects, \cite{MR3341467}.
Since Weyl groupoids are in one to one correspondence with
crystallographic arrangements \cite{MR2820159}, and these constitute a large subclass of the known simplicial arrangements,
this explains a large subset of the arrangements in Gr\"unbaum's list.

The list given by Gr\"unbaum contains two infinite series of irreducible simplicial arrangements of rank three parametrized by positive integers.
They are denoted $\Rc(1) = \{ \Ac(2n,1) \mid n \geq 3 \}$ and $\Rc(2) = \{\Ac(4m+1,1) \mid m \geq 2 \}$.
The irreducible simplicial $3$-arrangements which do not belong to one
of these infinite classes are called \emph{sporadic}.
One observes that each of the
$94$ sporadic arrangements in \cite{Cun12}
consists of no more than $37$ hyperplanes.
So the following is conjectured:
\begin{conje}[cf.\ {\cite[Conj.\ 1.6]{MR3391182}}]\label{conj:37}
Let $\Ac$ be an irreducible simplicial arrangement of rank three.
If $\vert \Ac \vert > 37$ then $\Ac \in \Rc(1)\cup\Rc(2)$.
\end{conje}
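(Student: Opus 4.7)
The plan is to exploit the fact that both families $\Rc(1)$ and $\Rc(2)$ share a strong structural feature: each member possesses a point through which nearly all hyperplanes pass, so the complement of the associated pencil contains only a bounded number of hyperplanes arranged in a specific, rigid way. The strategy is therefore to show that any irreducible simplicial $3$-arrangement with $|\Ac|>37$ is forced into this ``near-pencil'' shape, and then to match the few remaining hyperplanes to the patterns defining $\Ac(2n,1)$ or $\Ac(4m+1,1)$.

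For the first step I would try to force the existence of a point of very high multiplicity using Melchior's inequality, which for a simplicial rank-$3$ arrangement takes the form $t_2=3+\sum_{k\geq 3}(k-3)\,t_k$, combined with the incidence identity $\sum_k\binom{k}{2}\,t_k=\binom{N}{2}$, where $N:=|\Ac|$ and $t_k$ denotes the number of rank-$2$ flats of multiplicity~$k$. These two identities alone force the maximum multiplicity $M:=\max\{k:t_k\neq 0\}$ to grow with $N$; the task is to sharpen this to $M\geq N-c$ for a small absolute constant~$c$ as soon as $N>37$. For such a refinement one must exploit additional simpliciality constraints, for example that each chamber has exactly three walls and that double points are distributed along chamber boundaries in a highly restrictive pattern. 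The Coxeter-graph formalism developed in the present paper looks well-suited here, since the Coxeter graph of a rank-$3$ arrangement with a near-maximal-multiplicity point has a very rigid shape.

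Once a point $p$ with $m(p)=M\geq N-c$ is secured, let $\Ac_p$ denote the pencil of hyperplanes through $p$ and set $\Bc:=\Ac\setminus\Ac_p$; then $|\Bc|=N-M\leq c$. Each $H\in\Bc$ inherits a rank-$2$ restriction whose combinatorics is essentially prescribed by $\Ac_p$, and global simpliciality imposes severe conditions on how the hyperplanes in $\Bc$ meet the pencil and one another. A finite case analysis parametrised by the small invariant $N-M$ should then produce exactly the two parametric patterns $\Ac(2n,1)$ and $\Ac(4m+1,1)$ after excluding sporadic configurations in each bounded case.

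The main obstacle is the first step. For $N\leq 37$ the enumeration in \cite{Cun12} exhibits sporadic examples that are genuinely far from any pencil, so any bound proving $M\geq N-c$ for $N>37$ must be remarkably tight at the threshold. Melchior's inequality alone is too weak, and standard refinements for simplicial arrangements do not obviously yield the sharp bound~$37$. A successful proof will therefore probably require either a new simplicial-specific combinatorial inequality or an algorithmic enumeration handling an intermediate window $38\leq N\leq N_0$, combined with an asymptotic argument beyond~$N_0$. This combinatorial difficulty presumably explains why the statement remains a conjecture rather than a theorem.
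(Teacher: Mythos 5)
The statement you are trying to prove is stated in the paper as a \emph{conjecture}; the paper offers no proof of it, so there is nothing to compare your argument against, and indeed your text is not a proof but a plan whose decisive step you yourself flag as missing. That missing step --- showing that every irreducible simplicial $3$-arrangement with $\vert\Ac\vert>37$ has a point of multiplicity at least $\vert\Ac\vert-c$ (or, more to the point, a \emph{modular} rank-two element) --- is precisely the open content of the conjecture. Melchior's identity and the incidence count you cite only bound the maximum multiplicity from below very weakly, and nothing in the simpliciality characterization ($\mu_2=2\vert L_2(\Ac)\vert-3$, Lemma \ref{point charact}) or in the Coxeter-graph formalism of Section \ref{sec:Coxeter graphs} is known to force a near-pencil structure without an additional hypothesis. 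So the proposal has a genuine, and in fact irreparable-by-known-methods, gap at its first step.

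It is worth noting that your \emph{second} step is essentially carried out in the paper, but under the hypothesis of supersolvability rather than high multiplicity: given a modular $X\in L_2(\Ac)$ with $n=\vert\Ac_X\vert$, Propositions \ref{Prop_A2n1Subarr} and \ref{Prop_A2n4n} show that $\Ac$ contains a subarrangement lattice-equivalent to $\Ac(2n,1)$ and that at most one further hyperplane can occur, yielding Theorem \ref{classification rk 3}. Modularity (equivalently, by Lemma \ref{X mod max}, maximality of $\vert\Ac_X\vert$ in the supersolvable case) is what makes the matching to the two infinite families work; a point of multiplicity $\vert\Ac\vert-c$ alone does not obviously give modularity, since intersections of lines avoiding that point need not be joined to it by a line of $\Ac$. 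The net effect of the paper's classification is only to \emph{reformulate} Conjecture \ref{conj:37} as: every irreducible simplicial $3$-arrangement with more than $37$ hyperplanes is supersolvable. Your plan, if completed, would have to prove exactly that, and the paper gives no route to it.
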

D.\ Geis and the first author observed that simpliciality is a purely combinatorial property of the intersection lattice of an arrangement \cite{MR3391182}.
This combinatorial characterization suggests a connection of the class of simplicial arrangements with other classes of arrangements which can be defined combinatorially.

Supersolvable arrangements were first considered by R.~Stan\-ley \cite{MR0309815}. They are now a well studied class of arrangements.
Supersolvable arrangements possess particularly nice algebraic, geometric, topological, and combinatorial properties,
cf.\ \cite[Theorems 2.63, 3.81, 4.58, 5.113]{orlik1992arrangements}.
Looking at the list of all known simplicial arrangements (including the known higher rank cases)
one further observes that almost all of them  belong to the class of supersolvable arrangements.

As the list (at least for rank $3$) is conjectured to be complete and a conceptional approach towards a general classification is still missing, one might ask if there is an approach for a subclass with additional properties, e.g.\ supersolvable simplicial arrangements.
This approach is chosen in the present article resulting in our following main theorem, a complete classification (for rank $3$ up to lattice equivalence) of
(irreducible) supersolvable simplicial arrangements:
\begin{theor}
Let $\Ac$ be an \isss $\ell$-ar\-range\-ment, $(\ell \geq 3)$. Then for $\Ac$ one of the following cases holds:
\begin{enumerate}
\item $\ell=3$ and $\Ac$ is $L$-equivalent to exactly one of the arrangements in $\Rc(1)\cup\Rc(2)$, or
\item $\ell \geq 4$ and $\Ac$ is linearly isomorphic to exactly one of the reflection arrangements $\Ac(A_\ell),\Ac(C_\ell)$ or
to $\Ac_\ell^{\ell-1}  = \Ac(C_\ell) \setminus  \{ \{x_1=0\} \}$.
In particular $\Ac$ is crystallographic.
\end{enumerate}
\end{theor}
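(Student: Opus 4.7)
The proof proceeds by induction on the rank $\ell$, with the Coxeter graph of a simplicial arrangement (introduced earlier in the paper) as the central bookkeeping device. Two standard facts drive the induction: a supersolvable arrangement $\Ac$ admits a modular hyperplane $H$, and the restriction $\Ac^H$ is again supersolvable of rank $\ell-1$. For simplicial $\Ac$ one further needs (either from earlier in the paper or as a direct consequence of the Coxeter graph description) that $\Ac^H$ is simplicial as well.

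\textbf{Base case $\ell=3$.} First verify that every arrangement in $\Rc(1)\cup\Rc(2)$ is supersolvable by exhibiting a modular element, i.e.\ a rank-two flat $X\in L(\Ac)$ such that every other rank-two flat lies on some hyperplane of $\Ac$ containing $X$ (a ``modular point'' in the projective picture). Conversely, let $\Ac$ be an irreducible supersolvable simplicial $3$-arrangement and let $X$ be such a modular element. The Coxeter graph of $\Ac$ splits into a piece coming from the pencil through $X$ and a complementary piece coming from the hyperplanes transverse to $X$. Simpliciality forces tight arithmetic constraints on the edge labels between consecutive pencil members, and an explicit enumeration of the resulting possibilities produces exactly the two infinite families $\Rc(1)$ and $\Rc(2)$.

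\textbf{Inductive step $\ell\ge 4$.} Fix a modular hyperplane $H$ of $\Ac$ and consider $\Ac^H$, supersolvable simplicial of rank $\ell-1$. By the inductive hypothesis (together with the $\ell=3$ case), $\Ac^H$ is either in $\Rc(1)\cup\Rc(2)$ (if $\ell-1=3$) or linearly isomorphic to one of $\Ac(A_{\ell-1})$, $\Ac(C_{\ell-1})$, $\Ac_{\ell-1}^{\ell-2}$ (if $\ell-1\ge 4$); in the latter range $\Ac^H$ is crystallographic. The hyperplanes $\Ac\setminus\{H\}$ project to the hyperplanes of $\Ac^H$, producing a fiber structure whose Coxeter-graph-theoretic compatibility with simpliciality is highly constrained. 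A case analysis on the type of $\Ac^H$ then shows: (a) when $\ell=4$ and $\Ac^H\in\Rc(1)\cup\Rc(2)$, only the crystallographic members $\Ac(A_3)$, $\Ac(C_3)$, $\Ac_3^2$ admit a simplicial lift to rank four; (b) each crystallographic quotient admits an essentially unique extension, corresponding to the standard embeddings $A_{\ell-1}\subset A_\ell$, $C_{\ell-1}\subset C_\ell$, and the analogous one yielding $\Ac_\ell^{\ell-1}$. Since all Coxeter edge labels arising in the extension lie in $\{2,3,4\}$, the resulting $\Ac$ is crystallographic.

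\textbf{Main obstacle.} The critical step is the rigidity of the fiber extension in rank $\ell\ge 4$: simpliciality of the $\ell$-dimensional chambers incident to the modular hyperplane is so restrictive that non-crystallographic Coxeter edge labels are ruled out, and consequently no non-crystallographic rank-three member of $\Rc(1)\cup\Rc(2)$ can arise as $\Ac^H$. This rigidity is the substantive content distinguishing case (1) from case (2) and is what produces the ``in particular crystallographic'' conclusion.
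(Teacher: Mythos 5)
Your plan diverges from the paper's in its inductive step, and the points at which it diverges are precisely where the substantive content of the theorem lives, so as written there are genuine gaps. First, in the step $\ell=4$ you assert that only $\Ac(6,1)\cong\Ac(A_3)$, $\Ac(8,1)\cong\Ac_3^2$ and $\Ac(9,1)\cong\Ac(B_3)$ can occur as $\Ac^H$ --- equivalently, that no rank-two flat of $\Ac$ has multiplicity $\geq 5$ --- but you offer no argument beyond ``Coxeter-graph-theoretic compatibility is highly constrained.'' This is exactly Proposition \ref{rk4 only 4er or smaller} of the paper, whose proof is a several-stage argument (uniqueness of the large modular flat via Corollary \ref{SSs3 leq 4 outside mod} applied to restrictions, a reduction to $|\Ac_X|\in\{5,6\}$, explicit projective pictures of $\Ac(10,1),\Ac(12,1),\Ac(13,1)$, and a count via the simpliciality criterion of Lemma \ref{charpoly charact} forcing $|\Ac|=17$ and then a contradiction with modularity); nothing in your sketch reproduces this. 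Second, your claim that each admissible quotient admits an ``essentially unique extension'' is an unproven uniqueness-of-lift statement, and you need it up to \emph{linear isomorphism}, not just lattice equivalence. The paper never proves such a lifting statement: instead it shows directly that $\Ac$ is crystallographic (Propositions \ref{Cartan matrices rk4}, \ref{Cartan matrices rk geq4}, \ref{rk4 crys rootsystem}, \ref{rk geq4 crys rootsystem}) and then quotes the external classification of crystallographic arrangements as restrictions of Weyl arrangements (Theorem \ref{cryst arr rk geq 4}) combined with Amend--Hoge--R\"ohrle's list of supersolvable restrictions (Theorem \ref{ss res refarr}, packaged as Theorem \ref{classification ss cryst rk geq 4}). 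If you do not invoke these results you must supply the uniqueness argument yourself, which your proposal does not.

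The step ``since all Coxeter edge labels arising in the extension lie in $\{2,3,4\}$, the resulting $\Ac$ is crystallographic'' is not a valid inference. Crystallographicity demands a root system $R$ with $R\subseteq\sum_{\alpha\in B_R^K}\ZZ\alpha$ for \emph{every} chamber $K$; bounded edge labels say nothing about the integrality of the coefficients $c^K_{ij}$. The paper earns this conclusion by showing that every chamber admits a locally crystallographic basis whose Cartan matrix is of type $A$, $C$, $D'$ or $D$, that these bases propagate consistently to adjacent chambers, and then by a determinant argument along galleries that the resulting global set of roots is a reduced crystallographic root system. Your base case $\ell=3$ is closer in spirit to the paper (modular point, pencil, enumeration), but it too omits the load-bearing steps: that the modular flat realizes the maximal multiplicity (Lemma \ref{X mod max}), that every other rank-two flat has multiplicity at most $4$ (Corollary \ref{SSs3 leq 4 outside mod}), the convex $n$-gon construction of Proposition \ref{Prop_A2n1Subarr}, and the parity argument via the characteristic polynomial in Proposition \ref{Prop_A2n4n} that limits the arrangement to at most one hyperplane beyond the $\Ac(2n,1)$ subarrangement.
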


As a result of Part (1) of the above theorem we can reformulate Conjecture \ref{conj:37} in the following way:
\begin{conje}
Let $\Ac$ be an irreducible simplicial $3$-arrangement.
If $\vert \Ac \vert > 37$ then $\Ac$ is supersolvable.
\end{conje}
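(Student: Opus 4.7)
The final statement is itself a conjecture, so the goal is not to prove it outright but to justify the authors' claim that it is a genuine \emph{reformulation} of Conjecture~\ref{conj:37}. Conditional on Part~(1) of the main theorem, this reduces to a short equivalence argument, which I would carry out in both directions.

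For the direction ``original implies reformulated'', I would assume Conjecture~\ref{conj:37} and take $\Ac$ to be irreducible simplicial of rank $3$ with $|\Ac|>37$. The hypothesis yields $\Ac \in \Rc(1)\cup\Rc(2)$, so the task reduces to checking that every member of the two infinite families $\{\Ac(2n,1)\}$ and $\{\Ac(4m+1,1)\}$ is supersolvable. I would do this directly from the geometric description of these families: each arrangement contains a distinguished line through which the induced rank-$2$ localization is a modular coatom of $L(\Ac)$, and the remaining flats then assemble into a maximal chain of modular elements. This is the ``easy'' implication built into Part~(1) of the main theorem.

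For the converse direction, I would assume the new conjecture and again take $\Ac$ irreducible simplicial of rank $3$ with $|\Ac|>37$. The new conjecture gives supersolvability of $\Ac$, and then Part~(1) of the main theorem, applied in its full classificatory strength, forces $\Ac\in\Rc(1)\cup\Rc(2)$, which is exactly Conjecture~\ref{conj:37}.

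The substantive content is therefore entirely carried by Part~(1) of the main theorem, and the main obstacle is not in the present derivation but in that earlier classification: establishing uniqueness of $\Rc(1)\cup\Rc(2)$ among irreducible supersolvable simplicial $3$-arrangements. That ``hard'' half is handled elsewhere in the paper using the Coxeter graph machinery introduced for simplicial arrangements; once it is in hand, the reformulation above is essentially a one-line corollary, and Conjecture~\ref{conj:37} and the new conjecture stand or fall together.
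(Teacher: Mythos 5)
Your reading is correct and matches the paper's intent: the statement is a conjecture, and the paper justifies it only as an equivalent reformulation of Conjecture~\ref{conj:37} given Part~(1) of the main theorem, with the forward direction resting on the supersolvability of every member of $\Rc(1)\cup\Rc(2)$ (established in the paper via the modular rank-$2$ element $X=\bigcap_i H_i$, cf.\ Lemma~\ref{Lem_Rc12ISSS}) and the converse resting on the classification in Theorem~\ref{classification rk 3} together with the rigidity of $L$-equivalence from Remark~\ref{Rem_LEquiField}. This is essentially the same argument the paper has in mind, so nothing further is needed.
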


The article is organized as follows.
Firstly we recall the basic notions from the theory of hyperplane arrangements
and some properties of supersolvable and simplicial arrangements which we frequently need later on.
In Subsection \ref{ssec:simplArr} we further comment on the more general notion of combinatorial simpliciality
and its behavior with respect to some standard constructions for arrangements.
In Section \ref{sec:Coxeter graphs} we introduce Coxeter graphs, our main tool for a detailed investigation of simplicial arrangements.
In the last three sections we prove our main theorem giving the aforementioned classification.

\subsection*{Acknowledgment}
We are grateful to the referee for valuable suggestions and comments on an earlier version of our manuscript.

\section{Recollection and Preliminaries}\label{sec:recprl}

We review the required notions and definitions, cf.~\cite{orlik1992arrangements}.
Furthermore in Subsection \ref{ssec:simplArr} we prove some basic properties of simplicial arrangements.

\subsection{Arrangements of hyperplanes}

Let $\KK$ be a field. An $\ell$\emph{-ar\-range\-ment of hyperplanes} is a pair $(\Ac,V)$,
where $\Ac$ is a finite set of hyperplanes (codimension $1$ subspaces) in
the finite dimensional vector space $V \cong \KK^\ell$.
For $(\Ac,V)$ we simply write $\Ac$ if the vector space $V$ is unambiguous.
We denote the empty $\ell$-arrangement by $\Phi_\ell$.

If $\alpha \in V^*$ is a linear form, we write $\alpha^\perp = \ker(\alpha)$ and interpret $\alpha$ as a normal vector
for the hyperplane $H = \alpha^\perp$.
Let $\Ac = \{H_1,\ldots,H_n\}$ be an arrangement in $V=\RR^\ell$.
If we choose a basis $x_1,\ldots,x_\ell$  for $V^*$ and if $\alpha_j = \sum_{i=1}^{\ell} a_{ij} x_i \in V^*$
such that $H_j = \alpha_j^\perp$ then we say that $\Ac$ is \emph{given explicitly by the matrix} $(a_{ij})_{1\le i\le \ell, 1\le j\le n} \in \KK^{\ell \times n}$.


The \emph{intersection lattice} $L(\Ac)$ of $\Ac$ is the set of all subspaces $X$ of $V$ of the form
$X = H_1 \cap \ldots \cap H_r$ with $\{ H_1,\ldots,H_r\} \subseteq \Ac$, partially ordered by reverse inclusion:
$$
X \leq Y \iff Y \subseteq X, \quad \text{ for } X,Y \in L(\Ac).
$$
If $X \in L(\Ac)$, then the \emph{rank} $r(X)$ of $X$ is defined as $r(X) := \ell - \dim{X}$, i.e.\ the codimension of $X$ and the rank of the arrangement
$\Ac$ is defined as $r(\Ac) := r(T(\Ac))$ where $T(\Ac) := \bigcap_{H \in \Ac} H$ is the \emph{center} of $\Ac$.
An $\ell$-arrangement $\Ac$ is called \emph{essential} if $r(\Ac)=\ell$.

For $X \in L(\Ac)$, we define the \emph{localization}
$$
\Ac_X := \{ H \in \Ac \mid X \subseteq H \}
$$
of $\Ac$ at $X$, and the \emph{restriction of} $\Ac$ to $X$,
$(\Ac^X,X)$, where
$$
\Ac^X := \{ X\cap H \mid H \in \Ac \setminus \Ac_X \}.
$$
For $X,Y \in L(\Ac)$ with $X \leq Y$ we define the \emph{interval}
$$
[X,Y] = \{ Z \in L(\Ac) \mid X \leq Z \leq Y \}.
$$
Note that we have $(\Ac_Y)^X = (\Ac^X)_Y$, and $L((\Ac_Y)^X) \cong [X,Y]$, i.e.\ intervals in $L(\Ac)$ 
are again intersection lattices of restricted and localized arrangements.

For $0 \leq q \leq \ell$ we write $L_q(\Ac) := \{ X \in L(\Ac) \mid r(X) = q \}$.
If $X$ is a subspace of $V$ and $X \subseteq H$ for all $H \in \Ac$ then $H/X$ is a hyperplane
in $V/X$ for all $H \in \Ac$ and we can define the quotient
arrangement $(\Ac / X, V/X)$ by $\Ac / X := \{ H/X \mid H \in \Ac \}$.

If $(\Ac,V)$ is not essential, i.e.\ $\dim(T(\Ac)) > 0$, we sometimes identify it with the essential
$r(\Ac)$-arrangement $(\Ac/T(\Ac),V/T(\Ac))$.

Two arrangements $\Ac$ and $\Bc$ in $V$ are \emph{lattice equivalent}
or $L$-equivalent if $L(\Ac) \cong L(\Bc)$ as lattices and in this case we write $\Ac \sim_L \Bc$.
If $\Ac$ and $\Bc$ are arrangements in $V$ such that there is a $\varphi \in \GL(V)$ with $\Bc = \varphi(\Ac) = \{ \varphi(H) \mid H \in \Ac \}$
then we say that $\Ac$ is \emph{(linearly) isomorphic} to $\Bc$.

The \emph{product} $\Ac = (\Ac_1 \times \Ac_2,V_1 \oplus V_2)$ of two arrangements $(\Ac_1,V_1)$, $(\Ac_2,V_2)$
is defined by
\begin{equation*}
\Ac := \Ac_1 \times \Ac_2 = \{ H_1 \oplus V_2 \mid H_1 \in \Ac_1 \} \cup \{ V_1 \oplus H_2 \mid H_2 \in \Ac_2 \},
\end{equation*}
see \cite[Def.~2.13]{orlik1992arrangements}. In particular $\vert \Ac \vert = \vert \Ac_1 \vert + \vert \Ac_2 \vert$.
If an arrangement $\Ac$ can be written as a non-trivial product $\Ac = \Ac_1 \times \Ac_2$, i.e.\ $\Ac_i \neq \Phi_0$,
then $\Ac$ is called \emph{reducible}, otherwise \emph{irreducible}.

\begin{propo}[{\cite[Prop.~2.14]{orlik1992arrangements}}]
Let $\Ac = \Ac_1 \times \Ac_2$ be a product. Define a partial order on $L(\Ac_1) \times L(\Ac_2)$  by
$$
(X_1,X_2) \leq (Y_1,Y_2) \iff X_1 \leq Y_1 \text{ and } X_2 \leq Y_2,
$$
for $(X_1,X_2), (Y_1,Y_2) \in L(\Ac_1) \times L(\Ac_2)$.
Then there is an isomorphism of lattices
\begin{eqnarray*}
\pi: 	&L(\Ac_1) \times L(\Ac_2) &\to L(\Ac_1 \times \Ac_2) \\
	& (X_1,X_2) &\mapsto X_1 \oplus X_2.
\end{eqnarray*}
\end{propo}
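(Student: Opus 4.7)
The plan is to verify three things: (i) the map $\pi$ is well-defined, (ii) it is a bijection, and (iii) it preserves the partial order in both directions. Throughout I will exploit the fact that hyperplanes of $\Ac_1 \times \Ac_2$ come in exactly two flavors, $H_1 \oplus V_2$ with $H_1 \in \Ac_1$, and $V_1 \oplus H_2$ with $H_2 \in \Ac_2$.

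For well-definedness, pick $X_1 = \bigcap_{H_1 \in \Sc_1} H_1 \in L(\Ac_1)$ and $X_2 = \bigcap_{H_2 \in \Sc_2} H_2 \in L(\Ac_2)$ with $\Sc_i \subseteq \Ac_i$. Direct computation in $V_1 \oplus V_2$ gives $(H_1 \oplus V_2) \cap (V_1 \oplus H_2) = H_1 \oplus H_2$ and, more generally,
\[
\bigcap_{H_1 \in \Sc_1} (H_1 \oplus V_2) \;\cap\; \bigcap_{H_2 \in \Sc_2} (V_1 \oplus H_2) \;=\; X_1 \oplus X_2,
\]
so $X_1 \oplus X_2 \in L(\Ac_1 \times \Ac_2)$. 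Conversely, any $Z \in L(\Ac_1 \times \Ac_2)$ is obtained by intersecting a family of hyperplanes, which we split according to the two types above. Intersecting the type-one hyperplanes gives $X_1 \oplus V_2$ where $X_1 \in L(\Ac_1)$, intersecting the type-two hyperplanes gives $V_1 \oplus X_2$ where $X_2 \in L(\Ac_2)$, and their total intersection is $X_1 \oplus X_2 = \pi(X_1,X_2)$; this proves surjectivity. Injectivity is immediate since the projections of $X_1 \oplus X_2$ onto $V_1$ and $V_2$ recover $X_1$ and $X_2$ respectively.

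For the order, recall that the order on $L(\Ac)$ is reverse inclusion. If $X_i \leq Y_i$ in $L(\Ac_i)$, i.e.\ $Y_i \subseteq X_i$, then $Y_1 \oplus Y_2 \subseteq X_1 \oplus X_2$, so $\pi(X_1,X_2) \leq \pi(Y_1,Y_2)$. Conversely, if $Y_1 \oplus Y_2 \subseteq X_1 \oplus X_2$, projecting onto each factor yields $Y_i \subseteq X_i$, hence $X_i \leq Y_i$. Since $\pi$ is an order-preserving bijection with order-preserving inverse, it is a lattice isomorphism.

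The argument is essentially bookkeeping with direct sums; the only point requiring care is the decomposition of an arbitrary element of $L(\Ac_1 \times \Ac_2)$ into the two hyperplane types, and verifying the identity $(H_1 \oplus V_2) \cap (V_1 \oplus H_2) = H_1 \oplus H_2$, which is the key calculation making the whole product structure transparent.
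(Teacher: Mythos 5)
Your proof is correct: the decomposition of the hyperplanes of $\Ac_1\times\Ac_2$ into the two types, the identity $(X_1\oplus V_2)\cap(V_1\oplus X_2)=X_1\oplus X_2$, and the order check via projections together give exactly the standard argument. The paper itself states this result without proof, citing \cite[Prop.~2.14]{orlik1992arrangements}, and your verification matches the one found there, so there is nothing to add.
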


\begin{corol}\label{res loc product}
Let $\Ac = \Ac_1 \times \Ac_2$ be a product and $X = X_1\oplus X_2 \in L(\Ac)$. Then we have
$$
\Ac_X = (\Ac_1)_{X_1} \times (\Ac_2)_{X_2} \text{ and }
\Ac^X = (\Ac_1)^{X_1} \times (\Ac_2)^{X_2}.
$$
\end{corol}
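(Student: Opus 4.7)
The plan is to prove both equalities by unraveling the definition of the product arrangement and exploiting the explicit description of $L(\Ac_1 \times \Ac_2)$ provided by the preceding proposition. Recall that every hyperplane of $\Ac = \Ac_1 \times \Ac_2$ has one of the two forms $H_1 \oplus V_2$ with $H_1 \in \Ac_1$, or $V_1 \oplus H_2$ with $H_2 \in \Ac_2$, and these two families are disjoint (provided each $\Ac_i$ is nonempty; the trivial cases are immediate).

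For the localization, I would check containment $X = X_1 \oplus X_2 \subseteq H$ in each of the two cases. Since $X_2 \subseteq V_2$ holds automatically, $X \subseteq H_1 \oplus V_2$ is equivalent to $X_1 \subseteq H_1$, i.e.\ to $H_1 \in (\Ac_1)_{X_1}$; the symmetric statement holds for hyperplanes of the form $V_1 \oplus H_2$. Taking the union of the two families gives precisely $(\Ac_1)_{X_1} \times (\Ac_2)_{X_2}$, as desired.

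For the restriction, I would consider $H \cap X$ for $H \in \Ac \setminus \Ac_X$. If $H = H_1 \oplus V_2$ with $H_1 \in \Ac_1 \setminus (\Ac_1)_{X_1}$, then
\[
H \cap X = (H_1 \oplus V_2) \cap (X_1 \oplus X_2) = (H_1 \cap X_1) \oplus X_2,
\]
which under the identification of hyperplanes of $X = X_1 \oplus X_2$ via the isomorphism $\pi$ from the preceding proposition corresponds to the hyperplane $H_1 \cap X_1$ of $(\Ac_1)^{X_1}$ crossed with the full space $X_2$. The symmetric computation applies to $H = V_1 \oplus H_2$ with $H_2 \in \Ac_2 \setminus (\Ac_2)_{X_2}$, yielding $X_1 \oplus (H_2 \cap X_2)$. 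Collecting these two families shows that $\Ac^X$ is exactly the product $(\Ac_1)^{X_1} \times (\Ac_2)^{X_2}$ inside $X_1 \oplus X_2$.

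There is no real obstacle here; the only point that requires minor care is to verify that the correspondence is bijective, i.e.\ that distinct hyperplanes of $\Ac \setminus \Ac_X$ give rise to distinct intersections with $X$, which follows immediately from the direct sum decomposition $X = X_1 \oplus X_2$ and the fact that an intersection of the form $(H_1 \cap X_1) \oplus X_2$ determines $H_1 \cap X_1$ and hence the corresponding element of $(\Ac_1)^{X_1}$, and analogously on the other factor.
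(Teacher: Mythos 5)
Your proof is correct and is exactly the verification the paper leaves implicit: the corollary is stated without proof as an immediate consequence of the definition of the product arrangement and the lattice isomorphism in the preceding proposition, and your case analysis on the two families of hyperplanes $H_1\oplus V_2$ and $V_1\oplus H_2$ is the intended argument. One small caveat: your closing remark that distinct hyperplanes of $\Ac\setminus\Ac_X$ give distinct intersections with $X$ is neither needed (the asserted identity is an equality of sets, and both sides collapse coincident restrictions in the same way) nor true in general (two hyperplanes $H_1\neq H_1'$ of $\Ac_1$ may well satisfy $H_1\cap X_1=H_1'\cap X_1$), but this does not affect the validity of the argument.
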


For an arrangement $\Ac$ the \emph{M\"obius function} $\mu: L(\Ac) \to \ZZ$ is defined by:
$$
\mu(X) = \left\{ \begin{array}{l l}
   1 & \quad \text{ if } X=V \text{,}\\
        -\sum_{V \leq Y < X} \mu(Y) & \quad \text{ if }X \neq V\text{.}
  \end{array} \right.
$$

We denote by $\chi_\Ac(t)$
the \emph{characteristic polynomial} of $\Ac$ which is defined by:
$$
\chi_\Ac(t)= \sum_{X \in L(\Ac)} \mu(X) t^{\dim(X)}.
$$

\begin{remar}\label{rem:CharPolyRk3}
If $\Ac$ is a $3$-arrangement then the characteristic polynomial is given by
$$
	\chi_\Ac(t) = t^3 + \mu_1t^2 + \mu_2t + \mu_3,
$$
with
\begin{eqnarray*}
	\mu_1 &= &-\vert \Ac \vert, \quad \mu_2 = \sum_{X \in L_2(\Ac)} (\vert \Ac_X \vert -1), \quad
	\mu_3 =
	-1-\mu_1-\mu_2.
\end{eqnarray*}
\end{remar}

\begin{lemma}[{\cite[Lem.~2.50]{orlik1992arrangements}}]\label{charpoly product}
Let $\Ac = \Ac_1 \times \Ac_2$ be a product of two arrangements. Then
$$
\chi_\Ac(t) = \chi_{\Ac_1}(t)\chi_{\Ac_2}(t).
$$
\end{lemma}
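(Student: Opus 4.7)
The plan is to reduce the identity to the well-known multiplicativity of the Möbius function on a product poset, using the lattice isomorphism $\pi : L(\Ac_1) \times L(\Ac_2) \to L(\Ac)$ of the preceding proposition.

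First I would rewrite the defining sum of $\chi_\Ac(t)$ by indexing it via $\pi$: every $X \in L(\Ac)$ is uniquely of the form $X_1 \oplus X_2$ with $X_i \in L(\Ac_i)$, and since $\dim(X_1 \oplus X_2) = \dim X_1 + \dim X_2$, the exponent of $t$ already factors as $t^{\dim X_1}\cdot t^{\dim X_2}$. Thus the only remaining ingredient is the identity
\begin{equation*}
\mu_\Ac(X_1 \oplus X_2) \;=\; \mu_{\Ac_1}(X_1)\,\mu_{\Ac_2}(X_2).
\end{equation*}

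The main (and essentially only) work is establishing this multiplicativity. I would prove it by induction on $r(X_1 \oplus X_2) = r(X_1) + r(X_2)$. The base case $X_1 = V_1$, $X_2 = V_2$ gives $\mu_\Ac(V) = 1 = 1\cdot 1$. For the inductive step, apply the recursive definition
\begin{equation*}
\mu_\Ac(X_1\oplus X_2) \;=\; -\sum_{V \le Y < X_1\oplus X_2} \mu_\Ac(Y),
\end{equation*}
transport the sum via $\pi^{-1}$ to a sum over pairs $(Y_1,Y_2) \le (X_1,X_2)$ with $(Y_1,Y_2) \ne (X_1,X_2)$, and apply the induction hypothesis to each strictly smaller pair. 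A short computation rewrites the resulting expression as
\begin{equation*}
\Bigl(\sum_{Y_1 \le X_1} \mu_{\Ac_1}(Y_1)\Bigr)\Bigl(\sum_{Y_2 \le X_2} \mu_{\Ac_2}(Y_2)\Bigr) - \mu_{\Ac_1}(X_1)\mu_{\Ac_2}(X_2),
\end{equation*}
and since the two full sums over intervals vanish (unless $X_i = V_i$, handled separately), the induction closes and yields the desired product formula.

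With multiplicativity of $\mu$ in hand, the proof finishes by a straightforward double-sum manipulation:
\begin{equation*}
\chi_\Ac(t) \;=\; \sum_{(X_1,X_2)} \mu_{\Ac_1}(X_1)\mu_{\Ac_2}(X_2)\,t^{\dim X_1}t^{\dim X_2} \;=\; \chi_{\Ac_1}(t)\,\chi_{\Ac_2}(t).
\end{equation*}
The conceptual obstacle is the multiplicativity of $\mu$ on a product poset; once that is secured, the rest is bookkeeping. An alternative would be to simply cite this as a standard fact from poset combinatorics, shortening the proof to the two lines displayed above.
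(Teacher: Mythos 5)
Your argument is correct: the lattice isomorphism $\pi\colon L(\Ac_1)\times L(\Ac_2)\to L(\Ac)$ from the preceding proposition, additivity of dimension on $X_1\oplus X_2$, and multiplicativity of the M\"obius function (which your rank induction establishes properly, since the product of the two interval sums vanishes whenever $(X_1,X_2)\neq(V_1,V_2)$) together give the factorization. Note, however, that the paper offers no proof of its own here -- it simply cites \cite[Lem.~2.50]{orlik1992arrangements} -- and your argument is essentially the standard one found in that reference, so there is nothing substantive to compare; your alternative suggestion of citing the multiplicativity of $\mu$ on product posets as a known fact is exactly what the paper (one level up) does.
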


We state the following geometric theorem generalizing the well known
Sylvester-Gallai theorem in its dual version for real arrangements.
It was first proved by Motzkin \cite{MR0041447} for $\ell =4$ and later by Hansen \cite{MR0203561} for all $\ell$.

\begin{theor}[Hansen-Motzkin] \label{gen Silvester}
Let $\Ac$ be an essential $\ell$-arrangement over $\RR$, $\ell \geq 3$. Then there is an $X \in L_{\ell-1}(\Ac)$ and an $H \in \Ac$ such that
$X = H \cap Y$ for a $Y \in L_{\ell-2}(\Ac)$, and $\Ac_X = \Ac_Y \cup \{H\}$.
In particular $\Ac_X/X$ is reducible with $\Ac_X/X \cong \Ac_Y/Y \times \{ \{0\} \}$.
\end{theor}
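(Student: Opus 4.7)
The plan is to prove the theorem by induction on the rank $\ell \geq 3$.

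For the base case $\ell = 3$, the statement reduces to the classical (dual) Sylvester--Gallai theorem: any essential real arrangement of rank three contains a rank-two flat $X$ lying on exactly two hyperplanes $H, H'$. Setting $Y := H' \in L_1(\Ac)$ yields $\Ac_Y = \{H'\}$, $X = H \cap Y$, and $\Ac_X = \{H, H'\} = \Ac_Y \cup \{H\}$. Modulo $X$, the arrangement $\Ac_X/X$ consists of two distinct lines through the origin in the plane $V/X$ and is visibly reducible with the required product structure, via Corollary \ref{res loc product}.

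For the inductive step $\ell \geq 4$, I would fix some $H_0 \in \Ac$. The restriction $\Ac^{H_0}$ is essential of rank $\ell - 1$ since
$$T(\Ac^{H_0}) = H_0 \cap T(\Ac \setminus \{H_0\}) = T(\Ac) = \{0\}.$$
The inductive hypothesis supplies $X' \in L_{\ell-2}(\Ac^{H_0})$, $Y' \in L_{\ell-3}(\Ac^{H_0})$, and $K' \in \Ac^{H_0}$ with $X' = K' \cap Y'$ and $(\Ac^{H_0})_{X'} = (\Ac^{H_0})_{Y'} \cup \{K'\}$. Viewed as subspaces of $V$, these satisfy $X' \in L_{\ell-1}(\Ac)$, $Y' \in L_{\ell-2}(\Ac)$ and $K' \in L_2(\Ac)$, and a dimension count gives $K' + Y' = H_0$. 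Analyzing the fibers of the restriction map $K \mapsto K \cap H_0$ on $\Ac \setminus \{H_0\}$ then yields
$$\Ac_{X'} = \Ac_{Y'} \cup (\Ac_{K'} \setminus \{H_0\}),$$
with the two pieces on the right disjoint (a common element would contain $K' + Y' = H_0$, hence equal $H_0$). In the favorable case $|\Ac_{K'}| = 2$, writing $\Ac_{K'} = \{H_0, H\}$ and setting $X := X'$, $Y := Y'$ delivers the desired configuration: indeed $H \cap Y$ is a $1$-dimensional subspace of $Y$ containing $X'$, forcing $X = H \cap Y$, and $\Ac_X = \Ac_Y \cup \{H\}$.

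The hard part is handling the case $|\Ac_{K'}| \geq 3$, where the naive lift produces multiple extra hyperplanes through $X'$ and the clean structure fails at $X = X'$. My plan to address this is to choose the auxiliary $H_0$ judiciously in advance. By applying the base case to a suitable rank-$3$ localization $\Ac_F/F$ (for any $F \in L_3(\Ac)$, which is essential of rank $3$ in $V/F$), one locates a rank-$2$ flat $Z \in L_2(\Ac)$ with $|\Ac_Z| = 2$; taking $H_0 \in \Ac_Z$ and strengthening the inductive hypothesis to simultaneously force the output $K'$ to be a prescribed ``simple'' rank-$2$ flat of $\Ac$ (one with $|\Ac_{K'}| = 2$) should guarantee $K' = Z$ and thus reduce every case to the favorable one. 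I expect the bookkeeping of this strengthened selection---essentially an extremal argument over pencils of hyperplanes through codimension-two flats---to be the main technical obstacle and the essential geometric content of the Hansen--Motzkin theorem.
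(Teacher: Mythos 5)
The paper offers no proof of this theorem: it is imported from the literature (Motzkin for $\ell=4$, Hansen for general $\ell$), so there is no internal argument to compare yours with, and a complete proof here would have to be a genuine proof of the Hansen--Motzkin theorem itself. Your reduction is set up correctly as far as it goes: the identity $\Ac_{X'}=\Ac_{Y'}\cup(\Ac_{K'}\setminus\{H_0\})$, the disjointness of the two pieces via $Y'+K'=H_0$, and the conclusion in the favorable case $\vert\Ac_{K'}\vert=2$ are all sound. But the case $\vert\Ac_{K'}\vert\geq 3$ that you defer is the entire content of the theorem, and the repair you sketch cannot work. The strengthened inductive hypothesis you would need --- that the distinguished hyperplane $K'$ of a Hansen--Motzkin configuration in $\Ac^{H_0}$ can be forced to equal a prescribed hyperplane of $\Ac^{H_0}$ --- is false already in rank $3$, where it amounts to the claim that every line of an essential real line arrangement carries an ordinary (double) point. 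Counterexample: the seven lines $x=\lambda y$, $z=\mu y$ (for $\lambda,\mu\in\{0,1,-1\}$) and $y=0$ in the real projective plane; the line $y=0$ meets the other six only in the two quadruple points $(0:0:1)$ and $(1:0:0)$, hence contains no ordinary point and cannot be prescribed as the extra line of a Sylvester--Gallai configuration.

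There is also a structural obstruction to the plan: the set of admissible targets for $K'$ (the hyperplanes of $\Ac^{H_0}$ that happen to be simple rank-$2$ flats of the ambient $\Ac$) is not an intrinsic datum of the $(\ell-1)$-arrangement $\Ac^{H_0}$, so it cannot be carried along in an induction whose hypothesis quantifies over abstract essential arrangements. Your localization argument does correctly produce a simple flat $Z\in L_2(\Ac)$ with $\vert\Ac_Z\vert=2$ and hence a sensible choice of $H_0$, but nothing forces the configuration supplied by the inductive hypothesis in $\Ac^{H_0}$ to select $Z$; the admissible set may well consist of $Z$ alone, which puts you back in the situation the counterexample above rules out. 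The reduction of the case $\vert\Ac_{K'}\vert\geq 3$ to the favorable one is precisely the hard geometric content of Hansen's theorem, and as it stands your argument is a correct treatment of a special case together with an announced but unworkable strategy for the rest.
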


\subsection{Simplicial arrangements}\label{ssec:simplArr}
Many of the notions in this subsection were introduced in the more general setting of  simplicial arrangements
on convex cones and Tits arrangements
in \cite{CMW2017}.

We firstly recall the definition of a simplicial arrangement.

\begin{defin}
Let $\Ac$ be an arrangement in a finite dimensional real vector space $V$.
Then $\Ac$ is called \emph{simplicial} if every connected component of $V \setminus \bigcup_{H \in \Ac} H$
is an open simplicial cone.
We denote by $\Kc(\Ac)$ the set of connected components of $V \setminus \bigcup_{H \in \Ac} H$;
a $K \in \Kc(\Ac)$ is called a \emph{chamber}.
\end{defin}

Note that the only simplicial $1$-arrangement is the arrangement $\Ac =
\{\{0\}\}$, i.e. the non empty one, and every real $2$-arrangement with
more than one hyperplane is simplicial.

There are the following classical examples of simplicial arrangements.

\begin{examp}\label{ex Coxeter group}
Let $W \leq \GL(V)$ be a finite real reflection group acting on the real vector space $V$, i.e.\ a finite Coxeter group.
Suppose that $W$ has full rank, i.e. $\rank(W) = \dim(V)$.
Then the reflection arrangement $(\Ac(W),V)$,
(also called Coxeter arrangement), consisting of all
the reflection hyperplanes of $W$ is a simplicial arrangement.
\end{examp}

\begin{examp}
For $0 \leq k \leq \ell$ let $\Ac_\ell^k$ be the $\ell$-arrangement defined as follows
\begin{align*}
\Ac_\ell^k := \quad& \{ \ker(x_i\pm x_j) \mid 1 \leq i < j \leq \ell \} \\
	& \cup \{ \ker(x_i) \mid 1 \leq i \leq k \}.
\end{align*}
The arrangements $\Ac_\ell^k$ are simplicial, cf.\ \cite{MR3341467}.
In particular $\Ac_\ell^0 = \Ac(D_\ell)$ and $\Ac_\ell^\ell=\Ac(C_\ell)$ are the reflection
arrangements of the finite reflection groups of type $D_\ell$ and $C_\ell$ respectively.
\end{examp}

\begin{defin}
Let $\Ac$ be a simplicial $\ell$-arrangement in the real vector space $V$.
For $\alpha \in V^*$ we write $\alpha^+ = \alpha^{-1}(\RR_{> 0})$ and $\alpha^- = (-\alpha)^+$
for the positive respectively negative open half-space defined by $\alpha$.

For $K \in \Kc(\Ac)$ define the \emph{walls} of $K$ as
$$
W^K := \{ H \in \Ac \mid \dim(H\cap\overline{K}) = \ell-1 \}.
$$

If $R \subseteq V^*$ is a finite set such that $\Ac = \{ \alpha^\perp \mid \alpha \in R\}$
and $\RR\alpha \cap R = \{\pm \alpha \}$ for all $\alpha \in R$
then $R$ is called a \emph{(reduced) root system} for $\Ac$.

If $B^K \subseteq V^*$ is such that $\vert B^K \vert = \vert W^K \vert$, $W^K = \{ \alpha^\perp \mid \alpha \in B^K \}$
and $K = \cap_{\alpha \in B^K} \alpha^+$ then $B^K$ is called a \emph{basis} for $K$.

If $R$ is a root system for $\Ac$ we obtain a basis for $K$ as
$$
B_R^K := \{ \alpha \in R \mid \alpha^\perp \in W^K \text{ and } \alpha^+\cap K =  K \}.
$$
Furthermore for $\gamma \in B^K$ let $K\gamma$ be the unique adjacent chamber in $\Kc(\Ac)$, such that
$\langle \overline{K} \cap \overline{K\gamma} \rangle = \gamma^\perp$ (the linear span of $\overline{K} \cap \overline{K\gamma}$).
If there is a chosen numbering of $B^K=\{\alpha_1,\ldots,\alpha_\ell\}$
then
we simply write $K_i=K\alpha_i$.
\end{defin}

\begin{remar}\label{simpl basis}
The notions $W^K$, $R$ and $B^K$ also make sense for a not necessarily simplicial real arrangement $\Ac$.
Since the normals of the facets of a cone constitute a basis if and only if the cone is simplicial,
we observe that $B^K$ is indeed a basis of $V^*$ for all $K \in \Kc(\Ac)$ if and only if $\Ac$ is simplicial.
\end{remar}

The following notion was first introduced in \cite[Def.~2.3]{MR2820159}.
\begin{defin}
Let $\Ac$ be a simplicial arrangement.
If there exists a root system $R \subseteq V^*$ for $\Ac$ such that for all $K \in \Kc(\Ac)$ we have
$$
R \subseteq \sum_{\alpha \in B_R^K} \ZZ\alpha,
$$
then $\Ac$ is called \emph{crystallographic} and in this case we call $R$ a \emph{crystallographic root system}
for $\Ac$.
\end{defin}

\begin{examp}\label{ex:Weyl}
Let $W$ be a Weyl group, i.e.\ a crystallographic finite real reflection group with
(reduced) root system $\Phi(W)$.
Then the Weyl arrangement $\Ac(W) = \{ \alpha^\perp \mid \alpha \in \Phi(W) \}$
is a crystallographic arrangement with crystallographic root system $R = \Phi(W)$.
\end{examp}

A complete classification of crystallographic arrangements by finite Weyl grou\-po\-ids
was obtained in \cite{MR3341467}, see also \cite{MR2820159}.
It is worth mentioning that the class of crystallographic arrangements is much bigger than the class
of Weyl arrangements with many more ($74$) sporadic cases.
However, it turns out
that irreducible crystallographic arrangements of rank greater or equal to 4 are all restrictions of (irreducible) Weyl arrangements
(see for example \cite[Thm.~3.7]{MR3623727}):
\begin{theor}\label{cryst arr rk geq 4}
Let $\Ac$ be an irreducible simplicial $\ell$-arrangement with $\ell\geq 4$.
Then it is crystallographic if and only if it is a restriction of some (irreducible) Weyl arrangement.
\end{theor}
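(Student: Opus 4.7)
The plan is to treat the two implications separately. The direction ``restriction of a Weyl arrangement is crystallographic'' is a direct verification from the definitions, whereas the converse rests on the complete classification of irreducible crystallographic arrangements of rank at least four via finite Weyl groupoids.

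For the easy direction, let $\Bc = \Ac(W)$ be an irreducible Weyl arrangement in $V$ with crystallographic root system $\Phi \subseteq V^*$, and let $X \in L(\Bc)$, so that $\Ac := \Bc^X$ is an arrangement in $X$. I would form
\[
R := \{\, \alpha|_X \mid \alpha \in \Phi,\ X \not\subseteq \alpha^\perp \,\} \subseteq X^*
\]
and (after removing rescalings) verify that $R$ is a reduced root system for $\Ac$. Every chamber $K'$ of $\Ac$ arises as the relative interior in $X$ of the face $\overline{C}\cap X$ of some chamber $C$ of $\Bc$; among $B_\Phi^C = \{\beta_1,\ldots,\beta_\ell\}$, precisely those $\beta_i$ vanishing on $X$ correspond to walls of $C$ containing $X$, while the restrictions of the remaining $\beta_i$ form the basis $B_R^{K'}$ of $K'$ in $X^*$. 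Integrality $\Phi \subseteq \sum_{i=1}^\ell \ZZ\beta_i$ then restricts to $R \subseteq \sum_{\beta \in B_R^{K'}}\ZZ\beta$, so $\Ac$ is crystallographic.

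For the harder direction I would invoke the classification of irreducible crystallographic arrangements obtained in \cite{MR3341467} through the classification of finite Weyl groupoids. That work enumerates, in each rank, all connected simply-connected Cartan schemes of finite type; in ranks $\ell \geq 4$ the resulting list consists entirely of arrangements that are restrictions of the irreducible Weyl arrangements $\Ac(A_\ell), \Ac(B_\ell), \Ac(D_\ell), \Ac(E_6), \Ac(E_7), \Ac(E_8), \Ac(F_4)$, and no sporadic examples survive above rank three. Identifying each enumerated arrangement with a concrete Weyl restriction—either by matching intersection lattices or, more intrinsically, by observing that the Cartan matrices attached to the Weyl groupoid at each chamber embed the root system of $\Ac$ into a classical or exceptional finite root system $\Phi$ and realize $\Ac$ as $\Ac(W_\Phi)^X$ for the corresponding flat $X$—completes the proof.

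The main obstacle is the ``only if'' direction, which is essentially equivalent to the content of the Heckenberger–Cuntz classification itself. Without \cite{MR3341467} one would have to rule out hypothetical sporadic rank $\geq 4$ crystallographic arrangements from scratch, which is precisely the deep combinatorial-geometric work of that program. Granted the classification as input, the remaining identification with Weyl restrictions is routine bookkeeping.
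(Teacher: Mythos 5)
The paper does not actually prove this statement: it is quoted as a known result with a pointer to \cite[Thm.~3.7]{MR3623727}, so there is no internal argument to measure your proposal against. Your two-step architecture (restrict the Weyl root system for the ``if'' direction, invoke the classification of finite Weyl groupoids \cite{MR3341467} for the ``only if'' direction) is essentially a reconstruction of the cited source rather than a new route.

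Two concrete points in your sketch do not hold up, however. First, your summary of the classification is factually wrong: sporadic irreducible crystallographic arrangements exist in every rank from $3$ through $8$ (the paper itself counts $74$ sporadic cases in total), and the substance of \cite[Thm.~3.7]{MR3623727} is precisely the non-obvious identification of each sporadic arrangement of rank $\geq 4$ with a restriction of a Weyl arrangement of type $E_6$, $E_7$, $E_8$ or $F_4$. Asserting that ``no sporadic examples survive above rank three'' and that the identification is ``routine bookkeeping'' deletes exactly the part of the argument that carries the weight. Second, in the ``if'' direction the set $R=\{\alpha|_X\}$ is in general not reduced: distinct roots of $\Phi$ can restrict to proportional functionals with ratio different from $\pm 1$ (already for $\Ac(B_\ell)$ restricted to $X=(x_1-x_2)^\perp$ one obtains both $x_1|_X$ and $(x_1+x_2)|_X = 2\,x_1|_X$). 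After ``removing rescalings'' you must still exhibit one representative per line such that the chosen representatives are the ones occurring in each basis $B_R^{K'}$ and such that $R \subseteq \sum_{\beta \in B_R^{K'}}\ZZ\beta$ holds simultaneously at every chamber $K'$. This is true, but it is itself a lemma (restrictions of crystallographic arrangements are again crystallographic), not an immediate consequence of restricting the inclusion $\Phi \subseteq \sum_i \ZZ\beta_i$.
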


We recall the following combinatorial characterization of simplicial $3$-arrangements.
\begin{lemma}\cite[Cor.~2.7]{MR3391182}\label{point charact}
Let $\Ac$ be a $3$-arrangement. Then $\Ac$ is simplicial if and only if
$$
\mu_2 := \sum_{X \in L_2(\Ac)} (\vert \Ac_X \vert-1) = 2 \vert L_2(\Ac) \vert - 3.
$$
\end{lemma}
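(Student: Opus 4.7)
The plan is to use Euler's formula for cell decompositions of the sphere $S^2$ induced by a rank-$3$ arrangement. Write $n=|\Ac|$ and $t=|L_2(\Ac)|$. First I would dispose of the non-essential case: if $\dim T(\Ac)\geq 1$ then no chamber of $\Ac$ is a simplicial cone in $\RR^3$, and a direct count shows that $\mu_2\neq 2t-3$ in all such cases (for example, if $r(\Ac)=2$ then $t=1$ and $\mu_2=|\Ac|-1\geq 1$, but $2t-3=-1$; the empty and rank-$1$ cases are similar). So I may assume $\Ac$ is essential of rank $3$.

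Consider the cell decomposition of the unit sphere $S^2\subseteq \RR^3$ obtained by intersecting it with the great circles $H\cap S^2$ for $H\in\Ac$. The vertices are exactly the antipodal pairs coming from $L_2(\Ac)$, so $V=2t$. The $2$-faces are in bijection with $\Kc(\Ac)$, so $F=|\Kc(\Ac)|$. Every edge lies on a unique great circle, and on the circle coming from $H\in\Ac$ the number of edges equals the number of vertices, namely $2|\{X\in L_2(\Ac):H\supseteq X\}|$. Summing and using $\sum_{H\in\Ac}|\{X\in L_2(\Ac):H\supseteq X\}|=\sum_{X\in L_2(\Ac)}|\Ac_X|$, I get
\[
E \;=\; 2\sum_{X\in L_2(\Ac)}|\Ac_X| \;=\; 2\bigl(\mu_2+t\bigr).
\]
Euler's formula $V-E+F=2$ on $S^2$ therefore reads $F = 2+E-V = 2+2\mu_2$.

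Now the key inequality: every chamber of a rank-$3$ essential real arrangement is a polyhedral cone with at least three walls, so every $2$-face of the decomposition has at least three boundary edges, and each edge bounds exactly two faces. Hence $3F\leq 2E$, with equality if and only if every face is a triangle, i.e.\ every chamber is a simplicial cone. Substituting the expressions above,
\[
3(2+2\mu_2) \;\leq\; 4(\mu_2+t),
\]
which simplifies to $\mu_2 \leq 2t-3$, again with equality precisely when $\Ac$ is simplicial. Both directions of the claimed characterisation follow at once.

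I do not expect a real obstacle here; the bookkeeping that each edge lies on exactly one great circle and that each vertex-incidence is counted via $|\Ac_X|$ is the only place where one must be careful, and the non-essential case needs to be checked separately so that simpliciality truly forces rank $3$. Everything else is Euler's formula applied to a polygonal decomposition of $S^2$.
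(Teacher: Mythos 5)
Your proof is correct, and since the paper does not prove this lemma itself (it only cites \cite[Cor.~2.7]{MR3391182}), it is worth comparing routes. Your argument is the classical spherical one: project onto $S^2$, count $V=2\vert L_2(\Ac)\vert$, $E=2\sum_X\vert\Ac_X\vert$, $F=\vert\Kc(\Ac)\vert$, apply Euler's formula to get $F=2+2\mu_2$, and then use $3F\le 2E$ with equality exactly when every face is a spherical triangle. The source the paper relies on instead works with the general-rank combinatorial criterion $s(\Ac)=\ell\vert\chi_\Ac(-1)\vert-2\sum_H\vert\chi_{\Ac^H}(-1)\vert\le 0$ (equality iff simplicial), obtained by double-counting chamber--wall incidences via Zaslavsky's theorem; specializing to $\ell=3$, where $\vert\chi_\Ac(-1)\vert=2+2\mu_2$ and $\vert\chi_{\Ac^H}(-1)\vert=2\vert\Ac^H\vert$, reproduces exactly your arithmetic $3(2+2\mu_2)\le 4(\mu_2+t)$. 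So the two proofs are combinatorially the same inequality; yours buys a self-contained elementary derivation in rank $3$ (Euler's formula replaces Zaslavsky), while the cited approach buys uniformity over all ranks. Two small points you rightly flag but should make sure are actually verified: every chamber of an essential $3$-arrangement is a salient cone with at least three walls (a chamber with two walls would contain the line $H_1\cap H_2$, contradicting essentiality), and no point of $L_2(\Ac)$ lies in the relative interior of an edge of a face (a second hyperplane through such a point would cut the adjacent chamber), so that a face with three edges really is a triangle; with these observations the equality case, and hence both directions of the lemma, are complete. Your separate treatment of the non-essential cases is also needed and correct, since a non-essential arrangement has chambers containing lines and so is never simplicial, while $\mu_2\neq 2\vert L_2(\Ac)\vert-3$ there.
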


More generally real simplicial $\ell$-arrangements are characterized by the next combinatorial property.

\begin{lemma}\cite[Cor.~2.4]{MR3391182}\label{charpoly charact}
Let $\Ac$ be an $\ell$-arrangement. Then $\Ac$ is simplicial if and only if
$$
\ell\vert\chi_\Ac(-1)\vert - 2 \sum_{H \in \Ac} \vert \chi_{\Ac^H}(-1) \vert = 0.
$$
\end{lemma}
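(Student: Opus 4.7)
The plan is to translate both sides of the identity into chamber counts via Zaslavsky's theorem, and then to recognise the difference of the two sides as a double count of chamber--wall incidences. The identity will then be equivalent to the well-known geometric characterisation of simplicial chambers as those with the minimum possible number of walls.

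By Zaslavsky's theorem, $|\chi_\Ac(-1)| = |\Kc(\Ac)|$ and $|\chi_{\Ac^H}(-1)| = |\Kc(\Ac^H)|$ for every $H \in \Ac$, so the stated equation is equivalent to
$$
\ell\,|\Kc(\Ac)| \;=\; 2 \sum_{H \in \Ac} |\Kc(\Ac^H)|.
$$
I would then count the incidences
$$
N := \#\{(K,H) \mid K \in \Kc(\Ac),\ H \in W^K\}
$$
in two ways. Summing over chambers gives $N = \sum_{K \in \Kc(\Ac)} |W^K|$. To sum over hyperplanes instead, I use the standard fact that the chambers of the restriction $\Ac^H$ are in bijection with the panels of $\Ac$ lying on $H$ (the codimension-one faces whose relative interior meets no further hyperplane), and that each such panel is the common facet of exactly two adjacent chambers of $\Ac$. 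Hence each $H\in\Ac$ is a wall of exactly $2|\Kc(\Ac^H)|$ chambers, giving $N = 2\sum_{H\in\Ac} |\Kc(\Ac^H)|$.

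The key inequality is now the polyhedral fact, recorded in Remark~\ref{simpl basis}, that every chamber $K$ of an essential $\ell$-arrangement satisfies $|W^K| \geq \ell$, with equality iff its wall normals form a basis of $V^*$, iff $K$ is a simplicial cone. Summing, $\sum_{K} |W^K| \geq \ell\,|\Kc(\Ac)|$, with equality iff every chamber is simplicial, iff $\Ac$ is simplicial; comparing with the two expressions for $N$ yields both implications in the essential case. If $\Ac$ is not essential, every chamber contains the nontrivial lineality space $T(\Ac)$, so $|W^K|$ coincides with the wall count of the corresponding chamber in $\Ac/T(\Ac)$ and is strictly less than $\ell$; then $\ell|\chi_\Ac(-1)| - 2\sum_H |\chi_{\Ac^H}(-1)| > 0$, and $\Ac$ is manifestly not simplicial, so the equivalence is preserved.

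The main obstacle, and essentially the only nonroutine step, is the correspondence between $\Kc(\Ac^H)$ and the panels of $\Ac$ on $H$, together with the fact that each such panel is shared by exactly two chambers. Once this bijection and Zaslavsky's theorem are in place the argument reduces to bookkeeping combined with the minimal-facet characterisation of simplicial cones.
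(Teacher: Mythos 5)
The paper does not actually prove this lemma; it quotes it from \cite[Cor.~2.4]{MR3391182}. That said, your argument for \emph{essential} arrangements is precisely the standard proof behind that citation: Zaslavsky's theorem turns both sides into chamber counts, the double count of chamber--wall incidences (via the bijection between chambers of $\Ac^H$ and the panels of $\Ac$ on $H$, each shared by exactly two chambers) identifies $2\sum_{H}\vert\Kc(\Ac^H)\vert$ with $\sum_{K}\vert W^K\vert$, and the fact that a full-dimensional pointed cone in $\RR^\ell$ has at least $\ell$ facets, with equality exactly for simplicial cones (cf.\ Remark~\ref{simpl basis}), closes the loop. This part is correct and complete, modulo the routine verification that essentiality forces every chamber to be pointed.

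The genuine flaw is in your reduction of the non-essential case. You assert that if $T(\Ac)\neq 0$ then every chamber has $\vert W^K\vert<\ell$, hence $s(\Ac)>0$. This is false once the essentialization $\Ac/T(\Ac)$ has rank $r\geq 3$: the chambers of $\Ac$ have the same wall counts as those of the essential $r$-arrangement $\Ac/T(\Ac)$, and these can greatly exceed $\ell$. Concretely, take $6$ generic hyperplanes through the origin in $\RR^4$ and regard them as a non-essential $5$-arrangement: then $\vert\Kc(\Ac)\vert=52$ while $\sum_K\vert W^K\vert = 2\sum_H\vert\Kc(\Ac^H)\vert = 264$, so $s(\Ac)=5\cdot 52-264=-4<0$, the opposite sign from what you claim. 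Your argument therefore does not exclude $s(\Ac)=0$ for some non-essential arrangement, so the backward implication is not established in that case. (For essentializations of rank $r\leq 3$ one can check that the average wall count is always strictly below $4$, so the conclusion happens to hold there, but not by your reasoning.) The clean repair is to prove the statement for essential arrangements only --- which is all this paper ever uses, and is consistent with its convention of identifying a non-essential arrangement with its essentialization --- or to supply a genuinely different argument for $T(\Ac)\neq 0$.
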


\begin{defin}
Let $\KK$ be any field and $\Ac$ an arrangement in $V=\KK^\ell$.
Define
$$
s(\Ac) := \ell\vert\chi_\Ac(-1)\vert - 2 \sum_{H \in \Ac} \vert \chi_{\Ac^H}(-1) \vert.
$$
If $\Ac$ satisfies $s(\Ac)=0$ then $\Ac$ is called \emph{combinatorially simplicial}, see \cite{MR3391182}.
\end{defin}

Simpliciality, at least geometrically for real arrangements, is compatible with taking localizations and restrictions,
compare with the more general statements in \cite{CMW2017}.
\begin{lemma}\label{loc res simpl}
Let $\Ac$ be a simplicial arrangement over $\RR$ and $X \in L(\Ac)$. Then we have
\begin{enumerate}
\item $(\Ac_X/X,V/X)$ is simplicial,
\item $(\Ac^X,X)$ is simplicial.
\end{enumerate}
\end{lemma}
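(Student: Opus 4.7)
The plan is to handle both parts by choosing a chamber $K \in \Kc(\Ac)$ whose closure meets a generic point of $X$, and then exploiting the simplicial structure of $\overline{K}$ together with a key face-span lemma. The lemma asserts: if $K \in \Kc(\Ac)$ has $x \in \overline{K}$ for a point $x$ in the relative interior of $X$ lying on no hyperplane of $\Ac \setminus \Ac_X$, then the walls of $K$ containing $X$, i.e.\ $B := W^K \cap \Ac_X$, satisfy $|B| = r(X)$ and $\bigcap_{H \in B} H = X$. I would prove it by setting $Y := \bigcap_{H \in B} H \supseteq X$ and observing that the walls of $K$ through $x$ are precisely those in $B$, so the face $F$ of $\overline{K}$ with $x$ in its relative interior equals $\overline{K} \cap Y$; if $Y \supsetneq X$, some $H^* \in \Ac_X$ does not contain $Y$, but $x \in X \subseteq H^*$ combined with $K$ lying strictly on one side of $H^*$ forces $H^* \cap \overline{K}$ to be a face of $\overline{K}$ containing $F$, whose linear span then contains $Y$, giving $H^* \supseteq Y$ --- a contradiction. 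A direct consequence is that the forms $\alpha \in B_R^K$ with $\alpha^\perp \in B$ descend to a basis of $(V/X)^*$, while those with $\alpha^\perp \notin \Ac_X$ restrict to a basis of $X^*$.

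For part (1), fix $C \in \Kc(\Ac_X)$; since $X = T(\Ac_X) \subseteq \overline{C}$, the local chamber structure at a generic $x \in X$ provides a $K \subseteq C$ to which the lemma applies. A direct computation then shows that the Minkowski sum $K + X$ equals $\bigcap_{\alpha \in B_R^K,\, \alpha^\perp \in B} \alpha^+$, because the forms $\alpha \in B_R^K$ with $\alpha^\perp \notin \Ac_X$ can be absorbed by translations in $X$. The $X$-invariance of $C$ gives $K + X \subseteq C$, while $C$ imposes more sign conditions (all of $\Ac_X$) than $K + X$ (only those in $B$), so $C \subseteq K + X$. Hence $C = K + X$, and passing to $V/X$ presents $C/X$ as a simplicial cone with basis coming from $B$.

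For part (2), fix $C \in \Kc(\Ac^X)$ and $v \in C$; then $v$ plays the role of the lemma's generic point, so for any $K \in \Kc(\Ac)$ with $v \in \overline{K}$ the face $F := \overline{K} \cap X$ is a full-dimensional simplicial cone in $X$ with walls $(\alpha^\perp \cap X)$ for $\alpha^\perp \in W^K \setminus \Ac_X$. It remains to identify the relative interior of $F$ with $C$: any hyperplane of $\Ac$ through an interior point of $F$ must contain $\langle F \rangle = X$ (by the same face argument as in the lemma) and hence lies in $\Ac_X$, placing this interior inside a single chamber of $\Ac^X$ which must be $C$ since it contains $v$; the reverse inclusion follows from sign-preservation along a path in $C$ from $v$. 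The main obstacle is the face-span lemma; once it is in hand, the rest is a routine translation between the simplicial structure of $\overline{K}$, the Minkowski sum $K + X$ for part (1), and the face $\overline{K} \cap X$ for part (2).
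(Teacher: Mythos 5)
Your argument is correct and is essentially the paper's proof written out in full: the paper likewise identifies a chamber of $\Ac_X$ with a ``fattened'' chamber $K$ of $\Ac$ (so that the walls of the former are the walls of $K$ lying in $\Ac_X$, whose normals are linearly independent as a subset of the simplicial basis $B^K$), and obtains part (2) from the fact that $\overline{K}\cap X$ is a face of the simplicial cone $\overline{K}$. Your face-span lemma and the Minkowski-sum computation $C=K+X$ supply precisely the justification that the paper leaves implicit in its one-line assertion that the walls of a chamber of $\Ac_X$ form a subset of the walls of some chamber of $\Ac$.
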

\begin{proof}
Let $H_1,\ldots,H_{r(X)}$ be the walls of a chamber $K_X$ in $\Ac_X$. They are a subset of the walls of a chamber $K \in \Kc(\Ac)$.
If $\alpha_1,\ldots,\alpha_{r(X)}$ are corresponding normals of these walls pointing to the inside of $K$ and also $K_X$
then they are linearly independent, hence $K_X/X$ is a simplicial cone by Remark \ref{simpl basis} and $\Ac_X/X$ is simplicial.

Since every face of a simplicial cone is a simplicial cone, Statement (2) follows directly.
\end{proof}

\begin{examp}
Let $\Ac = \Ac(W)$ be the Coxeter arrangement of the finite real reflection group $W$ in $V$ and let $X \in L(\Ac)$.
Then $\Ac_X/X$ is a reflection arrangement, namely the Coxeter arrangement of a parabolic subgroup of $W$.
The arrangement $\Ac_X/X$ is simplicial in accordance with Lemma \ref{loc res simpl}(1).
\end{examp}

In the next example we see that the bigger class of combinatorially simplicial arrangements defined over arbitrary fields
is neither closed under taking localizations nor closed under taking restrictions.
\begin{examp}\label{ex comb simpl loc res}
Let $V=\CC^4$, $\zeta = -\frac{1}{2}(1-\sqrt{3}i)$ be a primitive third root of unity and $(\Ac,V)$ the complex
$4$-arrangement containing $18$ hyperplanes and defined by
$$
\begin{psmallmatrix}
0 & 1 & 1 & 1 & 1 & 1 & 1 & 1 & 1 & 1 & 0 & 0 & 0 & 0 & 0 & 0 & 0 & 0  \\
0 & -\zeta & -\zeta^2 & -1 & 0 & 0 & 0 & 0 & 0 & 0 & 1 & 1 & 1 & 1 & 1 & 1 & 0 & 0  \\
0 & 0 & 0 & 0 & -\zeta & -\zeta^2 & -1 & 0 & 0 & 0 & -\zeta & -\zeta^2 & -1 & 0 & 0 & 0 & 1 & 1  \\
1 & 0 & 0 & 0 & 0 & 0 & 0 & -\zeta & -\zeta^2 & -1 & 0 & 0 & 0 & -\zeta & -\zeta^2 & -1 & -\zeta & -\zeta^2
\end{psmallmatrix}.
$$
Note that $\Ac$ is a subarrangement of the reflection arrangement of the complex reflection group $G(3,1,4)$,
see \cite[Ch.~6.4]{orlik1992arrangements} for a definition of these reflection arrangements.
This is to say if
\begin{eqnarray*}
\Bc  &:= &\Ac(G(3,1,4)) \\
	 &= &\{ \ker(x_i-\zeta^k x_j) \mid 1 \leq i < j \leq 4,\, 0 \leq k \leq 2 \} \\
	 & &\cup \{\ker(x_i) \mid 1 \leq i \leq 4\},
\end{eqnarray*}
then we obtain $\Ac$ by removing $4$ hyperplanes,
$$
\Ac = \Bc \setminus \{\ker(x_1),\ker(x_2),\ker(x_3),\ker(x_3-x_4)\}.
$$
A quick calculation shows that $\Ac$ satisfies $s(\Ac)=0$ so it is combinatorially simplicial.
While for the reflection arrangement $\Bc$ all localizations and restrictions are again combinatorially simplicial,
localizing $\Ac$ at the rank 3 intersection $X = H_1\cap H_2 \cap H_3 \in L(\Ac)$,
where the hyperplane $H_i$ corresponds to the $i$-th column of the defining matrix
above, yields the $3$-arrangement $\Cc = \Ac_X/X$. It contains $10$ hyperplanes and is given by
$$
\begin{pmatrix}
0 & 1 & 1 & 1 & 1 & 1 & 1 & 0 & 0 & 0  \\
0 & -\zeta & -\zeta^2 & -1 & 0 & 0 & 0 & 1 & 1 & 1  \\
1 & 0 & 0 & 0 & -\zeta & -\zeta^2 & -1 & -\zeta & -\zeta^2 & -1
\end{pmatrix}.
$$
For $\Cc$ we have $s(\Cc)  = 4$, so it is not combinatorially simplicial.

Now let $H= H_8 = (1,0,0,-\zeta)^\perp \in \Ac$. Then $\Dc := \Ac^H$
contains $10$ hyperplanes and may be defined by
$$
\begin{pmatrix}
1 & \zeta & 1 & 0 & 0 & 0 & 0 & -1 & \zeta & 1 \\
0 & 0 & 0 & 1 & \zeta & 1 & 0 & \zeta & -1 & -1 \\
0 & 1 & -1 & 0 & 1 & -1 & 1 & 1 & 1 & 0
\end{pmatrix}.
$$
For $\Dc$ we have $s(\Dc)  = 4$, thus it is also not combinatorially simplicial.
\end{examp}

The product construction described above is compatible with simpliciality.

\begin{propo}\label{simpl prod_comb}
Let $\Ac_1$, $\Ac_2$ be combinatorially simplicial arrangements in $\KK^{\ell_1}$ and $\KK^{\ell_2}$ respectively.
Then the product $\Ac = \Ac_1 \times \Ac_2$ is combinatorially simplicial.
\end{propo}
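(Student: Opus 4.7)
The plan is to express $s(\Ac_1 \times \Ac_2)$ in terms of $s(\Ac_1)$ and $s(\Ac_2)$ by using multiplicativity of the characteristic polynomial under products (Lemma \ref{charpoly product}) together with the behavior of restrictions in products (Corollary \ref{res loc product}).

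First I would note that every hyperplane $H \in \Ac_1 \times \Ac_2$ is of exactly one of the two forms $H_1 \oplus V_2$ with $H_1 \in \Ac_1$, or $V_1 \oplus H_2$ with $H_2 \in \Ac_2$. For $H = H_1 \oplus V_2$, Corollary \ref{res loc product} applied with $X_1 = H_1$ and $X_2 = V_2$ gives
\[
\Ac^H = (\Ac_1)^{H_1} \times (\Ac_2)^{V_2} = (\Ac_1)^{H_1} \times \Ac_2,
\]
since $(\Ac_2)_{V_2} = \emptyset$ and every $H' \in \Ac_2$ satisfies $V_2 \cap H' = H'$. Symmetrically $\Ac^{V_1\oplus H_2} = \Ac_1 \times (\Ac_2)^{H_2}$. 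Applying Lemma \ref{charpoly product} then yields
\[
|\chi_{\Ac^H}(-1)| = |\chi_{(\Ac_1)^{H_1}}(-1)| \cdot |\chi_{\Ac_2}(-1)|
\]
in the first case, and symmetrically in the second.

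Next I would substitute into the definition of $s$. With $\ell = \ell_1 + \ell_2$ and $|\chi_{\Ac_1\times\Ac_2}(-1)| = |\chi_{\Ac_1}(-1)| \cdot |\chi_{\Ac_2}(-1)|$, the sum over hyperplanes of $\Ac_1 \times \Ac_2$ splits according to the two types, giving
\begin{align*}
s(\Ac_1\times\Ac_2)
&= (\ell_1+\ell_2)|\chi_{\Ac_1}(-1)|\,|\chi_{\Ac_2}(-1)| \\
&\quad - 2|\chi_{\Ac_2}(-1)| \sum_{H_1 \in \Ac_1} |\chi_{(\Ac_1)^{H_1}}(-1)| \\
&\quad - 2|\chi_{\Ac_1}(-1)| \sum_{H_2 \in \Ac_2} |\chi_{(\Ac_2)^{H_2}}(-1)|.
\end{align*}
Grouping the $\ell_1$ terms with the first sum and the $\ell_2$ terms with the second, this factors as
\[
s(\Ac_1\times\Ac_2) = |\chi_{\Ac_2}(-1)|\cdot s(\Ac_1) + |\chi_{\Ac_1}(-1)|\cdot s(\Ac_2).
\]
Since $s(\Ac_1) = s(\Ac_2) = 0$ by hypothesis, this vanishes, proving combinatorial simpliciality of the product.

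The only delicate point is identifying the restrictions $\Ac^H$ correctly, in particular handling the trivial restriction $(\Ac_2)^{V_2} = \Ac_2$; everything else is a routine bookkeeping step using the two cited results. No estimation or case analysis is required.
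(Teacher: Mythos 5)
Your proposal is correct and follows essentially the same route as the paper's proof: both use Corollary \ref{res loc product} to split the sum over hyperplanes of the product into the two types, apply Lemma \ref{charpoly product}, and factor $s(\Ac_1\times\Ac_2)$ as $\vert\chi_{\Ac_2}(-1)\vert\, s(\Ac_1)+\vert\chi_{\Ac_1}(-1)\vert\, s(\Ac_2)$. Your explicit identification of $\Ac^{H_1\oplus V_2}=(\Ac_1)^{H_1}\times\Ac_2$ is a welcome clarification of a step the paper leaves implicit, but the argument is the same.
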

\begin{proof}
Let $\Ac_1$ and $\Ac_2$ be combinatorially simplicial.
Then we have
$$
s(\Ac_1) = \ell_1\vert\chi_{\Ac_1}(-1)\vert - 2\sum_{H \in \Ac_1} \vert \chi_{\Ac_1^H}(-1) \vert = 0,
$$
and
$$
s(\Ac_2) = \ell_2\vert\chi_{\Ac_2}(-1)\vert - 2\sum_{H \in \Ac_2} \vert \chi_{\Ac_2^H}(-1) \vert = 0.
$$
By Lemma \ref{charpoly product} we have $\chi_\Ac(t) = \chi_{\Ac_1}(t)\chi_{\Ac_2}(t)$. By Corollary \ref{res loc product} we get
\begin{eqnarray*}
s(\Ac)
&= & \ell\vert\chi_\Ac(-1)\vert - 2\sum_{H \in \Ac} \vert \chi_{\Ac^H}(-1) \vert \\
&= &(\ell_1+\ell_2)\vert\chi_{\Ac_1}(-1)\chi_{\Ac_2}(-1)\vert \\
& &-\,  2\sum_{H \in \Ac_1} \vert \chi_{\Ac_2}(-1)\chi_{\Ac_1^H}(-1) \vert
         - 2\sum_{H \in \Ac_2} \vert \chi_{\Ac_1}(-1)\chi_{\Ac_2^H}(-1) \vert \\
&= &\vert \chi_{\Ac_2}(-1) \vert (\ell_1\vert\chi_{\Ac_1}(-1)\vert - 2\sum_{H \in \Ac_1} \vert \chi_{\Ac_1^H}(-1) \vert) \\
& &+\,  \vert \chi_{\Ac_1}(-1) \vert (\ell_2\vert\chi_{\Ac_2}(-1)\vert - 2\sum_{H \in \Ac_2} \vert \chi_{\Ac_2^H}(-1) \vert) \\
&= &\vert \chi_{\Ac_2}(-1) \vert s(\Ac_1) + \vert \chi_{\Ac_1}(-1) \vert s(\Ac_2) = 0.
\end{eqnarray*}
Hence $\Ac$ is combinatorially simplicial.
\end{proof}

\begin{propo}\label{simpl prod}
Let $(\Ac_1,V_1)$ and $(\Ac_2,V_2)$ be two real arrangements.
Then the product $(\Ac_1\times \Ac_2,V)$ with $V=V_1\oplus V_2$
is simplicial if and only if $\Ac_1$ and $\Ac_2$ are both simplicial.
\end{propo}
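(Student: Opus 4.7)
The natural plan is to exploit the characterization of simpliciality in Remark \ref{simpl basis}, namely that a real arrangement is simplicial if and only if for every chamber $K$ the set $B^K$ of wall normals is a basis of the ambient dual space. So the strategy is to analyze the chambers of $\Ac_1 \times \Ac_2$ in terms of the chambers of the factors, and then reduce the "simplicial cone" condition to a statement about linear independence that splits along the direct sum decomposition.

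First I would verify that the chambers of the product are exactly the direct sums of chambers of the factors. Concretely, since
\[
V \setminus \bigcup_{H \in \Ac_1 \times \Ac_2} H \;=\; \bigl(V_1 \setminus \bigcup_{H_1 \in \Ac_1} H_1\bigr) \oplus \bigl(V_2 \setminus \bigcup_{H_2 \in \Ac_2} H_2\bigr),
\]
the set $\Kc(\Ac_1 \times \Ac_2)$ is naturally identified with $\Kc(\Ac_1) \times \Kc(\Ac_2)$ via $(K_1,K_2) \mapsto K_1 \oplus K_2$. Under this identification the walls of a chamber $K = K_1 \oplus K_2$ are
\[
W^K = \{H_1 \oplus V_2 \mid H_1 \in W^{K_1}\} \cup \{V_1 \oplus H_2 \mid H_2 \in W^{K_2}\},
\]
and if $B^{K_1} = \{\alpha_1,\dots,\alpha_{r_1}\} \subseteq V_1^*$ and $B^{K_2} = \{\beta_1,\dots,\beta_{r_2}\} \subseteq V_2^*$ are the corresponding wall normal sets, then under the canonical injections $V_i^* \hookrightarrow V^*$ the set of wall normals for $K$ is $B^K = B^{K_1} \sqcup B^{K_2}$.

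The key observation is the elementary linear algebra fact that, because the supports of the $\alpha_i$ lie in $V_1^*$ and those of the $\beta_j$ lie in $V_2^*$, the concatenated family $B^K = B^{K_1} \sqcup B^{K_2}$ is linearly independent in $V^*$ if and only if $B^{K_1}$ is linearly independent in $V_1^*$ and $B^{K_2}$ is linearly independent in $V_2^*$. Moreover $|B^K| = \dim V$ if and only if $|B^{K_i}| = \dim V_i$ for $i = 1,2$. Hence $B^K$ is a basis of $V^*$ if and only if $B^{K_i}$ is a basis of $V_i^*$ for each $i$.

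Applying Remark \ref{simpl basis}, $\Ac_1 \times \Ac_2$ is simplicial precisely when $B^{K_1 \oplus K_2}$ is a basis of $V^*$ for every pair $(K_1,K_2) \in \Kc(\Ac_1) \times \Kc(\Ac_2)$, which by the above equivalence is the case precisely when $B^{K_i}$ is a basis of $V_i^*$ for every chamber $K_i \in \Kc(\Ac_i)$ and each $i \in \{1,2\}$, i.e.\ precisely when both $\Ac_1$ and $\Ac_2$ are simplicial. No serious obstacle is anticipated; the only point to treat carefully is the identification of chambers and walls of the product with those of the factors, which is a direct unraveling of the definition of $\Ac_1 \times \Ac_2$.
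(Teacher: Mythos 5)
Your proof is correct, but it takes a genuinely different route from the paper's. You argue directly at the level of chambers: the complement of $\bigcup_{H\in\Ac_1\times\Ac_2}H$ is the product of the complements, so $\Kc(\Ac_1\times\Ac_2)$ is identified with $\Kc(\Ac_1)\times\Kc(\Ac_2)$, the wall normals of $K_1\oplus K_2$ are the disjoint union $B^{K_1}\sqcup B^{K_2}$ inside $V^*=V_1^*\oplus V_2^*$, and such a union is a basis of $V^*$ iff each part is a basis of the corresponding $V_i^*$; Remark \ref{simpl basis} then gives both implications at once. The paper instead proves the forward direction by passing through the combinatorial invariant $s(\Ac)$ and the factorization of the characteristic polynomial (Proposition \ref{simpl prod_comb}), and the converse by identifying each $\Ac_i$ with the localization $\Ac_{X_i}/X_i$ at $X_i=\{0\}\oplus V_{3-i}$ and invoking Lemma \ref{loc res simpl}. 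Your argument is more elementary and self-contained, needing neither the characteristic polynomial nor localizations; the paper's detour buys the stronger field-independent statement that combinatorial simpliciality is preserved under products, which it needs separately (e.g.\ for Example \ref{ex comb simpl prod}). One small imprecision in your write-up: the claim that $|B^K|=\dim V$ iff $|B^{K_i}|=\dim V_i$ for $i=1,2$ is not true in isolation, only in conjunction with the linear independence of each part (which bounds $|B^{K_i}|\leq\dim V_i$); since you only use the combined equivalence ``$B^K$ basis iff both $B^{K_i}$ bases,'' the argument stands.
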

\begin{proof}
If $\Ac_1$ and $\Ac_2$ are simplicial, then $\Ac = \Ac_1 \times \Ac_2$ is simplicial by Proposition \ref{simpl prod_comb}.

Conversely, let $\Ac = \Ac_1 \times \Ac_2$ be simplicial.
Then $\Ac_i$ is isomorphic to $\Ac_{X_i} / X_i$ for $i =1,2$ as $r(X_i)$-arrangements in $V/X_i$
where $X_i = {\{0\} \oplus V_{3-i}}$. But these localizations regarded as essential arrangements in quotient spaces are
simplicial by Lemma \ref{loc res simpl}.
\end{proof}

Combinatorial simpliciality of $\Ac_1 \times \Ac_2$ does not imply combinatorial simpliciality of $\Ac_1$ and $\Ac_2$ in general:

\begin{examp}\label{ex comb simpl prod}
Let $\zeta$, $\Ac$ and $\Dc$ be as in Example \ref{ex comb simpl loc res}. Let $\Ac_1 = \Dc$ and $\Ac_2 = \Ac^H$
where $H = H_5 = (1,0,-\zeta,0)^\perp$ as in Example \ref{ex comb simpl loc res}.
Define $\omega := \frac{1}{3}(1-\zeta)$. Then $\Ac_2$ is given by
$$
\begin{pmatrix}
1 & 0 & \omega & \omega & \omega & \omega & \omega & \omega & 0 & 0 \\
0 & 0 & 1 & \zeta & \zeta^2 & 0 & 0 & 0 & \zeta & 1 \\
0 & 1 & 0 & 0 & 0 & -\zeta & -\zeta^2 & -1 & 1 & 1
\end{pmatrix}.
$$
Recall that for the non combinatorially simplicial arrangement $\Ac_1$ we have $s(\Ac_1) = 4$.
Furthermore $\chi_{\Ac_1}(t) = (t-1)(t-4)(t-5) = \chi_{\Ac_2}(t)$, and $s(\Ac_2)  = -4$.
So $\Ac_2$ is also not combinatorially simplicial.
But, similar to the proof of Proposition \ref{simpl prod}, for $\Ac_1\times\Ac_2$ we have
\begin{eqnarray*}
 s(\Ac_1\times\Ac_2) &=
 	&\vert \chi_{\Ac_2}(-1) \vert s(\Ac_1) +\,  \vert \chi_{\Ac_1}(-1) \vert s(\Ac_2) \\
&= &\vert \chi_{\Ac_1}(-1) \vert s(\Ac_1) +\,  \vert \chi_{\Ac_1}(-1) \vert s(\Ac_2) \\
&= &\vert \chi_{\Ac_1}(-1) \vert (s(\Ac_1) + s(\Ac_2)) \\
&= &\vert \chi_{\Ac_1}(-1) \vert (4-4) \\
&=  &0.
\end{eqnarray*}
So the product $\Ac_1 \times \Ac_2$ is combinatorially simplicial.
\end{examp}

The following is true for all real simplicial arrangements, cf.\ \cite[Lem.~3.29]{CMW2017}.
\begin{lemma}\label{Lem_RootsAdjK}
Let $(\Ac,V)$ be a real simplicial arrangement, $K \in \Kc(\Ac)$ a chamber with basis $B^K = \{ \alpha_1,\ldots,\alpha_\ell \}$.
Then for $\beta \in V^*$ with $\beta^\perp \in W^{K_i}$ and $K_i \subseteq \beta^+$ we have
\begin{enumerate}
\item $\beta \in \RR_{<0}\alpha_i$ if $\beta^\perp = \alpha_i^\perp$, or
\item $\beta \in \sum_{k=1}^\ell \RR_{\geq 0}\alpha_k$ if $\beta^\perp \in W^{K_i} \setminus \{\alpha_i^\perp\}$.
\end{enumerate}
\end{lemma}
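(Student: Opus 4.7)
The plan is to split into the two cases. Case (1) is immediate from signs: $\beta^\perp = \alpha_i^\perp$ forces $\beta = c\alpha_i$ for some nonzero $c$, and since $K_i$ lies across $\alpha_i^\perp$ from $K$ we have $K_i \subseteq \alpha_i^-$, so $K_i \subseteq \beta^+$ forces $c<0$.

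For case (2), I would work with the basis $e_1,\ldots,e_\ell$ of $V$ dual to $\alpha_1,\ldots,\alpha_\ell$, so that if $\beta = \sum_{k=1}^\ell c_k\alpha_k$ then $c_k = \beta(e_k)$, and the task becomes showing $c_k\geq 0$ for every $k$. For $k\neq i$ the key input is the identification of the facet $F:=\overline{K}\cap\alpha_i^\perp$ of $\overline{K}$ with the shared face $\overline{K}\cap\overline{K_i}$. This uses the standard observation that no hyperplane of $\Ac$ other than $\alpha_i^\perp$ meets the relative interior of $F$, since otherwise $K$ would be further subdivided; therefore near any such point only $\alpha_i^\perp$ separates $K$ from $K_i$, and a small translate in the $-e_i$ direction lands in $K_i$. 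Consequently $F\subseteq\overline{K_i}\subseteq\overline{\beta^+}$, and since $e_k\in F$ whenever $k\neq i$ we get $c_k\geq 0$.

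To control $c_i$ I would argue globally using $\beta^\perp$ itself. Since $\beta^\perp$ is a wall of $K_i$ it belongs to $\Ac$, and so it cannot meet the chamber $K$; by connectedness either $K\subseteq\beta^+$ or $K\subseteq\beta^-$. The second alternative would force $\beta\leq 0$ on all of $\overline{K}$, which combined with the previous step would give $c_k=0$ for all $k\neq i$, whence $\beta\in\RR\alpha_i$ and $\beta^\perp=\alpha_i^\perp$, contradicting the case assumption. Hence $K\subseteq\beta^+$, and since $e_i\in\overline{K}$ we conclude $c_i\geq 0$. The main technical point is the identification $F=\overline{K}\cap\overline{K_i}$, which rests on the local uniqueness-of-hyperplane observation at relative interior points of a facet; once this is in hand the rest of the argument is elementary linear algebra in the basis dual to $\{\alpha_1,\ldots,\alpha_\ell\}$.
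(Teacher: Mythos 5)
Your argument is correct. Note that the paper does not actually prove this lemma; it is quoted with a reference to \cite[Lem.~3.29]{CMW2017}, so there is no in-text proof to compare against. Your case (1) is the immediate sign computation one expects. For case (2), the two things that need care are exactly the two you isolate: (i) that the closed facet $F=\overline{K}\cap\alpha_i^\perp=\sum_{k\neq i}\RR_{\geq 0}e_k$ is contained in $\overline{K_i}$, which you justify correctly by observing that no hyperplane of $\Ac$ other than $\alpha_i^\perp$ can pass through a relative interior point of $F$ (otherwise that point would not be in the relative interior of $\overline{K}\cap\alpha_i^\perp$), so that locally only $\alpha_i^\perp$ separates $K$ from the adjacent chamber; this gives $c_k=\beta(e_k)\geq 0$ for $k\neq i$; and (ii) the sign of $c_i$, where your global dichotomy ($\beta^\perp\in\Ac$ misses the chamber $K$, so $K\subseteq\beta^+$ or $K\subseteq\beta^-$, and the latter collapses $\beta$ onto $\RR\alpha_i$, contradicting $\beta^\perp\neq\alpha_i^\perp$) is clean and complete. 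This is a self-contained elementary proof of a statement the authors chose to import, and I see no gap.
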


\begin{lemma}\label{cij unique}
Let $\Ac$ be a real simplicial $\ell$-arrangement and $K \in \Kc(\Ac)$ with basis
$B^K = \{\alpha_1,\ldots,\alpha_\ell \}$.
Then for $1\leq i, j \leq \ell$
there are $c^K_{ij} \in \RR$
such that
$$
\{\beta_j^i = \alpha_j - c^K_{ij}\alpha_i \mid j=1,\ldots,\ell\}
$$
is a basis for $K_i$.
If $i\neq j$ then $c^K_{ij}\leq 0$ and $c^K_{ij}$ is
uniquely determined by $B^K$.
If $i=j$ then $c^K_{ij} > 1$.
\end{lemma}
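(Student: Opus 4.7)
The plan is to identify all walls of the adjacent chamber $K_i$ explicitly in terms of the basis $B^K$, and then read off the coefficients $c_{ij}^K$ directly.

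First I would pin down the walls of $K_i$. Since $K$ and $K_i$ are adjacent simplicial chambers sharing the wall $\alpha_i^\perp$, their common facet $\overline{K}\cap\overline{K_i}$ lies in $\alpha_i^\perp$, and its codimension-two subfaces are exactly the cones $\alpha_i^\perp\cap\alpha_j^\perp\cap\overline{K}$ for $j\neq i$. Simpliciality of $K_i$ guarantees that each such codimension-two face is contained in precisely two walls of $K_i$, one of which is $\alpha_i^\perp$; denote the other by $H_{ij}$. This produces the complete list $W^{K_i}=\{\alpha_i^\perp\}\cup\{H_{ij}\mid j\neq i\}$ of $\ell$ walls of $K_i$.

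Next I would determine the inward normals. For $j\neq i$, any linear form defining $H_{ij}$ must vanish on $\alpha_i^\perp\cap\alpha_j^\perp$, hence lies in $\RR\alpha_i+\RR\alpha_j$; since $H_{ij}\neq\alpha_i^\perp$ the coefficient of $\alpha_j$ is nonzero and can be normalized to $1$, yielding a unique expression $\beta_j^i=\alpha_j-c_{ij}^K\alpha_i$. Applying Lemma \ref{Lem_RootsAdjK}~(2) to the inward normal of $H_{ij}$ with respect to $K_i$ forces this normal to lie in $\sum_k\RR_{\geq 0}\alpha_k$; comparing coefficients in the basis $B^K$ gives $-c_{ij}^K\geq 0$, i.e.\ $c_{ij}^K\leq 0$, and simultaneously shows that $\beta_j^i$ itself is the inward normal (so no further sign flip is needed). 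For $j=i$, Lemma \ref{Lem_RootsAdjK}~(1) requires the inward normal of $\alpha_i^\perp$ for $K_i$ to be a strictly negative multiple of $\alpha_i$; writing it as $(1-c_{ii}^K)\alpha_i$ translates this to $c_{ii}^K>1$ (note that the normalization here is not canonical, which is why the lemma does not claim uniqueness of $c_{ii}^K$).

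Finally I would check that $\{\beta_j^i\mid j=1,\ldots,\ell\}$ is a basis of $V^*$: its transition matrix from $B^K$ is triangular with diagonal entries $1,\ldots,1,\,1-c_{ii}^K,\,1,\ldots,1$, hence invertible, and by the above it consists of the inward normals of the walls $W^{K_i}$, so it is a basis for $K_i$. Uniqueness of $c_{ij}^K$ for $i\neq j$ follows because $H_{ij}$ depends only on $B^K$ and~$i$, and the normalization of $\beta_j^i$ to have coefficient $1$ at $\alpha_j$ is unique. The main obstacle is the first step, i.e.\ the combinatorial identification of the walls of $K_i$ through the codimension-two faces of the shared facet; here one genuinely needs both $K$ and $K_i$ to be simplicial, since otherwise the face poset of $K_i$ along $\alpha_i^\perp$ need not be controlled by that of $K$. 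Once this is in place the rest is essentially bookkeeping using Lemma \ref{Lem_RootsAdjK}.
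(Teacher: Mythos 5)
Your proposal is correct and follows essentially the same route as the paper: both identify the walls of $K_i$ other than $\alpha_i^\perp$ through the codimension-two intersections $\alpha_i^\perp\cap\alpha_j^\perp$, and both invoke Lemma \ref{Lem_RootsAdjK} to obtain $c^K_{ij}\le 0$ for $i\ne j$ and $c^K_{ii}>1$ after normalizing the $\alpha_j$-coefficient to $1$. The only cosmetic slip is calling the transition matrix ``triangular''; it is the identity perturbed in the $i$-th row (triangular only after moving index $i$ to the end), but its determinant is still $1-c^K_{ii}\neq 0$, so nothing is affected.
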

\begin{proof}
Let $\beta \in V^*$ such that $\beta^\perp \in W^{K_i}$ and $K_i \subseteq \beta^+$.
Suppose that $\beta^\perp \neq \alpha_i^\perp$. Then $\beta^\perp \in \Ac_{\alpha_i^\perp \cap \alpha_j^\perp}$ for some $1\leq j \leq \ell$, $j \neq i$
and by Lemma \ref{Lem_RootsAdjK} there are $a_i,a_j \in \RR_{\geq 0}$ such that $\beta = a_i\alpha_i + a_j\alpha_j$.
Since $B^K$ is a basis for $V^*$, and $\beta \notin \langle \alpha_i \rangle$ we further have $a_j >0$.
Setting $c^K_{ij} := -\frac{a_i}{a_j}$ and $\beta_j^i = \alpha_j - c^K_{ij}\alpha_i$ we have ${\beta_j^i}^\perp = \beta^\perp$
and ${\beta_j^i}^+ = \beta^+$.
Since $B^K$ is a basis for $V^*$, for $i\neq j$ the $c^K_{ij}$ are uniquely determined.
Hence again by Lemma \ref{Lem_RootsAdjK} for some $c^K_{ii} > 1$ we have that
$\{\beta_j^i = \alpha_j - c^K_{ij}\alpha_i \mid j=1,\ldots,\ell\}$ is a basis for $K_i$.
\end{proof}

\begin{defin}
Let $\Ac$ be a real simplicial $\ell$-arrangement, $K \in \Kc(\Ac)$, and
$B^K = \{\alpha_1,\ldots,\alpha_\ell \}$ a basis for $K$.
For $i\neq j$ let $c^K_{ij}$ be the uniquely determined coefficients
from Lemma \ref{cij unique}.
For $1\leq i\leq\ell$ we set $c^K_{ii} = 2$
and define the linear map
$\sigma^K_{\alpha_i} := \sigma^K_i$ by
$$
\sigma^K_i(\alpha_j) := \alpha_j - c^K_{ij}\alpha_i
$$
for $1 \leq j \leq \ell$.
With respect to the basis $B^K$ this map is represented by the matrix
$$
S_i^K :=
\begin{pmatrix}
1 & 0  &   &  & &  &  \\
0 & 1 &   & & &  &  \\
& & \ddots &  & & & \\
-c^K_{i1} & \cdots & -c^K_{i(i-1)} & -1 & -c^K_{i(i+1)} & \cdots & -c^K_{i\ell} \\
 &  &  &  & \ddots  &  &  \\
 &  &  &  &  & 1 & 0 \\
 &  &  &  &  & 0 & 1 \\
\end{pmatrix}.
$$
\end{defin}

\begin{remar}\label{Rem_simpl_refl}
We observe that $\sigma^K_i$ is a reflection at the hyperplane $\alpha_i^\perp$.
In particular $\det(S_i^K) = -1$.
Furthermore, $c^K_{ij} \neq 0$ if and only if $c^K_{ji} \neq 0$ 
since $c^K_{ij}=0$ if and only if $|\Ac_{\alpha_i^\perp \cap \alpha_j^\perp}|=2$
if and only $c^K_{ji}=0$ by Lemma \ref{cij unique} (cf.\ \cite{MR2820159}).
\end{remar}

\begin{defin}
Let $\Ac$ be a real arrangement with chambers $\Kc(\Ac)$.
A sequence $(K^0,K^1,\ldots,$ $K^{n-1},K^n)$ of distinct chambers in $\Kc(\Ac)$ is called a \emph{gallery} if
for all $1 \leq i \leq n$ we have
$\langle\overline{K^{i-1}} \cap \overline{K^i} \rangle  = H \in \Ac$,
i.e.\ if $K^i$ and $K^{i-1}$ are adjacent
with common wall $H$.
We denote by $\Gc(\Ac)$ the set of all galleries of $\Ac$.

We say that $G \in \Gc(\Ac)$ has length $n$ if it is a sequence of $n+1$ chambers.
For $G = (K^0,\ldots,K^n) \in \Gc(\Ac)$ we denote by $b(G) = K^0$ the first chamber and by $e(G) = K^n$ the last chamber in $G$.
\end{defin}

\begin{defin}
Let $\Ac$ be a real simplicial $\ell$-arrangement.
We fix a chamber $K^0 \in \Kc(\Ac)$.
Let $\Gc(K^0,\Ac) = \{ G \in \Gc(\Ac) \mid b(G) = K^0 \}$ be the set of galleries starting with $K^0$.

Let $B^{K^0} = \{ \alpha_1^0,\ldots,\alpha_\ell^0\}$ be a basis for $K^0$.
For $(K^0,\ldots,K^n) = G \in \Gc(K^0,\Ac)$ we denote by $B^{K^n}_G = B_G$ the basis for $K^n$ induced by $G$ and $B^{K^0}$, i.e.\ such that
$$
B^{K^{i+1}} = \sigma^{K^i}_{\mu_i}(B^{K^i}) = \{\alpha_j^{i+1} = \sigma^{K^i}_{\mu_i}(\alpha^i_j) = \alpha_j^i - c^{K^i}_{\mu_ij}\alpha_{\mu_i}^i \mid 1 \leq j \leq \ell\},
$$
where $K^{i+1} = K^i_{\mu_i}$, $\mu_i \in \{1,\ldots,\ell\}$, and $0 \leq i \leq n-1$.
\end{defin}

\begin{defin}
Let $\Ac$ be a real simplicial $\ell$-arrangement, $K \in \Kc(\Ac)$.
We call a basis $B^K = \{\alpha_1,\ldots,\alpha_\ell\}$ \emph{locally crystallographic}  if
the $c^K_{ij}$ are all integers.

If $B^K$ is a locally crystallographic basis
then we call the matrix $C^K = (c^K_ {ij})_{i,j=1,\ldots,\ell}$ the Cartan matrix of $B^K$.
\end{defin}

\begin{examp}\label{ex:Akl_cryst}
Let $\Ac = \Ac_\ell^k$. Then $\Ac$ is crystallographic with crystallographic root system $R$.
In particular for $K \in \Kc(\Ac)$ the basis $B^K_R$ is a locally crystallographic basis for $K$ and the
corresponding Cartan matrix is (up to simultaneous permutation of columns and rows) one of the matrices displayed in Table \ref{tab:CartanCoxeter}, see \cite[Prop.~3.8]{MR3341467}.
\begin{table}
\begin{tabular}{c|c}
$C^K$ & $\Gamma(K)$ \\
\hline
\\
$
A_\ell:
\begin{pmatrix}
2 & -1 & 0 & \cdots & 0 \\
-1 & 2 & -1 & \cdots & 0 \\
0 & -1 & 2 & \cdots & 0 \\
\vdots& \vdots & \ddots & \ddots & \vdots \\
0 & 0 & \cdots & 2 & -1 \\
0 & 0 & \cdots & -1 & 2
\end{pmatrix}
$
&
\parbox[c]{4.5cm}{
\begin{tikzpicture}
\filldraw [black]
	(0,0) circle [radius=2pt]
	(1,0) circle [radius=2pt]
	(2,0) circle [radius=2pt]
	(3,0) circle [radius=2pt]
	(4,0) circle [radius=2pt];
\draw
	(0,0) -- (1,0) -- (2,0) -- (2.2,0)
	(2.8,0) -- (3,0) -- (4,0);
\node at (2.5,0)  {$\ldots$};

\node[below] at (0,0)  {$1$};
\node[below] at (1,0)  {$2$};
\node[below] at (2,0)  {$3$};
\node[below] at (3,0)  {$\ell-1$};
\node[below] at (4,0)  {$\ell$};
\end{tikzpicture}\hspace{1cm}
}
\\
\\
$
C_\ell:
\begin{pmatrix}
2 & -1 & 0 & 0 & \cdots & 0 \\
-2 & 2 & -1 & 0 & \cdots & 0 \\
0 & -1 & 2 & -1 & \cdots & 0 \\
\vdots& \vdots & \ddots & \ddots & \ddots & \vdots \\
0 & 0 & \cdots & -1 & 2 & -1 \\
0 & 0 & \cdots & 0 & -1 & 2
\end{pmatrix}
$
&
\parbox[c]{4.5cm}{
\begin{tikzpicture}
\filldraw [black]
	(0,0) circle [radius=2pt]
	(1,0) circle [radius=2pt]
	(2,0) circle [radius=2pt]
	(3,0) circle [radius=2pt]
	(4,0) circle [radius=2pt];
\draw
	(0,0) -- (1,0) -- (2,0) -- (2.2,0)
	(2.8,0) -- (3,0) -- (4,0);
\node at (2.5,0)  {$\ldots$};
\node[above] at (0.5,0)  {$4$};

\node[below] at (0,0)  {$1$};
\node[below] at (1,0)  {$2$};
\node[below] at (2,0)  {$3$};
\node[below] at (3,0)  {$\ell-1$};
\node[below] at (4,0)  {$\ell$};
\end{tikzpicture}
}
\\
\\
$
D_\ell:
\begin{pmatrix}
2 & 0 & -1 & 0 & 0 & \cdots & 0 \\
0 & 2 & -1 & 0 & 0 &\cdots & 0 \\
-1 & -1 & 2 & -1 & 0 &\cdots & 0 \\
0 & 0 & -1 & 2 & -1 & \cdots & 0 \\
\vdots& \vdots & \vdots &\ddots & \ddots & \ddots & \vdots \\
0 & 0 & 0 & \cdots & -1 & 2 & -1 \\
0 & 0 & 0 & \cdots & 0 & -1 & 2
\end{pmatrix}
$
&
\parbox[c]{4.5cm}{
\begin{tikzpicture}
\filldraw [black]
	(0,0.5) circle [radius=2pt]
	(0,-0.5) circle [radius=2pt]
	(1,0) circle [radius=2pt]
	(2,0) circle [radius=2pt]
	(3,0) circle [radius=2pt]
	(4,0) circle [radius=2pt];
\draw
	(0,-0.5) -- (1,0) -- (0,0.5)
	(1,0) -- (2,0) -- (2.2,0)
	(2.8,0) -- (3,0) -- (4,0);
\node at (2.5,0)  {$\ldots$};

\node[below] at (0,-0.5)  {$1$};
\node[above] at (0,0.5)  {$2$};
\node[below] at (1,0)  {$3$};
\node[below] at (2,0)  {$4$};

\node[below] at (3,0)  {$\ell-1$};
\node[below] at (4,0)  {$\ell$};
\end{tikzpicture}
}
\\
\\
$
D'_\ell:
\begin{pmatrix}
2 & -1 & -1 & 0 & 0 & \cdots & 0 \\
-1 & 2 & -1 & 0 & 0 &\cdots & 0 \\
-1 & -1 & 2 & -1 & 0 &\cdots & 0 \\
0 & 0 & -1 & 2 & -1 & \cdots & 0 \\
\vdots& \vdots & \vdots &\ddots & \ddots & \ddots & \vdots \\
0 & 0 & 0 & \cdots & -1 & 2 & -1 \\
0 & 0 & 0 & \cdots & 0 & -1 & 2
\end{pmatrix}
$
&
\parbox[c]{4.5cm}{
\begin{tikzpicture}
\filldraw [black]
	(0,0.5) circle [radius=2pt]
	(0,-0.5) circle [radius=2pt]
	(1,0) circle [radius=2pt]
	(2,0) circle [radius=2pt]
	(3,0) circle [radius=2pt]
	(4,0) circle [radius=2pt];
\draw
	(0,0.5) -- (0,-0.5) -- (1,0) -- (0,0.5)
	(1,0) -- (2,0) -- (2.2,0)
	(2.8,0) -- (3,0) -- (4,0);
\node at (2.5,0)  {$\ldots$};

\node[below] at (0,-0.5)  {$1$};
\node[above] at (0,0.5)  {$2$};
\node[below] at (1,0)  {$3$};
\node[below] at (2,0)  {$4$};

\node[below] at (3,0)  {$\ell-1$};
\node[below] at (4,0)  {$\ell$};
\end{tikzpicture}
}
\\
\end{tabular}
\vspace{0.2cm}
\caption{Cartan matrices and Coxeter graphs.}
\label{tab:CartanCoxeter}
\end{table}
\end{examp}

\begin{defin}
Let $B^K$ be a locally crystallographic basis with Cartan matrix $C^K$.
If (up to simultaneous permutation of columns and rows) $C^K$ is one of the matrices shown in the left column of Table \ref{tab:CartanCoxeter}
then we say $C^K$ is of type $A,C,D$, or $D'$ respectively.
\end{defin}

If $B^K$ is a locally crystallographic basis with Cartan matrix of type $A,C,D$, or $D'$ then the corresponding Coxeter graph
$\Gamma(K)$ (see Section \ref{sec:Coxeter graphs}) is displayed in the right column of Table \ref{tab:CartanCoxeter}.

\begin{lemma}\label{cij=cij'}
Let $\Ac$ be a simplicial $\ell$-arrangement, $K \in \Kc(\Ac)$ with basis $B^K = \{\alpha_1,\ldots,\alpha_\ell\}$,
and $K_i$ an adjacent chamber.
Then for all $1 \leq j \leq \ell$ we have $c^{K_i}_{ij} = c^K_{ij}$ and in particular
$\sigma^K_i \circ \sigma^{K_i}_i = \sigma^{K_i}_i \circ \sigma^K_i = \id$.
\end{lemma}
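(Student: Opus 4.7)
The plan is to apply Lemma~\ref{cij unique} twice. The first application, starting from $B^K = \{\alpha_1,\ldots,\alpha_\ell\}$, yields the basis $B^{K_i} = \{\beta^i_j = \alpha_j - c^K_{ij}\alpha_i \mid j=1,\ldots,\ell\}$ of $K_i$, with $\beta^i_i = -\alpha_i$ (from $c^K_{ii}=2$). Since $K$ and $K_i$ share the wall $\alpha_i^\perp = (\beta^i_i)^\perp$, crossing back from $K_i$ across this wall returns to $K$, so $(K_i)_i = K$. Applying Lemma~\ref{cij unique} a second time to $B^{K_i}$ at index $i$ then produces a basis $\{\gamma^i_j = \beta^i_j - c^{K_i}_{ij}\beta^i_i \mid j=1,\ldots,\ell\}$ of $K$. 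Direct computation gives
$$
\gamma^i_i = -\beta^i_i = \alpha_i, \qquad \gamma^i_j = \alpha_j + (c^{K_i}_{ij} - c^K_{ij})\alpha_i \text{ for } j \neq i.
$$

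The crux is to identify $\gamma^i_j$ with $\alpha_j$, which is where the main obstacle lies: a priori the basis $\{\gamma^i_j\}$ of $K$ produced by the double application could differ from $B^K$ by a permutation and positive rescalings of its entries. By Remark~\ref{simpl basis} together with the definition of a basis for $K$ as a collection of inward-pointing normals to its walls, every basis of the simplicial cone $K$ arises from $B^K$ by exactly such a permutation and rescaling. Since $\gamma^i_i = \alpha_i$ already matches $\alpha_i$, each remaining $\gamma^i_j$ (for $j \neq i$) must be a positive multiple of some $\alpha_k$ with $k \neq i$. On the other hand $\gamma^i_j \in \langle \alpha_i, \alpha_j\rangle$, so $\alpha_k \in \langle\alpha_i,\alpha_j\rangle$, and linear independence of $B^K$ forces $k = j$. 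Reading off the $\alpha_j$-coefficient (which equals $1$) then pins down $\gamma^i_j = \alpha_j$, yielding $c^{K_i}_{ij} = c^K_{ij}$; for $j = i$ the equality $c^{K_i}_{ii} = c^K_{ii} = 2$ is immediate from the convention.

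The reflection identity then follows by evaluating both compositions on the two bases. From $\sigma^{K_i}_i \circ \sigma^K_i(\alpha_j) = \sigma^{K_i}_i(\beta^i_j) = \gamma^i_j = \alpha_j$ for all $j$, I obtain $\sigma^{K_i}_i \circ \sigma^K_i = \id$ on $B^K$, hence on $V^*$; symmetrically, from $\sigma^K_i \circ \sigma^{K_i}_i(\beta^i_j) = \sigma^K_i(\gamma^i_j) = \sigma^K_i(\alpha_j) = \beta^i_j$ I obtain $\sigma^K_i \circ \sigma^{K_i}_i = \id$ on $B^{K_i}$, hence on $V^*$.
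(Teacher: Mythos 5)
Your argument is correct and follows essentially the same route as the paper: apply the reflection construction of Lemma \ref{cij unique} twice, use that crossing $\alpha_i^\perp$ back from $K_i$ returns to $K$, and compare coefficients via the linear independence of $B^K$ to force $c^{K_i}_{ij}=c^K_{ij}$. The paper is merely terser in identifying $\sigma^{K_i}_i(\beta^i_j)^\perp$ with $\alpha_j^\perp$ directly, whereas you justify the identification through the uniqueness of $B^K$ up to permutation and positive rescaling; the substance is the same.
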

\begin{proof}
We have ${\sigma^K_i(\alpha_j)}^\perp = {\beta^i_j}^\perp = (\alpha_j - c^K_{ij}\alpha_i)^\perp \in W^{K_i}$
but similarly $\sigma^{K_i}_i(\beta^i_j)^\perp = \alpha_j^\perp = (\beta^i_j - c^{K_i}_{ij}\beta^i_i)^\perp
= (\alpha_j - c^K_{ij}\alpha_i - c^{K_i}_{ij}(-\alpha_i))^\perp \in W^K$.
Thus $c^K_{ij} = c^{K_i}_{ij}$.
\end{proof}

Similarly to the crystallographic case we have the following.
\begin{lemma}[{cf.\ \cite[Lem.~4.5]{MR2498801}}]\label{cij=0}
Let $\Ac$ be a simplicial $\ell$-arrangement, $K$, $B^K$, and $K_i$ as before.
Let $i \neq j$ and suppose $c^K_{ij} = 0$. Then $c^{K_i}_{jk} = c^K_{jk}$ for all $k \in \{1,\ldots,\ell\}$.
\end{lemma}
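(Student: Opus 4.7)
The plan is to reduce the statement to the identity $\sigma^K_j = \sigma^{K_i}_j$ of linear maps on $V^*$, which I expect to hold under the hypothesis $c^K_{ij}=0$ and from which the claim will fall out by unraveling the defining formula for the $c$'s.

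First I would dispose of the two easy indices. The case $k=j$ is immediate since $c^K_{jj}=c^{K_i}_{jj}=2$ by convention. For $k=i$, Remark \ref{Rem_simpl_refl} applied to $K$ gives $c^K_{ji}=0$, while Lemma \ref{cij=cij'} applied to the pair $K, K_i$ forces $c^{K_i}_{ij}=c^K_{ij}=0$; a second application of Remark \ref{Rem_simpl_refl}, now to $K_i$, then yields $c^{K_i}_{ji}=0$, so $c^{K_i}_{ji}=c^K_{ji}$.

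For the main case $k\notin\{i,j\}$, I would observe that $c^K_{ij}=0$ forces $\beta^i_j=\alpha_j$, so that $\sigma^K_j$ and $\sigma^{K_i}_j$ are both reflections with fixed hyperplane $\alpha_j^\perp$, and both send $\alpha_j$ to $-\alpha_j$ (the latter since $\sigma^{K_i}_j(\beta^i_j)=-\beta^i_j=-\alpha_j$). A reflection being rigidly determined by its fixed hyperplane together with its $(-1)$-eigenline, this gives $\sigma^K_j=\sigma^{K_i}_j$. Evaluating this common map at $\beta^i_k=\alpha_k-c^K_{ik}\alpha_i$ in two ways—once as $\sigma^{K_i}_j(\beta^i_k)=\beta^i_k-c^{K_i}_{jk}\beta^i_j$, once using $\sigma^K_j(\alpha_k)=\alpha_k-c^K_{jk}\alpha_j$ together with $\sigma^K_j(\alpha_i)=\alpha_i$ (the latter being a consequence of $c^K_{ji}=0$)—comparison of the coefficients of $\alpha_j$ then yields $c^{K_i}_{jk}=c^K_{jk}$.

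The main obstacle is the identification $\sigma^K_j=\sigma^{K_i}_j$: it rests on the elementary but essential linear-algebraic fact that a reflection is determined by its fixed hyperplane and its $(-1)$-eigenspace, applied to two maps that are a priori defined with respect to different bases $B^K$ and $B^{K_i}$. Once the hypothesis $c^K_{ij}=0$ has been used to identify $\beta^i_j$ with $\alpha_j$, both invariant subspaces do coincide, and the remainder of the argument is a direct substitution.
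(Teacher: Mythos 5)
Your treatment of the two easy cases is correct and matches the paper's: for $k=j$ everything equals $2$, and for $k=i$ the chain $c^K_{ij}=0\Rightarrow c^K_{ji}=0$ (Remark \ref{Rem_simpl_refl}), $c^{K_i}_{ij}=c^K_{ij}=0$ (Lemma \ref{cij=cij'}), $c^{K_i}_{ji}=0$ works.

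The main case, however, has a genuine gap. Your reduction to the identity $\sigma^K_j=\sigma^{K_i}_j$ is a faithful reformulation of the lemma: expanding $\sigma^K_j(\beta^i_m)$ and using $c^K_{ji}=0$ gives $\beta^i_m-c^K_{jm}\beta^i_j$, so the two maps agree precisely when $c^{K_i}_{jm}=c^K_{jm}$ for all $m$. But the justification you offer for this identity is circular. The two pieces of data you invoke --- ``fixed hyperplane $\alpha_j^\perp$'' and ``$(-1)$-eigenline $\RR\alpha_j$'' --- are one and the same datum viewed in $V$ and in $V^*$: \emph{any} reflection of $V$ fixing $\ker\alpha_j$ pointwise sends $\alpha_j\mapsto-\alpha_j$ under the dual action, regardless of its direction of reflection. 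A reflection at the hyperplane $\alpha_j^\perp$ is pinned down only after one also specifies its $(-1)$-eigenline in $V$ (equivalently, its fixed hyperplane in $V^*$), and for $\sigma^K_j$ that line is exactly what the off-diagonal row $(c^K_{jm})_m$ encodes --- i.e., exactly the quantities the lemma compares. So asserting $\sigma^K_j=\sigma^{K_i}_j$ at that point is asserting the conclusion, and the subsequent coefficient comparison proves nothing new.

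The missing ingredient is geometric, and no purely formal manipulation of the reflection formulas can supply it, since $c^{K_i}_{jk}$ is \emph{defined} by which hyperplanes of $\Ac$ are walls of the chamber $(K_i)_j$. The paper's proof uses that $c^K_{ij}=0$ forces $\vert\Ac_{\alpha_i^\perp\cap\alpha_j^\perp}\vert=2$, hence the chamber reached from $K$ by crossing wall $i$ and then wall $j$ coincides with the one reached by crossing wall $j$ and then wall $i$. Applying $\sigma^{K_i}_j\circ\sigma^K_i$ and $\sigma^{K_j}_i\circ\sigma^K_j$ to $\alpha_k$ then yields two normals of the same wall of that common chamber, both with coefficient $1$ on $\alpha_k$, and comparing the $\alpha_j$-coefficients gives $c^{K_i}_{jk}=c^K_{jk}$. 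To repair your argument you would need to import this fact (or an equivalent statement about the walls of $(K_i)_j$) before you can conclude that the two reflections agree.
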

\begin{proof}
The proof is the same as in \cite{MR2498801}.

If $k =i$ then by Lemma \ref{cij=cij'} $c^K_{jk} = c^K_{kj} = 0 = c^{K_i}_{kj} = c^{K_i}_{jk}$. And if $k = j$ then
all the coefficients are equal to $2$. So let $k \in \{1,\ldots,\ell\}\setminus \{i,j\}$.
Since $c^K_{ij} = 0$ we have $\vert \Ac_{\alpha_i^\perp \cap \alpha_j^\perp} \vert = 2$.
So application of $\sigma^{K_i}_j \circ \sigma^K_i$ and $\sigma^{K_j}_i \circ \sigma^K_j$ on $\alpha_k$ should yield
a normal of the same wall of the chamber $K\alpha_i\sigma^{K}_i(\alpha_j) = K\alpha_j\sigma^{K}_j(\alpha_i)$.
Now
\begin{eqnarray*}
\sigma^{K_i}_j(\sigma^K_i(\alpha_k)) &=
	&\sigma^{K}_i(\alpha_k) - c^{K_i}_{jk}\sigma^K_i(\alpha_j) \\
&= 	&\alpha_k - c^K_{ik}\alpha_i - c^{K_i}_{jk}(\alpha_j - c^K_{ij}\alpha_i) \\
&= 	&\alpha_k - c^K_{ik}\alpha_i - c^{K_i}_{jk}\alpha_j,
\end{eqnarray*}
and similarly
\begin{eqnarray*}
\sigma^{K_j}_i(\sigma^K_j(\alpha_k)) &= &\alpha_k - c^K_{jk}\alpha_j - c^{K_j}_{ik}\alpha_i.
\end{eqnarray*}
Since $i,j,k$ are pairwise different and $\{\alpha_1\ldots,\alpha_\ell\}$ are linearly independent,
comparing the coefficients of $\alpha_j$ in both terms gives $c^{K_i}_{jk} = c^K_{jk}$.
\end{proof}

\subsection{Supersolvable arrangements}

An element $X \in L(\Ac)$ is called \emph{modular} if $X + Y \in L(\Ac)$ for all $Y \in L(\Ac)$.
An arrangement $\Ac$ with $r(\Ac) = \ell$ is called \emph{supersolvable}
if the intersection lattice $L(\Ac)$ is supersolvable, i.e.\ there is
a maximal chain of modular elements
$$
V = X_0 < X_1 < \ldots <X_\ell = T(\Ac),
$$
$X_i \in L(\Ac)$ modular.
For example an essential $3$-arrangement $\Ac$ is supersolvable if there exists an $X \in L_2(\Ac)$ which is
connected to all other $Y \in L_2(\Ac)$ by a suitable hyperplane $H \in \Ac$, (i.e.\ $X + Y \in \Ac$).

\begin{lemma}[{\cite[Lem.~2.27]{orlik1992arrangements}}]\label{Lem_ModCompl}
Let $\Ac$ be an essential $\ell$-arrangement, $X \in L_{\ell-1}(\Ac)$ a modular element and $H \in \Ac \setminus \Ac_X$.
Then $|\Ac^H|=|\Ac_X|$.
\end{lemma}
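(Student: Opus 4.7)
My plan is to construct an explicit bijection $\phi \colon \Ac^H \to \Ac_X$ and verify it with elementary dimension counts, with modularity entering precisely to guarantee that the image lies in $L(\Ac)$.

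First I would extract the basic geometry from the hypotheses. Since $\Ac$ is essential, $T(\Ac)=\{0\}$ and $\dim X = \ell - r(X) = 1$, so $X$ is a line. Since $H \in \Ac\setminus\Ac_X$, we have $X\not\subseteq H$, and as $X$ is a line this forces $X\cap H = \{0\}$. For any $Y \in \Ac^H$, written $Y = H\cap H'$ for some $H' \in \Ac$ with $H' \neq H$, we have $\dim Y = \ell-2$ and $Y\subseteq H$, whence $X\cap Y \subseteq X\cap H = \{0\}$.

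Next I would define the map $\phi\colon \Ac^H \to \Ac_X$ by $\phi(Y) := X+Y$. The modularity of $X$ gives $X+Y \in L(\Ac)$, and the dimension computation
\[
\dim(X+Y) = \dim X + \dim Y - \dim(X\cap Y) = 1 + (\ell-2) - 0 = \ell-1
\]
shows $X+Y$ is a hyperplane containing $X$, so $\phi(Y) \in \Ac_X$ and $\phi$ is well-defined.

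For injectivity, if $\phi(Y_1) = \phi(Y_2) = H^*$, then $Y_i \subseteq H^*$ and $Y_i\subseteq H$, so $Y_i \subseteq H\cap H^*$; as $H^*\in\Ac_X$ contains $X$ while $H$ does not, $H^* \neq H$ and hence $\dim(H\cap H^*)= \ell-2 = \dim Y_i$, forcing $Y_1 = H\cap H^* = Y_2$. For surjectivity, given $H^*\in\Ac_X$, again $H^*\neq H$, so $Y := H\cap H^* \in \Ac^H$; since $X\subseteq H^*$ and $Y\subseteq H^*$ with $\dim(X+Y)=\ell-1 = \dim H^*$, we get $X+Y = H^*$, i.e.\ $\phi(Y) = H^*$. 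No step is genuinely hard; the only subtlety worth flagging is the explicit use of modularity of $X$ to ensure $X+Y$ is actually an element of $L(\Ac)$, and hence belongs to $\Ac_X$ rather than merely being an abstract hyperplane through $X$.
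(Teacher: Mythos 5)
Your proof is correct: the paper itself only cites this statement from Orlik--Terao (Lem.~2.27) without reproducing an argument, and your bijection $Y \mapsto X+Y$ with inverse $H^* \mapsto H\cap H^*$, using modularity exactly to place $X+Y$ in $L(\Ac)$ and the dimension count $\dim(X+Y)=\ell-1$ to land in $\Ac_X$, is precisely the standard proof of that lemma. Nothing is missing.
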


Supersolvability is preserved by taking localizations and restrictions,
see \cite[Lem.~2.6]{MR3252667}, and \cite[Prop.~3.2]{MR0309815}:

\begin{lemma}\label{SS localization}
Let $\Ac$ be an arrangement, $X \in L(\Ac)$ and $Y \in L(\Ac)$ a modular element with $X \subseteq Y$.
Then $Y$ is modular in $L(\Ac_X)$. Moreover if $\Ac$ is supersolvable so is $\Ac_X$ for all $X \in L(\Ac)$.
\end{lemma}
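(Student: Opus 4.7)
The plan is to handle the two assertions in turn: first, how a single modular element behaves under localization, and second, how to chain this up to preserve supersolvability.

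For the first assertion, suppose $Y \in L(\Ac)$ is modular and $X \subseteq Y$. Every element $Z \in L(\Ac_X)$ is an intersection of hyperplanes from $\Ac_X$ and hence contains $X$. By modularity of $Y$ in $L(\Ac)$, the subspace $Y+Z$ lies in $L(\Ac)$; and since $X \subseteq Z \subseteq Y+Z$, every hyperplane $H \in \Ac$ containing $Y+Z$ must contain $X$, so lies in $\Ac_X$. Writing $Y+Z$ as the intersection of such hyperplanes shows $Y+Z \in L(\Ac_X)$, proving $Y$ is modular in $L(\Ac_X)$.

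For the second assertion, fix a maximal modular chain $V = X_0 < X_1 < \ldots < X_\ell = T(\Ac)$ in $L(\Ac)$ and set $Y_i := X_i + X$. Modularity of $X_i$ puts $Y_i$ in $L(\Ac)$, and $X \subseteq Y_i$ puts it in $L(\Ac_X)$. Each $Y_i$ is moreover modular in $L(\Ac_X)$ by essentially the same argument as above: for $Z \in L(\Ac_X)$ the containment $X \subseteq Z$ gives $Y_i + Z = X_i + Z \in L(\Ac)$, and this sum lies in $L(\Ac_X)$ by the same reasoning. One checks $Y_0 = V + X = V$ and $Y_\ell = T(\Ac) + X = X$ (as $T(\Ac) \subseteq X$), so the weakly increasing chain $Y_0 \leq Y_1 \leq \ldots \leq Y_\ell$ runs from the bottom to the top of $L(\Ac_X)$.

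To finish, extract a saturated chain by removing repetitions. Since $X_{i+1}$ arises from $X_i$ by intersecting with one additional hyperplane, a direct dimension count yields $\dim Y_i - \dim Y_{i+1} \in \{0,1\}$, so the rank of $Y_i$ increases by at most one per step. The total rank increase equals $r(Y_\ell) - r(Y_0) = r(X)$, so exactly $r(X)$ of the $\ell$ steps are strict, producing a maximal chain of modular elements of $L(\Ac_X)$ of length $r(X)$; this establishes supersolvability of $\Ac_X$. The main obstacle is precisely this last rank bookkeeping: modularity of the $X_i$ gives elements of $L(\Ac_X)$ that are modular there, but does not by itself guarantee the resulting chain is saturated. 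The saturatedness relies on the semimodular nature of the intersection lattice to control per-step rank jumps, so that no length is lost in passing from the chain in $L(\Ac)$ to the chain in $L(\Ac_X)$.
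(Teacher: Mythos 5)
Your proof is correct. The paper does not prove this lemma itself but only cites it (to Stanley and to Amend--Hoge--R\"ohrle), so there is no in-paper argument to compare against; your argument is the standard one. Both halves check out: the observation that $X \subseteq Z \subseteq Y+Z$ forces $\Ac_{Y+Z} \subseteq \Ac_X$ correctly places $Y+Z$ in $L(\Ac_X)$, and the dimension count $\dim Y_i - \dim Y_{i+1} = 1 - \bigl(\dim(X_i \cap X) - \dim(X_{i+1}\cap X)\bigr) \in \{0,1\}$ together with $r(Y_\ell)-r(Y_0)=r(X)$ shows that deleting repetitions from $(Y_i)_i$ yields a saturated maximal chain of modular elements of $L(\Ac_X)$.
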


\begin{lemma}\label{SS restriction}
Let $\Ac$ be an arrangement, $X \in L(\Ac)$ and $Y \in L(\Ac)$ a modular element.
Then $Y\cap X$ is modular in $L(\Ac^X)$.
In particular if $\Ac$ is supersolvable so is $\Ac^X$ for all $X \in L(\Ac)$.
\end{lemma}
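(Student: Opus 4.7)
The plan is to first pin down what $L(\Ac^X)$ looks like as a subposet of $L(\Ac)$, then reduce modularity in $L(\Ac^X)$ to a one-line application of the subspace modular law plus the modularity of $Y$ in $L(\Ac)$, and finally deduce supersolvability by intersecting a maximal modular chain with $X$ and counting dimensions.

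First I would verify the identification
\[
L(\Ac^X) \;=\; \{\, W \in L(\Ac) \mid W \subseteq X\,\}.
\]
One direction is immediate from the definition $\Ac^X = \{X \cap H \mid H \in \Ac \setminus \Ac_X\}$. For the reverse direction, given $W = \bigcap_{j} H_j \in L(\Ac)$ with $W \subseteq X$, split the defining hyperplanes $H_j$ into those that contain $X$ and those that do not. Since the intersection of the former contains $X$, one obtains $W = X \cap \bigcap_{H_j \not\supseteq X} H_j = \bigcap_{H_j \not\supseteq X}(X \cap H_j) \in L(\Ac^X)$. In particular $Y \cap X \in L(\Ac^X)$ because $Y \cap X \in L(\Ac)$ (intersections of lattice elements lie in the lattice) and $Y \cap X \subseteq X$.

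Next I would prove modularity of $Y \cap X$ in $L(\Ac^X)$. Take any $Z' \in L(\Ac^X)$, so $Z' \subseteq X$ and $Z' \in L(\Ac)$ by the above. The classical modular law for subspaces (applicable since $Z' \subseteq X$) gives
\[
Z' + (Y \cap X) \;=\; (Z' + Y) \cap X.
\]
Since $Y$ is modular in $L(\Ac)$, we have $Z' + Y \in L(\Ac)$, and then $(Z' + Y) \cap X \in L(\Ac)$ as an intersection of lattice elements. As this subspace is contained in $X$, the identification from the first step places it in $L(\Ac^X)$, which is exactly what modularity of $Y \cap X$ demands.

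For the second assertion, assume $\Ac$ is supersolvable and fix a maximal modular chain $V = X_0 < X_1 < \cdots < X_\ell = T(\Ac)$, with $\dim X_i = \ell - i$. Intersecting with $X$ yields a chain
\[
X \;=\; X_0 \cap X \;\supseteq\; X_1 \cap X \;\supseteq\; \cdots \;\supseteq\; X_\ell \cap X \;=\; T(\Ac)
\]
in $L(\Ac^X)$, and each term is modular in $L(\Ac^X)$ by the first part of the lemma. To see that after removing repetitions this is a \emph{maximal} chain, I would use the dimension formula $\dim(B \cap X) = \dim B + \dim X - \dim(B + X)$ to show that each step drops dimension by $0$ or $1$: since $\dim X_i - \dim X_{i+1} = 1$, the quantity $\dim(X_i + X) - \dim(X_{i+1} + X)$ is $0$ or $1$, hence so is $\dim(X_i \cap X) - \dim(X_{i+1} \cap X)$. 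The total dimension drop along the chain is $\dim X - \dim T(\Ac) = r(\Ac^X)$, so after deleting the repeated entries the remaining chain has exactly $r(\Ac^X) + 1$ elements, forming a maximal chain of modular elements in $L(\Ac^X)$. No step of the argument looks difficult; the only mild subtlety is the dimension-drop estimate that ensures the intersected chain is still long enough to be maximal.
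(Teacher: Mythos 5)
Your proof is correct. The paper itself does not prove Lemma \ref{SS restriction} but only cites \cite[Prop.~3.2]{MR0309815} and \cite[Lem.~2.6]{MR3252667}; your argument --- identifying $L(\Ac^X)$ with $\{W \in L(\Ac) \mid W \subseteq X\}$, applying the subspace modular law $Z' + (Y\cap X) = (Z'+Y)\cap X$ together with the modularity of $Y$, and then intersecting a maximal modular chain with $X$ while controlling the dimension drops --- is precisely the standard proof underlying those citations, and every step checks out.
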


Combining the previous two lemmas with Lemma \ref{loc res simpl} we obtain the following.

\begin{lemma}\label{SSS loc res}
Let $\Ac$ be a real \sss arrangement and $X \in L(\Ac)$. Then we have
\begin{enumerate}
\item $(\Ac_X/X,V/X)$ is supersolvable and simplicial,
\item $(\Ac^X,X)$ is supersolvable and simplicial.
\end{enumerate}
\end{lemma}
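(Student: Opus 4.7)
The statement should follow by directly combining the already-established lemmas: simpliciality of the localization and restriction is Lemma \ref{loc res simpl}, while supersolvability of the localization and restriction is Lemma \ref{SS localization} and Lemma \ref{SS restriction}. So essentially no new argument is needed, only a careful bookkeeping of the two properties at once.

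For part (1), I would invoke Lemma \ref{loc res simpl}(1) to get simpliciality of $(\Ac_X/X,V/X)$ and Lemma \ref{SS localization} to get supersolvability of $\Ac_X$. The one small point to address is that \ref{SS localization} gives supersolvability of $\Ac_X$ itself, not of its essentialization $\Ac_X/X$. This is harmless because $T(\Ac_X)=X$ (each hyperplane in $\Ac_X$ contains $X$, and $X$ is itself an intersection of elements of $\Ac_X$), so the lattices $L(\Ac_X)$ and $L(\Ac_X/X)$ are canonically isomorphic and supersolvability—being a purely lattice-theoretic property—transfers.

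For part (2), Lemma \ref{loc res simpl}(2) provides simpliciality of $(\Ac^X,X)$, and Lemma \ref{SS restriction} provides supersolvability of $\Ac^X$. Both are stated in the exact form required, so there is nothing further to check.

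There is no real obstacle here; the lemma is a packaging result. The only subtlety worth mentioning explicitly is the shift from $\Ac_X$ to its essential quotient $\Ac_X/X$ in part (1), which is justified by the lattice isomorphism induced by quotienting out the center.
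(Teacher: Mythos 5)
Your proposal matches the paper exactly: the lemma is stated there as an immediate combination of Lemma \ref{loc res simpl} with Lemmas \ref{SS localization} and \ref{SS restriction}, with no further argument given. Your extra remark about passing from $\Ac_X$ to the essentialization $\Ac_X/X$ via the lattice isomorphism is a correct and harmless addition that the paper leaves implicit.
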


Furthermore, as (geometric) simpliciality, supersolvability is compatible with products.

\begin{lemma}[{\cite[Prop.~2.5]{MR3251720}}] \label{ss prod}
Let $\Ac = \Ac_1 \times \Ac_2$ be a product. Then $\Ac$ is supersolvable if and only if
$\Ac_1$ and $\Ac_2$ are both supersolvable.
\end{lemma}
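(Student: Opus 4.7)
\emph{Plan.} The proof rests on the lattice isomorphism $L(\Ac_1 \times \Ac_2) \cong L(\Ac_1) \times L(\Ac_2)$ of \cite[Prop.~2.14]{orlik1992arrangements} (recalled above), together with a characterization of modular elements in the product lattice. Specifically, I would first prove as an intermediate claim that an element $X = X_1 \oplus X_2 \in L(\Ac)$ is modular if and only if $X_i$ is modular in $L(\Ac_i)$ for $i=1,2$. The ``if'' direction is immediate: any $Y \in L(\Ac)$ decomposes as $Y = Y_1 \oplus Y_2$, and a short computation gives $X + Y = (X_1 + Y_1) \oplus (X_2 + Y_2)$, which lies in $L(\Ac)$ since each summand lies in $L(\Ac_i)$ by modularity of $X_i$. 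For the ``only if'' direction, applying the modularity of $X$ to the special element $Y_1 \oplus V_2 \in L(\Ac)$ forces $(X_1 + Y_1) \oplus V_2 \in L(\Ac)$, which via the product lattice isomorphism forces $X_1 + Y_1 \in L(\Ac_1)$; the argument for $X_2$ is symmetric.

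\emph{Forward direction of the lemma.} Given maximal chains of modular elements $V_i = X_0^{(i)} < X_1^{(i)} < \cdots < X_{\ell_i}^{(i)} = T(\Ac_i)$ in $L(\Ac_i)$ for $i = 1, 2$, concatenate them componentwise:
\[
V_1 \oplus V_2 \,<\, X_1^{(1)} \oplus V_2 \,<\, \cdots \,<\, X_{\ell_1}^{(1)} \oplus V_2 \,<\, X_{\ell_1}^{(1)} \oplus X_1^{(2)} \,<\, \cdots \,<\, T(\Ac_1) \oplus T(\Ac_2) = T(\Ac).
\]
By the intermediate claim every term is modular in $L(\Ac)$, and the chain has length $\ell_1 + \ell_2 = \ell$, hence is maximal. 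Thus $\Ac$ is supersolvable.

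\emph{Backward direction of the lemma.} Suppose $\Ac = \Ac_1 \times \Ac_2$ is supersolvable. Set $X := T(\Ac_1) \oplus V_2 \in L(\Ac)$; by Corollary \ref{res loc product} one has $\Ac_X = (\Ac_1)_{T(\Ac_1)} \times (\Ac_2)_{V_2} = \Ac_1 \times \Phi_{\ell_2}$, which is supersolvable by Lemma \ref{SS localization}. Since the empty factor $\Phi_{\ell_2}$ contributes nothing to the intersection lattice beyond an ambient shift, $\Ac_1$ is itself supersolvable; the argument for $\Ac_2$ is symmetric. The main technical point is the intermediate claim characterizing modular elements in a product lattice; once this decomposition is established, both directions of the lemma reduce to routine bookkeeping with maximal chains.
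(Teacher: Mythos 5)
Your proof is correct. Note that the paper itself gives no proof of this lemma: it is quoted from Hoge--R\"ohrle \cite[Prop.~2.5]{MR3251720}, so there is no in-paper argument to compare against. Your route --- characterizing modular elements of $L(\Ac_1\times\Ac_2)$ as exactly the $X_1\oplus X_2$ with each $X_i$ modular in $L(\Ac_i)$, then concatenating maximal modular chains for one direction and localizing at $T(\Ac_1)\oplus V_2$ for the other --- is the standard and natural one, and all the steps check out: the identity $(X_1\oplus X_2)+(Y_1\oplus Y_2)=(X_1+Y_1)\oplus(X_2+Y_2)$ together with \cite[Prop.~2.14]{orlik1992arrangements} gives the intermediate claim, and the concatenated chain has length $\ell_1+\ell_2=r(\Ac)$, hence is maximal. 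The only point requiring a word of care is the one you already flag: $\Ac_X=\Ac_1\times\Phi_{\ell_2}$ is not essential, so ``supersolvable'' must be read via the paper's convention of passing to $\Ac_X/T(\Ac_X)$; since $L(\Ac_1\times\Phi_{\ell_2})\cong L(\Ac_1)$ with modular elements corresponding, this is harmless. As an aside, the backward direction could also be obtained directly from your intermediate claim by projecting a maximal modular chain of $L(\Ac)$ to its first components (each covering step raises exactly one component rank by one, so the projected chain, with repetitions removed, is a maximal modular chain in $L(\Ac_1)$), which avoids invoking Lemma \ref{SS localization} altogether; but your version is equally valid.
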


So together with Proposition \ref{simpl prod} we get the following.
\begin{propo}\label{sss prod}
Let $\Ac_1$ and $\Ac_2$ be real arrangements and $\Ac = \Ac_1 \times \Ac_2$ their product.
Then $\Ac$ is supersolvable and simplicial
if and only if $\Ac_1$ and $\Ac_2$ are both supersolvable and simplicial.
\end{propo}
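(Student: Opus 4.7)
The plan is to observe that this proposition is a direct combination of two already-proved results. Specifically, Proposition \ref{simpl prod} characterizes simpliciality of a real product arrangement in terms of the simpliciality of its factors, and Lemma \ref{ss prod} does the same for supersolvability. Since both properties are simultaneously inherited from factors to product and from product to factors, the conjunction "supersolvable and simplicial" behaves exactly the same way.

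More concretely, I would argue the forward direction by assuming $\Ac = \Ac_1 \times \Ac_2$ is supersolvable and simplicial; then Proposition \ref{simpl prod} immediately gives that both $\Ac_1$ and $\Ac_2$ are simplicial, and Lemma \ref{ss prod} gives that both are supersolvable, so each factor is \sss. Conversely, if $\Ac_1$ and $\Ac_2$ are both \sss, then applying Proposition \ref{simpl prod} yields simpliciality of $\Ac$ and applying Lemma \ref{ss prod} yields supersolvability of $\Ac$, hence $\Ac$ is \sss.

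Since the proof is just a cross-application of the two preceding statements, there is no real obstacle. The only conceptual point worth noting is that we are in the real simplicial (not merely combinatorially simplicial) setting, so that Proposition \ref{simpl prod}, whose converse direction genuinely uses the geometric notion (as Example \ref{ex comb simpl prod} warns), applies here without any additional hypothesis.
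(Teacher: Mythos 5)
Your proposal is correct and is exactly the paper's argument: the paper states this proposition as an immediate consequence of Lemma \ref{ss prod} and Proposition \ref{simpl prod}, combining the two equivalences for supersolvability and (geometric) simpliciality of products. Your added remark that one must be in the real simplicial setting (rather than the merely combinatorially simplicial one) is a correct and worthwhile observation consistent with Example \ref{ex comb simpl prod}.
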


Because of the previous proposition, to classify supersolvable and simplicial arrangements, it suffices to
classify the irreducible ones.

The following property of the characteristic polynomial of a supersolvable arrangement is due to Stanley \cite{MR0309815},
cf.~\cite[Thm.~2.63]{orlik1992arrangements}.
\begin{theor}\label{ss charpoly fact}
Let $\Ac$ be a supersolvable $\ell$-arrangement with
$$
V = X_0 < X_1 < \ldots <X_\ell = T(\Ac)
$$
a maximal chain of modular elements.
Let $b_i := \vert \Ac_{X_i} \setminus \Ac_{X_{i-1}} \vert$ for $1 \leq i \leq \ell$. Then
$$
\chi_\Ac(t) = \prod_{i=1}^\ell (t-b_i).
$$
\end{theor}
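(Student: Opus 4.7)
The plan is to induct on the rank $\ell$, using the modular chain to peel off one linear factor at each step. The base $\ell = 1$ is immediate: $\Ac = \{\{0\}\} \subset \RR^1$ gives $\chi_\Ac(t) = t - 1$ and $b_1 = 1$.

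For the inductive step, I would work with the top modular element $X_{\ell-1}$ of rank $\ell - 1$ and set $\Ac' := \Ac_{X_{\ell-1}}/X_{\ell-1}$. By Lemma~\ref{SS localization}, $\Ac'$ is supersolvable of rank $\ell - 1$ with modular chain $\overline{X}_0 < \cdots < \overline{X}_{\ell-1}$ inherited from $X_0, \ldots, X_{\ell-1}$. Since $X_{\ell-1} \subseteq X_j$ as subspaces for $j \leq \ell - 1$, every hyperplane containing $X_j$ automatically lies in $\Ac_{X_{\ell-1}}$, so $|\Ac'_{\overline{X}_j}| = |\Ac_{X_j}|$ and the $b_i$-values of $\Ac'$ coincide with $b_1, \ldots, b_{\ell-1}$. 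Induction then yields $\chi_{\Ac'}(t) = \prod_{i=1}^{\ell-1}(t - b_i)$, and the theorem reduces to proving
$$\chi_\Ac(t) = (t - b_\ell)\chi_{\Ac'}(t), \qquad b_\ell = |\Ac| - |\Ac_{X_{\ell-1}}|.$$

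The core geometric fact I would exploit is that, since $\dim X_{\ell-1} = 1$, for each $H \in \Ac \setminus \Ac_{X_{\ell-1}}$ one has $V = H \oplus X_{\ell-1}$; the projection $V \to V/X_{\ell-1}$ then restricts to a linear isomorphism $H \xrightarrow{\sim} V/X_{\ell-1}$. Combined with Lemma~\ref{Lem_ModCompl} (giving $|\Ac^H| = |\Ac_{X_{\ell-1}}|$), this isomorphism identifies $L(\Ac^H)$ with $L(\Ac')$, yielding $\chi_{\Ac^H}(t) = \chi_{\Ac'}(t)$. More importantly, every restriction $K \cap H$ with $K \in \Ac \setminus \{H\}$ already equals $K' \cap H$ for a unique $K' \in \Ac_{X_{\ell-1}}$, so for any sub-arrangement $\Bc$ with $\Ac_{X_{\ell-1}} \subseteq \Bc \subseteq \Ac$ the restriction $\Bc^H$ has the same set of hyperplanes as $\Ac^H$, giving $\chi_{\Bc^H}(t) = \chi_{\Ac'}(t)$.

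Enumerating $\Ac \setminus \Ac_{X_{\ell-1}} = \{H_1, \ldots, H_{b_\ell}\}$ and applying deletion-restriction sequentially (each step contributing the same polynomial $\chi_{\Ac'}(t)$) telescopes to
$$\chi_{\Ac_{X_{\ell-1}}}(t) = \chi_\Ac(t) + b_\ell \, \chi_{\Ac'}(t).$$
Combined with the identity $\chi_{\Ac_{X_{\ell-1}}}(t) = t \, \chi_{\Ac'}(t)$ (since $T(\Ac_{X_{\ell-1}}) = X_{\ell-1}$ has dimension $1$, contributing an extra factor of $t$ to each M\"obius summand), this produces $\chi_\Ac(t) = (t - b_\ell)\chi_{\Ac'}(t)$ and closes the induction. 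The main subtle point to verify carefully is the invariance $\Bc^H = \Ac^H$ for intermediate $\Bc$; this is what makes the iteration telescope cleanly, and it rests entirely on the lattice identification $L(\Ac^H) \cong L(\Ac')$ coming from the one-dimensionality of $X_{\ell-1}$ and the modularity hypothesis.
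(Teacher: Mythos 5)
The paper does not prove Theorem \ref{ss charpoly fact} at all: it is quoted as a classical result of Stanley with a pointer to \cite[Thm.~2.63]{orlik1992arrangements}, so there is no in-paper argument to compare against. Your proof is correct and complete, and it is essentially the standard textbook argument: induct on the rank, peel off the modular coatom $X_{\ell-1}$, and telescope deletion--restriction over the hyperplanes of $\Ac\setminus\Ac_{X_{\ell-1}}$. The two points that carry the whole proof are both handled properly. First, the inductive set-up is sound: for $j\le\ell-1$ one has $X_{\ell-1}\subseteq X_j$, hence $\Ac_{X_j}\subseteq\Ac_{X_{\ell-1}}$, so the localized chain is again a maximal modular chain (Lemma \ref{SS localization}) with the same $b_1,\dots,b_{\ell-1}$. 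Second, the invariance $\Bc^H=\Ac^H=(\Ac_{X_{\ell-1}})^H$ for every intermediate $\Ac_{X_{\ell-1}}\subseteq\Bc\subseteq\Ac$ and $H\notin\Ac_{X_{\ell-1}}$ is exactly where modularity enters: for $K\ne H$ the element $X_{\ell-1}+(K\cap H)$ lies in $L(\Ac)$, meets $H$ trivially along $X_{\ell-1}$, hence is a hyperplane $K'\in\Ac_{X_{\ell-1}}$ with $K'\cap H=K\cap H$, and $K'$ is unique because $K'=X_{\ell-1}+(K'\cap H)$. This makes each restriction in the telescoping sum linearly isomorphic to $\Ac'=\Ac_{X_{\ell-1}}/X_{\ell-1}$ (your Lemma \ref{Lem_ModCompl} citation is then subsumed by the bijection $K'\mapsto K'\cap H$), and together with $\chi_{\Ac_{X_{\ell-1}}}(t)=t\,\chi_{\Ac'}(t)$ the factor $(t-b_\ell)$ drops out. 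What your route buys over simply quoting Stanley is a self-contained proof using only deletion--restriction and the definition of modularity; no gap to report.
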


A helpful result is due to Amend, Hoge, and R\"ohrle.
They checked which restrictions of irreducible reflection arrangements are supersolvable, \cite[Thm.~1.3]{MR3252667}.
Here we only need the following weaker version for real reflection arrangements of rank greater or equal to $4$.
\begin{theor}\label{ss res refarr}
Let $\Ac = \Ac(W)$ be an irreducible real reflection arrangement of rank $\ell \geq 4$ associated to
the finite reflection group $W$ and $X \in L(\Ac)$ with $m:= \dim(X) \geq 4$.
Then $\Ac^X$ is supersolvable if and only if one of the following holds:
\begin{enumerate}
\item $W= A_\ell$ and then $\Ac^X = \Ac(A_m)$,
\item $\Ac^X = \Ac^k_m$ with $k \in \{m,m-1\}$.
\end{enumerate}
\end{theor}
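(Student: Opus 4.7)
The plan is to derive this theorem as the specialization to real $W$ with $\ell \geq 4$ and $m \geq 4$ of the complete classification of supersolvable restrictions of irreducible reflection arrangements due to Amend, Hoge and R\"ohrle, \cite[Thm.~1.3]{MR3252667}. The substantive combinatorial work has already been carried out there; the task here reduces to isolating the real, rank-$\geq 4$ subcases of their list.

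First I would enumerate the candidate groups: the irreducible real reflection groups of rank $\ell \geq 4$ are exactly the members of the three infinite families $A_\ell$, $B_\ell = C_\ell$, $D_\ell$ together with the five exceptional types $F_4$, $H_4$, $E_6$, $E_7$, $E_8$. For each such $W$ and each $X \in L(\Ac(W))$ with $m = \dim X \geq 4$, the isomorphism type of the restriction $\Ac^X$ can be read off from the Orlik-Solomon restriction tables reproduced in \cite[App.~C]{orlik1992arrangements}.

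Next I would go type by type and identify which restrictions are supersolvable. For $W = A_\ell$ every restriction is (up to quotienting by the center) a product of type-$A$ Weyl arrangements; each factor is supersolvable, hence so is the product by Lemma \ref{ss prod}, and one obtains case (1). For $W = B_\ell/C_\ell$ or $D_\ell$ every restriction is isomorphic to some $\Ac^k_m$ multiplied by a product of type-$A$ factors; combining Lemma \ref{ss prod} with the characterization that $\Ac^k_m$ is supersolvable precisely when $k \in \{m, m-1\}$ gives case (2). For the five exceptional groups, a finite case check against the restriction tables, together with the necessary integrality condition from Theorem \ref{ss charpoly fact} that the characteristic polynomial of a supersolvable arrangement must factor into integral linear factors, rules out any further supersolvable restriction with $\dim X \geq 4$.

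The main obstacle is precisely the exceptional case analysis for $F_4$, $H_4$, $E_6$, $E_7$, $E_8$, which requires a somewhat lengthy inspection of the Orlik-Solomon tables. However, this finite verification has already been completed in \cite{MR3252667}, so the proof here reduces to citing their classification and extracting the subset of the list compatible with the assumptions that $W$ is real and $m \geq 4$.
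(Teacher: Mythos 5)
Your proposal is correct and coincides with the paper's approach: the paper gives no independent proof, but presents this statement explicitly as a weaker, real, rank-$\geq 4$ specialization of the classification of supersolvable restrictions in \cite[Thm.~1.3]{MR3252667}, which is exactly the reduction you carry out. The only (harmless) imprecision is that a restriction $\Ac(A_\ell)^X$ is a single type-$A$ arrangement rather than a genuine product, but this does not affect the argument.
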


Together with Theorem \ref{cryst arr rk geq 4} this gives us the following classification of irreducible supersolvable crystallographic
arrangements of rank $\geq 4$.
\begin{theor}\label{classification ss cryst rk geq 4}
Let $\Ac$ be an irreducible supersolvable crystallographic $\ell$-arrangement with $\ell \geq 4$.
Then $\Ac$ is isomorphic to exactly one of the reflection arrangements $\Ac(A_\ell)$, $\Ac(C_\ell)$ or
$\Ac_\ell^{\ell-1} = \Ac(C_\ell) \setminus  \{ \{x_1=0\} \}$.
\end{theor}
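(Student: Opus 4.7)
My plan is to combine the two strong black-box inputs from the paragraph immediately above: the reduction of irreducible crystallographic arrangements of rank at least four to restrictions of Weyl arrangements (Theorem \ref{cryst arr rk geq 4}), and the classification of supersolvable restrictions of irreducible reflection arrangements (Theorem \ref{ss res refarr}). Once these are in hand, the proof is essentially assembly.

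First, since $\Ac$ is irreducible, crystallographic, and of rank $\ell \geq 4$, Theorem \ref{cryst arr rk geq 4} furnishes an irreducible Weyl group $W$ and a flat $X \in L(\Ac(W))$ with $\Ac \cong \Ac(W)^X$. Because $\Ac$ is essential of rank $\ell$ on $X$, one has $\dim X = \ell \geq 4$ (and in particular $\rank(W) \geq \ell \geq 4$), so the hypotheses of Theorem \ref{ss res refarr} are met (with the roles of $\ell$ and $m$ there played by $\rank(W)$ and $\ell$, respectively).

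Second, I invoke Theorem \ref{ss res refarr} on the supersolvable restriction $\Ac \cong \Ac(W)^X$. Only two outcomes are permitted by that theorem: either $W$ is of type $A$ and $\Ac \cong \Ac(A_\ell)$, or $\Ac \cong \Ac_\ell^k$ with $k \in \{\ell,\ell-1\}$. Since $\Ac_\ell^\ell = \Ac(C_\ell)$, this yields precisely the three candidates $\Ac(A_\ell)$, $\Ac_\ell^{\ell-1}$, and $\Ac(C_\ell)$ listed in the statement.

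Finally, the uniqueness clause \emph{exactly one} is settled by counting hyperplanes: $|\Ac(A_\ell)| = \binom{\ell+1}{2}$, $|\Ac_\ell^{\ell-1}| = \ell^2 - 1$, and $|\Ac(C_\ell)| = \ell^2$, which are pairwise distinct for $\ell \geq 4$ (one checks $\binom{\ell+1}{2} < \ell^2 - 1 < \ell^2$ in this range). No step should present a real obstacle, since the substantive work has been packaged into Theorems \ref{cryst arr rk geq 4} and \ref{ss res refarr}; the only mild verification is confirming that the essentiality/rank bookkeeping $\dim X = \ell \geq 4$ really does allow Theorem \ref{ss res refarr} to be applied.
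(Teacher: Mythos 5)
Your proof is correct and is exactly the paper's argument: the paper derives this theorem by simply combining Theorem~\ref{cryst arr rk geq 4} with Theorem~\ref{ss res refarr} (it states "Together with Theorem~\ref{cryst arr rk geq 4} this gives us the following classification" and gives no further details). Your additional bookkeeping — checking $\dim X = \ell \geq 4$ so that Theorem~\ref{ss res refarr} applies, and distinguishing the three candidates by their hyperplane counts to justify "exactly one" — is sound and only makes explicit what the paper leaves implicit.
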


\section{Coxeter graphs for simplicial arrangements}\label{sec:Coxeter graphs}

From now on until the end of this article we always assume arrangements to be real.

We introduce Coxeter graphs of chambers of simplicial arrangements and
use the results from Subsection \ref{ssec:simplArr} to derive their properties.

\begin{defin}
Let $K \in \Kc(\Ac)$ be a chamber of the simplicial $\ell$-arrangement $\Ac$
and $B^K$ some basis for $K$.
We define a labeled non directed simple graph $\Gamma(K) = (\Vg,\Eg)$ with vertices $\Vg = B^K$ and
edges $\Eg = \{ \{\alpha,\beta\} \mid \vert\Ac_{\alpha^\perp \cap \beta^\perp}\vert \geq 3 \}$. An
edge $e=\{\alpha,\beta\} \in \Eg$ is labeled with
$m^K(e) = m^K(\alpha,\beta) = \vert\Ac_{\alpha^\perp \cap \beta^\perp}\vert$.
Since the label $m(\alpha,\beta) = 3$ appears more often we omit it in drawing the graph.
We call $\Gamma(K)$ the \emph{Coxeter graph} of $K$.
If we have chosen a numbering $B^K = \{\alpha_1,\ldots,\alpha_\ell\}$ then
$\{\alpha_i,\alpha_j\} \in \Eg$ is simply denoted by $\{i,j\}$ and $\Vg = \{1,\ldots,\ell\}$,
see Figure \ref{fig:ExCoxeter}.

If $K_i$ is an adjacent chamber for some $1 \leq i \leq \ell$ and $\Gamma(K_i) = (\Vg_i,\Eg_i)$ its Coxeter graph
with $\Vg_i = B^{K_i} = \sigma_i^K(B^K) = \{ \sigma^K_i(\alpha_1),\ldots,\sigma^K_i(\alpha_\ell) \}$,
we similarly identify $\{ \sigma_i^K(\alpha_k),\sigma_i^K(\alpha_j) \} \in \Eg_i$ with $\{k,j\}$
and $\Vg_i$ with $\{1, \ldots, \ell\}$. 
\end{defin}

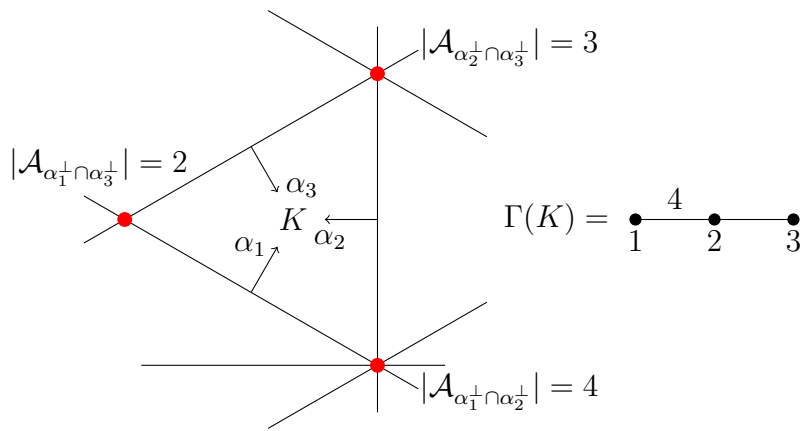
\begin{figure}

\begin{tikzpicture}[scale=0.7]
\draw (-3.974901573277509,0.44738963192723413) -- (2.3749015732775094,-3.218670924037438);  
\draw (1.6000000000000001,3.666060555964672) -- (1.6000000000000001,-3.666060555964672);  
\draw (2.3749015732775094,3.218670924037438) -- (-3.974901573277509,-0.44738963192723413);  
\draw (3.6784609690826526,-1.571281292110204) -- (-0.47846096908265423,-3.9712812921102043);  
\draw (2.8844410203711917,-2.7712812921102037) -- (-2.8844410203711917,-2.7712812921102037);  
\draw (-0.47846096908265423,3.9712812921102043) -- (3.6784609690826526,1.571281292110204);  

\node (x12) at (1.6000000000000001,-2.7712812921102041) {};
\node (x13) at (-3.2000000000000002,0.0) {};
\node (x23) at (1.6000000000000001,2.7712812921102041) {};

\node at ($(x12) + (2.5,-0.5)$) {$\vert\Ac_{\alpha_1^\perp\cap\alpha_2^\perp}\vert = 4$};
\node at ($(x23) + (2.5,0.5)$) {$\vert\Ac_{\alpha_2^\perp\cap\alpha_3^\perp}\vert = 3$};
\node at ($(x13) + (-0.5,1)$) {$\vert\Ac_{\alpha_1^\perp\cap\alpha_3^\perp}\vert = 2$};

\fill[red] (x12) circle[radius=4pt];  
\fill[red] (x13) circle[radius=4pt];  
\fill[red] (x23) circle[radius=4pt];  

\node at (0,0) {$K$};

\node (a) at ($0.5*(x12)+0.5*(x13)$) {};
\node (b) at ($0.5*(x12)+0.5*(x23)$) {};
\node (c) at ($0.5*(x13)+0.5*(x23)$) {};


\node (r1) at (0.5,0.866025) {};
\node (r2) at (-1,0) {};
\node (r3) at (0.5,-0.866025) {};

\draw[->] ($(a)$) -- ($(a) + 1*(r1)$);
\node[left] at ($(a) + 1*(r1)$) {$\alpha_1$};
\draw[->] ($(b)$) -- ($(b) + 1*(r2)$);
\node[below] at ($(b) + 0.9*(r2)$) {$\alpha_2$};
\draw[->] ($(c)$) -- ($(c) + 1*(r3)$);
\node[right] at ($(c) + 0.9*(r3)$) {$\alpha_3$};


\node at (5,0) {$\Gamma(K) = $};
\node (1) at (6.5,0) {};
\node (2) at (8,0) {};
\node (3) at (9.5,0) {};

\filldraw ($(1)$) circle[radius=3pt];
\filldraw ($(2)$) circle[radius=3pt];
\filldraw ($(3)$) circle[radius=3pt];

\draw ($(1)$) -- ($(2)$) -- ($(3)$);

\node[below] at ($(1)$) {$1$};
\node[below] at ($(2)$) {$2$};
\node[below] at ($(3)$) {$3$};

\node[above] at ($0.5*(1) + 0.5*(2)$) {$4$};

\end{tikzpicture}
\caption{The Coxeter graph $\Gamma(K)$ of a chamber $K$.}\label{fig:ExCoxeter}
\end{figure}

\begin{examp}
Let $\Ac(W)$ be the Coxeter arrangement of the Coxeter group $W$.
Then $\Ac$ is a simplicial arrangement (c.f.\ Example \ref{ex Coxeter group}) and for all $K \in \Kc(\Ac)$ the Coxeter graph $\Gamma(K)$
is indeed the Coxeter graph of $W$, see for example \cite[Ch.\ 2]{MR1066460}.
\end{examp}

\begin{lemma}\label{cij m=3}
Let $\Ac$ be a simplicial $\ell$-arrangement, $K \in \Kc(\Ac)$ with basis $B^K = \{\alpha_1,\ldots,\alpha_\ell\}$,
$K_i$ an adjacent chamber.
Then for $j\neq i$ we have $c^K_{ij}=c^K_{ji}=0$ if and only if $m^K(i,j)=2$.
Furthermore if $m^K(i,j) = 3$ then for $i \neq j$ we have $c^K_{ji} = 1/c^K_{ij}$.
In particular if $c^K_{ij}=-1$ then $c^K_{ji}=c^K_{ij}$.
\end{lemma}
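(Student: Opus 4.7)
The plan is to reduce the problem to a rank-$2$ situation and then exploit the cyclic arrangement of chambers. First I would localize at $Y := \alpha_i^\perp \cap \alpha_j^\perp$: the arrangement $\Ac_Y/Y$ is a rank-$2$ (hence automatically simplicial) arrangement with exactly $m := m^K(i,j)$ hyperplanes, and the chambers $K, K_i, K_j$ descend to chambers of $\Ac_Y/Y$. The coefficients $c^K_{ij}, c^K_{ji}$ and the adjacent-basis vectors $\beta^i_j = \alpha_j - c^K_{ij}\alpha_i$ and $\beta^j_i = \alpha_i - c^K_{ji}\alpha_j$ from Lemma \ref{cij unique} depend only on this rank-$2$ picture, so I may assume $\ell = 2$ and write $\alpha := \alpha_i$, $\beta := \alpha_j$, $c_1 := c^K_{ij}$, $c_2 := c^K_{ji}$.

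For the first equivalence, I would observe that $c_1 = 0$ is equivalent to $\beta^i_j = \beta$, i.e.\ to the wall of $K_i$ other than $\alpha^\perp$ being $\beta^\perp$. If $m = 2$ then $\Ac_Y = \{\alpha^\perp, \beta^\perp\}$ forces this, so $c_1 = 0$; by the same argument applied to $K_j$, also $c_2 = 0$. Conversely, if $c_1 = 0$, then $K$ and $K_i$ both have walls exactly $\{\alpha^\perp, \beta^\perp\}$, so $\Ac_Y$ contains no further hyperplane and $m = 2$.

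For the case $m = 3$, denote the third hyperplane of $\Ac_Y$ by $\gamma^\perp$, normalising $\gamma$ so that $\gamma(K) > 0$. Since $K$ and $K_i$ differ only across the wall $\alpha^\perp$ and lie on the same side of every other hyperplane of $\Ac_Y$, we have $\gamma(K_i) > 0$, and analogously $\gamma(K_j) > 0$, so all three chambers lie in the half-plane $\{\gamma > 0\}$. The first part yields $c_1 \neq 0$ (as $m \neq 2$), so the wall of $K_i$ other than $\alpha^\perp$ is not $\beta^\perp$ and must therefore be $\gamma^\perp$; analogously for $K_j$. Hence $\beta^i_j$ and $\beta^j_i$ are both inward-pointing normals to $\gamma^\perp$ into the half-plane $\{\gamma > 0\}$, and consequently positive multiples of one another: $\beta^i_j = \mu\,\beta^j_i$ for some $\mu > 0$. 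Expanding $\beta - c_1 \alpha = \mu(\alpha - c_2 \beta)$ and comparing coefficients in the basis $\{\alpha, \beta\}$ yields $\mu = -c_1$ and $c_2 = 1/c_1$. The "in particular" clause then follows by setting $c_1 = -1$ (still under $m = 3$).

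The principal obstacle I anticipate is the geometric step in the $m = 3$ case: verifying that both $K_i$ and $K_j$ have $\gamma^\perp$ as their second wall and that they lie on the same side of $\gamma^\perp$ as $K$. Once this is established the remaining coefficient comparison is purely formal.
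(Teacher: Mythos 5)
Your proof is correct and follows essentially the same route as the paper's: reduce to the rank-two localization at $\alpha_i^\perp\cap\alpha_j^\perp$, note that $c^K_{ij}=0$ exactly when $\alpha_j^\perp$ is itself the second wall of $K_i$ (forcing $m=2$), and for $m=3$ identify the third hyperplane as the common second wall of $K_i$ and $K_j$, so that $\alpha_j-c^K_{ij}\alpha_i$ and $\alpha_i-c^K_{ji}\alpha_j$ are proportional and comparing coefficients gives $c^K_{ji}=1/c^K_{ij}$. The only difference is that you also pin down the sign of the proportionality constant via $\gamma(K_i),\gamma(K_j)>0$, which is harmless but not needed, since proportionality of the normals alone already forces $c^K_{ij}c^K_{ji}=1$.
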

\begin{proof}
For $j \neq i$ we have $c^K_{ij}=c^K_{ji}=0$ if and only if $\alpha_j^\perp \in B^{K_i}$ if and only if $m^K(i,j)=2$.
If $m^K(i,j) = 3$ we have
$$
\Ac_{\alpha_i^\perp\cap \alpha_j^\perp} = \{\alpha_i^\perp,\alpha_j^\perp,(\alpha_j - c^K_{ij}\alpha_i)^\perp\}.
$$
By the definition of $c^K_{ij}$ and $c^K_{ji}$ we thus have $(\alpha_j - c^K_{ij}\alpha_i)^\perp=(\alpha_i - c^K_{ji}\alpha_j)^\perp$.
Since $\alpha_i$ and $\alpha_j$ are linearly independent this can only occur if $c^K_{ji} = 1/c^K_{ij}$.
\end{proof}

\begin{lemma}\label{graph adj chamber}
Let $\Ac$ be a simplicial $\ell$-arrangement, $K \in \Kc(\Ac)$ a chamber, $B^K = \{\alpha_1,\ldots,\alpha_\ell\}$,
$\Gamma(K) = (\Vg,\Eg)$, and $K_i$ an adjacent chamber
with $B^{K_i} = \{\sigma^K_i(\alpha_1),\ldots,\sigma^K_i(\alpha_\ell)\}$ and $\Gamma(K_i)=(\Vg_i,\Eg_i)$.
Then if $\{i,j\} \notin \Eg$ ($i\neq j$) but $\{j,k\} \in \Eg$ then $\{j,k\} \in \Eg_i$ (disregarding the labels).
\end{lemma}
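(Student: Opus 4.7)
The plan is to reduce the combinatorial statement about edges to an algebraic statement about the Cartan coefficients $c^K_{ij}$ of the basis $B^K$, and then invoke the two preceding lemmas (\emph{cij m=3} and \emph{cij=0}) essentially verbatim.

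First, I would translate the hypothesis $\{i,j\}\notin\Eg$ into the algebraic condition $c^K_{ij}=c^K_{ji}=0$. Indeed, by the definition of $\Gamma(K)$, the non-edge condition for $i\neq j$ is equivalent to $\vert\Ac_{\alpha_i^\perp\cap\alpha_j^\perp}\vert=2$, and by Lemma \ref{cij m=3} this is in turn equivalent to $c^K_{ij}=c^K_{ji}=0$. Similarly, the hypothesis $\{j,k\}\in\Eg$ forces $c^K_{jk}\neq 0$ (note $j\neq k$ since an edge is between distinct vertices, and we cannot have $k=i$ because then $\{j,k\}=\{i,j\}$ would contradict $\{i,j\}\notin\Eg$).

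Second, I would apply Lemma \ref{cij=0} with the hypothesis $c^K_{ij}=0$ (where $i\neq j$): this yields $c^{K_i}_{jk}=c^K_{jk}$ for every index $k\in\{1,\ldots,\ell\}$. Combined with $c^K_{jk}\neq 0$, this produces $c^{K_i}_{jk}\neq 0$. Running the equivalence from Lemma \ref{cij m=3} in the opposite direction inside the chamber $K_i$ (applied to the basis $B^{K_i}=\{\sigma^K_i(\alpha_1),\ldots,\sigma^K_i(\alpha_\ell)\}$), this nonvanishing coefficient is equivalent to $\vert\Ac_{\sigma^K_i(\alpha_j)^\perp\cap\sigma^K_i(\alpha_k)^\perp}\vert\geq 3$, i.e., $\{j,k\}\in\Eg_i$, which is the desired conclusion.

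There is no real obstacle here: the whole argument is a chain of three equivalences/implications pulled directly from the preceding two lemmas. The only subtlety is index bookkeeping, namely confirming that the degenerate cases $k=i$ and $k=j$ do not cause trouble (the first is excluded by the edge hypothesis $\{j,k\}\in\Eg$ together with $\{i,j\}\notin\Eg$, and the second is excluded because edges have distinct endpoints). Notice also that the statement is explicitly given modulo labels, which is why it suffices to show that the edge $\{j,k\}$ is still present in $\Gamma(K_i)$ without having to track how $m^{K_i}(j,k)$ compares to $m^K(j,k)$.
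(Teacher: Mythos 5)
Your proof is correct and follows essentially the same route as the paper: translate the non-edge $\{i,j\}$ and the edge $\{j,k\}$ into $c^K_{ij}=0$ and $c^K_{jk}\neq 0$ via Lemma \ref{cij m=3}, invoke Lemma \ref{cij=0} to get $c^{K_i}_{jk}=c^K_{jk}\neq 0$, and convert back. The extra bookkeeping you include (ruling out $k=i$ and $k=j$) is sound and only makes explicit what the paper leaves implicit.
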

\begin{proof}
Since $\{j,k\} \in \Eg$ and $m^K(j,k) \geq 3$ by Lemma \ref{cij m=3} we have $c^K_{jk}\neq 0$. Hence $c^{K_i}_{jk}\neq 0$ by Lemma \ref{cij=0} and so again by Lemma \ref{cij m=3}
$m^{K_i}(j,k) \geq 3$ and $\{j,k\} \in \Eg_i$.
\end{proof}

The next lemma is a direct generalization of \cite[Prop.~4.6]{MR2498801} from crystallographic arrangements
to general simplicial arrangements. It may be proved completely analogously but here we give a more geometric proof.
\begin{lemma}\label{simpl irred graph}
Let $\Ac$ be a simplicial $\ell$-arrangement with chambers $\Kc(\Ac)$. Then the following are equivalent.
\begin{enumerate}
\item $\Ac$ is an irreducible arrangement.
\item $\Gamma(K)$ is connected for all $K \in \Kc(\Ac)$.
\item $\Gamma(K)$ is connected for some $K \in \Kc(\Ac)$.
\end{enumerate}
\end{lemma}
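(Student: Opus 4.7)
My plan is to prove the cycle $(2) \Rightarrow (3) \Rightarrow (1) \Rightarrow (2)$. The implication $(2) \Rightarrow (3)$ is immediate since $\Kc(\Ac)$ is nonempty. For $(3) \Rightarrow (1)$ I will argue the contrapositive: if $\Ac = \Ac_1 \times \Ac_2$ is a nontrivial product, then any chamber $K$ factors as $K = K_1 \oplus K_2$ with basis $B^K = B^{K_1} \sqcup B^{K_2}$ viewed inside $V^* = V_1^* \oplus V_2^*$. For $\alpha \in B^{K_1}$ and $\beta \in B^{K_2}$, the subspace $X = \alpha^\perp \cap \beta^\perp$ lies in $L(\Ac)$, and Corollary~\ref{res loc product} gives $\Ac_X = (\Ac_1)_{\alpha^\perp} \times (\Ac_2)_{\beta^\perp}$, of cardinality two. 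Hence $\{\alpha,\beta\} \notin \Eg$, and $\Gamma(K)$ splits into at least two components for \emph{every} chamber $K$.

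For the substantial direction $(1) \Rightarrow (2)$ I will again work by contraposition. Suppose $\Gamma(K)$ is disconnected and write $B^K = B_1 \sqcup B_2$ respecting the components, both parts nonempty and with no edge between them. Set $V_i^* = \mathrm{span}(B_i)$, so $V^* = V_1^* \oplus V_2^*$, and let $V = V_1 \oplus V_2$ be the dual splitting. Since every hyperplane of $\Ac$ is a wall of some chamber and every chamber is reachable from $K$ by a gallery, the proof reduces to the following claim, established by induction on gallery length: for every gallery $G = (K = K^0, \ldots, K^n = K')$ the induced basis $B_G$ splits as $B_G = B^{K'}_1 \sqcup B^{K'}_2$ with $B^{K'}_i \subseteq V_i^*$, and $\Gamma(K')$ has no edges between the two parts.

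The inductive step is the technical heart. Assuming the claim for $K'$, consider $K'' = K'_i$ with $\alpha_i \in B^{K'}_1$, say. For any $\alpha_j \in B^{K'}_2$ the ``no cross edge'' hypothesis gives $m^{K'}(i,j) = 2$, whence $c^{K'}_{ij} = 0$ by Lemma~\ref{cij m=3}; therefore $\sigma^{K'}_i(\alpha_j) = \alpha_j$ stays in $V_2^*$, while for $\alpha \in B^{K'}_1$ the image $\sigma^{K'}_i(\alpha)$ remains in $V_1^*$, so $B^{K''}$ splits compatibly. To check the edge condition in $\Gamma(K'')$: for $j \in B^{K'}_2$, Lemma~\ref{cij=cij'} gives $c^{K''}_{ij} = c^{K'}_{ij} = 0$; and for $j \in B^{K'}_2, k \in B^{K'}_1 \setminus \{i\}$, Lemma~\ref{cij=0} applied to $c^{K'}_{ij} = 0$ yields $c^{K''}_{jk} = c^{K'}_{jk}$, which vanishes by the inductive hypothesis. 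Lemma~\ref{cij m=3} then converts these vanishings back to $m^{K''} = 2$, so $\Gamma(K'')$ has no cross edges either.

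The main obstacle will be precisely this bookkeeping inside the induction, where the basis splitting and the absence of cross edges must be propagated simultaneously under wall reflections, relying on the interplay of Lemmas~\ref{cij=cij'}, \ref{cij=0}, and \ref{cij m=3}. Once the induction is complete, every wall of every chamber has a normal in $V_1^* \cup V_2^*$, and so $\Ac$ decomposes as a nontrivial product $\Ac_1 \times \Ac_2$ with $\Ac_i$ the arrangement cut out in $V_i$, contradicting irreducibility and concluding $(1) \Rightarrow (2)$.
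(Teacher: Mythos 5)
Your proposal is correct, and its overall skeleton coincides with the paper's: $(2)\Rightarrow(3)$ is trivial, $(3)\Rightarrow(1)$ is the same contrapositive argument via the product structure of the intersection lattice, and $(1)\Rightarrow(2)$ is in both cases a contrapositive argument propagating the splitting of $B^K$ along galleries. The genuine difference lies in how the inductive step of $(1)\Rightarrow(2)$ is established. You derive it formally from the coefficient lemmas: $c^{K'}_{ij}=0$ across the split (Lemma \ref{cij m=3} together with Remark \ref{Rem_simpl_refl}), invariance of the cross coefficients under the reflection (Lemmas \ref{cij=cij'} and \ref{cij=0}), and then translation back to $m^{K''}=2$. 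The paper instead gives a direct geometric argument: assuming a cross edge appears at an adjacent chamber $K\alpha$, it produces a wall normal of the form $\alpha'+(c-d)\alpha+b\beta$ with $b>0$ in $B^{K\beta}$, contradicting the known shape of that basis (using $K\alpha\beta(-\alpha)=K\beta$). The authors explicitly remark that the lemma generalizes \cite[Prop.~4.6]{MR2498801} and ``may be proved completely analogously but here we give a more geometric proof''; your argument is essentially that analogous algebraic proof. What your route buys is that the inductive step becomes pure bookkeeping once Lemmas \ref{cij=cij'}, \ref{cij=0}, and \ref{cij m=3} are in hand, with no new geometric reasoning; what the paper's route buys is a self-contained geometric picture of why a cross edge cannot be created. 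Both are complete; I see no gap in your induction.
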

\begin{proof}
We may assume that $\ell$ is at least $2$ since otherwise the statement of the theorem is trivial.

The implication $(2){\Rightarrow}(3)$ is trivial.

$(1){\Rightarrow}(2)$. %
Suppose there is a $K \in \Kc(\Ac)$ such that $\Gamma(K) = (\Vg,\Eg)$ is not connected.
Then there is a partition $\Vg= B^K = \Delta_1 \dot{\cup} \Delta_2$ such that $\vert\Ac_{\alpha^\perp \cap \beta^\perp}\vert=2$
for $\alpha \in \Delta_1$, and $\beta \in \Delta_2$.
Without loss of generality let $\alpha \in \Delta_1$. Then
\begin{equation*}
B^{K\alpha} = \{-\alpha\} \dot{\cup} \{\alpha' + c_{\alpha'}\alpha \mid \alpha' \in \Delta_1 \setminus \{\alpha\} \} \dot{\cup} \Delta_2
\end{equation*}
is a basis for ${K\alpha}$ for certain $c_{\alpha'} \geq 0$, c.f.\ Lemma \ref{cij unique}.
Assume that there are $\alpha' + c\alpha \in B^{K\alpha}$ and $\beta \in \Delta_2 \subseteq B^{K\alpha}$ with
$\vert\Ac_{(\alpha' + c\alpha)^\perp \cap \beta^\perp}\vert \geq 3$.
Then there is a $b > 0$ such that $\alpha' + c\alpha + b\beta \in B^{K\alpha\beta}$.
Note that $K\alpha\beta(-\alpha) = K\beta$ since $\vert\Ac_{\alpha^\perp \cap \beta^\perp}\vert = 2$.
Then there is a $d \geq 0$ such that $\alpha' + c\alpha + b\beta + d(-\alpha) = \alpha' + (c-d)\alpha + b\beta \in B^{K\beta}$.
But $B^{K\beta} = \Delta_1 \dot{\cup} \{-\beta\} \dot{\cup} \{ \beta' + c_{\beta'}\beta \mid \beta' \in \Delta_2 \setminus \{\beta\}\}$
which gives a contradiction.
So for all $\alpha' + c_{\alpha'}\alpha \in B^{K\alpha}$ and $\beta \in \Delta_1$ we have
$\vert\Ac_{(\alpha' + c\alpha)^\perp \cap \beta^\perp}\vert = 2$.
We conclude that for all $\gamma \in B^K$, for the corresponding adjacent chamber $K\gamma$ there is a partition
$B^{K\gamma} = \tilde{\Delta}_1 \dot{\cup} \tilde{\Delta}_2$ with
$\tilde{\Delta}_i \subset \sum_{\lambda \in \Delta_i} \RR \lambda$ and
$\vert\Ac_{\tilde{\alpha}^\perp \cap \tilde{\beta}^\perp}\vert = 2$ for all
$\tilde{\alpha} \in \tilde{\Delta}_1$, $\tilde{\beta} \in \tilde{\Delta}_2$.
Hence for all $H \in \Ac$  by induction using a gallery from $K$ to some chamber $K'$ with $H \in W^{K'}$
we either have $H = (\sum_{\alpha \in \Delta_1} c_\alpha \alpha)^\perp$ with $c_\alpha \in \RR$,
or $H = (\sum_{\beta \in \Delta_2} c_\beta \beta)^\perp$ with $c_\beta \in \RR$ which means that $\Ac$ is reducible.

$(3){\Rightarrow}(1)$. Suppose that $\Ac$ is reducible. Then there exists a basis $\{x_1,\ldots,x_r\}\dot{\cup}\{y_1,\ldots,y_s\}$
of $V^*$ with $r,s \geq 1$ such that for $H \in \Ac$ and $H = \gamma^\perp$ for some $\gamma \in V^*$ we either have
$\gamma \in \sum_{i=1}^r \RR x_i$ or $\gamma \in \sum_{j=1}^s \RR y_j$.
Let $K \in \Kc(\Ac)$ be chamber of $\Ac$. Then $B^K = \Delta_1 \dot{\cup} \Delta_2$ with $\Delta_1 = B^K \cap \sum_i \RR x_i$ and
$\Delta_2 = B^K \cap \sum_j \RR y_j$.
Since $\Ac$ is simplicial, $B^K$ is a basis of $V^*$ and we have $\Delta_i \neq \emptyset$ for $i=1,2$.
Furthermore $\Ac_{\alpha^\perp \cap \beta^\perp} = \{ \alpha^\perp, \beta^\perp \}$ for $\alpha \in \Delta_1$, $\beta \in \Delta_2$
and hence $\Gamma(K)$ is not connected.
\end{proof}

\begin{lemma}\label{label not connected}
Let $\Ac$ be a simplicial $\ell$-ar\-range\-ment,
$K \in \Kc(\Ac)$ with $B^K=\{\alpha_1,\ldots,\alpha_\ell\}$
and $\Gamma(K) = (\Vg,\Eg)$ with vertices $\Vg= \{1,\ldots,\ell\}$.
Suppose that $\{i,j\} \in \Eg$ with label $m^K(i,j)$ and there is a $k \in \Vg\setminus \{i,j\}$ such that $\{k,i\} \notin \Eg$ and $\{k,j\} \notin \Eg$.
Then $\{i,j\}$ is an edge in $\Gamma(K_k)$ with the same label $m^{K_k}(i,j) = m^K(i,j)$.
\end{lemma}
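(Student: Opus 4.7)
The plan is to show that when the reflection passes through $\alpha_k^\perp$, the $i$-th and $j$-th elements of the induced basis on $K_k$ are literally $\alpha_i$ and $\alpha_j$ themselves, so the label on $\{i,j\}$ cannot change.

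First, I would translate the hypothesis $\{k,i\} \notin \Eg$ into $m^K(k,i) = 2$, and apply Lemma \ref{cij m=3} to conclude $c^K_{ki} = c^K_{ik} = 0$. The same reasoning with $\{k,j\} \notin \Eg$ yields $c^K_{kj} = c^K_{jk} = 0$.

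Second, recall that the basis for the adjacent chamber $K_k$ induced by $B^K$ is
\[
B^{K_k} = \{\sigma^K_k(\alpha_r) = \alpha_r - c^K_{kr}\alpha_k \mid r = 1,\ldots,\ell\}.
\]
Plugging in $r = i$ and $r = j$ and using the vanishing established in the first step, the $i$-th and $j$-th elements of $B^{K_k}$ are precisely $\alpha_i$ and $\alpha_j$.

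Third, by the definition of the Coxeter graph $\Gamma(K_k)$ with its inherited numbering, the label on the edge $\{i,j\}$ equals
\[
m^{K_k}(i,j) = |\Ac_{(\sigma^K_k(\alpha_i))^\perp \cap (\sigma^K_k(\alpha_j))^\perp}| = |\Ac_{\alpha_i^\perp \cap \alpha_j^\perp}| = m^K(i,j) \geq 3,
\]
so $\{i,j\}$ is indeed an edge of $\Gamma(K_k)$ and carries the same label. Since every step is a direct unwinding of definitions combined with Lemma \ref{cij m=3}, there is no real obstacle; the argument essentially refines Lemma \ref{graph adj chamber} by tracking labels in addition to incidence.
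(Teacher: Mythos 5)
Your proof is correct and is essentially the paper's own argument: the paper likewise observes that $\sigma^K_k$ fixes $\alpha_i$ and $\alpha_j$ (which is exactly your computation $c^K_{ki}=c^K_{kj}=0$ via Lemma \ref{cij m=3}) and concludes the label is preserved by the same display. The only cosmetic difference is that the paper cites Lemma \ref{graph adj chamber} for the edge's existence, whereas you deduce it directly from the label being at least $3$.
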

\begin{proof}
That $\{i,j\}$ is an edge in $\Gamma(K_k)$ is simply Lemma \ref{graph adj chamber}.
The second statement holds because $\sigma^K_k(\alpha_i) = \alpha_i$ and $\sigma^K_k(\alpha_j) = \alpha_j$
and thus
$$
  m^{K_k}(i,j) = \vert \Ac_{\sigma^K_k(\alpha_i)^\perp\cap \sigma^K_k(\alpha_j)^\perp} \vert
  = \vert \Ac_{\alpha_i^\perp \cap \alpha_j^\perp} \vert = m^K(i,j).
$$
\end{proof}

\begin{lemma}\label{loc subgraph}
Let $\Ac$ be a simplicial $\ell$-ar\-range\-ment, $X \in L_q(\Ac)$ for $1\leq q \leq \ell$,
and $K_X \in \Kc(\Ac_X/X)$ be a chamber of the localization $\Ac_X/X$.
Let $K \in \Kc(\Ac)$ with $B^K=\{\alpha_1,\ldots,\alpha_\ell\}$ such that $X = \bigcap_{j=1}^q \alpha_{i_j}^\perp$,
$K_X = \bigcap_{j=1}^q \alpha_{i_j}^+/X$,
and $\Gamma(K)$ with corresponding vertices $\Vg= \{1,\ldots,\ell\}$.
Then $\Gamma(K_X)$ is the induced subgraph on the $q$ vertices $\{i_1,\ldots,i_q\} \subseteq \Vg$ of
$\Gamma(K)$ including the labels.
\end{lemma}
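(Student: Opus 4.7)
The plan is to identify the basis of $K_X$ in the localized arrangement $\Ac_X/X$ and then compare edge labels via the natural bijection between hyperplanes through $X$ in $\Ac$ and hyperplanes in $\Ac_X/X$.

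First I would determine $B^{K_X}$. By hypothesis, $X = \bigcap_{j=1}^q \alpha_{i_j}^\perp$ (so $\alpha_{i_1},\ldots,\alpha_{i_q}$ descend to linearly independent forms $\bar{\alpha}_{i_1},\ldots,\bar{\alpha}_{i_q}\in (V/X)^*$) and $K_X = \bigcap_{j=1}^q \alpha_{i_j}^+/X$. The walls of $K_X$ are therefore exactly the hyperplanes $\alpha_{i_j}^\perp/X$ for $j=1,\ldots,q$, with corresponding inward-pointing normals $\bar{\alpha}_{i_1},\ldots,\bar{\alpha}_{i_q}$. Since $\Ac_X/X$ is itself simplicial by Lemma \ref{loc res simpl}(1) and $r(\Ac_X/X)=q=\dim(V/X)$, these $q$ forms are a basis for $K_X$, i.e.\ $B^{K_X}=\{\bar{\alpha}_{i_1},\ldots,\bar{\alpha}_{i_q}\}$. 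This identifies the vertex set of $\Gamma(K_X)$ with $\{i_1,\ldots,i_q\}\subseteq\Vg$.

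Next I would compare the labels. For distinct $i_j,i_k \in \{i_1,\ldots,i_q\}$ we have $X \subseteq \alpha_{i_j}^\perp\cap\alpha_{i_k}^\perp$, so every hyperplane of $\Ac$ containing $\alpha_{i_j}^\perp\cap\alpha_{i_k}^\perp$ automatically contains $X$; that is, $\Ac_{\alpha_{i_j}^\perp\cap\alpha_{i_k}^\perp}=(\Ac_X)_{\alpha_{i_j}^\perp\cap\alpha_{i_k}^\perp}$. Quotienting by $X$ is a bijection on the set of hyperplanes of $\Ac_X$ and sends $\alpha_{i_j}^\perp\cap\alpha_{i_k}^\perp$ to $\bar{\alpha}_{i_j}^\perp\cap\bar{\alpha}_{i_k}^\perp$, hence
$$
m^{K_X}(i_j,i_k)=\bigl|(\Ac_X/X)_{\bar{\alpha}_{i_j}^\perp\cap\bar{\alpha}_{i_k}^\perp}\bigr|=\bigl|\Ac_{\alpha_{i_j}^\perp\cap\alpha_{i_k}^\perp}\bigr|=m^K(i_j,i_k).
$$

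Finally, the edge set of $\Gamma(K_X)$ consists of those pairs $\{i_j,i_k\}$ with $m^{K_X}(i_j,i_k)\geq 3$, which by the previous equality coincides with the set of edges of $\Gamma(K)$ lying within $\{i_1,\ldots,i_q\}$. Together with the identification of vertices and equality of labels, this shows $\Gamma(K_X)$ is the induced labeled subgraph of $\Gamma(K)$ on $\{i_1,\ldots,i_q\}$. There is no real obstacle; the only point to be careful about is verifying the inclusion $X\subseteq\alpha_{i_j}^\perp\cap\alpha_{i_k}^\perp$ so that the hyperplane counts used to compute $m^{K_X}$ and $m^K$ really do agree.
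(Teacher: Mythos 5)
Your proof is correct and is essentially the paper's argument spelled out in detail: the paper compresses the whole thing into the single observation that $L(\Ac_X)$ is the interval $[V,X]$ in $L(\Ac)$, which is exactly the fact you use when you note $\Ac_{\alpha_{i_j}^\perp\cap\alpha_{i_k}^\perp}=(\Ac_X)_{\alpha_{i_j}^\perp\cap\alpha_{i_k}^\perp}$ and that quotienting by $X$ preserves the hyperplane counts. Your explicit identification of $B^{K_X}$ via Lemma \ref{loc res simpl}(1) is a welcome elaboration of a step the paper leaves implicit.
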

\begin{proof}
For $q=1$ the statement is trivially true.
For $q \geq 2$ this is easily seen as the intersection lattice $L(\Ac_X)$ is an interval in the intersection lattice $L(\Ac)$, i.e.\ $L(\Ac_X) = L(\Ac)_X  = [V,X] = \{ Z \in L(\Ac) \mid  Z \leq X \}$.
\end{proof}

With the correspondence from the previous lemma and Lemma \ref{simpl irred graph}
we obtain the following corollary for irreducible simplicial arrangements.

\begin{corol}\label{ex irred loc}
Let $\Ac$ be an irreducible simplicial $\ell$-arrangement and $K \in \Kc(\Ac)$. Then there is an
$X \in L_{\ell - 1}(W^K) \subseteq L(\Ac)$ such that $(\Ac_X/X,V/X)$ is an irreducible simplicial $(\ell-1)$-arrangement.
\end{corol}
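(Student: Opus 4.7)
The plan is to convert the geometric/combinatorial statement into a purely graph-theoretic one via the correspondence between the Coxeter graph $\Gamma(K)$ and irreducibility/localization established in Lemmas \ref{simpl irred graph} and \ref{loc subgraph}. The entire argument then reduces to a standard fact about connected graphs.

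First, I would fix a basis $B^K = \{\alpha_1,\dots,\alpha_\ell\}$ for $K$ and consider the Coxeter graph $\Gamma(K) = (\Vg,\Eg)$ with $\Vg = \{1,\dots,\ell\}$. Since $\Ac$ is irreducible by hypothesis, Lemma \ref{simpl irred graph} guarantees that $\Gamma(K)$ is connected. The case $\ell=1$ being vacuous (or trivial), we may assume $\ell \geq 2$.

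Next, I would invoke the elementary graph-theoretic fact that every finite connected graph on at least two vertices contains a non-cut vertex --- for instance, any leaf of a spanning tree of $\Gamma(K)$ will do. Pick such a vertex $i_0 \in \Vg$, so that the induced subgraph on $\Vg \setminus \{i_0\}$ is still connected. Define
\[
X := \bigcap_{j \neq i_0} \alpha_j^\perp \in L_{\ell-1}(W^K) \subseteq L(\Ac),
\]
which is an intersection of $\ell-1$ walls of $K$ and hence of rank $\ell-1$ since $B^K$ is a basis of $V^*$.

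Then I would apply Lemma \ref{loc subgraph} to the chamber $K_X := \bigcap_{j \neq i_0} \alpha_j^+/X$ of $\Ac_X/X$: the Coxeter graph $\Gamma(K_X)$ is precisely the induced subgraph of $\Gamma(K)$ on the vertex set $\Vg \setminus \{i_0\}$, together with its labels. By our choice of $i_0$, this induced subgraph is connected, so Lemma \ref{simpl irred graph} (applied to the $(\ell-1)$-arrangement $\Ac_X/X$, which is simplicial by Lemma \ref{loc res simpl}(1)) yields that $\Ac_X/X$ is irreducible. This finishes the proof.

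The main conceptual point is the translation irreducible $\leftrightarrow$ connected and localization $\leftrightarrow$ induced subgraph, which is already done in the cited lemmas; after that the only content is the standard existence of a non-cut vertex in a connected graph, so I do not expect any genuine obstacle.
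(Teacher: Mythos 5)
Your proof is correct and is essentially the argument the paper intends: the paper derives this corollary directly from Lemma \ref{loc subgraph} (localization $\leftrightarrow$ induced subgraph) and Lemma \ref{simpl irred graph} (irreducible $\leftrightarrow$ connected), with the only additional ingredient being the existence of a non-cut vertex in a connected graph, exactly as you use it. No discrepancies to report.
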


To describe the connection between restrictions of simplicial arrangements and Coxeter graphs we need a
bit more notation.

\begin{defin}
Let $\Ac$ be a simplicial arrangement, $K \in \Kc(\Ac)$, $\alpha \in B^K$ and $H = \alpha^\perp \in W^K$. Then
we denote the induced chamber in the restriction $\Ac^H$ by
$$
	K^H = (\bigcap_{\beta \in B^K \setminus \{\alpha\}} \beta^+) \cap H,
$$
and a basis for $K^H$ is given by
$$
B^{K^H} = \{ \beta^H \mid \beta^H := \beta\vert_{H^*} \text{ and } \beta \in B^K \setminus \{\alpha\}\}.
$$

Let $\Gamma(K) = (\Vg,\Eg)$ be the Coxeter graph of $K$ and suppose that there is an edge
$\{\alpha,\beta\} \in \Eg$  connecting the vertices $\alpha$ and $\beta$.
Define $\Gamma^{\alpha\beta} := (\Vg^{\alpha\beta},\Eg^{\alpha\beta})$ to be the (unlabeled) graph with vertices
$$
\Vg^{\alpha\beta} := \Vg \setminus \{\alpha,\beta\} \cup \{\alpha\beta\},
$$
and edges
\begin{eqnarray*}
\Eg^{\alpha\beta} :=
&\{ \{\gamma,\delta\} \in \Eg \mid \{\gamma,\delta\} \cap \{\alpha,\beta\} = \emptyset \} \cup\\
	 &\{ \{\alpha\beta,\gamma\} \mid  \{\alpha,\gamma\} \in \Eg \text{ or } \{\beta,\gamma\} \in \Eg\},
\end{eqnarray*}
i.e.\ the \emph{contraction} of $\Gamma(K)$ along the edge $\{\alpha,\beta\}$.
\end{defin}

It is convenient to use the following notation:
If $\Gamma(K) = (\Vg,\Eg)$ with $\Vg=\{1,\ldots,\ell\}$ corresponding to $B^K=\{\alpha_1, \ldots, \alpha_\ell\}$,
$I \subseteq \Vg$ with $I = \{i_1, \ldots, i_r\}$ and
$X = \cap_{i \in I} \alpha_i^\perp$ then for the localization $\Ac_X$ at the intersection adjacent to the chamber $K$
we simply write $\Ac^K_{i_1i_2\cdots i_r}$,
e.g.\ for $\Ac_{\alpha_1^\perp\cap\alpha_2^\perp\cap\alpha_4^\perp}$ we write $\Ac^K_{124}$.

\begin{lemma}\label{res subgraph}
Let $\Ac$ be a simplicial $\ell$-arrangement and $K \in \Kc(\Ac)$ with Coxeter graph $\Gamma(K) = (\Vg,\Eg)$.
Suppose $\{\alpha,\beta\} \in \Eg$ is an edge. Let $H \in \Ac_{\alpha^\perp\cap\beta^\perp}$
be the wall of $K\alpha$ with $H \neq \alpha^\perp$, i.e.\ $H = \sigma^K_\alpha(\beta)^\perp$, and let
$\Gamma^H =(\Vg^H,\Eg^H):= \Gamma((K\alpha)^H)$ be the Coxeter graph of the chamber $(K\alpha)^H \in \Kc(\Ac^H)$.
Then we have the following:
\begin{enumerate}
\item The contracted graph $\Gamma^{\alpha\beta}$ is isomorphic to a subgraph of $\Gamma^H$ in the following way:
Let $\rho:\Vg^{\alpha\beta} \to \Vg^H$ be the bijective map defined by
$$
\rho(\gamma) :=
\begin{cases}
  \sigma^K_\alpha(\gamma)^H & \text{ if } \gamma \neq \alpha\beta \\
  \sigma^K_\alpha(\alpha)^H = (-\alpha)^H & \text{ if } \gamma=\alpha\beta
\end{cases}.
$$
If $\{\gamma,\delta\} \in \Eg^{\alpha\beta}$ then $\{\rho(\gamma),\rho(\delta)\} \in \Eg^H$, i.e.\ $\Eg^{\alpha\beta} \subseteq \Eg^H$ disregarding the labels.

\item If $\{\alpha, \gamma\} \in \Eg$ ($\gamma \neq \beta$) is labeled with $m(\alpha, \gamma)$
then for the corresponding label in $\Gamma^H$
we have $m^H(\rho(\alpha\beta),\rho(\gamma)) \geq m(\alpha, \gamma)$ (see Figure \ref{fig:contr res}(a)).

\item If $\{\alpha,\beta\},\{\alpha,\gamma\}$, and $\{\beta,\gamma\}$ are edges in $\Eg$, then
for the label of the edge $\{\rho(\alpha\beta),\rho(\gamma)\}$ in $\Gamma^H$ we have
$m^H(\rho(\alpha\beta),\rho(\gamma)) \geq m(\alpha,\gamma)+m(\beta,\gamma) - 2$ (see Figure \ref{fig:contr res}(b)).
\end{enumerate}

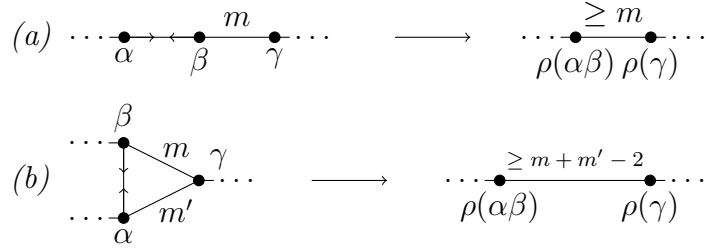
\begin{figure}
\begin{tikzpicture}
\node (a) at (0,0) {};
\node at ($(a)-(1.25,0)$) {(a)};

\node (b) at (6,0) {};

\filldraw [black]
	($(a) + (0,0)$) circle [radius=2pt]
	($(a) + (1,0)$) circle [radius=2pt]
	($(a) + (2,0)$) circle [radius=2pt];

\draw ($(a) + (-0.2,0)$) --  ($(a) + (0,0)$) -- ($(a) + (1,0)$) -- ($(a) + (2,0)$) -- ($(a) + (2.2,0)$);
\draw [->] ($(a) + (0,0)$) -- ($(a) + (0.4,0)$);
\draw [->] ($(a) + (1,0)$) -- ($(a) + (0.6,0)$);
\node at ($(a) + (-0.5,0)$){$\ldots$};
\node at ($(a) + (2.5,0)$){$\ldots$};
\node [above] at ($(a) + (1.5,0)$){$m$};

\node [below] at ($(a) + (0,0)$) {$\alpha$};
\node [below] at ($(a) + (1,0)$) {$\beta$};
\node [below] at ($(a) + (2,0)$) {$\gamma$};

\draw [->] ($0.4*(a)+0.6*(b)$) -- ($0.4*(a)+0.6*(b) + (1,0)$);

\filldraw [black]
	($(b) + (0,0)$) circle [radius=2pt]
	($(b) + (1,0)$) circle [radius=2pt];

\draw ($(b) + (-0.2,0)$) --  ($(b) + (0,0)$) -- ($(b) + (1,0)$) -- ($(b) + (1.2,0)$);
\node at ($(b) + (-0.5,0)$){$\ldots$};
\node at ($(b) + (1.5,0)$){$\ldots$};
\node [above] at ($(b) + (0.5,0)$){$\geq m$};

\node [below] at ($(b) + (0,0)$) {$\rho(\alpha\beta)$};
\node [below] at ($(b) + (1,0)$) {$\rho(\gamma)$};

\end{tikzpicture}

\begin{tikzpicture}
\node (a) at (0,0) {};
\node at ($(a)-(1.25,0)$) {(b)};

\filldraw [black]
	($(a) + (0,0.5)$) circle [radius=2pt]
	($(a) + (1,0)$) circle [radius=2pt]
	($(a) + (0,-0.5)$) circle [radius=2pt];
\draw
	 ($(a) + (0,0.5)$) -- ($(a) + (0,-0.5)$) -- ($(a) + (1,0)$) -- ($(a) + (0,0.5)$);
\draw [->]  ($(a) + (0,0.5)$) -- ($(a) + (0,0.1)$);
\draw [->]  ($(a) + (0,-0.5)$) -- ($(a) + (0,-0.1)$);
\draw ($(a) + (-0.2,0.5)$) -- ($(a) + (0,0.5)$);
\node at ($(a) + (-0.5,0.5)$) {$\ldots$};
\draw ($(a) + (-0.2,-0.5)$) -- ($(a) + (0,-0.5)$);
\node at ($(a) + (-0.5,-0.5)$) {$\ldots$};
\draw ($(a) + (1,0)$) -- ($(a) + (1.2,0)$);
\node at ($(a) + (1.5,0)$) {$\ldots$};
\node at ($(a) + (0.7,0.4)$) {$m$};
\node at ($(a) + (0.7,-0.4)$) {$m'$};
\node [above] at ($(a) + (0,0.5)$) {$\beta$};
\node [below] at ($(a) + (0,-0.5)$) {$\alpha$};
\node [above right] at ($(a) + (1,0)$) {$\gamma$};

\node (b) at (5,0) {};
\draw [->] ($0.5*(a)+0.5*(b)$) -- ($0.5*(a)+0.5*(b) + (1,0)$);

\filldraw [black]
	($(b) + (0,0)$) circle [radius=2pt]
	($(b) + (2,0)$) circle [radius=2pt];
\draw
	 ($(b) + (0,0)$) -- ($(b) + (2,0)$);
\draw ($(b) + (-0.2,0)$) -- ($(b) + (0,0)$);
\node at ($(b) + (-0.5,0)$) {$\ldots$};
\draw ($(b) + (2,0)$) -- ($(b) + (2.2,0)$);
\node at ($(b) + (2.5,0)$){$\ldots$};

\node [above] at ($(b) + (1,0)$) {\tiny{$\geq m+m' - 2$}};

\node [below] at ($(b) + (0,0)$) {$\rho(\alpha\beta)$};
\node [below] at ($(b) + (2,0)$) {$\rho(\gamma)$};
\end{tikzpicture}
\caption{Labels and contraction of Coxeter graphs.}
\label{fig:contr res}
\end{figure}
\end{lemma}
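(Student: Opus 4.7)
The common strategy is, for each part, to identify the codimension-$3$ subspace $Y \subseteq V$ with $Y \subseteq H$ whose image in $H$ is the rank-$2$ intersection of $\Ac^H$ in question, and use
\[
|(\Ac^H)_Y| = \#\{X \cap H : X \in \Ac_Y,\ X \neq H\}.
\]
The basic elementary fact to use repeatedly is: if $Z \subseteq V$ is a codimension-$2$ subspace with $Z \not\subseteq H$, then distinct hyperplanes in the pencil $\Ac_Z$ restrict to distinct hyperplanes of $H$, because $X \cap H = X' \cap H$ would force $X \cap X' \subseteq H$ and thus $Z \subseteq H$, a contradiction.

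For part $(2)$, take $Y = \alpha^\perp \cap \gamma^\perp \cap H$. Linear independence of $\alpha,\beta,\gamma$ ensures $\alpha^\perp \cap \gamma^\perp \not\subseteq H$, so the pencil $Z = \alpha^\perp \cap \gamma^\perp$ has all its $m(\alpha,\gamma)$ members distinct from $H$ and contained in $\Ac_Y$; by the basic fact they restrict to $m(\alpha,\gamma)$ distinct hyperplanes of $H$, yielding the desired bound.

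For part $(3)$, keep the same $Y$; since $Y \subseteq H$ forces $\beta$ to vanish on $Y$, in fact $Y = \alpha^\perp \cap \beta^\perp \cap \gamma^\perp$. All three pencils $Z_1 := \alpha^\perp \cap \gamma^\perp$, $Z_2 := \beta^\perp \cap \gamma^\perp$, $Z_3 := \alpha^\perp \cap \beta^\perp$ lie in $\Ac_Y$. Since $Z_3 \subseteq H$, the $m(\alpha,\beta) - 1$ members of $\Ac_{Z_3} \setminus \{H\}$ all restrict to the single hyperplane $p_0 := \alpha^\perp \cap \beta^\perp$ of $H$, which also equals the restrictions of $\alpha^\perp$ and $\beta^\perp$. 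Applying the basic fact to $Z_1$ and $Z_2$ produces $m(\alpha,\gamma) - 1$ further distinct restrictions from $\Ac_{Z_1} \setminus \{\alpha^\perp\}$ and $m(\beta,\gamma) - 1$ from $\Ac_{Z_2} \setminus \{\beta^\perp\}$; these two families overlap only in the common member $\gamma^\perp \in Z_1 \cap Z_2$, since any hyperplane containing $Z_1 + Z_2 = \gamma^\perp$ must equal $\gamma^\perp$. Inclusion--exclusion yields
\[
|(\Ac^H)_Y| \geq 1 + (m(\alpha,\gamma) - 1) + (m(\beta,\gamma) - 1) - 1 = m(\alpha,\gamma) + m(\beta,\gamma) - 2.
\]

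For part $(1)$, edges of $\Gamma^{\alpha\beta}$ involving $\alpha\beta$ follow from part $(2)$. For a persistent edge $\{\gamma,\delta\} \in \Eg$ with $\gamma,\delta \in B^K \setminus \{\alpha,\beta\}$, the goal is to locate a codimension-$2$ subspace $Z \supseteq Y$ with $Z \not\subseteq H$ and $|\Ac_Z| \geq 3$, where $Y = \sigma^K_\alpha(\gamma)^\perp \cap \sigma^K_\alpha(\delta)^\perp \cap H$. If $c^K_{\alpha\gamma} = c^K_{\alpha\delta} = 0$ take $Z = \gamma^\perp \cap \delta^\perp$. If exactly one is zero, say $c^K_{\alpha\delta} = 0$, Lemma \ref{cij=0} gives $c^{K\alpha}_{\delta\gamma} = c^K_{\delta\gamma} \neq 0$, so $Z = \sigma^K_\alpha(\gamma)^\perp \cap \sigma^K_\alpha(\delta)^\perp$ works. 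The principal obstacle is the case $c^K_{\alpha\gamma}, c^K_{\alpha\delta} \neq 0$, in which no obvious pencil contains $Y$: here one would pass to the rank-$3$ localization $\Ac_W/W$ at $W = \alpha^\perp \cap \gamma^\perp \cap \delta^\perp$, which by Lemma \ref{loc subgraph} has triangular Coxeter graph on $\{\alpha,\gamma,\delta\}$ and hence is irreducible (Lemma \ref{simpl irred graph}), and combine a direct count of rank-$2$ elements of this simplicial $3$-arrangement with the Hansen--Motzkin theorem (Theorem \ref{gen Silvester}) to conclude $|\Ac_{\sigma^K_\alpha(\gamma)^\perp \cap \sigma^K_\alpha(\delta)^\perp}| \geq 3$.
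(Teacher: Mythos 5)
Your strategy for parts (2) and (3) — working directly with the pencils through the rank-$3$ flat $Y$ rather than, as the paper does, localizing to reduce to the rank-$3$ case and proving there the single inequality $\vert\Ac^H\vert\geq\vert\Ac^K_{13}\vert+\vert\Ac^K_{23}\vert-2$ — is reasonable, and part (2) is complete. But part (3) has a genuine gap in the inclusion--exclusion. You rule out overcounting only by noting that the pencils $\Ac_{Z_1}$ and $\Ac_{Z_2}$ share no hyperplane besides $\gamma^\perp$. That addresses the wrong failure mode: two \emph{distinct} hyperplanes $H'\supseteq Z_1$ and $H''\supseteq Z_2$ can perfectly well satisfy $H'\cap H=H''\cap H$ (projectively, a line through $z_1=\alpha^\perp\cap\gamma^\perp$ and a line through $z_2=\beta^\perp\cap\gamma^\perp$ meeting at a point of $H$ — three concurrent lines, which is combinatorially unremarkable), and then your two families of restrictions overlap in a second element. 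Excluding this requires the positional information that $H$ is the second wall of $K\alpha$ and that the non-wall members of $\Ac_{Z_1}$, $\Ac_{Z_2}$ do not cross $K$: writing normals in the basis $B^K$, a non-wall member of $\Ac_{Z_1}$ meets $H=(\beta-c^K_{\alpha\beta}\alpha)^\perp$ in a point whose $\gamma$-coordinate is $\leq 0$, while a non-wall member of $\Ac_{Z_2}$ meets it in a point whose $\gamma$-coordinate is $\geq 0$, with equality only for $\gamma^\perp$. This separation-of-signs step is precisely what the paper's count isolates as the claim ``$H_1\cap H\neq H_2\cap H$''; your argument does not supply it.

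The more serious problem is your plan for the hard case of part (1). For a persistent edge $\{\gamma,\delta\}$ with $c^K_{\alpha\gamma},c^K_{\alpha\delta}\neq 0$ you propose to prove $\vert\Ac_{\sigma^K_\alpha(\gamma)^\perp\cap\sigma^K_\alpha(\delta)^\perp}\vert\geq 3$, i.e.\ that the edge $\{\gamma,\delta\}$ survives into $\Gamma(K\alpha)$. That intermediate claim is false in general: by the paper's own Lemma \ref{diagram graph change}, the triangle chamber of $\Ac(8,1)\cong\Ac_3^2$ turns into a path upon crossing one of its walls, so $m^{K\alpha}(\gamma,\delta)=2$ can occur even though $\{\alpha,\gamma,\delta\}$ induce a triangle in $\Gamma(K)$; embedding such a localization into a rank-$4$ arrangement with an additional edge $\{\alpha,\beta\}$ (e.g.\ a chamber of type $\Gamma_4^3$) produces exactly your problematic configuration. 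Consequently no count of rank-$2$ flats of $\Ac_W/W$ combined with Theorem \ref{gen Silvester} can yield that bound, and the conclusion $\{\rho(\gamma),\rho(\delta)\}\in\Eg^H$ has to be obtained differently — for instance by analyzing the rank-$3$ localization at $Y=\sigma^K_\alpha(\gamma)^\perp\cap\sigma^K_\alpha(\delta)^\perp\cap H$ itself and excluding that it is a near-pencil with apex on $H$. As it stands, this case of part (1) is not proved by your sketch.
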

\begin{proof}
It suffices to prove the statements for $3$-arrangements (the statements are trivial for $2$-arrangements).
The general case then follows by taking localizations, the fact that $(\Ac^H)_X = (\Ac_X)^H$,
and Lemma \ref{loc subgraph}.
Let $B^K = \{ \alpha_1,\alpha_2,\alpha_3\}$ and denote the corresponding vertices of
$\Gamma(K)$ by $\{1,2,3\}$.

If $\Gamma(K)$ is not connected, i.e.\ $\Ac$ is reducible by Lemma \ref{simpl irred graph}, then either there is no edge in $\Gamma(K)$
and there is nothing to show, or $\Gamma(K)$ is the graph of Figure \ref{fig:red Graph rk3}.
In this case, all statements hold, since for all $H \in \Ac^K_{12}$ we
then have $\vert \Ac^H \vert =2$. So  $\Ac^H$ is reducible and the Coxeter graph of every chamber of $\Ac^H$
is the graph with $2$ vertices which are not connected and which is exactly isomorphic to the contracted graph $\Gamma^{\alpha_1\alpha_2}$.

\begin{figure}
\begin{tikzpicture}
\filldraw [black]
	(-1,0) circle [radius=2pt]
	(0,0) circle [radius=2pt]
	(1,0) circle [radius=2pt];
\draw
	(-1,0) -- (0,0);
\node[below] at (-1,0) {$1$};
\node[below] at (0,0) {$2$};
\node[below] at (1,0) {$3$};
\end{tikzpicture}
\caption{Coxeter graph of a reducible $3$-arrangement.}%
\label{fig:red Graph rk3}
\end{figure}
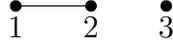

So assume $\Gamma(K)$ is connected. Without loss of generality let
$H = \sigma_1^K(\alpha_2)^\perp \in \Ac^K_{12}$.
Since $(\sigma^K_1(\alpha_1)^H)^\perp = (-\alpha_1^H)^\perp = \alpha_1^\perp\cap\alpha_2^\perp$
in $\Ac^H$, and 
$(\sigma^K_1(\alpha_3)^H)^\perp = \sigma^K_1(\alpha_2)^\perp \cap \sigma^K_1(\alpha_3)^\perp$
we have $(\sigma^K_1(\alpha_1)^H)^\perp \cap (\sigma^K_1(\alpha_3)^H)^\perp = \{0\}$.
We have to show that $\vert \Ac^H \vert \geq (\vert \Ac^K_{13} \vert -1) + (\vert \Ac^K_{23} \vert -1)$ to obtain all three statements.
Let $\Bc = \Ac^K_{13} \cup \Ac^K_{23}$. Then $\vert \Ac^H \vert \geq \vert \Bc^H \vert$ and
$\vert \Bc \vert = \vert \Ac^K_{13} \vert + \vert \Ac^K_{23} \vert -1$
(since $\Ac^K_{13} \cap \Ac^K_{23} = \{\alpha_3^\perp\}$).
We now deduce that $\vert \Bc^H \vert = \vert \Bc \vert -1$:

We have $W^K \subset \Bc$ and $\vert (W^K)^H \vert =2$.
Now let $H_1,H_2 \in \Bc \setminus W^K$ with $H_1 \neq H_2$.
We first observe that $H\cap H_1 \neq H \cap \tilde{H}$ for any $\tilde{H} \in W^K$.
But we also have $H_1 \cap H \neq H_2 \cap H$.
Hence all $H' \in \Bc \setminus W^K$ give different intersections with $H$.
Thus we obtain
\begin{eqnarray*}
\vert \Ac^H \vert &\geq
&\vert \Bc^H \vert =\vert (W^K)^H \vert + \vert (\Bc \setminus W^K)^H \vert \\
&= &2 + \vert (\Bc \setminus W^K) \vert = 2 + \vert \Bc \vert - 3 \\
&= &\vert \Ac^K_{13} \vert + \vert \Ac^K_{23} \vert -2.
\end{eqnarray*}
From this inequality by translating back to the corresponding Coxeter graphs (which are graphs with only two vertices) all statements directly follow.
\end{proof}

\begin{lemma}\label{res irred}
Let $\Ac$ be an irreducible simplicial $\ell$-arrangement and $X \in L_q(\Ac)$. Then the restriction $\Ac^X$ is an irreducible
simplicial $(\ell-q)$-arrangement.
\end{lemma}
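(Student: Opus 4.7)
The plan is to induct on $q := r(X)$. The base case $q = 0$ is trivial, since $X = V$ gives $\Ac^X = \Ac$. For $q \geq 2$, assuming the statement for all smaller values, I pick any $H_0 \in \Ac_X$ (nonempty, as $X \neq V$), observe that $X$ lies in $L_{q-1}(\Ac^{H_0})$ and that $(\Ac^{H_0})^X = \Ac^X$ by a direct set-theoretic check (the hyperplanes $H \in \Ac_X$ all satisfy $X \subseteq H$ so contribute nothing on either side), and apply the induction hypothesis to the simplicial $(\ell-1)$-arrangement $\Ac^{H_0}$. Hence the whole argument reduces to the critical case $q = 1$: the restriction $\Ac^{H_0}$ to a single hyperplane $H_0 \in \Ac$ is irreducible.

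Simpliciality of $\Ac^{H_0}$ has already been established in Lemma \ref{loc res simpl}(2), so only irreducibility is at issue. By Lemma \ref{simpl irred graph} it suffices to exhibit one chamber of $\Ac^{H_0}$ whose Coxeter graph is connected, and the natural tool is Lemma \ref{res subgraph}: for any chamber $K$ of $\Ac$ and any edge $\{\alpha, \beta\}$ of $\Gamma(K)$, the edge-contraction $\Gamma(K)^{\alpha\beta}$ embeds (disregarding labels) as a spanning subgraph of $\Gamma((K\alpha)^H)$, where $H = \sigma_\alpha^K(\beta)^\perp$. Since $\Ac$ is irreducible, $\Gamma(K)$ is connected (Lemma \ref{simpl irred graph}), and contracting an edge of a connected graph preserves connectivity; therefore $\Gamma((K\alpha)^H)$ is automatically connected. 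The main obstacle is that Lemma \ref{res subgraph} produces a restriction to the specific hyperplane $\sigma_\alpha^K(\beta)^\perp$, which is a wall of $K\alpha$ but generally not a wall of $K$; I need to arrange the data so that this coincides with the preassigned $H_0$.

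To overcome this, I pick a chamber $K' \in \Kc(\Ac)$ with $H_0 \in W^{K'}$, write $H_0 = {\alpha'}^\perp$ with $\alpha' \in B^{K'}$, and use irreducibility (plus $\ell \geq 2$) to find a neighbor $\beta' \in B^{K'}$ of $\alpha'$ in $\Gamma(K')$. Setting $c := c^{K'}_{\beta'\alpha'}$ and passing to the adjacent chamber $K := K'\beta'$, the basis $B^K$ contains the vectors $\alpha := -\beta'$ and $\beta := \alpha' - c\beta'$, and $\{\alpha, \beta\}$ is an edge of $\Gamma(K)$ because $\Ac_{\alpha^\perp \cap \beta^\perp} = \Ac_{{\alpha'}^\perp \cap {\beta'}^\perp}$. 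Moreover $K\alpha = K'$, since reflecting $K$ back across ${\beta'}^\perp = \alpha^\perp$ returns to $K'$. The crucial point, via Lemma \ref{cij=cij'}, is that $c^K_{\alpha\beta} = c$, whence
$$
\sigma^K_\alpha(\beta) \;=\; (\alpha' - c\beta') - c(-\beta') \;=\; \alpha',
$$
so $H = \sigma^K_\alpha(\beta)^\perp = H_0$. Feeding this choice into Lemma \ref{res subgraph} produces the required connected Coxeter graph $\Gamma((K\alpha)^{H_0}) = \Gamma((K')^{H_0})$, and Lemma \ref{simpl irred graph} then yields that $\Ac^{H_0}$ is irreducible, completing the case $q=1$ and hence the whole proof.
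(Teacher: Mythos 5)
Your proof is correct and follows essentially the same route as the paper: reduce to the case of a single hyperplane, get simpliciality from Lemma \ref{loc res simpl}, and obtain irreducibility by combining Lemma \ref{res subgraph} (a contraction of the connected graph $\Gamma(K)$ sits inside the Coxeter graph of a chamber of $\Ac^{H_0}$) with Lemma \ref{simpl irred graph}. Your explicit construction of the chamber $K = K'\beta'$ realizing the prescribed $H_0$ as $\sigma^K_\alpha(\beta)^\perp$ merely fills in a step the paper asserts more briefly.
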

\begin{proof}
It suffices to show the statement for $X = H \in \Ac$.

By Lemma \ref{loc res simpl} the restriction $\Ac^H$ is again simplicial.

Since $\Ac$ is irreducible, there is an $X \in L_2(\Ac)$ with $X \subseteq H$ and $\vert\Ac_X\vert \geq 3$.
So there is a chamber $K \in \Kc(\Ac)$ with $\Gamma(K) = (\Vg,\Eg)$, $\{\alpha,\beta\} \in \Eg$ such that
$X = \alpha^\perp\cap\beta^\perp$,
and $H$ the wall of $K\alpha$ not equal to $\alpha^\perp$.
Since $\Ac$ is irreducible, the Coxeter graph $\Gamma(K)$ is connected by Lemma \ref{simpl irred graph},
and by Lemma \ref{res subgraph} the Coxeter graph $\Gamma((K\alpha)^H)$
of the chamber $(K\alpha)^H$ of $\Ac^H$ contains a subgraph on $\ell-1$ vertices which is connected
(as it is isomorphic to a contraction of the connected graph $\Gamma(K)$).
So $\Gamma((K\alpha)^H)$ is also connected and hence again by Lemma \ref{simpl irred graph}
the restriction $\Ac^H$ is irreducible.
\end{proof}


\section{The rank $3$ case}\label{sec:rk3}
We firstly collect some useful results about supersolvable simplicial $3$-arrangements.

\begin{lemma}\label{SSS rk3 two modular}
Let $\Ac$ be a \sss $3$-arrangement with two modular elements $X,Y \in L_2(\Ac)$ such that
$\vert \Ac_X \vert \not= \vert \Ac_Y \vert$. Then $\Ac$ is reducible.
\end{lemma}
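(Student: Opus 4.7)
My plan is to exploit supersolvability via the modular complement Lemma \ref{Lem_ModCompl}, then apply the combinatorial simpliciality criterion of Lemma \ref{point charact} to force one of $|\Ac_X|, |\Ac_Y|$ down to $2$, after which an explicit product decomposition will present itself. First, for any $H \in \Ac \setminus \Ac_X$, Lemma \ref{Lem_ModCompl} gives $|\Ac^H| = |\Ac_X|$, and similarly $|\Ac^H| = |\Ac_Y|$ for $H \in \Ac \setminus \Ac_Y$; since $|\Ac_X| \neq |\Ac_Y|$, no hyperplane can lie outside both localizations, so $\Ac = \Ac_X \cup \Ac_Y$. Modularity of $X$ forces $X + Y \in L(\Ac)$, which (being $2$-dimensional since $X \neq Y$) is a hyperplane $H_{XY} \in \Ac$; as any hyperplane containing the two lines $X$ and $Y$ must coincide with their linear span, $\Ac_X \cap \Ac_Y = \{H_{XY}\}$ and hence $|\Ac| = |\Ac_X| + |\Ac_Y| - 1$.

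Setting $m := |\Ac_X|$ and $n := |\Ac_Y|$, I would then enumerate $L_2(\Ac)$. Besides $X$ (through $m$ hyperplanes) and $Y$ (through $n$), the remaining rank $2$ elements are the lines $H \cap H'$ with $H \in \Ac_X \setminus \{H_{XY}\}$ and $H' \in \Ac_Y \setminus \{H_{XY}\}$. Any coincidence or triple incidence among these would force such a line to lie inside $X$ or inside $Y$ (via $H_1\cap H_2 = X$ or $H_1' \cap H_2' = Y$), which is a contradiction. So one gets $(m-1)(n-1)$ distinct new lines, each meeting exactly two hyperplanes, yielding $|L_2(\Ac)| = 2+(m-1)(n-1)$ and $\mu_2 = (m-1)+(n-1)+(m-1)(n-1) = mn-1$. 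The simpliciality identity $\mu_2 = 2|L_2(\Ac)|-3$ of Lemma \ref{point charact} then becomes $mn-1 = 2(m-1)(n-1)+1$, which simplifies to $(m-2)(n-2)=0$.

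Since $m \neq n$, exactly one of them is $2$; say $m=2$, so $\Ac_X = \{H_{XY}, H^*\}$ and $\Ac = \Ac_Y \cup \{H^*\}$ with $H^* \notin \Ac_Y$. Then $Y \not\subseteq H^*$ gives the direct sum $V = Y \oplus H^*$, under which every $H \in \Ac_Y$ takes the form $Y \oplus (H\cap H^*)$ while $H^* = \{0\}\oplus H^*$. Thus $\Ac$ is the non-trivial product of the $1$-arrangement $\{\{0\}\}$ in $Y$ with the $2$-arrangement $\{H \cap H^* \mid H \in \Ac_Y\}$ in $H^*$, so $\Ac$ is reducible. The main technical obstacle is the lattice enumeration in the second paragraph; once the structural identity $\Ac_X \cap \Ac_Y = \{H_{XY}\}$ is in hand the rest is routine.
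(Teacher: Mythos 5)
Your proof is correct, but it follows a genuinely different route from the paper's. The paper gets $\vert\Ac\vert = \vert\Ac_X\vert + \vert\Ac_Y\vert - 1$ from Stanley's factorization (Theorem \ref{ss charpoly fact}) together with $\mu_1 = -\vert\Ac\vert$, concludes as you do that $\Ac = \Ac_X \cup \Ac_Y$ with $X+Y$ the unique common hyperplane $H$, and then simply observes that $\Ac^H = \{X,Y\}$ is a reducible $2$-arrangement, so $\Ac$ is reducible by (the contrapositive of) Lemma \ref{res irred}. You instead obtain $\Ac = \Ac_X \cup \Ac_Y$ more directly from the modular complement Lemma \ref{Lem_ModCompl}, and then, rather than passing to a restriction, you enumerate $L_2(\Ac)$ completely and feed it into the simpliciality criterion of Lemma \ref{point charact} to force $(m-2)(n-2)=0$, after which you exhibit the product decomposition by hand. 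The paper's argument is shorter but leans on the restriction machinery of Section \ref{sec:Coxeter graphs} (Lemma \ref{res irred} rests on the Coxeter-graph contraction lemma); yours is more self-contained at this point in the paper, uses simpliciality only through the elementary identity $\mu_2 = 2\vert L_2(\Ac)\vert - 3$, and yields slightly more structural information as a by-product, namely that one of the two modular points is a double point and $\Ac$ is the product of a point with an essential $2$-arrangement. Your lattice enumeration and the distinctness/multiplicity-two claims check out (they are consistent with $\binom{\vert\Ac\vert}{2} = \sum_Z \binom{\vert\Ac_Z\vert}{2}$), so there is no gap.
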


\begin{proof}
By Theorem \ref{ss charpoly fact} two different roots of $\chi_\Ac(t)$ are given by $\vert \Ac_X \vert -1$ and $\vert \Ac_Y \vert -1$.
So we have
$$
\chi_\Ac(t) = (t-1)(t-(\vert \Ac_X \vert -1))(t-(\vert \Ac_Y \vert -1)),
$$
and by Remark \ref{rem:CharPolyRk3} we further have
$$
\vert \Ac \vert = -\mu_1 = \vert \Ac_X \vert + \vert \Ac_Y \vert -1 \leq \vert \Ac_X \cup \Ac_Y \vert.
$$
Then there is a hyperplane $H \in \Ac$ with $\Ac^H = \{X,Y\}$, i.e.\ $H = X+Y$.
Hence $\Ac^H$ is reducible.
By Lemma \ref{res irred} the arrangement $\Ac$ is reducible.
\end{proof}

\begin{lemma}[{\cite[Lemma~2.1]{MR3243842}}]\label{Tohaneanu}
Let $\Ac$ be a super\-sol\-vable $3$-ar\-range\-ment. Then all elements
$X \in L_2(\Ac)$ with $\vert \Ac_X \vert$ maximal are modular.
\end{lemma}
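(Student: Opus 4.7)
The plan is as follows. Let $X_0 \in L_2(\Ac)$ be a modular element coming from the supersolvable chain, set $m := |\Ac_{X_0}|$, and let $Y \in L_2(\Ac)$ satisfy $m' := |\Ac_Y| = \max_{Z \in L_2(\Ac)} |\Ac_Z|$. If $Y = X_0$ there is nothing to prove, so assume $Y \neq X_0$. Modularity of $X_0$ gives $L_Y := X_0 + Y \in \Ac$. To show that $Y$ is modular, it suffices to verify $Y + Z \in \Ac$ for every $Z \in L_2(\Ac)$ with $Z \neq X_0, Y$ and $Z \not\subseteq L_Y$; in all other cases the join $Y + Z$ is directly seen to lie in $L(\Ac)$. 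For such a $Z$ set $L_Z := X_0 + Z \in \Ac$, so $L_Z \neq L_Y$ and $Y \not\subseteq L_Z$.

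The key observation, dual to Lemma \ref{Lem_ModCompl} applied at $X_0$, is that any $L \in \Ac$ with $Y \not\subseteq L$ satisfies $|\Ac^L| \geq m'$. Indeed the $m'$ distinct hyperplanes of $\Ac_Y$ meet $L$ in $m'$ distinct elements of $L_2(\Ac)$: two distinct hyperplanes through $Y$ intersect precisely in $Y$, which is not contained in $L$. Combined with Lemma \ref{Lem_ModCompl}, any $M \in \Ac$ avoiding both $X_0$ and $Y$ satisfies $m = |\Ac^M| \geq m'$, and the maximality of $m'$ then forces $m = m'$. The $m$ elements of $L_2(\Ac)$ lying on such an $M$ therefore coincide simultaneously with $\{L \cap M : L \in \Ac_{X_0}\}$ and with $\{L' \cap M : L' \in \Ac_Y\}$, giving a canonical bijection $\phi_M \colon \Ac_{X_0} \to \Ac_Y$ (fixing $L_Y$) characterised by $L \cap M = \phi_M(L) \cap M$.

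To conclude, pick $M \in \Ac_Z$ with $M \neq L_Z$; such an $M$ exists since $|\Ac_Z| \geq 2$. If $M \in \Ac_Y$, then $M$ contains both $Y$ and $Z$, so $Y + Z = M \in \Ac$ and we are done. Otherwise $M \notin \Ac_Y$, and also $M \notin \Ac_{X_0}$ (else the uniqueness of the hyperplane spanned by $X_0$ and $Z$ would give $M = L_Z$), so $M$ fits the hypotheses of the bijection constructed above. Applying $\phi_M$ to $L_Z$ produces $\phi_M(L_Z) \in \Ac_Y$ with $\phi_M(L_Z) \cap M = L_Z \cap M = Z$, so $\phi_M(L_Z)$ is a hyperplane of $\Ac$ containing both $Y$ and $Z$; hence $Y + Z = \phi_M(L_Z) \in \Ac$, as required.

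The main conceptual hurdle is isolating the symmetric inequality $|\Ac^L| \geq m'$ valid for every $L$ that misses $Y$; together with the modular complement identity at $X_0$ it pins down $|\Ac^M| = m = m'$ along every $M$ avoiding both distinguished elements, and the resulting bijection $\phi_M$ readily converts any second hyperplane through $Z$ into the desired element of $\Ac$ joining $Y$ and $Z$.
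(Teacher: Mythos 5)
Your argument is correct. Note, however, that the paper does not prove this lemma at all: it is imported verbatim from Toh\u{a}neanu \cite{MR3243842} as a citation, so there is no internal proof to compare against. What you have produced is a clean, self-contained derivation using only the modular-complement identity (Lemma \ref{Lem_ModCompl}) and elementary incidence geometry in rank $3$: the injectivity of $H \mapsto H \cap M$ on $\Ac_Y$ for $M \notin \Ac_Y$ gives $\vert\Ac^M\vert \geq \vert\Ac_Y\vert$, which combined with $\vert\Ac^M\vert = \vert\Ac_{X_0}\vert$ for $M \notin \Ac_{X_0}$ pins down the two pencils through $X_0$ and $Y$ as tracing out the same points of $\Ac^M$, and the bijection $\phi_M$ then transports $L_Z$ to the required line through $Y$ and $Z$. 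The one place a reader might pause is that the equality $m = m'$ is only available once an $M$ avoiding both $X_0$ and $Y$ is in hand, but your case analysis produces exactly such an $M$ before invoking $\phi_M$ (and when no such $M$ exists the chosen $M \in \Ac_Z \setminus \{L_Z\}$ is forced into $\Ac_Y$ and the conclusion is immediate), so there is no gap. This is essentially the standard counting proof of Toh\u{a}neanu's lemma, and it would suffice to make the paper self-contained at this point.
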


Combining Lemma \ref{SSS rk3 two modular} and Lemma \ref{Tohaneanu} we get the following lemma.
\begin{lemma}\label{X mod max}
Let $\Ac$ be an \isss $3$-ar\-ran\-ge\-ment. Then $X \in L_2(\Ac)$
is modular if and only if $\vert \Ac_X \vert$ is maximal among all localizations at intersections of rank two.
\end{lemma}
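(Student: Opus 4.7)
The plan is a short argument combining the two preceding lemmas with the irreducibility hypothesis.

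For the backward implication, if $|\Ac_X|$ is maximal among rank two localizations, then Lemma \ref{Tohaneanu} immediately gives that $X$ is modular; this direction requires no use of simpliciality or irreducibility.

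For the forward implication, suppose $X \in L_2(\Ac)$ is modular. Since $\Ac$ is supersolvable of rank three, it has at least one modular element in $L_2(\Ac)$, and moreover by Lemma \ref{Tohaneanu} any $Y \in L_2(\Ac)$ realizing the maximal value of $|\Ac_Y|$ is modular. Pick such a $Y$. If $|\Ac_X| < |\Ac_Y|$, then $X$ and $Y$ are two modular elements in $L_2(\Ac)$ with $|\Ac_X| \neq |\Ac_Y|$, so Lemma \ref{SSS rk3 two modular} forces $\Ac$ to be reducible, contradicting the hypothesis. Hence $|\Ac_X| = |\Ac_Y|$ is maximal.

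There is no real obstacle here; the statement is essentially a direct consequence of chaining Lemma \ref{Tohaneanu} (existence of a modular element of maximal size) with Lemma \ref{SSS rk3 two modular} (irreducibility prohibits two modular elements of different sizes). The only point to verify in writing the full proof is simply that a modular element in $L_2$ exists in the supersolvable rank three setting, which is immediate from the definition of supersolvability via a maximal chain of modular flats.
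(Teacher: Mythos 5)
Your proof is correct and matches the paper's argument exactly: the paper gives no written proof, stating only that the lemma follows by ``combining Lemma \ref{SSS rk3 two modular} and Lemma \ref{Tohaneanu},'' which is precisely the chaining you carry out (Lemma \ref{Tohaneanu} for the backward direction and for producing a modular element of maximal size, then Lemma \ref{SSS rk3 two modular} plus irreducibility to rule out a modular $X$ of non-maximal size).
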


\begin{corol}\label{Cor_Mod_ge3}
Let $\Ac$ be an \isss $3$-ar\-ran\-ge\-ment and $X \in L_2(\Ac)$ a modular element.
Then $|\Ac_X| \geq 3$.
\end{corol}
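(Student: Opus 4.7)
The plan is to argue by contradiction, exploiting the characterization of modular rank-two elements (Lemma \ref{X mod max}) together with the connectedness criterion for irreducibility via Coxeter graphs (Lemma \ref{simpl irred graph}).

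First I would suppose $|\Ac_X| \leq 2$ and observe that, since $X \in L_2(\Ac)$ is by definition the intersection of at least two hyperplanes, one must have $|\Ac_X|=2$. Next, because $X$ is modular, Lemma \ref{X mod max} forces $|\Ac_X|$ to be maximal among all cardinalities $|\Ac_Y|$ for $Y \in L_2(\Ac)$. Combining these two facts gives $|\Ac_Y| = 2$ for \emph{every} $Y \in L_2(\Ac)$.

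Now I would pick any chamber $K \in \Kc(\Ac)$ with basis $B^K = \{\alpha_1, \alpha_2, \alpha_3\}$ and inspect its Coxeter graph $\Gamma(K)$. By the definition of $\Gamma(K)$ an edge $\{i,j\}$ is present precisely when $|\Ac_{\alpha_i^\perp\cap\alpha_j^\perp}|\geq 3$. Since every rank-two localization has cardinality exactly $2$, the graph $\Gamma(K)$ has no edges at all, so it is disconnected (three isolated vertices). But Lemma \ref{simpl irred graph} asserts that irreducibility of $\Ac$ is equivalent to connectedness of $\Gamma(K)$, contradicting our standing assumption that $\Ac$ is irreducible. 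Hence $|\Ac_X| \geq 3$.

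There is no real obstacle here; the statement is a direct bookkeeping consequence of the two lemmas just established. The only subtlety worth verifying is the lower bound $|\Ac_X|\geq 2$, which is immediate from $X \in L_2(\Ac)$, so the reduction to $|\Ac_X|=2$ in the contradiction step is clean.
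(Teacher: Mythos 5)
Your proof is correct and follows exactly the paper's own argument: assume $|\Ac_X|=2$, use Lemma \ref{X mod max} to conclude all rank-two localizations have size $2$, hence $\Gamma(K)$ is edgeless and disconnected, contradicting irreducibility via Lemma \ref{simpl irred graph}. No issues.
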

\begin{proof}
Suppose $|\Ac_X| =2$. Then $|\Ac_Z|=2$ for all $Z \in L_2(\Ac)$ by Lemma \ref{X mod max}.
Hence $\Gamma(K)$ is disconnected for all $K \in \Kc(\Ac)$ and $\Ac$ is reducible by Lemma \ref{simpl irred graph}. A contradiction.
\end{proof}

\begin{defin}\label{def:A2n}
Let $n \in \NN$ and $\zeta := \exp(\frac{2\pi i}{2n})$ be a primitive $2n$-th root of unity.
We write
$$
c_n(m) := \cos \frac{2\pi m}{2n} = \frac{1}{2}(\zeta^m + \zeta^{-m}),
$$
and
$$
s_n(m) := \sin \frac{2\pi m}{2n} = \frac{1}{2i}(\zeta^m - \zeta^{-m}).
$$
For $n\geq 3$ the arrangements $\Ac(2n,1)$ of the infinite series $\Rc(1)$ from \cite{MR2485643} may be defined by
$$
\begin{pmatrix}
-s_n(0) & -s_n(1) & \ldots & -s_n(n-1) & c_n(1) & c_n(3) & \ldots & c_n(2n-1) \\
c_n(0) & c_n(1) & \ldots & c_n(n-1) & s_n(1) & s_n(3) & \ldots & s_n(2n-1) \\
0 & 0 & \ldots & 0 & 1 & 1& \ldots & 1
\end{pmatrix}
$$

For $n\geq 2$ the arrangements $\Ac(4n+1,1)$ of the series $\Rc(2)$ are constructed as
$$
\Ac(4n+1,1) = \Ac(4n,1) \cup \{ (0,0,1)^\perp \}.
$$
Some examples are displayed as projective pictures of the arrangements in Figure \ref{ex A(n,1)}.
\end{defin}

\begin{figure}

\begin{tikzpicture}[scale=0.75]
\draw (0.,4.) -- (0.,-4.);
\draw (4.,0.) -- (-4.,0.);
\draw (0,4.2352941176470589) arc [start angle=90, end angle=0, radius=4.2352941176470589,] ;
\draw (3.344805190907731,3.344805190907731) node {$\infty$};
\draw (-2.8284271247461898,2.8284271247461898) -- (2.8284271247461898,-2.8284271247461898);
\draw (2.8284271247461898,2.8284271247461898) -- (-2.8284271247461898,-2.8284271247461898);
\draw (-2.,3.4641016151377544) -- (-2.,-3.4641016151377544);
\draw (2.,3.4641016151377544) -- (2.,-3.4641016151377544);
\draw (3.4641016151377544,-2.) -- (-3.4641016151377544,-2.);
\draw (3.4641016151377544,2.) -- (-3.4641016151377544,2.);

\end{tikzpicture}
\begin{tikzpicture}[scale=0.75]
\draw (4.,0.) -- (-4.,0.);
\draw (3.4641016151377544,2.0000000000000004) -- (-3.4641016151377544,-2.0000000000000004);
\draw (2.0000000000000004,3.4641016151377544) -- (-2.0000000000000004,-3.4641016151377544);
\draw (0.,4.) -- (0.,-4.);
\draw (-2.0000000000000004,3.4641016151377544) -- (2.0000000000000004,-3.4641016151377544);
\draw (-3.4641016151377544,2.0000000000000004) -- (3.4641016151377544,-2.0000000000000004);
\draw (-2.8025170768881473,2.8541019662496843) -- (1.0704662693192701,-3.8541019662496843);
\draw (3.872983346207417,-1.) -- (-3.872983346207417,-1.);
\draw (2.8025170768881473,2.8541019662496843) -- (-1.0704662693192701,-3.8541019662496843);
\draw (-1.0704662693192701,3.8541019662496843) -- (2.8025170768881473,-2.8541019662496843);
\draw (3.872983346207417,1.) -- (-3.872983346207417,1.);
\draw (1.0704662693192701,3.8541019662496843) -- (-2.8025170768881473,-2.8541019662496843);
\end{tikzpicture}
\begin{tikzpicture}[scale=0.75]
\draw (0,4.2352941176470589) arc [start angle=90, end angle=0, radius=4.2352941176470589] ;
\draw (3.344805190907731,3.344805190907731) node {$\infty$};  
\draw (4.,0.) -- (-4.,0.);  
\draw (3.6955181300451474,1.5307337294603591) -- (-3.6955181300451474,-1.5307337294603591);  
\draw (2.8284271247461907,2.8284271247461903) -- (-2.8284271247461907,-2.8284271247461903);  
\draw (1.5307337294603591,3.695518130045147) -- (-1.5307337294603591,-3.695518130045147);  
\draw (0.,4.) -- (0.,-4.);  
\draw (-1.5307337294603591,3.695518130045147) -- (1.5307337294603591,-3.695518130045147);  
\draw (-2.8284271247461907,2.8284271247461903) -- (2.8284271247461907,-2.8284271247461903);  
\draw (-3.6955181300451474,1.5307337294603591) -- (3.6955181300451474,-1.5307337294603591);  
\draw (-2.4060060929307725,3.1954866109530178) -- (0.55824702790819836,-3.960853475683197);  
\draw (-3.960853475683197,0.55824702790819836) -- (3.1954866109530173,-2.4060060929307721);  
\draw (3.960853475683197,0.55824702790819836) -- (-3.1954866109530173,-2.4060060929307721);  
\draw (2.4060060929307725,3.1954866109530178) -- (-0.55824702790819836,-3.960853475683197);  
\draw (-0.55824702790819836,3.960853475683197) -- (2.4060060929307725,-3.1954866109530178);  
\draw (-3.1954866109530173,2.4060060929307721) -- (3.960853475683197,-0.55824702790819836);  
\draw (3.1954866109530173,2.4060060929307721) -- (-3.960853475683197,-0.55824702790819836);  
\draw (0.55824702790819836,3.960853475683197) -- (-2.4060060929307725,-3.1954866109530178);  
\end{tikzpicture}
\begin{tikzpicture}[scale=0.75]
\draw (4.,0.) -- (-4.,0.);  
\draw (3.7587704831436342,1.3680805733026751) -- (-3.7587704831436342,-1.3680805733026751);  
\draw (3.0641777724759125,2.571150438746157) -- (-3.0641777724759125,-2.571150438746157);  
\draw (2.,3.4641016151377548) -- (-2.,-3.4641016151377548);  
\draw (0.69459271066772088,3.939231012048833) -- (-0.69459271066772088,-3.939231012048833);  
\draw (-0.69459271066772088,3.939231012048833) -- (0.69459271066772088,-3.939231012048833);  
\draw (-2.,3.4641016151377548) -- (2.,-3.4641016151377548);  
\draw (-3.0641777724759125,2.571150438746157) -- (3.0641777724759125,-2.571150438746157);  
\draw (-3.7587704831436342,1.3680805733026751) -- (3.7587704831436342,-1.3680805733026751);  
\draw (-2.2643309399536973,3.2973937275321559) -- (0.38494569838188053,-3.9814340141834936);  
\draw (-3.8541019662496847,1.0704662693192697) -- (2.8541019662496856,-2.8025170768881473);  
\draw (3.9877922042991591,-0.31227125280892087) -- (-3.6404958489653017,-1.6573442532154961);  
\draw (3.2555501505834181,2.3240897609679978) -- (-1.7234612643454619,-3.6096649803410763);  
\draw (1.,3.872983346207417) -- (1.,-3.872983346207417);  
\draw (-1.7234612643454619,3.6096649803410763) -- (3.2555501505834181,-2.3240897609679978);  
\draw (-3.6404958489653017,1.6573442532154961) -- (3.9877922042991591,0.31227125280892087);  
\draw (2.8541019662496856,2.8025170768881473) -- (-3.8541019662496847,-1.0704662693192697);  
\draw (0.38494569838188053,3.9814340141834936) -- (-2.2643309399536973,-3.2973937275321559);  

\end{tikzpicture}

\caption{Projective pictures of $\Ac(9,1)$, $\Ac(12,1)$, $\Ac(17,1)$, and $\Ac(18,1)$.}
\label{ex A(n,1)}
\end{figure}
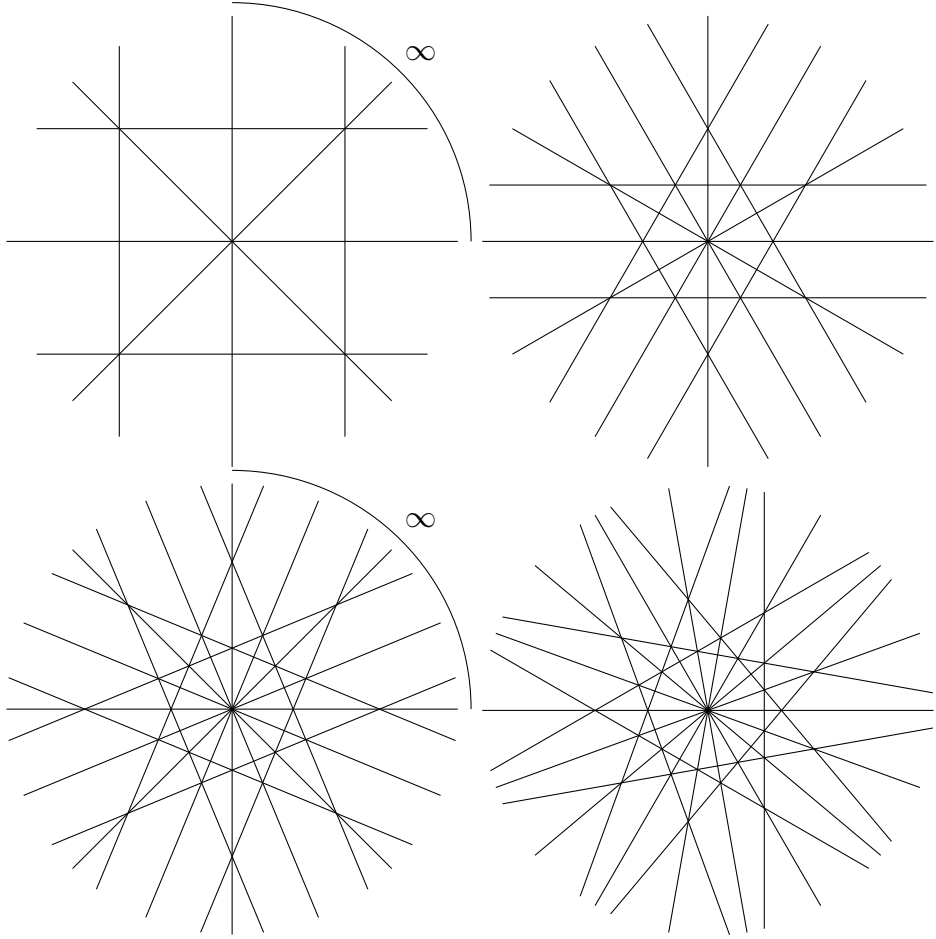

\begin{remar}\label{Rem_LatticeA2nA4n+1}
Let $\Ac = \Ac(2n,1) = \{H_1,\ldots,H_n,I_1,\ldots,I_n\}$ for an $n \geq 3$
where $H_i = (-s_n(i-1),c_n(i-1),0)^\perp$ and $I_j=(c_n(2j-1),s_n(2j-1),1)^\perp$, and let $X = \cap_{i=1}^n H_i \in L_2(\Ac)$.
The following facts are easily seen from the definition: 
\begin{itemize}
\item The rank $2$ part of the intersection lattice has the following form:
\begin{align*}
 L_2(\Ac) = \{X\} & \dot{\cup} \{ I_i \cap I_j \cap H_k \mid 1 \leq i,j,k \leq n, i\neq j \text{ and } i+j \equiv k \:(\md n) \} \\
    & \dot{\cup} \{I_i \cap H_k \mid 1 \leq i,k \leq n \text{ and } k \equiv 2i \:(\md n) \}.
\end{align*}
\item The intersection $X$ is modular and hence $\Ac$ is supersolvable.
\item We have the following multiset of invariants of $L(\Ac)$:
  $$
     \{ \{\vert \Ac_Z \vert \mid Z \in L_2(\Ac) \} \} = \{\{2^n,3^{\vert L_2(\Ac) \vert - n - 1},n^1 \}\}.
  $$
  By Lemma \ref{point charact} the arrangement $\Ac$ is simplicial.
\end{itemize}
Now let $\Ac = \Ac(4n+1,1) = \{H_1,\ldots,H_{2n},I_1,\ldots,I_{2n},J\}$ for an $n \geq 2$
where $H_i = (-s_{2n}(i-1),c_{2n}(i-1),0)^\perp$, $I_j=(c_{2n}(2j-1),s_{2n}(2j-1),1)^\perp$ for $1 \leq i,j \leq 2n$,
$J=(0,0,1)^\perp$, and let $X= \cap_{i=1}^{2n} H_i \in L_2(\Ac)$.
Then similarly we have:
\begin{itemize}
\item The rank $2$ part of the intersection lattice has the following form:
\begin{align*}
 L_2(\Ac) = \{X\} & \dot{\cup} \{ I_i \cap I_j \cap H_k \mid 1 \leq i,j,k \leq n, i\not\equiv j \:(\md n), \text{ and } i+j \equiv k \:(\md 2n)  \} \\
    & \dot{\cup} \{ I_i \cap I_{i+n} \cap H_k \cap J \mid 2i+n \equiv k \:(\md 2n) \text{ where } 1 \leq i \leq n\} \\
    & \dot{\cup} \{I_i \cap H_k \mid 1 \leq i,k \leq 2n \text{ and } k \equiv 2i \:(\md 2n) \} \dot{\cup} \{ H_{2i-1} \cap J \mid 1 \leq i \leq n\}. 
\end{align*}
\item The intersection $X$ is modular and hence $\Ac$ is supersolvable.
\item We have the following multiset of invariants of $L(\Ac)$:
  $$
     \{ \{\vert \Ac_Z \vert \mid Z \in L_2(\Ac) \} \} = \{\{2^{3n},3^{\vert L_2(\Ac) \vert - 4n - 1},4^{n},(2n)^1 \}\}.
  $$
  By Lemma \ref{point charact} the arrangement $\Ac$ is simplicial.
\end{itemize}
\end{remar}

From the remark we immediately get:
\begin{lemma}\label{Lem_Rc12ISSS}
Let $\Ac$ be a $3$-arrangement which is $L$-equivalent to one of the arrangements in $\Rc(1) \cup \Rc(2)$. 
Then $\Ac$ is irreducible, supersolvable and simplicial.
\end{lemma}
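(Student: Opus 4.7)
My plan is to first establish that all three properties are invariants of $L(\Ac)$ for essential rank three arrangements, and then to read off the desired conclusions from the explicit lattice data already recorded in Remark~\ref{Rem_LatticeA2nA4n+1}. Note that $L$-equivalence preserves rank, so $\Ac$ is automatically essential of rank three since the representatives in $\Rc(1)\cup\Rc(2)$ are.

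For the $L$-invariance: simpliciality is $L$-invariant by the combinatorial criterion of Lemma~\ref{point charact}, and supersolvability is $L$-invariant by its definition through modular chains. For irreducibility I would argue that if an essential rank three arrangement $\Bc$ is reducible, then necessarily $\Bc \cong \Bc_1 \times \{\{0\}\}$ with $\Bc_1$ an essential rank two arrangement, since the rank zero or one factor would have to be $\{\{0\}\}$. By Corollary~\ref{res loc product}, exactly one element of $L_2(\Bc)$, namely $T(\Bc_1) \oplus V_2$, has $|\Bc_X| = |\Bc_1|$, while every other element of $L_2(\Bc)$ has exactly two hyperplanes above it. Hence the multiset $\{\{|\Bc_Z| \mid Z \in L_2(\Bc)\}\}$ of a reducible essential rank three arrangement contains at most one entry strictly bigger than $2$; since this multiset is an $L$-invariant, irreducibility is too.

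Once the reduction to the representatives $\Ac(2n,1)$ and $\Ac(4n+1,1)$ is in place, simpliciality and supersolvability follow directly from Remark~\ref{Rem_LatticeA2nA4n+1}: the remark exhibits a modular rank two element $X$, and any hyperplane $H \in \Ac_X$ together with $V$ and $T(\Ac)=\{0\}$ completes a modular chain of length three (every hyperplane being modular in a rank three lattice), while the remark also verifies the criterion of Lemma~\ref{point charact} via the stated multiset of rank two multiplicities. For irreducibility I consult the same multisets: for $\Ac(2n,1)$ with $n \geq 3$ the multiset $\{\{2^n, 3^{|L_2|-n-1}, n^1\}\}$ contains the entry $n \geq 3$ and, because the triple $I_1 \cap I_2 \cap H_3$ is a valid rank two intersection as soon as $n \geq 3$, at least one entry equal to $3$; for $\Ac(4n+1,1)$ with $n \geq 2$ the multiset $\{\{2^{3n}, 3^{|L_2|-4n-1}, 4^n, (2n)^1\}\}$ contains $n \geq 2$ entries equal to $4$ plus one entry $2n \geq 4$. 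In both cases there are at least two entries exceeding $2$, contradicting the reducibility criterion of the previous paragraph.

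The only nontrivial step is the lattice-theoretic reformulation of irreducibility; once that is in place the lemma reduces to a straightforward inspection of the invariants listed in Remark~\ref{Rem_LatticeA2nA4n+1}, and I do not anticipate any further obstacles.
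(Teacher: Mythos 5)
Your proof is correct and follows essentially the same route as the paper, which simply reads all three properties off Remark \ref{Rem_LatticeA2nA4n+1}; you merely make explicit what the paper leaves implicit, namely the $L$-invariance of each property and, in particular, a clean lattice-theoretic criterion (at most one rank-two multiplicity exceeding $2$) for reducibility of essential rank-three arrangements. One small overstatement: your argument establishes only the one-directional implication ``reducible implies at most one entry of the multiset exceeds $2$,'' not that irreducibility is an $L$-invariant in full generality, but since you only apply the contrapositive to arrangements whose multisets have at least two entries greater than $2$, the proof stands as written.
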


\begin{lemma}\label{charpoly facts s3}
 Let $\Ac$ be a simplicial $3$-arrangement such that $\chi_{\Ac}=(t-1)(t-a)(t-b)$ factors over $\NN$.
 If $|\Ac|$ is even, then exactly one of the numbers $a,b$ is even.
 If $|\Ac|$ is odd, then $a,b$ are also odd.
\begin{proof}
Compare the coefficient of $t$, i.e.\ by Remark \ref{rem:CharPolyRk3} we have
$$
  ab + \vert\Ac\vert-1 = ab +a + b = \mu_2.
$$
Since $\Ac$ is simplicial we further have $\mu_2 = 2\vert L_2(\Ac) \vert -3$ by Lemma \ref{point charact}.
So $\mu_2$ is always odd and we obtain $ab\equiv \vert\Ac\vert \:(\md 2)$.
Thus $|\Ac|$ is odd if and only if both $a$ and $b$ are odd.
We further have $a+b+1=\vert\Ac\vert$. Hence if $|\Ac|$ is even then exactly one of the numbers $a,b$ is even.
\end{proof}
\end{lemma}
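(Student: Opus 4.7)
The plan is to extract the parity constraint by combining the coefficient comparison of the two expressions for $\chi_\Ac(t)$ with the simpliciality criterion from Lemma \ref{point charact}.

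First I would expand $\chi_\Ac(t)=(t-1)(t-a)(t-b)$ to read off
\[
\chi_\Ac(t) = t^3 - (1+a+b)t^2 + (a+b+ab)t - ab,
\]
and compare with Remark \ref{rem:CharPolyRk3}, which yields $|\Ac| = 1 + a + b$ and $\mu_2 = a+b+ab$. Next I invoke Lemma \ref{point charact}: since $\Ac$ is a simplicial $3$-arrangement, $\mu_2 = 2|L_2(\Ac)| - 3$, so $\mu_2$ is odd. Hence $a+b+ab \equiv 1 \pmod 2$, i.e.\ $(a+1)(b+1) \equiv 0 \pmod 2$, so at least one of $a,b$ is odd.

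Now I would do a short parity case split on $|\Ac|$. If $|\Ac|$ is even, then $a+b = |\Ac|-1$ is odd, forcing exactly one of $a,b$ to be even and the other odd. If $|\Ac|$ is odd, then $a+b$ is even; combined with the previous observation that $a,b$ cannot both be even, this forces both to be odd.

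I do not anticipate a real obstacle: the only nonformal input is the simpliciality-to-parity link through $\mu_2 = 2|L_2(\Ac)| - 3$, and after that the argument is a one-line parity check on the two coefficient identities.
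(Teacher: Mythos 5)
Your proposal is correct and follows essentially the same route as the paper: compare the coefficient of $t$ in the two expressions for $\chi_\Ac(t)$, use Lemma \ref{point charact} to conclude $\mu_2 = 2|L_2(\Ac)|-3$ is odd, and finish with a parity check using $a+b+1=|\Ac|$. The only cosmetic difference is that you phrase the oddness condition as $(a+1)(b+1)\equiv 0 \pmod 2$ where the paper writes $ab \equiv |\Ac| \pmod 2$; these are equivalent given $a+b \equiv |\Ac|-1$.
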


\begin{lemma}\label{rk3 modular graph}
Let $\Ac$ be an irreducible simplicial $3$-arrangement, $X \in L_2(\Ac)$ a modular element,
$n = \vert \Ac_X \vert$, and $K \in \Kc(\Ac)$ a chamber with $\langle \overline{K}\cap X \rangle= X$.
Then the Coxeter graph $\Gamma(K)$ is the graph of Figure \ref{Graph mod Elt}.
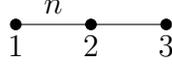
\begin{figure}
\begin{tikzpicture}
\filldraw [black]
	(-1,0) circle [radius=2pt]
	(0,0) circle [radius=2pt]
	(1,0) circle [radius=2pt];
\draw
	(-1,0) -- (0,0) -- (1,0);
\node[below] at (-1,0) {$1$};
\node[below] at (0,0) {$2$};
\node[below] at (1,0) {$3$};
\node[above] at (-0.5,0)  {$n$};
\end{tikzpicture}
\caption{The Coxeter graph of a chamber adjacent to the modular element $X$} \label{Graph mod Elt}
\end{figure}
\end{lemma}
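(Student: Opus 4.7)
The plan is to choose a basis $B^K = \{\alpha_1,\alpha_2,\alpha_3\}$ of $K$ with $X = \alpha_1^\perp\cap\alpha_2^\perp$, which is possible thanks to the hypothesis $\langle\overline{K}\cap X\rangle = X$. Immediately $m^K(1,2) = |\Ac_X| = n$ (and $n\geq 3$ by Corollary \ref{Cor_Mod_ge3}). Since $\Ac$ is irreducible, Lemma \ref{simpl irred graph} forces $\Gamma(K)$ to be connected, so at least one of the remaining labels $m^K(1,3)$, $m^K(2,3)$ is $\geq 3$. The central observation is that, by the modularity of $X$ and Lemma \ref{Lem_ModCompl}, every $H\in\Ac\setminus\Ac_X$ satisfies $|\Ac^H| = n$; combined with Lemma \ref{res subgraph} this will control all the labels that arise.

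To rule out the triangle case in $\Gamma(K)$, I would suppose both $m^K(1,3)\geq 3$ and $m^K(2,3)\geq 3$ and apply Lemma \ref{res subgraph}(3) with $\alpha=\alpha_1$, $\beta=\alpha_3$, $\gamma=\alpha_2$. The restriction hyperplane $H=\sigma^K_1(\alpha_3)^\perp$ lies in $\Ac_{\alpha_1^\perp\cap\alpha_3^\perp}$ and is not $\alpha_1^\perp$; moreover $H\notin\Ac_X$, for otherwise $H$ would contain both $X$ and $\alpha_1^\perp\cap\alpha_3^\perp$, forcing $H\supseteq\alpha_1^\perp$ and hence $H=\alpha_1^\perp$. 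Consequently $|\Ac^H|=n$, so the unique edge of the rank-two Coxeter graph of $\Ac^H$ carries label $n$. But Lemma \ref{res subgraph}(3) forces this label to be at least $m^K(1,2)+m^K(2,3)-2 = n+m^K(2,3)-2 > n$, a contradiction. Therefore, after relabeling, $m^K(1,3)=2$ and $m^K(2,3)\geq 3$.

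It remains to show $m^K(2,3)=3$. The adjacent chamber $K\alpha_2$ is also adjacent to $X$, since its walls $\alpha_2^\perp$ and $\sigma^K_2(\alpha_1)^\perp$ both lie in $\Ac_X$. Applying the same triangle-elimination to $\Gamma(K\alpha_2)$ and identifying its three vertices as $X$, $Y_2 = \alpha_2^\perp\cap\alpha_3^\perp$, and $Z' = \sigma^K_2(\alpha_1)^\perp\cap\sigma^K_2(\alpha_3)^\perp$, a parallel argument forces $|\Ac_{Z'}|=2$. Iterating this propagation around the gallery of $2n$ chambers adjacent to $X$ and combining the resulting chain of label-$2$ vertices with Lemma \ref{res subgraph}(2), the characteristic polynomial factorization $\chi_\Ac(t)=(t-1)(t-(n-1))(t-(|\Ac|-n))$ from Theorem \ref{ss charpoly fact}, the simpliciality identity of Lemma \ref{point charact}, and the counting $|\Ac^H|=n$ for $H\notin\Ac_X$, will constrain $m^K(2,3)$ to equal $3$.

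The main obstacle is precisely this last equality: while the triangle-elimination step admits a single clean contradiction via modularity and Lemma \ref{res subgraph}(3), pinning down the exact value $m^K(2,3)=3$ (as opposed to merely $3\leq m^K(2,3)\leq n$) requires a more delicate combination of iterated propagation around $X$-adjacent galleries with the numerical identities enforced by supersolvability.
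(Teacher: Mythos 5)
Your first step---ruling out the triangle in $\Gamma(K)$ by restricting to $H=\sigma^K_1(\alpha_3)^\perp$, noting $H\notin\Ac_X$ so that $\vert\Ac^H\vert=n$ by Lemma \ref{Lem_ModCompl}, and then contradicting this with the lower bound $n+m^K(2,3)-2\geq n+1$ from Lemma \ref{res subgraph}(3)---is correct and is exactly the argument the paper gives. The problem is the second step. You never actually prove $m^K(2,3)=3$: you list a collection of tools (propagation of label-$2$ vertices around the $2n$ chambers adjacent to $X$, Lemma \ref{res subgraph}(2), the factorization of $\chi_\Ac$, the simpliciality identity, and $\vert\Ac^H\vert=n$) and assert that they ``will constrain'' the label, while conceding in your final paragraph that you do not know how to close this. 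That is a genuine gap, not a routine verification: the entire content of the lemma beyond the triangle elimination is precisely this equality, and the paper's later results (e.g.\ Proposition \ref{Prop_A2n1Subarr}, which builds the $\Ac(2n,1)$ subarrangement from the chambers around $X$) \emph{use} this lemma as input, so the ``iterate around the gallery and count'' strategy risks assuming what you are trying to prove.

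For comparison, the paper disposes of $m^K(2,3)\geq 4$ by a short direct geometric argument rather than by counting: if $\vert\Ac_{\alpha_2^\perp\cap\alpha_3^\perp}\vert\geq 4$, the hyperplanes $\sigma^K_3(\alpha_2)^\perp$ and $\sigma^{K_1}_2(\sigma^K_1(\alpha_3))^\perp$ meet in a point $Z$ which is forced (by the position of the additional lines through $\alpha_2^\perp\cap\alpha_3^\perp$) to lie in a region from which the line $Z+X$ cannot be a hyperplane of $\Ac$; this contradicts $Z+X\in L(\Ac)$, i.e.\ the modularity of $X$. Some such direct use of modularity against a specific intersection point is what your sketch is missing; without it the proof is incomplete.
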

\begin{proof}
Let $B^K = \{ \alpha_1,\alpha_2,\alpha_3\}$, and $\Vg = \{1,2,3\}$ the corresponding vertices of $\Gamma(K) = (\Vg,\Eg)$.
Since $\Ac$ is irreducible by Lemma \ref{simpl irred graph} the graph $\Gamma(K)$ is connected.
We may assume that $\{1,2\},\{2,3\} \in \Eg$ and that $m(1,2)=n$ by Corollary \ref{Cor_Mod_ge3}.

First suppose that $\{1,3\} \in \Eg$ and let $H = \sigma^K_1(\alpha_3)^\perp$. Then in particular
$H \in \Ac \setminus \Ac_X$ so $\vert \Ac^H \vert =n$ by Lemma \ref{Lem_ModCompl}.
Let $\Gamma^H = (\Vg^H,\Eg^H) = \Gamma(K_1^H)$, $\Vg^H=\{\gamma,\delta\}$ and denote the label by $m^H(\gamma,\delta) = \vert \Ac^H \vert$.
But by Lemma \ref{res subgraph}(3) we find that $n = m^H(\gamma,\delta) \geq m(1,2)+m(2,3)-2 = n+1$ which is absurd.

Now suppose that $m(2,3) \geq 4$.
Then $(\sigma^K_3(\alpha_2))^\perp$ (the blue line in Figure \ref{4er intersection}) intersects $(\sigma^{K_1}_2(\sigma^K_1(\alpha_3)))^\perp$ in $Z$.
But $Z$ must lie in $(-\alpha_1)^+ \cap (\sigma^K_1(\alpha_2))^+$ or $\alpha_1^+ \cap \alpha_2^+$
since otherwise $m(2,3) \leq 3$, see Figure \ref{4er intersection}.
This implies $Z + X \not \in L(\Ac)$ which contradicts the modularity of $X$.
\end{proof}

\begin{figure}
\begin{tikzpicture}[scale=1.2]
\draw (4.,0.) -- (-4.,0.);  
\draw (3.282069519732,2.2864863147734256) -- (-3.9839361903750659,-0.35812348571380725);  
\draw (-3.9839361903750659,0.35812348571380725) -- (3.282069519732,-2.2864863147734256);  
\draw (3.6305300503556284,1.6790627008735415) -- (-1.0231190384045969,-3.8669403193292302);  
\draw (1.,3.872983346207417) -- (1.,-3.872983346207417);  
\draw (-1.0231190384045969,3.8669403193292302) -- (3.6305300503556284,-1.6790627008735415);  

\draw[color=blue] (0.074267351366830145,3.9993104856363626) -- (2.6275993192762352,-3.0159114405663621);  
\draw [color=red,dashed](-3.9981663996434418,0.12110095277162514) -- (3.9111312808197192,-0.83848202377481074);  
\draw[color=blue, dashed]  (2.6275993192762352,3.0159114405663621) -- (0.074267351366830145,-3.9993104856363626);  

\fill[red] (-3.,0.0) circle[radius=2pt];  
\fill[blue] (1.,1.4558809370648094) circle[radius=2pt];  
\fill[red] (1.7391701236163843,-0.57497228670668132) circle[radius=2pt];  
\fill[blue] (1.,-1.4558809370648094) circle[radius=2pt];  
\fill[blue] (1.5298973257271573,0.0) circle[radius=2pt];  

\draw[->] (-1,0) -- (-1,0.3);
\node[above right] at (-1,0.1) {$\alpha_1$};

\draw[->] (0., 1.09191) -- ( 0.102606, 0.810003);
\node[left] at ( 0.102606, 0.810003) {$\alpha_2$};

\draw[->] (1., 0.72794) -- (0.7, 0.72794);
\node[below] at (0.7, 0.72794) {$\alpha_3$};

\node[above] at (-3.,0.0){$X$};

\node[below right] at (1.7391701236163843,-0.57497228670668132) {$Z$};

\node[above] at (-0.,0.2) {$K$};

\end{tikzpicture}

\caption{Proof of Lemma \ref{rk3 modular graph}} \label{4er intersection}

\end{figure}

\begin{propo}\label{Prop_A2n1Subarr}
Let $\Ac$ be an \isss $3$-ar\-ran\-ge\-ment, and $X \in L_2(\Ac)$ modular. Set $n := |\Ac_X|$.
Then there is a subarrangement $\Bc \subseteq \Ac$ with $\Ac_X=\Bc_X$ and $\Bc \sim_L \Ac(2n,1)$.
\end{propo}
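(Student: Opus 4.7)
The plan is to construct $\Bc$ from the local structure of $\Ac$ around $X$ and then match the intersection lattice of $\Bc$ to that of $\Ac(2n,1)$ described in Remark \ref{Rem_LatticeA2nA4n+1}.

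First I apply Lemma \ref{rk3 modular graph}: every chamber $K$ of $\Ac$ with $\langle \overline{K}\cap X\rangle = X$ admits a basis $B^K = \{\alpha_1,\alpha_2,\alpha_3\}$ with $\alpha_1^\perp, \alpha_2^\perp \in \Ac_X$ and Coxeter graph $\Gamma(K)$ a path on three vertices with $m^K(1,2) = n$ and $m^K(2,3) = 3$. By Lemma \ref{cij=0} and Lemma \ref{cij m=3}, crossing $\alpha_1^\perp$ fixes the ``third wall'' $\alpha_3^\perp$ (since $c^K_{13}=0$) while crossing $\alpha_2^\perp$ sends it to $(\alpha_3-c^K_{23}\alpha_2)^\perp$. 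Walking the $2n$ chambers of $\Ac$ adjacent to $X$ cyclically, the Coxeter graph at each step is again of this form, so the crossings alternate in type. The chambers pair up into $n$ pairs sharing a common third wall, yielding exactly $n$ distinct hyperplanes $I_1,\ldots,I_n\in\Ac\setminus\Ac_X$. I set $\Bc:=\Ac_X\cup\{I_1,\ldots,I_n\}$, so $\vert\Bc\vert=2n$ and $\Bc_X=\Ac_X$.

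Second, the element $X$ remains modular in $\Bc$: for any $Y\in L(\Bc)\subseteq L(\Ac)$, modularity of $X$ in $\Ac$ gives $X+Y\in L(\Ac)$, which is either $V$ or a hyperplane containing $X$ and hence lies in $\Ac_X\subseteq\Bc$. Therefore $\Bc$ is supersolvable, and Theorem \ref{ss charpoly fact} yields $\chi_\Bc(t)=(t-1)(t-(n-1))(t-n)$; Remark \ref{rem:CharPolyRk3} then gives $\mu_2(\Bc)=n^2+n-1$.

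Third I analyze the rank-$2$ intersections of $\Bc$. At every third-wall-changing crossing, the label-$3$ edge in $\Gamma(K)$ forces $I_k$, $I_{k+1}$ and the crossed hyperplane of $\Ac_X$ to share a rank-$2$ element of multiplicity exactly $3$ in $\Ac$, hence also in $\Bc$, producing $n$ triples from cyclically consecutive pairs. At every third-wall-preserving crossing the absent edge $\{1,3\}$ gives a double intersection $I_k\cap H$ with $H\in\Ac_X$, producing $n$ doubles (one per $I_k$). Modularity of $X$ in $\Bc$ forces every remaining pair $(I_i,I_j)$ to meet on some $H\in\Ac_X$, giving further rank-$2$ elements of multiplicity at least $3$; pair-counting of the $\binom{n}{2}$ $I$-$I$ pairs combined with the identity $\mu_2(\Bc)=n^2+n-1$ then constrains the multiset of multiplicities.

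The main obstacle is to verify that no rank-$2$ element has multiplicity $\geq 4$, so that the multiset of $\Bc$ matches the multiset $\{\{2^n,3^{\binom{n}{2}},n^1\}\}$ of $\Ac(2n,1)$ from Remark \ref{Rem_LatticeA2nA4n+1}. The cleanest route is to exploit the closure of the $2n$-gallery around $X$: the composition of the corresponding $2n$ reflections $\sigma^{K^j}_{\mu_j}$ must act as the identity on $V^*$, and the dihedral relations $c^{K^j}_{12}c^{K^j}_{21}=4\cos^2(\pi/n)$ (from the label $n$ on edge $\{1,2\}$) together with $c^{K^j}_{23}c^{K^j}_{32}=1$ (Lemma \ref{cij m=3}) along this gallery pin down how the $I_k$'s intersect each other. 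This forces the dihedral pattern $I_i\cap I_j\subseteq H_k$ with $k\equiv i+j\pmod{n}$, excluding higher coincidences and yielding $\Bc\sim_L\Ac(2n,1)$ by matching the combinatorial description of Remark \ref{Rem_LatticeA2nA4n+1}.
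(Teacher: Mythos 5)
Your construction of $\Bc$ (the $n$ walls of $\Ac_X$ together with the $n$ ``third walls'' $I_1,\dots,I_n$ of the $2n$ chambers adjacent to $X$), the verification that $X$ stays modular in $\Bc$, the factorization $\chi_\Bc(t)=(t-1)(t-(n-1))(t-n)$, and the identification of $n$ triple points and $n$ simple points on the boundary of the $n$-gon all agree with the paper and are fine. The gap is exactly where you locate ``the main obstacle,'' and the tool you propose there does not work. Both ingredients of your final step are false for a general simplicial arrangement: the relation $c^{K}_{12}c^{K}_{21}=4\cos^2(\pi/n)$ and the claim that the composition of the $2n$ reflections around $X$ is the identity hold for the \emph{dihedral reflection arrangement} $I_2(n)$, but $\Ac_X/X$ is merely \emph{some} arrangement of $n$ concurrent lines, and neither the simpliciality nor the supersolvability hypothesis constrains its angles. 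Concretely, for four lines through the origin at angles $0^\circ,30^\circ,90^\circ,120^\circ$ one computes $c_{12}c_{21}=4/3$ for the chamber $(0^\circ,30^\circ)$ and $c_{12}c_{21}=4$ for the chamber $(30^\circ,90^\circ)$, never $4\cos^2(\pi/4)=2$; and writing the gallery-induced normal of the $k$-th line as $u_k\nu_k$ one gets the recursion $u_{j+2}\sin g_{j+1}=u_j\sin g_j$ (with $g_j$ the angular gaps), whose product around the full loop is $\prod_{j\,\mathrm{odd}}\sin^2 g_j/\prod_{j\,\mathrm{even}}\sin^2 g_j\neq 1$ in this example. So the ``closure of the gallery'' gives you only a positive rescaling of the base, and the dihedral relations you invoke are simply not available; consequently neither the exclusion of three concurrent $I_i$'s nor the incidence pattern $I_i\cap I_j\subseteq H_{i+j \bmod n}$ is established. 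Note also that the multiset $\{\{2^n,3^{\binom{n}{2}},n^1\}\}$ alone does not determine $L(\Bc)$: one still has to pin down \emph{which} $H_k$ carries each triple $I_i\cap I_j$, so this last step cannot be skipped.

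The paper closes this gap by elementary convex geometry rather than by gallery relations: in the projective picture the $I_k$ are the edge-lines of a convex $n$-gon with $X$ in its interior, and no three edge-lines of a convex polygon are concurrent, which immediately gives $|\Bc_Y|\le 3$ for all $Y\neq X$. Extra simple points $I_i\cap H_j$ outside the $n$-gon are excluded because they would force $|\Bc^{I_i}|\ge n+1$, contradicting Lemma \ref{Lem_ModCompl}; and the cyclic pattern $I_i\cap I_j\cap H_k$ with $k\equiv i+j\ (\md n)$ is read off by walking along $I_1$ and observing that its $n$ intersection points with $\Bc\setminus\{I_1\}$ must be met by the edge-lines $I_2,I_3,\dots$ in cyclic order. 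If you want to salvage your approach, replace the dihedral relations by this convexity argument (or by the $|\Bc^{I_i}|=n$ count from Lemma \ref{Lem_ModCompl}); the alternation of third-wall-preserving and third-wall-changing crossings that you correctly derived is then enough to set up the cyclic labeling.
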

\begin{proof}
Since $\Ac$ is irreducible we have $n \geq 3$ by Corollary \ref{Cor_Mod_ge3}.
We define
$$
\Kc_X := \{K \in \Kc(\Ac)\mid \langle\overline{K}\cap X\rangle = X \},
$$
i.e.\ the subset of chambers adjacent to $X$,
and the subarrangement
$$
\Bc := \bigcup_{K \in \Kc_X} W^K.
$$
Note that by the definition of $\Bc$ and $\Kc_X$ we have $\Ac_X = \Bc_X$ and $|\Kc_X|=2n$.
Furthermore, for each $K \in \Kc_X$ we have $|W^K \setminus \Bc_X|=1$, and for each $H \in \Bc \setminus \Bc_X$
there are exactly two adjacent chambers $K,K' \in \Kc_X$ with $H \in W^K \cap W^{K'}$ by Lemma \ref{rk3 modular graph}.
Hence we have $|\Bc \setminus \Bc_X| = n$ and thus $\vert \Bc \vert = 2n$.

In the following we consider the projective picture of $\Ac$ respectively $\Bc$.
Then in this picture the $n$ lines in $\Bc \setminus \Bc_X$ are the edge-lines of a convex $n$-gon.
By Lemma \ref{rk3 modular graph} all chambers $K \in \Kc_X$ have the Coxeter graph
of Figure \ref{Graph mod Elt} and for those we have $\Bc_Y = \Ac_Y$ for all $Y \in L_2(W^K)$,
i.e.\ no line of $\Ac \setminus \Bc$ intersects
the convex $n$-gon. 

Now let $\Kc_X = \{K^1,\ldots,K^{2n}\}$ such that $K^{i}$ and $K^{j}$ are adjacent for $1\leq i,j \leq 2n$ with $j-i \equiv \pm1 \:(\md 2n)$.
Let $\Bc_X = \{H_1,\ldots,H_n\}$ and $\Bc \setminus \Bc_X = \{I_1,\ldots,I_n\}$ such that
$H_{a_i}$ and $H_{b_i}$ are walls of $K^i$ with $a_i \equiv b_i-1 \equiv i \:(\md n)$, $1 \leq a_i,b_i \leq n$, $1\leq i \leq 2n$, 
and $I_k$ is a wall of both the chambers $K^{2k-1}$ and $K^{2k}$ for $1 \leq k \leq n$.
Note that with this labeling for $1 \leq i,j \leq n$ we have $|\Bc_{I_i \cap H_j}| = 2$ if $2i \equiv j \:(\md n)$
by Lemma \ref{rk3 modular graph} 
(since each edge of the $n$-gon contains one such point).
The subarrangement $\Bc$ is supersolvable with modular element $X$ because $\Ac$ is supersolvable with modular element $X$ and $\Ac_X = \Bc_X$.
Since exactly $2$ edge-lines of the convex $n$-gon intersect in a common point we further have $\vert \Bc_Y \vert \leq 3$ for
all $Y \in L_2(\Bc) \setminus \{X\}$.
Suppose there is a $Y \in L_2(\Bc)$ with $\vert \Bc_Y \vert = 2$ and $Y \notin L_2(W^K)$ for any chamber $K \in \Kc_X$,
i.e.\ $Y$ is an intersection outside of the $n$-gon.
By the modularity of $X$ in $L(\Bc)$ we have $Y = I_i \cap H_j$ for some $1\leq i,j \leq n$. 
But then $\vert \Bc^{I_i} \vert \geq n+1$ contradicting Lemma \ref{Lem_ModCompl}.
Thus all intersections $Y$ outside the $n$-gon are of size $3$, in particular $\Bc_Y = \{ I_i,I_j,H_k \}$ for some $1\leq i<j \leq n$, and $1\leq k \leq n$. 
We obtain the following
multiset of invariants of the intersection lattice of $\Bc$:
$$
 \{ \{\vert \Bc_Y \vert \mid Y \in L_2(\Bc) \} \} = \{\{2^n,3^{\vert L_2(\Bc) \vert - n - 1},n^1 \}\}.
$$
In particular by Remark \ref{Rem_LatticeA2nA4n+1} we have 
$$
\{ \{\vert \Bc_Y \vert \mid Y \in L_2(\Bc) \}\}  = \{ \{\vert \Ac(2n,1)_Z \vert \mid Z \in L_2(\Ac(2n,1)) \} \}.
$$
To finally see with Remark \ref{Rem_LatticeA2nA4n+1} that $\Bc \sim_L \Ac(2n,1)$ we claim that 
$$
\{ I_i \cap I_j \cap H_k \mid 1 \leq i,j,k \leq n, i\neq j \text{ and } i+j \equiv k \:(\md n) \} \subseteq L_2(\Bc).
$$
Let $1 \leq i < j \leq n$. Without loss of generality we may assume $i=1$.
We have $\Bc^{I_1} = \{H_j \cap I_1 \mid 1 \leq j \leq n \}$ and there is exactly one simple intersection $H_2\cap I_1$. 
But from the projective picture we see that the next intersection point $H_3\cap I_1$ on $I_1$ is contained in the next edge-line $I_2$ of the $n$-gon.
Continuing this way gives exactly $\Bc^{I_1} = \{I_1\cap H_2, I_1 \cap I_2 \cap H_3, \ldots, I_1 \cap I_{n-1} \cap H_n, I_1\cap I_n \cap H_1 \}$ and the claim follows.
\end{proof}

\begin{remar}\label{Rem_LEquiField}
Let $\Ac \in \Rc(1)\cup\Rc(2)$.
Then by \cite[Thm.~3.6]{p-C10b} there exists a minimal subfield $\LL \leq \RR$ such that there is
an arrangement $\Bc$ in $\LL^3$ with $L(\Bc) \cong L(\Ac)$. Furthermore 
if $\Bc'$ is another arrangement in $\LL^3$ which is $L$-equivalent to $\Bc$,
then there is a collineation (projective semi-linear transformation) $\varphi \in \PGAL(\LL^3)$ with $\Bc'=\varphi(\Bc) = \{\varphi(H) \mid H \in \Bc\}$.
Hence, by the fundamental theorem of projective geometry (see e.g.\ \cite[Sec.\ II.9]{Artin88_GeomAlg}) there is a field automorphism
$\mu$ of $\LL$ and a $\psi \in \GL(\RR^3)$ such that $\psi(\mu(\Bc) \otimes_\LL \RR) = \Ac$.
So any (real) arrangement $\Ac'$ which is $L$-equivalent to $\Ac(2n,1)$ or $\Ac(4m+1,1)$ is essentially this arrangement.
\end{remar}

\begin{propo}\label{Prop_A2n4n}
Let $\Ac$ be an \isss $3$-arrangement with modular element $X \in L_2(\Ac)$, and $n := |\Ac_X|$. 
Let $\Bc$ be the subarrangement from Proposition \ref{Prop_A2n1Subarr} 
which is lattice equivalent to $\Ac(2n,1)$.
Then 
\begin{enumerate}
\item $| \Ac \setminus \Bc | \leq 1$, and 
\item If $\Ac = \Bc \dot{\cup} \{J\}$ then $n$ is even and $\Ac \sim_L \Ac(4\frac{n}{2}+1,1)$.
\end{enumerate}
\end{propo}
\begin{proof}
By the preceding remark we may assume that $\Bc=\Ac(2n,1)$ and is given as in Definition \ref{def:A2n}.

\begin{figure}
\def \sc {1}

\begin{tikzpicture}[scale=\sc]
\draw[dashed] (-1.1547005383792515,3.8297084310253524) -- (-1.1547005383792515,-3.8297084310253524);  
\node at (-1.1547005383792515,-4.0381513922420824) {\small $J$};
\draw (3.9984686968287599,0.11067104626107283) -- (-3.7135182035477854,-1.4865337372287333);  
\draw (4.,0.) -- (-4.,0.);  
\node at (-4.2000000000000002,0.) {\small $H$};
\draw (-3.7135182035477854,1.4865337372287333) -- (3.9984686968287599,-0.11067104626107283);  
\draw (-2.807973978395951,2.8487334267444564) -- (2.4463191254730421,-3.1647310685656707);  
\node at ($(-2.807973978395951,2.8487334267444564)+(-0.1,0.2)$) {\small $I_1$};
\draw (2.4463191254730421,3.1647310685656707) -- (-2.807973978395951,-2.8487334267444564);  
\node at ($(-2.9397950640386408,-2.9996008036827244)+(0.1,-0.1)$) {\small $I_1'$};
\draw (2.1522938065798431,3.3715918154720406) -- (-3.8667252762004534,-1.0239314617651551);  
\node at (-4.0394692976975994,-1.1500816462140691) {\small $I_2$};
\draw (-3.8667252762004534,1.0239314617651551) -- (2.1522938065798431,-3.3715918154720406);  
\node at (2.3250378280769888,-3.4977419999209545) {\small $I_2'$};

\fill[red] (-1.1547005383792515,-0.95658524695240099) circle[radius=2pt];  
\fill[red] (-1.1547005383792515,0.95658524695240099) circle[radius=2pt];  
\fill[red] (3.4641016151377548,0.0) circle[radius=2pt];  

\node[above] at (3.4641016151377548,0.0) {$X$};

\node at ($(-1.1547005383792515,0.95658524695240099) + 0.25*(-1,-2)$) {$K$};
\node at ($(-1.1547005383792515,-0.95658524695240099) + 0.25*(-1,2)$) {$K'$};
\end{tikzpicture}
\caption{The structure of $L(\Bc)$ yields only one possibility for $J \in \Ac\setminus \Bc$.}\label{fig:HinAwoB}
\end{figure}
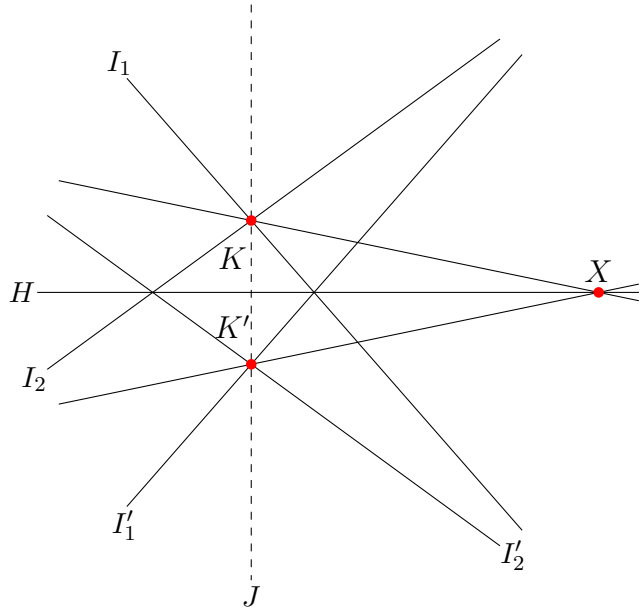

Assume there is a $J \in \Ac \setminus \Bc$. Then for all $I' \in \Bc \setminus \Ac_X$, $Y=J\cap I'$ we have
$\vert \Bc_Y \vert = 3$ since otherwise $\vert \Ac^J \vert \geq n+1$ contradicting Lemma \ref{Lem_ModCompl}.
In particular if $K \in \Kc(\Bc)$ such that $K \cap J \neq \emptyset$,
$W^K=\{H,I_1,I_2\}$ (since $\overline{K}\cap X = \{0\}$)
with $I \in \Bc_X$, $H_1,H_2 \in \Bc \setminus \Bc_X$, then
$I_1\cap I_2 \subseteq J$. Furthermore for the adjacent chamber $K'$ with
$\langle \overline{K}\cap\overline{K'}\rangle = H$, $W^{K'}=\{H,I_1',I_2'\}$
we also have $J\cap K' \neq \emptyset$ so similarly $I_1'\cap I_2' \subseteq J$.
Since $I_1,I_2,I_1',I_2'$ are pairwise different, $J = I_1\cap I_2 + I_1'\cap I_2'$, see Figure \ref{fig:HinAwoB}.
Let $\tilde{J} \in \Ac \setminus \Bc$ be another hyperplane.
Then there exists a chamber $K \in \Kc(\Bc)$ such that $J\cap K \neq \emptyset$ and $\tilde{J}\cap K \neq \emptyset$ 
(since otherwise there is an $I' \in \Bc \setminus \Bc _X$ such that
$\tilde{J}\cap I' \notin L_2(\Bc)$ which contradicts the modularity of $X$).
Hence $J=\tilde{J}$.
So there is only one possibility for such a $J$ and we obtain $\vert \Ac \setminus \Bc \vert \leq 1$.

Now suppose $n = \vert \Ac_X \vert$ is odd. Since $\Ac$ is supersolvable with modular element $X \in L_2(\Ac)$
 by Lemma \ref{ss charpoly fact} we have
$$
	\chi_\Ac(t) = (t-1)(t-a)(t-b),
$$
with $a = n-1$ and $b = \vert \Ac \vert - n$.
The first root $a$ is even so by Lemma \ref{charpoly facts s3} $b$ has to be odd,
i.e.\ $\vert \Ac \vert$ is even and hence $\Ac = \Bc$.
If $n$ is even then either $\Ac = \Bc$
or there is one more hyperplane $J \in \Ac \setminus \Bc$ which has to be $J = (0,0,1)^\perp$
after a possible coordinate change and $\Ac = \Ac(4\frac{n}{2} + 1,1)$.
\end{proof}

We are now prepared to prove the main result of this section. 
Notice that if $\Ac$ is not assumed to be finite, then one also obtains an infinite arrangement described in \cite{p-CG17}.

\begin{theor}\label{classification rk 3}
Let $\Ac$ be an \isss $3$-ar\-ran\-ge\-ment.
Then $\Ac$ is lattice equivalent to exactly one of the arrangements in $\Rc(1)\cup\Rc(2)$.
\end{theor}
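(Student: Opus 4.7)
The plan is to combine the structural results already proved for the modular element together with Propositions \ref{Prop_A2n1Subarr} and \ref{Prop_A2n4n}. Here is how I would carry it out.

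First, I would use supersolvability to produce a modular element $X \in L_2(\Ac)$ with $n := \vert \Ac_X \vert \geq 3$. By Lemma \ref{Tohaneanu} any $X \in L_2(\Ac)$ for which $\vert \Ac_X \vert$ is maximum is modular, and such an $X$ exists; irreducibility together with Corollary \ref{Cor_Mod_ge3} forces $n \geq 3$. Proposition \ref{Prop_A2n1Subarr} then produces a subarrangement $\Bc \subseteq \Ac$ with $\Bc_X = \Ac_X$ and $\Bc \sim_L \Ac(2n,1)$.

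Next, I would apply Proposition \ref{Prop_A2n4n} to control $\Ac \setminus \Bc$. There are two cases. If $\vert \Ac \setminus \Bc \vert = 0$, then $\Ac = \Bc \sim_L \Ac(2n,1) \in \Rc(1)$, which lies in $\Rc(1)$ since $n \geq 3$. If $\vert \Ac \setminus \Bc \vert = 1$, then by Proposition \ref{Prop_A2n4n}(2) the number $n$ is even (and automatically $n \geq 4$), and $\Ac \sim_L \Ac\bigl(4\tfrac{n}{2}+1,1\bigr)$ with $\tfrac{n}{2} \geq 2$, so $\Ac \in \Rc(2)$. In either case $\Ac$ is lattice equivalent to an arrangement in $\Rc(1) \cup \Rc(2)$, as desired.

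Finally, I would verify the word \emph{exactly one}. By Lemma \ref{Lem_Rc12ISSS} every element of $\Rc(1)\cup\Rc(2)$ is itself irreducible supersolvable simplicial, so the question is whether two distinct members of this list can be lattice equivalent. Using the multiset of invariants from Remark \ref{Rem_LatticeA2nA4n+1},
\begin{equation*}
  \{\{\vert \Ac_Z \vert \mid Z \in L_2(\Ac(2n,1)) \}\} = \{\{2^n, 3^{\ast}, n^1 \}\},
\end{equation*}
\begin{equation*}
  \{\{\vert \Ac_Z \vert \mid Z \in L_2(\Ac(4n+1,1)) \}\} = \{\{2^{3n}, 3^{\ast}, 4^n, (2n)^1 \}\},
\end{equation*}
one sees that the largest rank-two localization size has cardinality $n$ in the $\Rc(1)$-case and $2n$ in the $\Rc(2)$-case, and moreover the multiplicities distinguish them within each family. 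Hence the assignment $\Ac \mapsto $ its $L$-equivalence class on the list is well defined and injective.

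I do not expect any significant obstacle: the heavy combinatorial and geometric work has been done in Lemma \ref{rk3 modular graph} and the two propositions preceding the theorem. The only subtle point is matching the parity conditions and size constraints $n \geq 3$, $m \geq 2$ defining $\Rc(1)$ and $\Rc(2)$, which follows from irreducibility and the parity part of Proposition \ref{Prop_A2n4n}.
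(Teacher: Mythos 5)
Your proposal is correct and follows essentially the same route as the paper, which likewise deduces the theorem directly from Lemma \ref{Lem_Rc12ISSS}, Proposition \ref{Prop_A2n1Subarr} and Proposition \ref{Prop_A2n4n}; your additional check of the parity/size constraints and of the \emph{exactly one} clause (which already follows from the differing cardinalities $2n$ versus $4m+1$) just makes explicit what the paper leaves implicit.
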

\begin{proof}
By Lemma \ref{Lem_Rc12ISSS} all arrangements in $\Rc(1)\cup\Rc(2)$ are irreducible, supersolvable, and simplicial.

Conversely by Proposition \ref{Prop_A2n1Subarr} and Proposition \ref{Prop_A2n4n} we have $\Ac \sim_L \Ac(2n,1)$ if $n$ is odd,
or $\Ac \sim_L \Ac(2n,1)$ or $\Ac \sim_L \Ac(4\frac{n}{2}+1,1)$ if $n$ is even.
\end{proof}

From the proof of Proposition \ref{Prop_A2n1Subarr} we obtain the following corollary.

\begin{corol}\label{SSs3 leq 4 outside mod}
Let $\Ac$ be an \isss $3$-ar\-rangement and $X \in L_2(\Ac)$ a modular element.
Then for all $X' \in L_2(\Ac) \setminus \{X\}$ we have $\vert \Ac_{X'} \vert \leq 4$.
\end{corol}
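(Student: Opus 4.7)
The plan is to read this off directly from the two structural results established just before (Proposition~\ref{Prop_A2n1Subarr} and Proposition~\ref{Prop_A2n4n}), which together are equivalent to the classification Theorem~\ref{classification rk 3}. The idea is that these propositions already pin down the intersection lattice of $\Ac$ near $X$ quite precisely, so the bound $|\Ac_{X'}|\le 4$ is essentially a bookkeeping exercise.

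First, I would invoke Proposition~\ref{Prop_A2n1Subarr} to produce a subarrangement $\Bc\subseteq\Ac$ with $\Bc_X=\Ac_X$ and $\Bc\sim_L\Ac(2n,1)$, where $n=|\Ac_X|$. The proof of that proposition explicitly establishes the following key fact (via the projective $n$-gon picture): every rank-two intersection $Y\in L_2(\Bc)\setminus\{X\}$ satisfies $|\Bc_Y|\le 3$. This is visible either directly from that proof or from the multiset of invariants $\{\{2^n,3^{|L_2(\Bc)|-n-1},n^1\}\}$ recorded in Remark~\ref{Rem_LatticeA2nA4n+1}.

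Next, I would apply Proposition~\ref{Prop_A2n4n}, which gives $|\Ac\setminus\Bc|\le 1$. Thus either $\Ac=\Bc$, in which case every $X'\in L_2(\Ac)\setminus\{X\}$ already satisfies $|\Ac_{X'}|\le 3\le 4$; or $\Ac=\Bc\,\dot\cup\,\{J\}$ for a single additional hyperplane $J$. In the latter case, I would distinguish two subcases for $X'\in L_2(\Ac)\setminus\{X\}$: if $X'\in L_2(\Bc)$, then adding the one hyperplane $J$ can increase the localization by at most one, so $|\Ac_{X'}|\le |\Bc_{X'}|+1\le 3+1=4$; if $X'\notin L_2(\Bc)$, then at most one hyperplane of $\Bc$ passes through $X'$, so $|\Ac_{X'}|=2$. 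In either case the bound holds.

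There is no real obstacle here, since all the hard work (the structure of $\Bc$ and the uniqueness of the potential extra line $J$) has been done in the two preceding propositions; the only thing to verify is the elementary monotonicity statement that adjoining a single hyperplane raises the size of any rank-two localization by at most one, which is immediate from the definition of $\Ac_{X'}$. Consistency with Remark~\ref{Rem_LatticeA2nA4n+1} (where the maximal non-modular size for $\Ac(4m+1,1)$ is exactly $4$) confirms that the bound is sharp.
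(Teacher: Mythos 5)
Your proof is correct and follows essentially the same route as the paper, which derives this corollary directly from the structural analysis in Proposition~\ref{Prop_A2n1Subarr} (where $\vert\Bc_Y\vert\le 3$ for $Y\ne X$ is established) together with the fact from Proposition~\ref{Prop_A2n4n} that at most one further hyperplane can be adjoined. Your explicit case distinction on whether $X'$ lies in $L_2(\Bc)$, and the observation that a single extra hyperplane raises any rank-two localization by at most one, is exactly the bookkeeping the paper leaves implicit.
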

 
From the proof of Proposition \ref{Prop_A2n4n} we obtain:
\begin{corol}\label{Coxeter graph rk3}
Let $\Ac$ be an \isss $3$-ar\-rangement, $X \in L_2(\Ac)$ modular with $n = \vert \Ac_X \vert$,
and $K \in \Kc(\Ac)$.
Then $\Gamma(K)$ is one of the Coxeter graphs of Figure \ref{fig:Coxeter graphs rk3}.
In particular, if $\vert \Ac \vert$ is even or $n \leq 5$,
then there is no chamber $K \in \Kc(\Ac)$ such that $\Gamma(K) = \Gamma_3^5$
and if $n > 4$ and $\vert  \Ac \vert$ is even then there is also no chamber $K \in \Kc(\Ac)$ such that $\Gamma(K) = \Gamma_3^2$.
\begin{figure}
\begin{tikzpicture}
\node at (-1.5,0) {$\Gamma_3^1$};
\filldraw [black]
	(-1,0) circle [radius=2pt]
	(0,0) circle [radius=2pt]
	(1,0) circle [radius=2pt];
\draw
	(-1,0) -- (0,0) -- (1,0);
\node[above] at (-0.5,0)  {$n$};
\end{tikzpicture}\hspace{0.5cm}
\begin{tikzpicture}
\node at (-1.5,0) {$\Gamma_3^2$};
\filldraw [black]
	(-1,0) circle [radius=2pt]
	(0,0) circle [radius=2pt]
	(1,0) circle [radius=2pt];
\draw
	(-1,0) -- (0,0) -- (1,0);
\node[above] at (-0.5,0)  {$4$};
\end{tikzpicture}\hspace{0.5cm}
\begin{tikzpicture}
\node at (-1.5,0) {$\Gamma_3^3$};
\filldraw [black]
	(-1,0) circle [radius=2pt]
	(0,0) circle [radius=2pt]
	(1,0) circle [radius=2pt];
\draw
	(-1,0) -- (0,0) -- (1,0);
\end{tikzpicture}

\vspace{0.5cm}
\begin{tikzpicture}
\node at (-0.5,0) {$\Gamma_3^4$};
\filldraw [black]
	(0,0.5) circle [radius=2pt]
	(0,-0.5) circle [radius=2pt]
	(1,0) circle [radius=2pt];
\draw
	(0,0.5) -- (0,-0.5) -- (1,0) -- (0,0.5);
\end{tikzpicture}\hspace{0.5cm}
\begin{tikzpicture}
\node at (-0.75,0) {$\Gamma_3^5$};
\filldraw [black]
	(0,0.5) circle [radius=2pt]
	(0,-0.5) circle [radius=2pt]
	(1,0) circle [radius=2pt];
\draw
	(0,0.5) -- (0,-0.5) -- (1,0) -- (0,0.5);
\node[left] at (0,0)  {$4$};
\end{tikzpicture}
\caption{Possible Coxeter graphs for an \isss $3$-arrangement.}\label{fig:Coxeter graphs rk3}
\end{figure}
 \end{corol}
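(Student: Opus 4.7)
My plan is to leverage the classification of \isss $3$-arrangements in Theorem \ref{classification rk 3} together with the explicit intersection data given in Remark \ref{Rem_LatticeA2nA4n+1}. Since $\Ac$ is irreducible, Lemma \ref{simpl irred graph} tells us that $\Gamma(K)$ is connected, and having only three vertices it is either a path (two edges) or a triangle (three edges). By Corollary \ref{SSs3 leq 4 outside mod} every edge label $\vert \Ac_Y\vert$ corresponding to an intersection $Y \neq X$ is at most $4$, while the edge corresponding to $X$ itself (if present) carries the label $n$. If $K$ is adjacent to the modular element $X$, then Lemma \ref{rk3 modular graph} immediately forces $\Gamma(K) = \Gamma_3^1$.

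Now suppose $K$ is not adjacent to $X$, so every edge label of $\Gamma(K)$ lies in $\{3,4\}$. In the case $\Ac \sim_L \Ac(2n,1)$ the intersection sizes listed in Remark \ref{Rem_LatticeA2nA4n+1} are only $2,3,n$, so every label must be $3$, giving $\Gamma_3^3$ (path) or $\Gamma_3^4$ (triangle). For the case $\Ac \sim_L \Ac(4m+1,1)$ with $n = 2m$, intersections of size $4$ also appear, and all of them contain the line $J$. The key technical step is to show that at most one edge of $\Gamma(K)$ can carry the label $4$. Since any two edges of $\Gamma(K)$ must share a common vertex (a wall $W_a$ of $K$), suppose two such edges had label $4$. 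If $W_a \neq J$, then the two intersections $W_a \cap W_b$ and $W_a \cap W_c$ both lie in the $1$-dimensional subspace $W_a \cap J$, forcing all three walls of $K$ to contain this common line, which is impossible for a simplicial chamber. If $W_a = J$, then the chamber $K^J \in \Kc(\Ac^J)$ would touch two consecutive size-$4$ intersections along $J$; but a direct inspection of the indices in Remark \ref{Rem_LatticeA2nA4n+1} shows that the $2m$ intersections on $J$ alternate size-$4$ and size-$2$, so two consecutive size-$4$'s do not occur. Hence the only remaining possibilities for $\Gamma(K)$ are $\Gamma_3^2$ (path with one label $4$) and $\Gamma_3^5$ (triangle with one label $4$).

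The ``in particular'' statements then follow painlessly. If $\vert \Ac \vert$ is even then $\Ac \sim_L \Ac(2n,1)$, so size-$4$ intersections do not exist at all and $\Gamma_3^5$ cannot occur; if in addition $n > 4$, then no edge label equals $4$ either, excluding $\Gamma_3^2$. If $n \leq 5$ then the only instance in $\Rc(2)$ to check is $\Ac \sim_L \Ac(9,1)$, corresponding to $n = 4$; there every size-$4$ intersection $Y$ satisfies $\vert \Ac_Y \vert = 4 = n$ and is therefore modular by Lemma \ref{X mod max}, so by Lemma \ref{rk3 modular graph} any chamber adjacent to such a $Y$ has Coxeter graph $\Gamma_3^1$, which is a path and not $\Gamma_3^5$. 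I expect the main obstacle to be the alternation claim on $J$, since it requires translating the index conditions of Remark \ref{Rem_LatticeA2nA4n+1} into the cyclic geometric order of the points $H_k \cap J$ on the projective line $J$ and handling the parity of $m$ uniformly.
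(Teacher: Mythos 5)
Your argument is correct and is essentially the paper's own derivation: the paper obtains this corollary directly from the structural analysis in Propositions \ref{Prop_A2n1Subarr} and \ref{Prop_A2n4n} (chambers adjacent to $X$ are handled by Lemma \ref{rk3 modular graph}, all other labels are at most $4$ by Corollary \ref{SSs3 leq 4 outside mod}, and the position of the extra line $J$ rules out two labels equal to $4$), which is exactly your case distinction. The alternation you flag as the remaining obstacle does hold --- on the projective line $J$ the points $H_k\cap J$ occur in cyclic order of $k$ and satisfy $\vert\Ac_{H_k\cap J}\vert=4$ precisely when $k\equiv m\pmod 2$, so quadruple and double points strictly alternate (note that the explicit description $\{H_{2i-1}\cap J\}$ of the double points in Remark \ref{Rem_LatticeA2nA4n+1} is literally correct only for $m$ even, as you anticipated).
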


\begin{lemma}\label{rk 3 size rest}
Let $\Ac$ be an \isss $3$-ar\-rangement and $H \in \Ac$.
Then for all $H \in \Ac$ we have
$$
 	\vert \Ac^H \vert \geq \lceil \frac{\vert \Ac \vert}{4}\rceil + 1.
$$
\end{lemma}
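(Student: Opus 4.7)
The plan is to reduce to the classification theorem and then to compute $|\Ac^H|$ explicitly in the two model families. By Theorem \ref{classification rk 3} I may assume $\Ac=\Ac(2n,1)$ for some $n\geq 3$ or $\Ac=\Ac(4n+1,1)$ for some $n\geq 2$, since both $|\Ac|$ and $|\Ac^H|$ depend only on $L(\Ac)$. Let $X\in L_2(\Ac)$ denote the modular element described in Remark \ref{Rem_LatticeA2nA4n+1}.

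If $H\notin \Ac_X$, the argument is immediate: Lemma \ref{Lem_ModCompl} gives $|\Ac^H|=|\Ac_X|$, which equals $n$ or $2n$ in the two families, and a short arithmetic comparison with $\lceil|\Ac|/4\rceil+1$ finishes this case.

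The main case is $H=H_k\in \Ac_X$. Here the rank-$2$ intersections contained in $H$ are precisely $X$ together with one line for each equivalence class of ``$I$-hyperplanes'' meeting $H$ in a common rank-$2$ subspace. Using the congruence description in Remark \ref{Rem_LatticeA2nA4n+1}, I will count these classes: for $\Ac(2n,1)$ the pairs of $I$-hyperplanes producing a coincidence on $H_k$ are exactly the triples $I_i\cap I_j\cap H_k$ with $i+j\equiv k\pmod n$, and a short count (splitting by the parities of $n$ and $k$) shows that the number of distinct $I$-lines on $H_k$ together with $X$ already meets the required lower bound.

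The harder part will be the case $\Ac=\Ac(4n+1,1)$, where the extra hyperplane $J$ has to be tracked separately: either $H_k\cap J$ coincides with one of the quadruple lines $I_i\cap I_{i+n}\cap H_k\cap J$ on $H_k$ (when $k\equiv n\pmod 2$), or it contributes an additional ``isolated'' double to $\Ac^{H_k}$. Combining the count of triples and quadruples with the correct behaviour of $H_k\cap J$, split by the parities of $n$ and $k$, yields $|\Ac^{H_k}|$ in each sub-case; this is the step where the bound becomes essentially tight and careful bookkeeping is needed.
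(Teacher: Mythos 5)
Your overall strategy---reduce via Theorem \ref{classification rk 3} to the two model families and count the points on each $H_k$ explicitly---is legitimate and close in spirit to the paper's argument: the paper also leans on the classification (through Corollary \ref{SSs3 leq 4 outside mod} and Theorem \ref{classification rk 3}), but organizes the count more compactly by setting $t_r^H:=|\{Y\in\Ac^H \mid |\Ac_Y|=r\}|$ and using $\sum_{r\ge 2}(r-1)t_r^H=|\Ac|-1$ to get $|\Ac^H|=\tfrac{n+t_2^H}{2}+1$ for $H\in\Ac_X$. Your treatment of $H\notin\Ac_X$ via Lemma \ref{Lem_ModCompl} is identical to the paper's.

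The genuine problem is that the ``careful bookkeeping'' you defer in the $\Rc(2)$ case cannot be completed, because the asserted inequality is false there. Take $\Ac=\Ac(4m+1,1)$ with $m$ odd and $H=H_k$ with $k\equiv m\ (\md 2)$: such an $H_k$ carries only $X$, one quadruple point $H_k\cap I_i\cap I_{i+m}\cap J$ (at infinity), and $m-1$ triple points $I_i\cap I_j\cap H_k$ (there are no simple points $I_i\cap H_k$ since $2i\equiv k\ (\md 2m)$ has no solution for $k$ odd), so $|\Ac^{H_k}|=m+1$, while $\lceil(4m+1)/4\rceil+1=m+2$. Concretely, in $\Ac(13,1)$ the line $H_1$ meets the other twelve lines in exactly four points (multiplicities $6,4,3,3$), so $|\Ac^{H_1}|=4<5$; this is visible in Figure \ref{A10,12,13}, where one thick line of $\Ac(13,1)$ carries four marked points and the other six. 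So the sub-case you flag as ``essentially tight'' is exactly where the bound fails by one. The paper's own proof shares this defect: its closing inequality $\tfrac{n+t_2^H}{2}+1\ge\lceil|\Ac|/4\rceil+1$ breaks precisely when $t_2^H=0$ and $|\Ac|=2n+1$. What your computation (and the paper's) actually establishes is $|\Ac^H|\ge\lfloor|\Ac|/4\rfloor+1$, which is sharp; the sole application, in Proposition \ref{rk4 only 4er or smaller}, still goes through because there $\Ac_Y/Y$ is $\Ac(2p,1)$ with $p\ge 7$ or $\Ac(4q+1,1)$ with $q\ge 4$, and in either family the minimum of $|\Ac^H|$ is at least $5$. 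One further caveat for your bookkeeping: the list in Remark \ref{Rem_LatticeA2nA4n+1} places the simple points $H_k\cap J$ at odd $k$, which is only correct for $m$ even; in general they occur for $k\not\equiv m\ (\md 2)$, so your parity split must be taken with respect to $k-m$ rather than $k$.
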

\begin{proof}
Let $X \in L_2(\Ac)$ be modular, $n= \vert \Ac_X \vert$, and $H \in \Ac$.
If $H \in \Ac \setminus \Ac_X$ then by Lemma \ref{Lem_ModCompl} we have
$\vert \Ac^H \vert = n \geq \frac{\vert \Ac \vert}{2} \geq \lceil \frac{\vert \Ac \vert}{4} \rceil +1$.

Let $t_r^H := \vert \{ X \in \Ac^H \mid \vert \Ac_X \vert = r \} \vert$.
Then we always have the identity $\sum_{r \geq 2} (r-1)t_r^H = \vert \Ac \vert - 1$.
By Corollary \ref{SSs3 leq 4 outside mod} for $H \in \Ac_X$ we see that $t_r^H = 0$ for $r \notin \{2,3,4,n\}$,
and $t_n^H=1$.
Furthermore by Theorem \ref{classification rk 3} we have
$t_2^H \in \{0,1,2\}$ and $t_4^H=1$ if and only if $\vert \Ac \vert = 2n+1$ and $n$ is even.
So we obtain
$$
t_3^H = \frac{\vert \Ac \vert  - 1 - t_2^H - 3t_4^H - (n-1)t_n^H}{2} = \frac{\vert \Ac \vert - n - t_2^H - 3t_4^H}{2},
$$
and hence
$$
	\vert \Ac^H \vert = t_2^H + t_3^H + t_4^H + t_n^H = \frac{n + t_2^H}{2} +1 \geq \lceil \frac{\vert \Ac \vert}{4} \rceil +1.
$$
\end{proof}

\section{The rank $4$ case}\label{sec:rk4}

The following proposition and its immediate corollary are the key for the classification of irreducible
supersolvable simplicial arrangements of rank $\ell \geq 4$.

\begin{propo}\label{rk4 only 4er or smaller}
Let $\Ac$ be an \isss $4$-ar\-rangement.
Then for all $X \in L_2(\Ac)$ we have $\vert\Ac_X\vert \leq 4$.
\end{propo}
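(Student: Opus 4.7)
The plan is to argue by contradiction: suppose there is an $X \in L_2(\Ac)$ with $n := |\Ac_X| \geq 5$. First I would pick a chamber $K \in \Kc(\Ac)$ with basis $B^K = \{\alpha_1, \alpha_2, \alpha_3, \alpha_4\}$ such that $X = \alpha_1^\perp \cap \alpha_2^\perp$, which is always possible because $\Ac$ is simplicial and so every rank two element of $L(\Ac)$ arises as the intersection of two walls of some chamber. By Lemma~\ref{simpl irred graph} the Coxeter graph $\Gamma(K)$ on the vertex set $\{1, 2, 3, 4\}$ is connected (since $\Ac$ is irreducible), and the edge $\{1, 2\}$ carries the label $n \geq 5$.

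The central lever is the following. Whenever a three-vertex subset $I \subseteq \{1, 2, 3, 4\}$ containing $\{1, 2\}$ induces a connected subgraph of $\Gamma(K)$, the rank three localization $\Ac_{Y_I}/Y_I$ at $Y_I := \bigcap_{i \in I} \alpha_i^\perp$ is an irreducible supersolvable simplicial rank three arrangement (Lemmas~\ref{loc subgraph}, \ref{simpl irred graph} and \ref{SSS loc res}), and $X/Y_I$ is a rank two element of size $n \geq 5$ in it. Corollary~\ref{SSs3 leq 4 outside mod} then forces $X/Y_I$ to be the modular element of $\Ac_{Y_I}/Y_I$, and Lemma~\ref{rk3 modular graph} pins down the induced subgraph of $\Gamma(K)$ on $I$ as the path $\Gamma_3^1$ of Corollary~\ref{Coxeter graph rk3}, with $\{1, 2\}$ the edge of label $n$ and exactly one of $\{1, k\}, \{2, k\}$ (for $\{k\} = I \setminus \{1, 2\}$) the other edge, labeled $3$. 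Applied to $I = \{1, 2, 3\}$ and $I = \{1, 2, 4\}$, combined with connectedness of $\Gamma(K)$, this restricts $\Gamma(K)$ (up to swapping $1 \leftrightarrow 2$ and $3 \leftrightarrow 4$) to one of three shapes: (A) a star centered at vertex $2$ with arms to $1, 3, 4$ labeled $n, 3, 3$; (B) the path $3-1-2-4$ with labels $3, n, 3$; (C) the path $1-2-3-4$ with labels $n, 3, m$ for some $m \geq 3$. An additional $\{3,4\}$-edge in each of these shapes is ruled out by contracting the edge $\{2, 3\}$ via Lemma~\ref{res subgraph}: by part (3) of that lemma the resulting label on $\{23, 4\}$ (in (A)) or a forced contracted triangle (in (B)) is incompatible with the constraint that the Coxeter graph $\Gamma^H$ of the rank three restriction $\Ac^H$ must be among $\Gamma_3^1, \ldots, \Gamma_3^5$ (which, given the forced label $\geq n \geq 5$ in the contraction, singles out $\Gamma^H = \Gamma_3^1$, the only possibility with a label $\geq 5$). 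The same contraction in shape (C) forces $m = 3$, reducing (C) to the path $1-2-3-4$ with labels $n, 3, 3$.

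I expect the main obstacle to be closing out the three remaining shapes, particularly shape (C), which for $n = 5$ is the Coxeter diagram of the non-crystallographic reflection group $H_4$ — no single edge contraction rules it out. The plan there is to combine information from the several rank three localizations $\Ac_{Y_I}/Y_I$ with $|I| = 3$: by Theorem~\ref{classification rk 3} every irreducible such localization is lattice equivalent to some $\Ac(2n', 1)$ or $\Ac(4m'+1, 1)$ and its size $|\Ac_{Y_I}|$ is thereby pinned down by the labels on the induced Coxeter subgraph. Combining these local counts with Stanley's factorization $\chi_\Ac(t) = \prod_i (t - b_i)$ of the characteristic polynomial for supersolvable $\Ac$ (Theorem~\ref{ss charpoly fact}), with the simpliciality identity $4|\chi_\Ac(-1)| = 2 \sum_{H \in \Ac} |\chi_{\Ac^H}(-1)|$ of Lemma~\ref{charpoly charact}, and with the identity $|\Ac^H| = |\Ac_{X_3}|$ for $H$ not containing the modular rank three element $X_3$ of the supersolvable chain (Lemma~\ref{Lem_ModCompl}), the aim is to derive an arithmetic incompatibility with the required integrality and non-negativity of the exponents $b_i$.
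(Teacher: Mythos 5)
Your reduction to the three candidate shapes for $\Gamma(K)$ is a reasonable and essentially sound setup (the observation that any rank-two flat of size $\geq 5$ must be \emph{the} modular element of every irreducible rank-three localization containing it, via Corollary \ref{SSs3 leq 4 outside mod} and Lemma \ref{rk3 modular graph}, is a correct lever, and it parallels the first step of the paper's proof, which establishes the same uniqueness statement by restricting to a transversal hyperplane instead of localizing). However, the proposal stops exactly where the proposition actually lives: none of the three surviving shapes is eliminated. For shapes (A) and (B) you give no argument at all beyond listing them, and for shape (C) you state an ``aim'' to derive an arithmetic incompatibility from Theorem \ref{ss charpoly fact}, Lemma \ref{charpoly charact} and Lemma \ref{Lem_ModCompl} without exhibiting the computation or explaining why it must close. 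That this step is not routine is demonstrated by the paper's own proof: after showing uniqueness of the large element, it still needs (i) Lemma \ref{rk 3 size rest} applied to \emph{two distinct} rank-three flats $Y,Y'>X$, each of size $\geq 14$, to force $\vert\Ac_X\vert\leq 6$; (ii) the explicit geometry of $\Ac(12,1)$ and $\Ac(13,1)$ (Figure \ref{A10,12,13}) to kill $\vert\Ac_X\vert=6$; and (iii) for $\vert\Ac_X\vert=5$, the full computation $\chi_\Ac(t)=(t-1)(t-4)(t-5)(t-(\vert\Ac\vert-10))$ fed into Lemma \ref{charpoly charact} to get $\vert\Ac\vert=17$, followed by a final modularity contradiction coming from the two hyperplanes outside $\Ac_Y\cup\Ac_{Y'}$. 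Your plan names some of these tools but does not execute any of them, and in particular the $n\geq 7$ and $n=6$ regimes are not addressed by the characteristic-polynomial identity you propose.

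A secondary, fixable issue: in forcing the label $m=3$ on the edge $\{3,4\}$ of shape (C) you invoke Lemma \ref{res subgraph}(2) after contracting $\{2,3\}$, but that statement only bounds the label of the contracted vertex against edges incident to $\alpha$ (the first vertex of the contracted edge), not against edges incident to $\beta$; the two applications (with $(\alpha,\beta)=(\alpha_2,\alpha_3)$ and $(\alpha_3,\alpha_2)$) land in two a priori different chambers of $\Ac^{H}$, so combining the bounds ``$\geq n$'' and ``$\geq m$'' on a single path of $\Gamma_3^1$ requires an extra argument. This is minor compared with the main gap above, but as written the step is not justified by the cited lemma.
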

\begin{proof}
The proof is in three steps.
First we show that if $X \in L_2(\Ac)$ with $\vert \Ac_X \vert \geq 5$ then $X$
necessarily has to be the only rank $2$ modular element in $L(\Ac)$.
From this we derive that $\vert \Ac_X \vert \leq 6$.
Finally by some geometric arguments and using the classification in dimension $3$
we exclude the cases $\vert \Ac_X \vert = 5, 6$.

Let $X \in L_2(\Ac)$ be fixed and suppose $\vert\Ac_X\vert \geq 5$.

First assume that there is a modular $X' \in L_2(\Ac) \setminus \{X\}$.
By the irreducibility of $\Ac$ there is an $H \in \Ac$ transversal to $X$ and $X'$, i.e.\ such that $X \nsubseteq H$, $X' \nsubseteq H$,
and also $X \cap X' \nsubseteq H$ if $X \cap X' \in L_3(\Ac)$.
Let $Y = H \cap X$ and $Y' = H \cap X'$.
By Lemma \ref{SS restriction} and Lemma \ref{res irred} the restriction $\Ac^H$ is an \isss $3$-arrangement.
Furthermore, $Y \neq Y'$ and $5 \leq \vert \Ac^H_Y \vert \leq \vert \Ac^H_{Y'} \vert$ for the $3$-arrangement $\Ac^H$
by Lemma \ref{X mod max} since $Y'$ is a modular element in $L_2(\Ac^H)$ by Lemma \ref{SS restriction}.
But this contradicts Corollary \ref{SSs3 leq 4 outside mod},
the irreducible \sss $3$-arrangement $\Ac^H$ cannot have two distinct rank 2 intersections of size greater or equal to $5$,
one of them modular.
Hence $X$ is the only modular element in $L_2(\Ac)$ and also the one single element in $L_2(\Ac)$
with $\vert \Ac_X \vert \geq 5$.

From now on to the end of the proof let $Y \in L_3(\Ac)$ be a fixed modular intersection of rank $3$
with $Y > X$.

Suppose that $\vert \Ac_X \vert \geq 7$.
Then since $\Ac$ is irreducible, by Lemma \ref{SS localization} the localization $\Ac_Y/Y$
regarded as an essential 3-arrangement in $V / Y$ is an \isss 3-arrangement with modular element $X/Y \in L_2(\Ac_Y/Y))$.
So by Theorem \ref{classification rk 3} we have $\vert \Ac_Y \vert \geq 14$.
Let $H \in \Ac_X$.
By Lemma \ref{res irred} the restriction $\Ac^H$ is irreducible and by Corollary \ref{ex irred loc}
there is a $Y' \in L_2(\Ac^H) \setminus \{Y\}$ with
$Y' \subseteq X$ such that $\vert (\Ac^H)_{Y'} \vert \geq 3$.
Since $\Ac_{Y'}/Y'$ is an \isss $3$-arrangement with modular element $X/Y'$,
as for $\Ac_Y$ we have $\vert \Ac_{Y'} \vert \geq 14$.
By Lemma \ref{SS restriction} the rank 3 intersection $Y\cap H = Y$ is modular in $L(\Ac^H)$ for $H \in \Ac_X$.
By Lemma \ref{rk 3 size rest} we further have $\vert (\Ac^H)_Y \vert = \vert (\Ac_Y)^H \vert \geq 5$
and similarly $\vert (\Ac^H)_{Y'} \vert \geq 5$.
Because of the choice of $Y' \in L_2(\Ac^H) \setminus \{Y\}$ the \isss $3$-arrangement $\Ac^H$ has two distinct  rank 2 intersections of size greater or equal to $5$
which contradicts Corollary \ref{SSs3 leq 4 outside mod}.
Hence $\vert \Ac_X \vert \leq 6$.

To exclude the cases $\vert \Ac_X \vert \in \{5,6\}$ first assume that $\vert \Ac_X \vert =6$.
\begin{figure}
\def \sc {0.48}
\begin{tikzpicture}[scale=\sc]
\node[below] at (0,-4) {$\Ac(10,1)$};
\draw [line width=0.5mm] (4.,0.) -- (-4.,0.);  
\draw (3.2360679774997898,2.3511410091698925) -- (-3.2360679774997898,-2.3511410091698925);  
\draw (1.2360679774997896,3.804226065180615) -- (-1.2360679774997896,-3.804226065180615);  
\draw (-1.2360679774997896,3.804226065180615) -- (1.2360679774997896,-3.804226065180615);  
\draw (-3.2360679774997898,2.3511410091698925) -- (3.2360679774997898,-2.3511410091698925);  
\draw (-3.0854994876500208,2.5455240937204779) -- (1.4674654989001255,-3.7210945983054238);  
\draw (3.9924430432881204,0.24576115661408859) -- (-3.3744090545382268,-2.1478741892043964);  
\draw (1.,3.872983346207417) -- (1.,-3.872983346207417);  
\draw (-3.3744090545382268,2.1478741892043964) -- (3.9924430432881204,-0.24576115661408859);  
\draw (1.4674654989001255,3.7210945983054238) -- (-3.0854994876500208,-2.5455240937204779);  

\def \colp {red}
\fill[\colp] (0.0,0.0) circle[radius=1.5mm];  
\fill[\colp] (-1.2360679774997894,0.0) circle[radius=1.5mm];  
\fill[\colp] (3.2360679774997898,0.0) circle[radius=1.5mm];  
\fill[\colp] (1.,0.0) circle[radius=1.5mm];  

\end{tikzpicture}
\begin{tikzpicture}[scale=\sc]
\node[below] at (0,-4) {$\Ac(12,1)$};
\draw [line width=0.5mm]  (4.,0.) -- (-4.,0.);  
\draw [line width=0.5mm]  (3.4641016151377548,2.) -- (-3.4641016151377548,-2.);  
\draw (2.,3.4641016151377548) -- (-2.,-3.4641016151377548);  
\draw (0.,4.) -- (0.,-4.);  
\draw (-2.,3.4641016151377548) -- (2.,-3.4641016151377548);  
\draw (-3.4641016151377548,2.) -- (3.4641016151377548,-2.);  
\draw (-2.8025170768881469,2.8541019662496847) -- (1.0704662693192699,-3.8541019662496847);  
\draw (3.872983346207417,-1.) -- (-3.872983346207417,-1.);  
\draw (2.8025170768881469,2.8541019662496847) -- (-1.0704662693192699,-3.8541019662496847);  
\draw (-1.0704662693192699,3.8541019662496847) -- (2.8025170768881469,-2.8541019662496847);  
\draw (3.872983346207417,1.) -- (-3.872983346207417,1.);  
\draw (1.0704662693192699,3.8541019662496847) -- (-2.8025170768881469,-2.8541019662496847);  

\fill[red] (0.0,0.0) circle[radius=1.5mm];  
\fill[red] (-1.1547005383792515,0.0) circle[radius=1.5mm];  
\fill[red] (1.1547005383792515,0.0) circle[radius=1.5mm];  
\fill[red] (4.2352941176470589,0) circle[radius=1.5mm];
\fill[red] (-0.86602540378443871,-0.5) circle[radius=1.5mm];  
\fill[red] (-1.7320508075688774,-1.) circle[radius=1.5mm];  
\fill[red] (1.7320508075688774,1.) circle[radius=1.5mm];  
\fill[red] (0.86602540378443871,0.5) circle[radius=1.5mm];  

\end{tikzpicture}
\begin{tikzpicture}[scale=\sc]
\node[below] at (0,-4) {$\Ac(13,1)$};
\draw (0,4.2352941176470589) arc [start angle=90, end angle=0, radius=4.2352941176470589] ;
\draw (3.344805190907731,3.344805190907731) node {$\infty$};  
\draw [line width=0.5mm]  (4.,0.) -- (-4.,0.);  
\draw [line width=0.5mm]  (3.4641016151377548,2.) -- (-3.4641016151377548,-2.);  
\draw (2.,3.4641016151377548) -- (-2.,-3.4641016151377548);  
\draw (0.,4.) -- (0.,-4.);  
\draw (-2.,3.4641016151377548) -- (2.,-3.4641016151377548);  
\draw (-3.4641016151377548,2.) -- (3.4641016151377548,-2.);  
\draw (-2.8025170768881469,2.8541019662496847) -- (1.0704662693192699,-3.8541019662496847);  
\draw (3.872983346207417,-1.) -- (-3.872983346207417,-1.);  
\draw (2.8025170768881469,2.8541019662496847) -- (-1.0704662693192699,-3.8541019662496847);  
\draw (-1.0704662693192699,3.8541019662496847) -- (2.8025170768881469,-2.8541019662496847);  
\draw (3.872983346207417,1.) -- (-3.872983346207417,1.);  
\draw (1.0704662693192699,3.8541019662496847) -- (-2.8025170768881469,-2.8541019662496847);  

\fill[red] (0.0,0.0) circle[radius=1.5mm];  
\fill[red] (-1.1547005383792515,0.0) circle[radius=1.5mm];  
\fill[red] (1.1547005383792515,0.0) circle[radius=1.5mm];  
\fill[red] (4.2352941176470589,0) circle[radius=1.5mm];
\fill[red] (3.6641016151377548,2.12)  circle[radius=1.5mm];
\fill[red] (-0.86602540378443871,-0.5) circle[radius=1.5mm];  
\fill[red] (-1.7320508075688774,-1.) circle[radius=1.5mm];  
\fill[red] (1.7320508075688774,1.) circle[radius=1.5mm];  
\fill[red] (0.86602540378443871,0.5) circle[radius=1.5mm];  

\end{tikzpicture}

\caption{$\vert \Ac_Y^H \vert = 4,5,6$ respectively for $H \in \Ac_X$.}\label{A10,12,13}
\end{figure}
We may assume that there is an $Y' \in L_3(\Ac)$, $Y' \neq Y$, and $Y' > X$ such that $\Ac_{Y'}/Y'$
is an \isss $3$-arrangement.
So we have $\Ac_{Y'}/Y' \sim_L \Ac(12,1)$ or $\Ac_{Y'}/Y' \sim_L \Ac(13,1)$. But then there is an $H \in \Ac_X$
such that $\vert \Ac_{Y'}^H \vert \geq 5$ which is immediately clear by Figure \ref{A10,12,13}.
Since by Lemma \ref{SS restriction} $Y = Y \cap H$ is a rank 2 modular element in $L(\Ac^H)$
different from $Y' \cap H = Y' \in L_2(\Ac^H)$, with
Corollary \ref{SSs3 leq 4 outside mod} we get a contradiction.

Finally suppose $\vert \Ac_X \vert = 5$.
Then we have $\Ac_Y/Y \sim_L \Ac(10,1)$.
Again we may assume that there is an $Y' \in L_3(\Ac)$, $Y' \neq Y$, and $Y' > X$ such that $\Ac_{Y'}/Y'$
is an \isss $3$-arrangement. So $\Ac_{Y'}/Y' \sim_L \Ac(10,1)$. Let $H \in \Ac_X$.
Then $\vert \Ac_{Y}^H \vert = \vert \Ac_{Y'}^H \vert = 4$, see Figure \ref{A10,12,13}.
Since by Lemma \ref{res irred} $\Ac^H$ is an \isss $3$-arrangement with modular element $Y$
by Theorem \ref{classification rk 3} we have $\Ac^H \sim_L \Ac(9,1) \cong \Ac(B_3)$.
For the other restrictions $\Ac^{H'}$ with ${H'} \in \Ac \setminus \Ac_X$ we have $\Ac^{H'} \sim_L \Ac(10,1)$.
The arrangement $\Ac$ is supersolvable and by Theorem \ref{ss charpoly fact} the characteristic polynomial
factors as follows over the integers
$$
\chi_\Ac(t) = (t-1)(t-4)(t-5)(t-(\vert \Ac \vert -10)).
$$
Similarly for $H \in \Ac_X$  by Theorem \ref{ss charpoly fact} we have
$$
\chi_{\Ac^H}(t) = (t-1)(t-3)(t-5),
$$
and for $H \in \Ac \setminus \Ac_X$
$$
\chi_{\Ac^H}(t) = (t-1)(t-4)(t-5).
$$
Now we use Lemma \ref{charpoly charact} and insert the numbers,
\begin{eqnarray*}
0 	&= &\ell\vert\chi_\Ac(-1)\vert - 2 \sum_{H \in \Ac} \vert \chi_{\Ac^H}(-1) \vert \\
	&= & \ell\vert\chi_\Ac(-1)\vert - 2(\sum_{H \in \Ac_X} \vert \chi_{\Ac^H}(-1) \vert +
		 \sum_{H \in \Ac \setminus \Ac_X} \vert \chi_{\Ac^H}(-1) \vert ) \\
 	&= & (4\cdot2\cdot5\cdot6)(\vert \Ac \vert -9) - 2 ( 5\cdot2\cdot4\cdot6 + (\vert \Ac \vert - 5\cdot2\cdot5\cdot6) \\
 	&= & 2\vert \Ac \vert -18 - 4 - \vert \Ac \vert + 5 \\
 	&=& \vert \Ac \vert - 17,
\end{eqnarray*}
so $\vert \Ac \vert = 17$.
Since $\vert \Ac_Y \cup \Ac_{Y'} \vert = 15$ there are exactly $2$ other hyperplanes $H_1, H_2$ not contained
in either $\Ac_Y$ or in $\Ac_{Y'}$.
But then there is a $Z \in L_2(\Ac)$, $Z \subseteq H_i$ for an $i = 1,2$ such that $Z \notin \Ac_Y^{H_i}$.
This contradicts the modularity of $Y$
and finishes the proof.
\end{proof}

From the previous proposition, by taking localizations and Lemma \ref{loc subgraph} we
immediately obtain the following theorem.

\begin{theor}\label{Thm:at most 4er}
Let $\Ac$ be an \isss $\ell$-ar\-rangement with $\ell \geq 4$.
Then for all $X \in L_2(\Ac)$ we have $\vert\Ac_X\vert \leq 4$.
\end{theor}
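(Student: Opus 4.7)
The plan is to reduce to the rank-$4$ case of Proposition~\ref{rk4 only 4er or smaller} via a carefully chosen localization. Suppose, for contradiction, that some $X \in L_2(\Ac)$ satisfies $|\Ac_X| \geq 5$. First I would pick a chamber $K \in \Kc(\Ac)$ with basis $B^K = \{\alpha_1,\ldots,\alpha_\ell\}$, numbered so that $\alpha_1^\perp, \alpha_2^\perp \in W^K$ and $X = \alpha_1^\perp \cap \alpha_2^\perp$; such a $K$ exists by lifting any chamber of the essential rank-$2$ localization $\Ac_X/X$ to a chamber of $\Ac$. Since $|\Ac_X| \geq 3$, the pair $\{1,2\}$ is an edge of the Coxeter graph $\Gamma(K)$.

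Next I would exploit irreducibility. By Lemma~\ref{simpl irred graph}, $\Gamma(K)$ is connected, and since $\ell \geq 4$ one can extend the edge $\{1,2\}$ to a connected subtree of $\Gamma(K)$ on exactly four vertices, say $\{1,2,3,4\}$, by walking along a spanning tree. Setting $Z := \alpha_1^\perp \cap \alpha_2^\perp \cap \alpha_3^\perp \cap \alpha_4^\perp$, the fact that $B^K$ is a basis of $V^*$ guarantees $Z \in L_4(\Ac)$, and clearly $X \leq Z$ in $L(\Ac)$.

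Now I would analyze the localization $\Ac_Z/Z$. It is supersolvable and simplicial of rank~$4$ by Lemma~\ref{SSS loc res}. By Lemma~\ref{loc subgraph}, the Coxeter graph of the chamber of $\Ac_Z/Z$ induced by $K$ is precisely the induced subgraph of $\Gamma(K)$ on $\{1,2,3,4\}$, which is connected since it contains our subtree. Hence Lemma~\ref{simpl irred graph} applied to $\Ac_Z/Z$ shows that it is irreducible, so all hypotheses of Proposition~\ref{rk4 only 4er or smaller} are met.

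Finally, since $Z \subseteq X$ as subspaces, every hyperplane containing $X$ also contains $Z$, so $\Ac_X \subseteq \Ac_Z$ and $(\Ac_Z/Z)_{X/Z}$ has the same cardinality as $\Ac_X$, in particular at least $5$. Applying Proposition~\ref{rk4 only 4er or smaller} to the irreducible supersolvable simplicial $4$-arrangement $\Ac_Z/Z$ yields $|(\Ac_Z/Z)_{X/Z}| \leq 4$, contradicting $|\Ac_X| \geq 5$. The only non-routine step I foresee is the graph-theoretic observation that an edge of a connected graph on $\geq 4$ vertices is always contained in some connected induced subgraph on four vertices, which follows at once from the spanning-tree argument above.
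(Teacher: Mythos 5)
Your proof is correct and is precisely the argument the paper intends: the paper obtains this theorem from Proposition~\ref{rk4 only 4er or smaller} ``by taking localizations and Lemma~\ref{loc subgraph}'', and your write-up supplies exactly those details --- choosing a rank-$4$ flat $Z \subseteq X$ whose induced Coxeter subgraph is connected, so that $\Ac_Z/Z$ is an irreducible supersolvable simplicial $4$-arrangement to which the rank-$4$ proposition applies. Nothing further is needed.
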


After establishing this strong constraint, in a sequence of lemmas we will decimate the number of possible
Coxeter graphs for \isss $4$-arrangements. We will use this to derive the crystallographic property at the end of this section.

\begin{lemma}\label{no chordal circles}
Let $\Ac$ be an \isss $4$-ar\-rangement and let $K \in \Kc(\Ac)$ be a chamber.
Then $\Gamma(K)$ has no subgraph of the form shown in Figure \ref{fig:imp_subgraph1}.
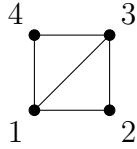
\begin{figure}
\begin{tikzpicture}
\filldraw [black]
	(-0.5,-0.5) circle [radius=2pt]
	(-0.5,0.5) circle [radius=2pt]
	(0.5,-0.5) circle [radius=2pt]
	(0.5,0.5) circle [radius=2pt];
\draw
	(-0.5,-0.5) -- (-0.5,0.5) -- (0.5,0.5) -- (0.5,-0.5) -- (-0.5,-0.5);
\draw
	(-0.5,-0.5) -- (0.5,0.5);
\node[below left] at (-0.5,-0.5) {$1$};
\node[below right] at (0.5,-0.5) {$2$};
\node[above right] at (0.5,0.5) {$3$};
\node[above left] at (-0.5,0.5) {$4$};
\end{tikzpicture}
\caption{Forbidden subgraph.}%
\label{fig:imp_subgraph1}
\end{figure}

\end{lemma}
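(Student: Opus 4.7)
The plan is to contradict the existence of the forbidden subgraph by contracting along the diagonal edge $\{1,3\}$ and then appealing to the rank-three classification.

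First I will observe that, since $\Ac$ is an \isss $4$-arrangement, Theorem \ref{Thm:at most 4er} forces every edge label in $\Gamma(K)$ to satisfy $3 \leq m(i,j) \leq 4$. I then set $H := \sigma^K_1(\alpha_3)^\perp$, the hyperplane chosen in Lemma \ref{res subgraph} for the contraction of $\{1,3\}$, and pass to $\Ac^H$. By Lemma \ref{SSS loc res} together with Lemma \ref{res irred}, the restriction $\Ac^H$ is again an irreducible supersolvable simplicial arrangement, now of rank three, so Corollary \ref{Coxeter graph rk3} forces the Coxeter graph $\Gamma^H$ of the chamber $(K\alpha_1)^H$ to be one of $\Gamma_3^1, \Gamma_3^2, \Gamma_3^3, \Gamma_3^4, \Gamma_3^5$.

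The heart of the argument is the observation that the assumed subgraph contains two triangles through the chord $\{1,3\}$, namely $\{1,2,3\}$ and $\{1,3,4\}$. Applying Lemma \ref{res subgraph}(3) to each of these triangles under the contraction $\rho$ of $\{1,3\}$ will yield
\begin{equation*}
  m^H(\rho(13), \rho(2)) \geq m(1,2) + m(2,3) - 2 \geq 4, \qquad m^H(\rho(13), \rho(4)) \geq m(1,4) + m(3,4) - 2 \geq 4,
\end{equation*}
so in $\Gamma^H$ the single vertex $\rho(13)$ is incident to two distinct edges both carrying a label of at least $4$.

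To finish I will inspect the five permitted graphs of Corollary \ref{Coxeter graph rk3}: each of the three path graphs $\Gamma_3^1, \Gamma_3^2, \Gamma_3^3$ carries at most one edge of label $\geq 4$, and each of the two triangle graphs $\Gamma_3^4, \Gamma_3^5$ carries at most one label $\geq 4$ in total. Hence no vertex of $\Gamma^H$ can be incident to two edges both of label $\geq 4$, delivering the contradiction. I do not foresee any serious obstacle; the only thing that needs verification is that Lemma \ref{res subgraph}(3) is available for both triangles simultaneously, which is immediate since both share the contracted chord $\{1,3\}$ and retain their other two edges by hypothesis.
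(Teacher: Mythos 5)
Your proposal is correct and follows essentially the same route as the paper: contract along the chord $\{1,3\}$ via $H = \sigma^K_1(\alpha_3)^\perp$, apply Lemma \ref{res subgraph}(3) to the two triangles $\{1,2,3\}$ and $\{1,3,4\}$ to produce a vertex of $\Gamma((K_1)^H)$ incident to two edges of label at least $4$, and contradict Corollary \ref{Coxeter graph rk3}. The only cosmetic difference is that you invoke Theorem \ref{Thm:at most 4er} for the edge labels, whereas the lower bound $m(i,j)\geq 3$ needed here already holds by the definition of the Coxeter graph.
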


\begin{proof}
Suppose there exists a chamber $K \in \Kc(\Ac)$ with $B^K =\{ \alpha_1,\ldots,$ $\alpha_4\}$ such that $\Gamma(K)$
has a subgraph of this form.
Then by Lemma \ref{res subgraph} for $H = \sigma^K_1(\alpha_3)^\perp \in \Ac_{13}$
and the chamber $K_1^H \in \Kc(\Ac^H)$ we find that the graph of Figure \ref{fig:contr_imp_subgraph1}
is a subgraph of $\Gamma(K^H)$.

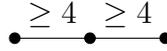
\begin{figure}
\begin{tikzpicture}
\filldraw [black]
	(-1,0) circle [radius=2pt]
	(0,0) circle [radius=2pt]
	(1,0) circle [radius=2pt];
\draw
	(-1,0) -- (0,0) -- (1,0);

\node[above] at (-0.5,0)  {$\geq 4$};
\node[above] at (0.5,0)  {$\geq 4$};
\end{tikzpicture}
\caption{Forbidden subgraph of a chamber in $\Ac^H$ by Corollary \ref{Coxeter graph rk3}.}%
\label{fig:contr_imp_subgraph1}
\end{figure}

But this is a contradiction since by Lemma \ref{SS restriction} and Lemma \ref{res irred}
the restricted arrangement $\Ac^H$ is an
\isss 3-arrangement and such a graph is not contained in the list of Corollary \ref{Coxeter graph rk3}.
\end{proof}

\begin{lemma}\label{no big circles}
Let $\Ac$ be an irreducible \sss $4$-ar\-rangement and let $K \in \Kc(\Ac)$ be a chamber.
Then $\Gamma(K)$ has no subgraph of the form shown in Figure \ref{fig:imp_subgraph2}.
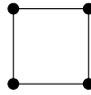
\begin{figure}
\begin{tikzpicture}
\filldraw [black]
	(-0.5,-0.5) circle [radius=2pt]
	(-0.5,0.5) circle [radius=2pt]
	(0.5,-0.5) circle [radius=2pt]
	(0.5,0.5) circle [radius=2pt];
\draw
	(-0.5,-0.5) -- (-0.5,0.5) -- (0.5,0.5) -- (0.5,-0.5) -- (-0.5,-0.5);
\end{tikzpicture}
\caption{No $4$-cycles.}%
\label{fig:imp_subgraph2}
\end{figure}
\end{lemma}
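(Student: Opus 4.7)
The plan is to assume that some chamber $K\in\Kc(\Ac)$ has $\Gamma(K)$ containing a $4$-cycle and to derive a contradiction by combining vertex-localization constraints (from Corollary \ref{Coxeter graph rk3}) with edge-contraction restrictions (from Lemma \ref{res subgraph}). By Lemma \ref{no chordal circles} the cycle has no chord, so after relabeling $B^K=\{\alpha_1,\alpha_2,\alpha_3,\alpha_4\}$ its edges are precisely $\{1,2\},\{2,3\},\{3,4\},\{4,1\}$ while $\{1,3\},\{2,4\}\notin\Eg$. By Theorem \ref{Thm:at most 4er} every label $m^K(i,j)$ on a cycle edge lies in $\{3,4\}$.

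First, for each $i\in\{1,\ldots,4\}$ I consider the rank-$3$ vertex localization $\Ac_{v_i}/v_i$ where $v_i:=\bigcap_{j\neq i}\alpha_j^\perp\in L_3(\Ac)$. The $4$-cycle minus a vertex is a connected path, so by Lemmas \ref{SSS loc res} and \ref{simpl irred graph} each $\Ac_{v_i}/v_i$ is an irreducible supersolvable simplicial $3$-arrangement; by Lemma \ref{loc subgraph} its chamber's Coxeter graph is exactly the induced labeled path on $\{1,\ldots,4\}\setminus\{i\}$. Corollary \ref{Coxeter graph rk3} admits only $\Gamma_3^1$, $\Gamma_3^2$, $\Gamma_3^3$ as path-shaped Coxeter graphs, and in none of these do both edges carry a label $\geq 4$. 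Letting $i$ range over $\{1,\ldots,4\}$ yields the key combinatorial constraint: no two cycle edges meeting at a common vertex can both be labeled $4$, so the ``$4$-labeled'' edges form an independent set of the $4$-cycle.

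Second, I apply Lemma \ref{res subgraph} contracting the edge $\{1,2\}$: with $H:=\sigma^K_1(\alpha_2)^\perp$ the contracted graph is a triangle on $\{12,3,4\}$ which, having three vertices, coincides with $\Gamma((K\alpha_1)^H)$. Since $\Ac^H$ is irreducible supersolvable simplicial of rank $3$ (Lemmas \ref{SS restriction}, \ref{res irred}), Corollary \ref{Coxeter graph rk3} forces $\Gamma((K\alpha_1)^H)\in\{\Gamma_3^4,\Gamma_3^5\}$, pinning all labels in this triangle to $\{3,4\}$; by Lemma \ref{res subgraph}(2) the edges $\{12,3\},\{12,4\}$ carry labels at least $m^K(2,3)$ and $m^K(1,4)$, respectively. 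Cycling through the remaining three edges of the $4$-cycle gives analogous triangles and label bounds in three further rank-$3$ restrictions. The plan is then to iterate: apply Lemma \ref{res subgraph}(3) within one of these restriction triangles by contracting a further edge; the resulting rank-$2$ subgraph then carries a label bounded below by $m^H(12,4)+m^H(3,4)-2$, and I would identify this doubly restricted arrangement with a localization $\Ac_X$ for some $X\in L_2(\Ac)$ so as to push the bound strictly above $4$, contradicting Theorem \ref{Thm:at most 4er}.

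The main obstacle is the extremal case where all four cycle labels equal $3$ and each restriction triangle is $\Gamma_3^4$, so the double-contraction bound from Lemma \ref{res subgraph}(3) reads only $3+3-2=4$, matching Theorem \ref{Thm:at most 4er} tightly. To dispose of this residual case I would exploit the rigidity under wall crossings: since $c^K_{13}=c^K_{24}=0$, Lemma \ref{cij=0} forces $c^{K_1}_{3k}=c^K_{3k}$ for all $k$, and any new edge $\{2,4\}$ in $\Gamma(K_1)$ would create a chorded $4$-cycle forbidden by Lemma \ref{no chordal circles}; iterating shows every chamber reachable from $K$ inherits a chordless $4$-cycle with all labels $3$. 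Combining this rigidity with $\Ac_{v_i}/v_i\sim_L\Ac(6,1)$ (forced by Theorem \ref{classification rk 3} since the induced path is $\Gamma_3^3$) and the supersolvability chain of $\Ac$, I expect to count too many hyperplanes through some common rank-$2$ subspace, producing the final contradiction with Theorem \ref{Thm:at most 4er}.
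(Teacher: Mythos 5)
Your proposal assembles several correct local constraints (chordlessness via Lemma \ref{no chordal circles}, the bound on labels from Theorem \ref{Thm:at most 4er}, the observation that no two adjacent cycle edges can both carry the label $4$, and the triangle obtained by contracting a cycle edge in a restriction), but it never actually reaches a contradiction, and the two routes you sketch for closing the argument do not work. First, the ``doubly restricted arrangement'' you want to compare with Theorem \ref{Thm:at most 4er} is a restriction $\Ac^{X}$ for some $X \in L_2(\Ac)$, not a localization $\Ac_X$; its cardinality counts rank-$3$ flats above $X$ and is in no way bounded by $4$, so Theorem \ref{Thm:at most 4er} says nothing about it. Second, as you yourself note, in the extremal case where every cycle edge is labeled $3$ all your inequalities are tight: each vertex-localization is $\Gamma_3^3$, each contraction triangle is the admissible graph $\Gamma_3^4$ (realized by $\Ac_3^2$), and Lemma \ref{res subgraph}(3) only yields $3+3-2=4$. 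This is exactly the hard case (the ``affine $\tilde A_3$'' configuration), and no amount of local label bookkeeping excludes it; your final sentence (``I expect to count too many hyperplanes\ldots'') is a hope, not an argument.

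The paper's proof is shorter and uses a global input you never invoke. After establishing chordlessness, it uses Lemma \ref{graph adj chamber} to propagate the chordless $4$-cycle to every adjacent chamber (you correctly identify this rigidity) and hence, by induction along galleries, to \emph{every} chamber. Then, since deleting any vertex of a $4$-cycle leaves a connected path, Lemma \ref{loc subgraph} and Lemma \ref{simpl irred graph} show that \emph{every} rank-$3$ localization $\Ac_X/X$, $X \in L_3(\Ac)$, is irreducible. This contradicts the Hansen--Motzkin theorem (Theorem \ref{gen Silvester}), which guarantees a reducible rank-$3$ localization for any essential real $4$-arrangement. To repair your proof you would need to import this (or an equivalent global) ingredient; the missing idea is precisely that the all-labels-$3$ cycle is killed by a theorem about real arrangements in the large, not by the classification of rank-$3$ supersolvable simplicial arrangements.
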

\begin{proof}
It is convenient to denote the graph of Figure \ref{fig:imp_subgraph2} by $\tilde{\Gamma}$.

Suppose there is a chamber $K$ such that $\tilde{\Gamma}$ is a subgraph of $\Gamma(K)$
and let $K'$ be an adjacent chamber.
By Lemma \ref{no chordal circles} the graph $\Gamma(K)$ cannot have a chord.
But then by Lemma \ref{graph adj chamber} the Coxeter graph $\Gamma(K')$ of the adjacent chamber also has a subgraph
of the form shown in Figure \ref{fig:imp_subgraph2} and hence, disregarding the labels, $\Gamma(K')$ is the same graph as
$\Gamma(K)$.
Thus by induction for all chambers $K \in \Kc(\Ac)$ the graph $\tilde{\Gamma}$ is a subgraph of $\Gamma(K)$.
Now let $X \in L_3(\Ac)$ and $K \in \Kc(\Ac)$ be some chamber adjacent to $X$, i.e.\ $X \in L_3(W^K)$.
Then by Lemma \ref{loc subgraph} the Coxeter graph $\Gamma(K_X)$ for a chamber $K_X\in \Kc(\Ac_X/X)$ contains an
induced subgraph on $3$ vertices of $\tilde{\Gamma}$ and thus is connected. So $\Ac_X$ is irreducible for all $X \in L_3(\Ac)$ by Lemma \ref{simpl irred graph}.
This is a contradiction to Theorem \ref{gen Silvester}.
\end{proof}

To give a complete list of all possible Coxeter graphs of \isss $4$-arrangements we need the explicit description
of the change of Coxeter graphs for adjacent chambers in the three possible irreducible rank $3$ localizations
given by the next lemma.

\begin{lemma}\label{diagram graph change}
Let $\Ac$ be one of the arrangements $\Ac(A_3)$, $\Ac(B_3)$ or $\Ac_3^2$. Then Figure \ref{fig:change graphs}
gives a complete description of the change of the Coxeter graphs for adjacent chambers where an arrow labeled with $\sigma_i$
means crossing the $i$-th wall corresponding to the $i$-th vertex of the Coxeter graph.
\begin{figure}
\begin{tikzpicture}
\node at (-1.8,0) {$\Ac(6,1) \cong \Ac(A_3)$:};
\filldraw [black]
	(0,0) circle [radius=2pt]
	(1,0) circle [radius=2pt]
	(2,0) circle [radius=2pt];
\draw (0,0) -- (1,0) -- (2,0);
\node[below] at (0,0)  {$1$};
\node[below] at (1,0)  {$2$};
\node[below] at (2,0)  {$3$};

\draw [<->] (2.5,0.125) arc [start angle=140, end angle=-140, radius=0.25] ;
\node[right] at (2.9,0) {$\sigma_1, \sigma_2, \sigma_3$};

\end{tikzpicture}
\begin{tikzpicture}
\node at (-1.8,0) {$\Ac(9,1) \cong \Ac(B_3)$:};
\filldraw [black]
	(0,0) circle [radius=2pt]
	(1,0) circle [radius=2pt]
	(2,0) circle [radius=2pt];
\draw (0,0) -- (1,0) -- (2,0);
\node[below] at (0,0)  {$1$};
\node[below] at (1,0)  {$2$};
\node[below] at (2,0)  {$3$};

\node[above] at (1.5,0) {$4$};

\draw [<->] (2.5,0.125) arc [start angle=140, end angle=-140, radius=0.25] ;
\node[right] at (2.9,0) {$\sigma_1, \sigma_2, \sigma_3$};

\end{tikzpicture}

\begin{tikzpicture}
\node at (1.6,0.8) {$\Ac(8,1) \cong \Ac_3^2$:};
\filldraw [black]
	(0,0) circle [radius=2pt]
	(1,0) circle [radius=2pt]
	(2,0) circle [radius=2pt];
\draw (0,0) -- (1,0) -- (2,0);
\node[below] at (0,0)  {$1$};
\node[below] at (1,0)  {$2$};
\node[below] at (2,0)  {$3$};
\node[above] at (1.5,0) {$4$};

\draw [<->] (1.2,-0.5) arc [start angle=40, end angle=-220, radius=0.25] ;
\node[right] at (1.2,-0.75) {$\sigma_2, \sigma_3$};

\node (a) at (2.35,0) {};
\draw [<->] ($(a)+(0,0)$) -- ($(a)+(1,0)$);
\node[above] at ($(a)+(0.5,0)$) {$\sigma_1$};
\end{tikzpicture}
\begin{tikzpicture}
\filldraw [black]
	(0,0) circle [radius=2pt]
	(1,0) circle [radius=2pt]
	(2,0) circle [radius=2pt];
\draw (0,0) -- (1,0) -- (2,0);
\node[below] at (0,0)  {$1$};
\node[below] at (1,0)  {$2$};
\node[below] at (2,0)  {$3$};

\draw [<->] (1.2,-0.5) arc [start angle=40, end angle=-220, radius=0.25] ;
\node[right] at (1.2,-0.75) {$\sigma_3$};

\node (a) at (2.35,0) {};
\draw [<->] ($(a)+(0,0)$) -- ($(a)+(1,0)$);
\node[above] at ($(a)+(0.5,0)$) {$\sigma_2$};
\end{tikzpicture}
\begin{tikzpicture}
\filldraw [black]
	(0,-0.5) circle [radius=2pt]
	(0,0.5) circle [radius=2pt]
	(1,0) circle [radius=2pt];
\draw (0,-0.5) -- (0,0.5) -- (1,0) -- (0,-0.5);
\node[below] at (0,-0.5)  {$1$};
\node[above] at (0,0.5)  {$2$};
\node[below] at (1,0)  {$3$};

\draw [<->] (0.8,-0.5) arc [start angle=60, end angle=-200, radius=0.25] ;
\node[right] at (0.9,-0.75) {$\sigma_1$};
\end{tikzpicture}

\begin{tikzpicture}

\node (a) at (-1.35,0) {};
\draw [<->] ($(a)+(0,0)$) -- ($(a)+(1,0)$);
\node[above] at ($(a)+(0.5,0)$) {$\sigma_3$};

\filldraw [black]
	(0,0) circle [radius=2pt]
	(1,0) circle [radius=2pt]
	(2,0) circle [radius=2pt];
\draw (0,0) -- (1,0) -- (2,0);
\node[below] at (0,0)  {$1$};
\node[below] at (1,0)  {$3$};
\node[below] at (2,0)  {$2$};

\draw [<->] (1.2,-0.5) arc [start angle=40, end angle=-220, radius=0.25] ;
\node[right] at (1.2,-0.75) {$\sigma_2$};

\node (a) at (2.35,0) {};
\draw [<->] ($(a)+(0,0)$) -- ($(a)+(1,0)$);
\node[above] at ($(a)+(0.5,0)$) {$\sigma_1$};
\end{tikzpicture}
\begin{tikzpicture}
\filldraw [black]
	(0,0) circle [radius=2pt]
	(1,0) circle [radius=2pt]
	(2,0) circle [radius=2pt];
\draw (0,0) -- (1,0) -- (2,0);
\node[below] at (0,0)  {$1$};
\node[below] at (1,0)  {$3$};
\node[below] at (2,0)  {$2$};
\node[above] at (1.5,0) {$4$};

\draw [<->] (1.2,-0.5) arc [start angle=40, end angle=-220, radius=0.25] ;
\node[right] at (1.2,-0.75) {$\sigma_3, \sigma_2$};
\end{tikzpicture}

\caption{Diagrams for the change of Coxeter graphs of adjacent chambers in $\Ac(A_3), \Ac(B_3)$, and $\Ac_3^2$ respectively.}
\label{fig:change graphs}
\end{figure}
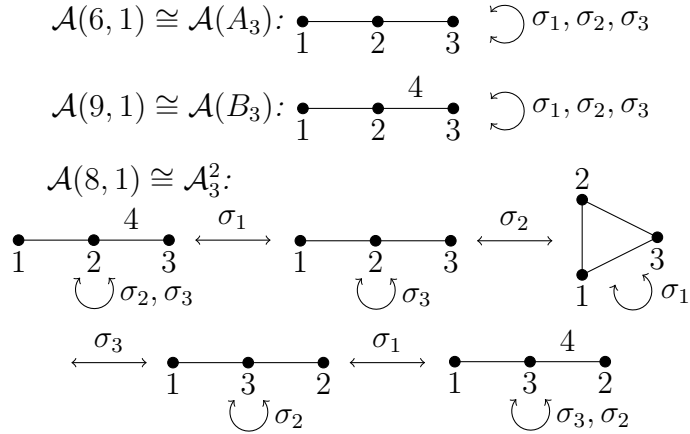
\end{lemma}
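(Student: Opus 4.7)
The plan is to verify every arrow in Figure \ref{fig:change graphs} by a direct case analysis, exploiting as much symmetry as possible. The general tool is that adjacent chambers $K$ and $K\alpha_i$ share the coefficients $c^K_{ij} = c^{K\alpha_i}_{ij}$ (Lemma \ref{cij=cij'}), so many entries of the labeled graph are forced to agree; the truly nontrivial changes occur only at the vertex $i$ being crossed, and even there Lemma \ref{label not connected} preserves any label between two vertices both non-adjacent to $i$.

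For the two reflection arrangements $\Ac(A_3) = \Ac(6,1)$ and $\Ac(B_3) = \Ac(9,1)$, the Weyl group acts simply transitively on $\Kc(\Ac)$, so there is a single chamber type. A standard fundamental chamber (with basis $\{e_1-e_2, e_2-e_3, e_3-e_4\}$ resp.\ $\{e_1-e_2, e_2-e_3, e_3\}$) has Coxeter graph equal to the Dynkin diagram $A_3$ resp.\ $B_3$. Transitivity immediately gives that every chamber has the same graph up to isomorphism, and hence each wall-crossing $\sigma_i$ returns a chamber whose graph, after the canonical relabeling via $\sigma_i^K$, is of the same type. This produces the single self-loop diagrams in the figure.

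For $\Ac_3^2 = \Ac(8,1)$, which is not a reflection arrangement, there are three genuinely different chamber types, matching the three graphs in the figure (of types $C_3$, $A_3$, and $D'_3$ in the notation of Table \ref{tab:CartanCoxeter}). My plan is to choose an explicit representative $K^{(j)}$ for each type, using the defining normals $\{e_i\pm e_j\} \cup \{e_1,e_2\}$, and to compute the adjacent chamber $K^{(j)}\alpha_i$ for each $i\in\{1,2,3\}$. For each adjacency I determine the new Coxeter graph by evaluating $|\Ac_{\alpha^\perp\cap\beta^\perp}|$ for each pair of vectors in the new basis $\sigma_i^K(B^{K^{(j)}})$; the bulk of these values is already forced by Lemmas \ref{cij=0} and \ref{label not connected}, and the remaining ones come from a direct count in a concrete arrangement of eight hyperplanes.

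The main obstacle is the bookkeeping in this last step: after each reflection $\sigma_i^K$ one must identify the permutation on the vertex labels that makes the arrows in the figure compose consistently, and verify that each diagonal transition in the figure reflects an actual change of combinatorial type rather than a mere relabeling. Since $\Ac_3^2$ has only eight hyperplanes and a small chamber set, this is a finite and tractable case analysis, which can also be cross-checked against the classification of rank-3 simplicial arrangements via Corollary \ref{Coxeter graph rk3}.
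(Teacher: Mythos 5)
Your proposal is correct and follows essentially the same route as the paper: the $\Ac(A_3)$ and $\Ac(B_3)$ diagrams are disposed of by the fact that a reflection group acts (simply) transitively on chambers, so every chamber carries the same labeled graph, and the $\Ac_3^2$ case is settled by direct inspection of the concrete eight-line arrangement. The only difference is one of presentation: where you propose to carry out the wall-crossing computation explicitly with chosen normals from $\{e_i\pm e_j\}\cup\{e_1,e_2\}$, the paper simply reads the same information off the projective picture of $\Ac(8,1)$ (or cites \cite[Prop.~3.8]{MR3341467}), which amounts to the same finite verification.
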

\begin{proof}
The diagrams for $\Ac(A_3)$ and $\Ac(B_3)$ are obvious since they are reflection arrangements and hence for all
chambers the Coxeter graph is the same.

\begin{figure}
\def \sc {0.75}
\begin{tikzpicture}[scale=\sc]
\draw (0.,4.) -- (0.,-4.);
\draw (4.,0.) -- (-4.,0.);
\draw [dashed] (0,4.2352941176470589) arc [start angle=90, end angle=0, radius=4.2352941176470589] ;
\draw (3.344805190907731,3.344805190907731) node {$\infty$};
\draw (-2.8284271247461898,2.8284271247461898) -- (2.8284271247461898,-2.8284271247461898);
\draw (2.8284271247461898,2.8284271247461898) -- (-2.8284271247461898,-2.8284271247461898);
\draw (-2.,3.4641016151377544) -- (-2.,-3.4641016151377544);
\draw (2.,3.4641016151377544) -- (2.,-3.4641016151377544);
\draw (3.4641016151377544,-2.) -- (-3.4641016151377544,-2.);
\draw (3.4641016151377544,2.) -- (-3.4641016151377544,2.);
\end{tikzpicture}
\caption{The arrangements $\Ac(8,1)$ and $\Ac(9,1)$}%
\label{fig:A8_1}
\end{figure}

The third diagram can be seen by looking at a projective picture of the arrangement (see Figure \ref{fig:A8_1}) or as
a special case of \cite[Prop.~3.8]{MR3341467}.
\end{proof}

\begin{lemma}\label{no Type F4 D4_4}
Let $\Ac$ be an irreducible \sss $4$-ar\-rangement and let $K \in \Kc(\Ac)$ be a chamber.
Then $\Gamma(K)$ is not one of the graphs of Figure \ref{fig:imp_graph3}.
\begin{figure}
\begin{tikzpicture}
\node at (-0.5,0)  {$\Gamma_1$};
\filldraw [black]
	(2,-0.5) circle [radius=2pt]
	(2,0.5) circle [radius=2pt]
	(0,0) circle [radius=2pt]
	(1,0) circle [radius=2pt];
\draw
	(0,0) -- (1,0) -- (2,-0.5)
	(1,0) -- (2,0.5);
\node[above] at (0.5,0)  {$4$};
\node[below] at (0,0)  {$1$};
\node[below] at (1,0)  {$2$};
\node[below] at (2,-0.5)  {$3$};
\node[above] at (2,0.5)  {$4$};
\end{tikzpicture}\hspace{0.5cm}
\begin{tikzpicture}
\node at (-0.5,0)  {$\Gamma_2$};
\filldraw [black]
	(2,-0.5) circle [radius=2pt]
	(2,0.5) circle [radius=2pt]
	(0,0) circle [radius=2pt]
	(1,0) circle [radius=2pt];
\draw
	(0,0) -- (1,0) -- (2,-0.5)
	(1,0) -- (2,0.5) -- (2,-0.5);
\node[above] at (0.5,0)  {$4$};
\node[below] at (0,0)  {$1$};
\node[below] at (1,0)  {$2$};
\node[below] at (2,-0.5)  {$3$};
\node[above] at (2,0.5)  {$4$};
\end{tikzpicture}\hspace{0.5cm}
\begin{tikzpicture}
\node at (-0.5,0)  {$\Gamma_3$};
\filldraw [black]
	(2,-0.5) circle [radius=2pt]
	(2,0.5) circle [radius=2pt]
	(0,0) circle [radius=2pt]
	(1,0) circle [radius=2pt];
\draw
	(0,0) -- (1,0) -- (2,-0.5)
	(1,0) -- (2,0.5) -- (2,-0.5);
\node[right] at (2,0)  {$4$};
\node[below] at (0,0)  {$1$};
\node[below] at (1,0)  {$2$};
\node[below] at (2,-0.5)  {$3$};
\node[above] at (2,0.5)  {$4$};
\end{tikzpicture}\hspace{0.5cm}
\begin{tikzpicture}
\node at (-0.5,0)  {$\Gamma_4$};
\filldraw [black]
	(2,-0.5) circle [radius=2pt]
	(2,0.5) circle [radius=2pt]
	(0,0) circle [radius=2pt]
	(1,0) circle [radius=2pt];
\draw
	(0,0) -- (1,0) -- (2,-0.5)
	(1,0) -- (2,0.5) -- (2,-0.5);
\node at (1.5,0.5)  {$4$};
\node[below] at (0,0)  {$1$};
\node[below] at (1,0)  {$2$};
\node[below] at (2,-0.5)  {$3$};
\node[right] at (2,0.5)  {$4$};
\end{tikzpicture}\hspace{0.5cm}
\begin{tikzpicture}
\node at (-0.5,0)  {$\Gamma_5$};
\filldraw [black]
	(0,0) circle [radius=2pt]
	(1,0) circle [radius=2pt]
	(2,0) circle [radius=2pt]
	(3,0) circle [radius=2pt];
\draw
	(0,0) -- (1,0) -- (2,0) -- (3,0);
\node[above] at (1.5,0)  {$4$};
\node[below] at (0,0)  {$1$};
\node[below] at (1,0)  {$2$};
\node[below] at (2,0)  {$3$};
\node[below] at (3,0)  {$4$};
\end{tikzpicture}

\caption{Impossible Coxeter graphs for an \isss $4$-arrangement.}%
\label{fig:imp_graph3}
\end{figure}
\end{lemma}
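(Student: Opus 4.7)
The plan is, for each graph $\Gamma_i$, to derive a contradiction combining Lemma \ref{res subgraph} (restriction and edge contraction), Theorem \ref{Thm:at most 4er} (labels bounded by $4$), and the complete list of rank-$3$ Coxeter graphs from Corollary \ref{Coxeter graph rk3}. Throughout, every restriction $\Ac^H$ and every rank-$3$ localisation is itself \isssp (Lemmas \ref{SSS loc res}, \ref{res irred}), so any contracted or induced Coxeter graph on three vertices must belong to $\{\Gamma_3^1,\ldots,\Gamma_3^5\}$.

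The three graphs $\Gamma_2,\Gamma_3,\Gamma_4$ each contain a triangle on $\{2,3,4\}$ carrying exactly one labelled edge, and a single contraction suffices. For $\Gamma_2$, contracting the edge $\{2,3\}$ produces in $\Gamma^H$ one edge of label $\geq m(1,2)=4$ by Lemma \ref{res subgraph}(2), and another of label $\geq m(2,4)+m(3,4)-2=4$ by Lemma \ref{res subgraph}(3); since no admissible graph in $\{\Gamma_3^1,\ldots,\Gamma_3^5\}$ has two adjacent edges both labelled $4$, this is a contradiction. For $\Gamma_3$ (respectively $\Gamma_4$), contracting the unlabelled edge $\{2,3\}$ (respectively $\{3,4\}$) yields via Lemma \ref{res subgraph}(3) a label $\geq 3+4-2=5$ on the contracted edge of $\Gamma^H$, contradicting Theorem \ref{Thm:at most 4er}.

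For the triangle-free cases $\Gamma_1$ and $\Gamma_5=F_4$, direct contraction is too weak and a wall-crossing argument is required. Each of these graphs has a rank-$3$ corner $Y$ of $K$ whose induced Coxeter graph is $\Gamma_3^2$, forcing $\Ac_Y/Y\cong\Ac_3^2$ (by Lemma \ref{loc subgraph} and the classification in Corollary \ref{Coxeter graph rk3}). For $\Gamma_5$ one takes $Y=\alpha_2^\perp\cap\alpha_3^\perp\cap\alpha_4^\perp$: since $c^K_{41}=c^K_{42}=0$ and $c^K_{43}\neq 0$, a direct verification shows that $Y$ remains a corner of $K_4$, and Lemma \ref{diagram graph change} then transforms the induced graph on $\{2,3,4\}$ from $\Gamma_3^2$ to $\Gamma_3^3$; combined with Lemmas \ref{cij=cij'} and \ref{cij=0} this pins down $\Gamma(K_4)=A_4$. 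Iterating with the crossing $\sigma_3$, which by Lemma \ref{diagram graph change} carries the $Y$-localisation from $\Gamma_3^3$ into the triangle $\Gamma_3^4$, produces a chamber whose induced graph on $\{2,3,4\}$ is a triangle. Examining now the induced graphs on the other three-vertex subsets of the basis, combined with the supersolvability constraint on $\chi_\Ac(t)$ (Theorem \ref{ss charpoly fact}) and Lemma \ref{Lem_ModCompl} applied to the modular rank-$3$ element, forces either an induced rank-$3$ Coxeter graph outside $\{\Gamma_3^1,\ldots,\Gamma_3^5\}$, a label exceeding $4$, or a $4$-cycle/chorded $4$-cycle (contradicting Lemmas \ref{no big circles}, \ref{no chordal circles}). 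The case $\Gamma_1$ is handled analogously using the two $\Ac_3^2$-localisations at $\{1,2,3\}$ and $\{1,2,4\}$ and tracking the corresponding moves.

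The main obstacle is keeping track of which rank-$3$ intersections remain corners of the new chamber after each wall-crossing: when $c^K_{ij}\neq 0$, the basis vector $\alpha_j$ becomes tilted in $K_i$, so some corners of $K$ cease to be corners of $K_i$ while others appear. Combining Lemma \ref{diagram graph change} for the three irreducible rank-$3$ localisations with the persistence relations \ref{cij=cij'} and \ref{cij=0} to trace how labels and edges propagate across multiple wall-crossings, and then to exhibit a forbidden configuration, is the technical core of the argument for $\Gamma_1$ and $\Gamma_5$.
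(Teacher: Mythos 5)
Your treatment of $\Gamma_2$ coincides with the paper's (contract $\{2,3\}$, get two adjacent labels $\geq 4$ via Lemma \ref{res subgraph}(2) and (3), contradict Corollary \ref{Coxeter graph rk3}), and that part is fine. The other four cases have problems.

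For $\Gamma_3$ and $\Gamma_4$ the claimed contradiction does not hold as stated. The label $\geq 5$ you produce by contraction lives on an edge of $\Gamma((K_2)^H)$ for the rank-$3$ restriction $\Ac^H$; it equals $\vert \Ac^H_Z\vert$ for some $Z \in L_2(\Ac^H)$, i.e.\ it counts hyperplanes of $\Ac$ through a rank-\emph{three} flat of $\Ac$, restricted to $H$. Theorem \ref{Thm:at most 4er} only bounds $\vert\Ac_X\vert$ for $X \in L_2(\Ac)$ and says nothing about such quantities; indeed an \isss $3$-arrangement may well have a modular point of multiplicity $\geq 5$ (e.g.\ $\Ac(10,1)$), so a single label $\geq 5$ in $\Ac^H$ is not per se forbidden. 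To close this you would have to bound $\vert(\Ac_{X'})^H\vert$ by analysing what $\Ac_{X'}=\Ac^K_{234}$ is --- at which point the detour through the restriction is superfluous: the paper simply observes that the induced subgraph on $\{2,3,4\}$ is the Coxeter graph $\Gamma_3^5$ of a chamber of the localization $\Ac^K_{234}$ (Lemma \ref{loc subgraph}), and by Corollary \ref{Coxeter graph rk3} the graph $\Gamma_3^5$ forces the modular point of that $3$-arrangement to have multiplicity $>5$, contradicting Theorem \ref{Thm:at most 4er}, which \emph{does} apply there because rank-$2$ flats of the localization are rank-$2$ flats of $\Ac$.

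For $\Gamma_1$ and $\Gamma_5$ your sketch is both incomplete and, where it is concrete, incorrect. In $\Gamma_5$ the induced graph on $\{2,3,4\}$ is $\Gamma_3^2$ with the label $4$ on $\{2,3\}$; in the local numbering of Lemma \ref{diagram graph change} the wall $\alpha_4^\perp$ is the one farthest from the labelled edge, and crossing it \emph{fixes} the local graph (it is $\sigma_1$, not $\sigma_3$, that removes the label in $\Ac(8,1)$). So your claim that $\sigma_4$ turns $\Gamma_3^2$ into $\Gamma_3^3$ and pins down $\Gamma(K_4)=A_4$ is false, and the subsequent ``forces either \dots or \dots'' is a statement of intent rather than an argument. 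You also assume the localizations are $\Ac_3^2$, whereas $\Gamma_3^2$ is equally the chamber graph of $\Ac(9,1)=\Ac(B_3)$; the paper must (and does) split on this. The paper's actual route is different: for $\Gamma_1$ it restricts to $H=\sigma^K_2(\alpha_1)^\perp$ and computes that \emph{both} contracted labels equal $4$ (using $\vert\Ac_X^H\vert=\vert\Ac_{X'}^H\vert=4$ for $\Ac(8,1)$ and $\Ac(9,1)$), recovering the forbidden two-adjacent-$4$'s pattern; for $\Gamma_5$ it reduces the double-$\Ac(9,1)$ case to that same restriction argument and otherwise follows explicit gallery sequences $\sigma_1,\sigma_2,\sigma_3,\dots$ through Lemma \ref{diagram graph change} until a graph among $\Gamma_1,\dots,\Gamma_4$ appears. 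These two cases are the technical core of the lemma, and your proposal does not supply a working argument for either.
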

\begin{proof}
Let $B^K = \{\alpha_1,\ldots,\alpha_4\}$ be a basis for $K$.

First suppose that there is a $K \in \Kc(\Ac)$ such that $\Gamma(K) = \Gamma_1$.
By Lemma \ref{SSS loc res}(1) the arrangements $\Ac^K_{123} := \Ac_X/X$ with
$X = \alpha_1^\perp \cap \alpha_2^\perp \cap \alpha_3^\perp$ and  $\Ac^K_{124} := \Ac_{X'}/X'$ with
$X' = \alpha_1^\perp \cap \alpha_2^\perp \cap \alpha_4^\perp$ are supersolvable and simplicial.
By Lemma \ref{loc subgraph} both localizations contain a chamber with Coxeter graph
the induced subgraph on the vertices $\{1,2,3\}$ or $\{1,2,4\}$ of $\Gamma_1$.
Hence by Lemma \ref{simpl irred graph} both localizations are irreducible and by Corollary \ref{Coxeter graph rk3}
they are either lattice equivalent to $\Ac(8,1)$ or $\Ac(9,1)$.
Since $\vert \Ac^K_{23} \vert = \vert \Ac^K_{24} \vert = 3$ by Lemma \ref{X mod max} the intersection
$Y = \alpha_1^\perp \cap \alpha_2^\perp$ is modular in $\Ac_X$ and $\Ac_{X'}$.
Let $H=\sigma^K_2(\alpha_1) \in \Ac_Y$ then by Lemma \ref{res subgraph} the Coxeter graph of $K_2^H \in \Kc(\Ac^H)$
contains a subgraph of the form shown in Figure \ref{fig:subgraph3}.
\begin{figure}
\begin{tikzpicture}
\filldraw [black]
	(-1,0) circle [radius=2pt]
	(0,0) circle [radius=2pt]
	(1,0) circle [radius=2pt];
\draw
	(-1,0) -- (0,0) -- (1,0);

\node[above] at (-0.5,0)  {$\geq 3$};
\node[above] at (0.5,0)  {$\geq 3$};
\end{tikzpicture}
\caption{Subgraph of a chamber in $\Ac^H$.}%
\label{fig:subgraph3}
\end{figure}

For the arrangements $\Ac(8,1)$ and $\Ac(9,1)$ in both cases we have
$\vert \Ac_X^H \vert = \vert \Ac_{X'}^H \vert = 4$.
So actually both labels of the Coxeter subgraph are equal to $4$ and $\Gamma(K_2^H)$
contains a subgraph as in Figure \ref{fig:contr_imp_subgraph1}.
This is a contradiction to Corollary \ref{Coxeter graph rk3} and we can exclude the graph $\Gamma_1$ from the list
of possible Coxeter graphs of \isss $4$-arrangements.

Secondly suppose that $\Gamma(K)=\Gamma_2$. Then by Lemma \ref{res subgraph} there is a hyperplane $H \in \Ac^K_{23}$
and a chamber $K^H \in \Kc(\Ac^H)$ such that the graph shown in Figure \ref{fig:contr_imp_subgraph1}
is a subgraph of $\Gamma(K^H)$ contradicting Corollary \ref{Coxeter graph rk3} again.

For the graphs $\Gamma_3$ and $\Gamma_4$ the localization $\Ac^K_{234}$ is an
\isss $3$-arrangement. By Theorem \ref{Thm:at most 4er} it has rank $2$ localizations of size at most $4$.
By Lemma \ref{loc subgraph} there is a chamber in $\Ac^K_{234}$ with Coxeter graph the induced subgraph
on the vertices $\{2,3,4\}$. But this a contradiction to Corollary \ref{Coxeter graph rk3}.

Finally suppose that there is a $K \in \Kc(\Ac)$ such that $\Gamma(K) = \Gamma_5$ and let $B^K = \{\alpha_1,\ldots,\alpha_4\}$.
Let $X = \alpha_1^\perp \cap \alpha_2^\perp \cap \alpha_3^\perp$, $X' = \alpha_2^\perp \cap \alpha_3^\perp \cap \alpha_4^\perp$, $\Ac^K_{123} = \Ac_X /X$ and
$\Ac^K_{234} = \Ac_{X'}/X'$.
By Lemma \ref{SSS loc res}(1) these arrangements are supersolvable and simplicial, and
by Lemma \ref{loc subgraph}, Lemma \ref{simpl irred graph} and Corollary \ref{Coxeter graph rk3}
the two arrangements are either $\Ac(8,1)$ or $\Ac(9,1)$.
If both arrangements are $\Ac(9,1)$ then for all $H \in \Ac_Y$ with $Y = \alpha_2^\perp \cap \alpha_3^\perp$
we have $\vert \Ac_X^H \vert = \vert \Ac_{X'}^H \vert = 4$ (see Figure \ref{fig:A8_1})
and similarly to the first part of this proof we can find an $H'$ and a $K'^{H'} \in \Kc(\Ac^{H'})$
which contains the forbidden Coxeter subgraph of Figure \ref{fig:contr_imp_subgraph1}.
So assume without loss of generality that $\Ac_{123}$ is equal to $\Ac(8,1)$.
Using Lemma \ref{loc subgraph}, Lemma \ref{graph adj chamber} and Lemma \ref{diagram graph change} we obtain one of the sequences of Coxeter graphs
for the corresponding sequence of chambers (A)--(D) of Figure \ref{Fig_chamber_sequences} (depending on $\Ac^{K_1}_{234}$).
But in each sequence the last graph is (up to renumbering the vertices) one which we already excluded above.
E.g.\ the last graph in sequence (A) is the graph $\Gamma_1$ which we already excluded.
Hence $\Gamma_5$ is not the Coxeter graph of a chamber of an \isss $4$-arrangement.

\begin{figure}

\subcaptionbox{$\Ac^{K_1}_{234} \sim_L \Ac(6,1)$}[0.75\textwidth]{
\begin{tikzpicture}
\filldraw [black]
	(0,0) circle [radius=2pt]
	(1,0) circle [radius=2pt]
	(2,0) circle [radius=2pt]
	(3,0) circle [radius=2pt];
\draw
	(0,0) -- (1,0) -- (2,0) -- (3,0);
\draw (0,0) circle [radius=3.25pt];
\node[above] at (1.5,0)  {$4$};
\node[below] at (0,0)  {$1$};
\node[below] at (1,0)  {$2$};
\node[below] at (2,0)  {$3$};
\node[below] at (3,0)  {$4$};

\node (a) at (3.35,0) {};
\draw [->] ($(a)+(0,0)$) -- ($(a)+(1,0)$);
\node[above] at ($(a)+(0.5,0)$) {$\sigma_1$};

\end{tikzpicture}
\begin{tikzpicture}
\filldraw [black]
	(0,0) circle [radius=2pt]
	(1,0) circle [radius=2pt]
	(2,0) circle [radius=2pt]
	(3,0) circle [radius=2pt];
\draw (1,0) circle [radius=3.25pt];
\draw
	(0,0) -- (1,0) -- (2,0) -- (3,0);
\node[below] at (0,0)  {$1$};
\node[below] at (1,0)  {$2$};
\node[below] at (2,0)  {$3$};
\node[below] at (3,0)  {$4$};

\node (a) at (3.35,0) {};
\draw [->] ($(a)+(0,0)$) -- ($(a)+(1,0)$);
\node[above] at ($(a)+(0.5,0)$) {$\sigma_2$};
\end{tikzpicture}
\begin{tikzpicture}
\node (g1) at (0,0) {};


\node (a1) at ($(g1)+(0,-0.5)$)  {};
\node[below] at (a1) {$1$};
\node (a2) at ($(g1)+(0,0.5)$)  {};
\node[above] at (a2)  {$2$};
\node (a3) at ($(g1)+(1,0)$)  {};
\node[below] at (a3)  {$3$};
\node (a4) at ($(g1)+(2,0)$)  {};
\node[below] at (a4)  {$4$};

\filldraw [black]
	(a1) circle [radius=2pt]
	(a2) circle [radius=2pt]
	(a3) circle [radius=2pt]
	(a4) circle [radius=2pt];
\draw (a3) circle [radius=3.25pt];
\draw
	(a1) -- (a2) -- (a3) -- (a1)
	(a3) -- (a4);
\end{tikzpicture}

\begin{tikzpicture}
\node (a) at (-1.35,0) {};
\draw [->] ($(a)+(0,0)$) -- ($(a)+(1,0)$);
\node[above] at ($(a)+(0.5,0)$) {$\sigma_3$};

\filldraw [black]
	(0,-0.5) circle [radius=2pt]
	(0,0.5) circle [radius=2pt]
	(1,0) circle [radius=2pt]
	(2,0) circle [radius=2pt];
\draw (0,-0.5) circle [radius=3.25pt];
\draw
	(0,0.5) -- (1,0) -- (0,-0.5)
	(1,0) -- (2,0);
\node[below] at (0,-0.5)  {$1$};
\node[above] at (0,0.5)  {$2$};
\node[below] at (1,0)  {$3$};
\node[below] at (2,0)  {$4$};
\end{tikzpicture}
\begin{tikzpicture}
\node (a) at (-1.35,0) {};
\draw [->] ($(a)+(0,0)$) -- ($(a)+(1,0)$);
\node[above] at ($(a)+(0.5,0)$) {$\sigma_1$};

\filldraw [black]
	(0,-0.5) circle [radius=2pt]
	(0,0.5) circle [radius=2pt]
	(1,0) circle [radius=2pt]
	(2,0) circle [radius=2pt];
\draw
	(0,0.5) -- (1,0) -- (0,-0.5)
	(1,0) -- (2,0);
\node[below] at (0,-0.5)  {$1$};
\node[above] at (0,0.5)  {$2$};
\node[below] at (1,0)  {$3$};
\node[below] at (2,0)  {$4$};

\node at (0.6,0.5) {$4$};
\end{tikzpicture}
}

\subcaptionbox{$\Ac^{K_1}_{234} \sim_L \Ac(8,1)$}[0.75\textwidth]{
\begin{tikzpicture}
\filldraw [black]
	(0,0) circle [radius=2pt]
	(1,0) circle [radius=2pt]
	(2,0) circle [radius=2pt]
	(3,0) circle [radius=2pt];
\draw
	(0,0) -- (1,0) -- (2,0) -- (3,0);
\draw (0,0) circle [radius=3.25pt];
\node[above] at (1.5,0)  {$4$};
\node[below] at (0,0)  {$1$};
\node[below] at (1,0)  {$2$};
\node[below] at (2,0)  {$3$};
\node[below] at (3,0)  {$4$};

\node (a) at (3.35,0) {};
\draw [->] ($(a)+(0,0)$) -- ($(a)+(1,0)$);
\node[above] at ($(a)+(0.5,0)$) {$\sigma_1$};

\end{tikzpicture}
\begin{tikzpicture}
\filldraw [black]
	(0,0) circle [radius=2pt]
	(1,0) circle [radius=2pt]
	(2,0) circle [radius=2pt]
	(3,0) circle [radius=2pt];
\draw (1,0) circle [radius=3.25pt];
\draw
	(0,0) -- (1,0) -- (2,0) -- (3,0);
\node[below] at (0,0)  {$1$};
\node[below] at (1,0)  {$2$};
\node[below] at (2,0)  {$3$};
\node[below] at (3,0)  {$4$};

\node (a) at (3.35,0) {};
\draw [->] ($(a)+(0,0)$) -- ($(a)+(1,0)$);
\node[above] at ($(a)+(0.5,0)$) {$\sigma_2$};
\end{tikzpicture}
\begin{tikzpicture}
\node (g1) at (0,0) {};


\node (a1) at ($(g1)+(0,-0.5)$)  {};
\node[below] at (a1) {$1$};
\node (a2) at ($(g1)+(0,0.5)$)  {};
\node[above] at (a2)  {$2$};
\node (a3) at ($(g1)+(1,0)$)  {};
\node[below] at (a3)  {$3$};
\node (a4) at ($(g1)+(2,0)$)  {};
\node[below] at (a4)  {$4$};

\filldraw [black]
	(a1) circle [radius=2pt]
	(a2) circle [radius=2pt]
	(a3) circle [radius=2pt]
	(a4) circle [radius=2pt];
\draw (a3) circle [radius=3.25pt];
\draw
	(a1) -- (a2) -- (a3) -- (a1)
	(a3) -- (a4);
\end{tikzpicture}
\begin{tikzpicture}
\node (a) at (-1.35,0) {};
\draw [->] ($(a)+(0,0)$) -- ($(a)+(1,0)$);
\node[above] at ($(a)+(0.5,0)$) {$\sigma_3$};

\node (g1) at (0,0) {};


\node (a1) at ($(g1)+(0,0)$)  {};
\node[below] at (a1) {$1$};
\node (a2) at ($(g1)+(2,0.5)$)  {};
\node[above] at (a2)  {$2$};
\node (a3) at ($(g1)+(1,0)$)  {};
\node[below] at (a3)  {$3$};
\node (a4) at ($(g1)+(2,-0.5)$)  {};
\node[below] at (a4)  {$4$};

\filldraw [black]
	(a1) circle [radius=2pt]
	(a2) circle [radius=2pt]
	(a3) circle [radius=2pt]
	(a4) circle [radius=2pt];
\draw (a1) circle [radius=3.25pt];
\draw
	(a1) -- (a3) -- (a4) -- (a2) --
	(a3);
\end{tikzpicture}
\begin{tikzpicture}
\node (a) at (-1.35,0) {};
\draw [->] ($(a)+(0,0)$) -- ($(a)+(1,0)$);
\node[above] at ($(a)+(0.5,0)$) {$\sigma_1$};

\node (g1) at (0,0) {};


\node (a1) at ($(g1)+(0,0)$)  {};
\node[below] at (a1) {$1$};
\node (a2) at ($(g1)+(2,0.5)$)  {};
\node[above] at (a2)  {$2$};
\node (a3) at ($(g1)+(1,0)$)  {};
\node[below] at (a3)  {$3$};
\node (a4) at ($(g1)+(2,-0.5)$)  {};
\node[below] at (a4)  {$4$};

\filldraw [black]
	(a1) circle [radius=2pt]
	(a2) circle [radius=2pt]
	(a3) circle [radius=2pt]
	(a4) circle [radius=2pt];
\draw
	(a1) -- (a3) -- (a4) -- (a2) --
	(a3);
\node at ($0.5*(a1)+0.5*(a3)+(0,0.25)$) {$4$};
\end{tikzpicture}
}

\subcaptionbox{$\Ac^{K_1}_{234} \sim_L \Ac(9,1)$}[0.75\textwidth]{
\begin{tikzpicture}
\filldraw [black]
	(0,0) circle [radius=2pt]
	(1,0) circle [radius=2pt]
	(2,0) circle [radius=2pt]
	(3,0) circle [radius=2pt];
\draw
	(0,0) -- (1,0) -- (2,0) -- (3,0);
\draw (0,0) circle [radius=3.25pt];
\node[above] at (1.5,0)  {$4$};
\node[below] at (0,0)  {$1$};
\node[below] at (1,0)  {$2$};
\node[below] at (2,0)  {$3$};
\node[below] at (3,0)  {$4$};

\node (a) at (3.35,0) {};
\draw [->] ($(a)+(0,0)$) -- ($(a)+(1,0)$);
\node[above] at ($(a)+(0.5,0)$) {$\sigma_1$};

\end{tikzpicture}
\begin{tikzpicture}
\filldraw [black]
	(0,0) circle [radius=2pt]
	(1,0) circle [radius=2pt]
	(2,0) circle [radius=2pt]
	(3,0) circle [radius=2pt];
\draw (1,0) circle [radius=3.25pt];
\draw
	(0,0) -- (1,0) -- (2,0) -- (3,0);
\node[below] at (0,0)  {$1$};
\node[below] at (1,0)  {$2$};
\node[below] at (2,0)  {$3$};
\node[below] at (3,0)  {$4$};

\node (a) at (3.35,0) {};
\draw [->] ($(a)+(0,0)$) -- ($(a)+(1,0)$);
\node[above] at ($(a)+(0.5,0)$) {$\sigma_2$};
\end{tikzpicture}
\begin{tikzpicture}
\node (g1) at (0,0) {};

\node (a1) at ($(g1)+(0,-0.5)$)  {};
\node[below] at (a1) {$1$};
\node (a2) at ($(g1)+(0,0.5)$)  {};
\node[above] at (a2)  {$2$};
\node (a3) at ($(g1)+(1,0)$)  {};
\node[below] at (a3)  {$3$};
\node (a4) at ($(g1)+(2,0)$)  {};
\node[below] at (a4)  {$4$};
\node at ($0.5*(a2)+0.5*(a3)+(0.15,0.15)$) {$4$};

\filldraw [black]
	(a1) circle [radius=2pt]
	(a2) circle [radius=2pt]
	(a3) circle [radius=2pt]
	(a4) circle [radius=2pt];
\draw
	(a1) -- (a2) -- (a3) -- (a1)
	(a3) -- (a4);
\end{tikzpicture}
}

\subcaptionbox{$\Ac^{K_1}_{234} \sim_L \Ac(8,1),\Ac(9,1)$}[0.75\textwidth]{
\begin{tikzpicture}
\filldraw [black]
	(0,0) circle [radius=2pt]
	(1,0) circle [radius=2pt]
	(2,0) circle [radius=2pt]
	(3,0) circle [radius=2pt];
\draw
	(0,0) -- (1,0) -- (2,0) -- (3,0);
\draw (0,0) circle [radius=3.25pt];
\node[above] at (1.5,0)  {$4$};
\node[below] at (0,0)  {$1$};
\node[below] at (1,0)  {$2$};
\node[below] at (2,0)  {$3$};
\node[below] at (3,0)  {$4$};

\node (a) at (3.35,0) {};
\draw [->] ($(a)+(0,0)$) -- ($(a)+(1,0)$);
\node[above] at ($(a)+(0.5,0)$) {$\sigma_1$};

\end{tikzpicture}
\begin{tikzpicture}
\filldraw [black]
	(0,0) circle [radius=2pt]
	(1,0) circle [radius=2pt]
	(2,0) circle [radius=2pt]
	(3,0) circle [radius=2pt];
\draw (1,0) circle [radius=3.25pt];
\draw
	(0,0) -- (1,0) -- (2,0) -- (3,0);
\node[below] at (0,0)  {$1$};
\node[below] at (1,0)  {$2$};
\node[below] at (2,0)  {$3$};
\node[below] at (3,0)  {$4$};

\node (a) at (3.35,0) {};
\draw [->] ($(a)+(0,0)$) -- ($(a)+(1,0)$);
\node[above] at ($(a)+(0.5,0)$) {$\sigma_2$};
\end{tikzpicture}
\begin{tikzpicture}
\node (g1) at (0,0) {};

\node (a1) at ($(g1)+(0,-0.5)$)  {};
\node[below] at (a1) {$1$};
\node (a2) at ($(g1)+(0,0.5)$)  {};
\node[above] at (a2)  {$2$};
\node (a3) at ($(g1)+(1,0)$)  {};
\node[below] at (a3)  {$3$};
\node (a4) at ($(g1)+(2,0)$)  {};
\node[below] at (a4)  {$4$};
\node at ($0.5*(a3)+0.5*(a4)+(0,0.25)$) {$4$};

\filldraw [black]
	(a1) circle [radius=2pt]
	(a2) circle [radius=2pt]
	(a3) circle [radius=2pt]
	(a4) circle [radius=2pt];
\draw
	(a1) -- (a2) -- (a3) -- (a1)
	(a3) -- (a4);
\end{tikzpicture}
}

\caption{Sequences of graphs of chambers in $\Ac$ starting at $K$ and leading to a contradiction.}%
\label{Fig_chamber_sequences}
\end{figure}
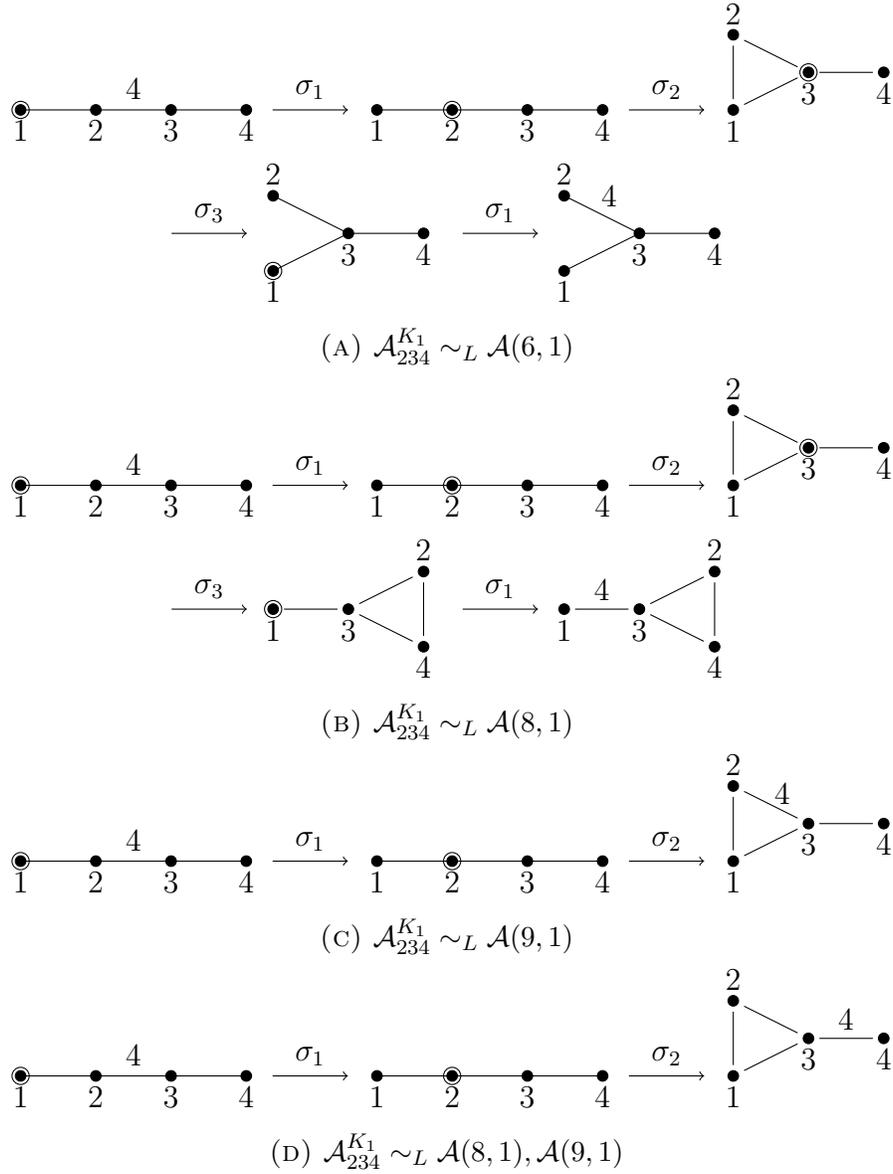
\end{proof}

\begin{propo}\label{Coxeter graphs rk4}
Let $\Ac$ be an \isss $4$-arrange\-ment and  $K \in \Kc(\Ac)$.
Then $\Gamma(K)$ is one of the Coxeter graphs displayed in Figure \ref{fig:Coxeter graphs rk4}.
\begin{figure}
\begin{tikzpicture}
\node at (-0.5,0)  {$\Gamma_4^1$};
\filldraw [black]
	(0,0) circle [radius=2pt]
	(1,0) circle [radius=2pt]
	(2,0) circle [radius=2pt]
	(3,0) circle [radius=2pt];
\draw
	(0,0) -- (1,0) -- (2,0) -- (3,0);
\node[below] at (0,0)  {$1$};
\node[below] at (1,0)  {$2$};
\node[below] at (2,0)  {$3$};
\node[below] at (3,0)  {$4$};
\end{tikzpicture}\hspace{0.5cm}
\begin{tikzpicture}
\node at (-0.5,0)  {$\Gamma_4^2$};
\filldraw [black]
	(0,0) circle [radius=2pt]
	(1,0) circle [radius=2pt]
	(2,0) circle [radius=2pt]
	(3,0) circle [radius=2pt];
\draw
	(0,0) -- (1,0) -- (2,0) -- (3,0);
\node[above] at (0.5,0)  {$4$};
\node[below] at (0,0)  {$1$};
\node[below] at (1,0)  {$2$};
\node[below] at (2,0)  {$3$};
\node[below] at (3,0)  {$4$};
\end{tikzpicture}

\begin{tikzpicture}
\node at (-0.5,0)  {$\Gamma_4^3$};
\filldraw [black]
	(0,-0.5) circle [radius=2pt]
	(0,0.5) circle [radius=2pt]
	(1,0) circle [radius=2pt]
	(2,0) circle [radius=2pt];
\draw
	(0,-0.5) -- (0,0.5) -- (1,0) -- (0,-0.5)
	(1,0) -- (2,0);
\node[below] at (0,-0.5)  {$1$};
\node[above] at (0,0.5)  {$2$};
\node[below] at (1,0)  {$3$};
\node[below] at (2,0)  {$4$};
\end{tikzpicture}\hspace{0.5cm}
\begin{tikzpicture}
\node at (-0.5,0)  {$\Gamma_4^4$};
\filldraw [black]
	(0,-0.5) circle [radius=2pt]
	(0,0.5) circle [radius=2pt]
	(1,0) circle [radius=2pt]
	(2,0) circle [radius=2pt];
\draw
	(0,0.5) -- (1,0) -- (0,-0.5)
	(1,0) -- (2,0);
\node[below] at (0,-0.5)  {$1$};
\node[above] at (0,0.5)  {$2$};
\node[below] at (1,0)  {$3$};
\node[below] at (2,0)  {$4$};
\end{tikzpicture}\hspace{0.5cm}

\caption{Remaining Coxeter graphs for an \isss $4$-arrangement.}\label{fig:Coxeter graphs rk4}
\end{figure}
\end{propo}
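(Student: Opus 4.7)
The plan is exhaustive enumeration of the labelled graphs on four vertices that could occur as $\Gamma(K)$, followed by the exclusion of every one not appearing in Figure \ref{fig:Coxeter graphs rk4}. By Lemma \ref{simpl irred graph} and Theorem \ref{Thm:at most 4er}, $\Gamma(K)$ is a connected labelled graph on four vertices with every edge label in $\{3,4\}$. Lemmas \ref{no big circles} and \ref{no chordal circles} together forbid any $4$-cycle subgraph (with or without a chord), so the unlabelled topology of $\Gamma(K)$ is either the path $P_4$, the star $K_{1,3}$, or the paw (triangle with one pendant edge). The workhorse filter is Lemma \ref{loc subgraph}: every connected three-vertex induced subgraph is the Coxeter graph of a chamber of some rank-$3$ localization $\Ac^K_{ijk}$, which is irreducible, supersolvable and simplicial (Lemmas \ref{SSS loc res} and \ref{simpl irred graph}) and has modular size $n \leq 4$ (Theorem \ref{Thm:at most 4er}). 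Corollary \ref{Coxeter graph rk3} then forces every such connected three-vertex induced subgraph to be one of $\Gamma_3^2$ (path $(4,3)$), $\Gamma_3^3$ (path $(3,3)$), or $\Gamma_3^4$ (triangle $(3,3,3)$); in particular $\Gamma_3^5$ is excluded and no connected path with both labels equal to $4$ may occur.

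Next I would run through the three topologies and eliminate all labelings except the four in Figure \ref{fig:Coxeter graphs rk4}. For the star $K_{1,3}$: a single label-$4$ edge makes $\Gamma(K) = \Gamma_1$ of Lemma \ref{no Type F4 D4_4}, and two or more label-$4$ edges create an induced $(4,4)$-path, both excluded; hence every edge is labelled $3$, giving $\Gamma_4^4$. For the paw: the triangle must be $\Gamma_3^4$ by the filter (since $\Gamma_3^5$ is excluded), so all three triangle edges carry label $3$; the pendant labelled $3$ yields $\Gamma_4^3$, while labelled $4$ it yields the excluded $\Gamma_2$ of Lemma \ref{no Type F4 D4_4}. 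For the path on vertices $1,2,3,4$ with label triple $(m_{12},m_{23},m_{34})\in\{3,4\}^3$: the tuple $(3,3,3)$ is $\Gamma_4^1$; the tuples $(4,3,3)$ and $(3,3,4)$ are $\Gamma_4^2$ up to relabelling; $(3,4,3)$ is the $F_4$-diagram $\Gamma_5$ of Lemma \ref{no Type F4 D4_4}; and any triple with two adjacent $4$'s contains an induced $(4,4)$-path, excluded by the filter.

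The main obstacle is the single remaining case, the path labelled $(4,3,4)$, which none of the cited lemmas rules out directly. To dispatch it I would proceed in the spirit of the $\Gamma_5$-case of Lemma \ref{no Type F4 D4_4}: the two rank-$3$ localizations $\Ac^K_{123}$ and $\Ac^K_{234}$ each carry Coxeter graph $\Gamma_3^2$ with modular elements of size $4$, at $\alpha_1^\perp \cap \alpha_2^\perp$ and $\alpha_3^\perp \cap \alpha_4^\perp$ respectively, so each is lattice equivalent to $\Ac(B_3)$ or $\Ac(8,1)$. Combining the explicit description of chamber adjacencies in these two arrangements (Lemma \ref{diagram graph change}), the contraction-subgraph bounds of Lemma \ref{res subgraph} applied at the middle label-$3$ edge $\{\alpha_2,\alpha_3\}$, and the restriction-size bound of Lemma \ref{Lem_ModCompl}, I expect to produce a chamber of some rank-$3$ restriction of $\Ac$ whose Coxeter graph carries two edges of label $\geq 4$. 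This contradicts Corollary \ref{Coxeter graph rk3}, since each of $\Gamma_3^1,\ldots,\Gamma_3^5$ carries at most one such edge. Once $(4,3,4)$ is excluded, the surviving labelings are precisely $\Gamma_4^1,\Gamma_4^2,\Gamma_4^3,\Gamma_4^4$, completing the classification.
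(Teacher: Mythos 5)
Your proposal is correct and follows essentially the same route as the paper's proof: bound all labels by $4$ (Proposition \ref{rk4 only 4er or smaller}), exclude $4$-cycles (Lemma \ref{no big circles}), rule out two label-$4$ edges by passing to a localization when they share a vertex and to a restriction when they do not, and dispose of the residual graphs with Lemma \ref{no Type F4 D4_4}. For the one step you hedge on, the path labelled $(4,3,4)$, contracting the middle label-$3$ edge and applying the bound of Lemma \ref{res subgraph} to the edges at both endpoints already yields a chamber of the rank-$3$ restriction $\Ac^H$, $H=\sigma^K_2(\alpha_3)^\perp$, whose Coxeter graph is a path with both labels $\geq 4$, contradicting Corollary \ref{Coxeter graph rk3} directly — the additional appeals to Lemma \ref{diagram graph change} and Lemma \ref{Lem_ModCompl} are unnecessary.
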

\begin{proof}
By Lemma \ref{no big circles} no big cycles are possible and
by Proposition \ref{rk4 only 4er or smaller} all labels are at most $4$.
Furthermore with Lemma \ref{loc subgraph}, Lemma \ref{res subgraph} and Corollary \ref{Coxeter graph rk3}
we see that the graph cannot contain two edges labeled with $4$
by looking at the appropriate restriction respectively localization not fitting into the classification of rank $3$ arrangements and their Coxeter graphs (see Theorem \ref{classification rk 3} and Corollary \ref{Coxeter graph rk3}).
Now by Lemma \ref{no Type F4 D4_4} the only possible Coxeter graphs left
are the ones of Figure  \ref{fig:Coxeter graphs rk4}.
\end{proof}

\begin{propo}\label{Cartan matrices rk4}
Let $\Ac$ be an \isss $4$-ar\-rangement and $K \in \Kc(\Ac)$.
\begin{enumerate}
\item There exists a locally crystallographic basis $B^K$ for $K$ such that the Cartan matrix $C^K$ with respect to $B^K$
	is of type $A,C,D'$ or $D$.
\item If $B^K = \{\alpha_1,\ldots,\alpha_4\}$ is a locally crystallographic basis for $K$ with $C^K$ of type $A,C,D'$ or $D$, then for $1 \leq i \leq 4$ the basis
$B^{K_i} = \sigma_i^K(B^K) = \{ \alpha_j + c^K_{ij}\alpha_i \mid 1 \leq j \leq 4\}$
is a locally crystallographic basis with Cartan matrix $C^{K_i}$ of type $A,C,D'$ or $D$.
\end{enumerate}

\end{propo}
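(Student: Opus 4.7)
The plan is to combine the four admissible Coxeter graphs from Proposition \ref{Coxeter graphs rk4} with a rescaling argument to find, for each graph, a basis whose Cartan matrix matches the prescribed type. Under $\alpha_i \mapsto \lambda_i\alpha_i$ with $\lambda_i>0$, the entries transform as $c^K_{ij} \mapsto (\lambda_j/\lambda_i)c^K_{ij}$, so the product $c^K_{ij}c^K_{ji}$ is a rescaling invariant. Hence for Part (1) I would first determine these products on each edge of $\Gamma(K)$ and then verify that the resulting rescaling system admits a solution; for edges of label $3$, Lemma \ref{cij m=3} already supplies $c^K_{ij}c^K_{ji}=1$ with negative factors, so no extra input is needed there.

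The nontrivial inputs are the label-$4$ edge in $\Gamma_4^2$ and the triangle of label-$3$ edges in $\Gamma_4^3$. For $\Gamma_4^2$ I would pass to the rank-$3$ localization $\Ac^K_{123}$, which by Lemma \ref{SSS loc res} is an \isss $3$-arrangement and, by Theorem \ref{Thm:at most 4er}, has all rank-$2$ intersections of size at most $4$. Theorem \ref{classification rk 3} together with Corollary \ref{Coxeter graph rk3} then leaves only $\Ac(9,1)\cong \Ac(B_3)$ as a candidate with a label-$4$ Coxeter edge. Applying Remark \ref{Rem_LEquiField} upgrades this lattice equivalence to a linear isomorphism, so $\Ac^K_{123}$ inherits the crystallographic Cartan data of $\Ac(B_3)$, giving $c^K_{12}c^K_{21}=2$. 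The same line of reasoning applied to $\Gamma_4^3$ identifies $\Ac^K_{123}$ as the unique arrangement in $\Rc(1)\cup \Rc(2)$ (subject to the size bound) admitting a chamber with triangular Coxeter graph $\Gamma_3^4$, namely $\Ac(8,1)\cong \Ac_3^2$, which is crystallographic by Example \ref{ex:Akl_cryst}; this forces the cocycle identity $c^K_{13}=-c^K_{12}c^K_{23}$ on the triangle. Once these two multiplicative inputs are in place, setting $\lambda_1=1$ and propagating along a spanning tree of $\Gamma(K)$ produces the claimed locally crystallographic basis, the triangle cocycle making the non-tree edge consistent and the non-edges with $m=2$ giving $c^K_{ij}=0$ automatically.

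For Part (2) I would exploit that $\sigma_i^K$ fixes the $i$-th row of $C^K$ by Lemma \ref{cij=cij'} and preserves every row indexed by a vertex not adjacent to $i$ in $\Gamma(K)$ by Lemma \ref{cij=0}. Hence only the entries $c^{K_i}_{jk}$ with both $j,k$ adjacent to $i$ in $\Gamma(K)$ can change, and these are precisely the entries governed by the rank-$3$ localization $\Ac^K_{ijk}$, which by the identification carried out for Part (1) is always one of $\Ac(A_3)$, $\Ac_3^2$, or $\Ac(B_3)$. The wall-crossings in these three arrangements are described completely by Lemma \ref{diagram graph change}, so the updated Cartan matrix $C^{K_i}$ can be read off directly and checked, in finitely many cases indexed by the four graphs $\Gamma_4^1,\ldots,\Gamma_4^4$ and the four walls of $K$, to remain of type $A$, $C$, $D'$, or $D$.

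The main obstacle is the identification of the relevant rank-$3$ localizations as the specific crystallographic arrangements $\Ac(8,1)$ and $\Ac(9,1)$ rather than some other $\Ac(2n,1)$ or $\Ac(4m+1,1)$: both identifications rely crucially on combining the \isss rank-$3$ classification with the upper bound $|\Ac_X|\le 4$ from Theorem \ref{Thm:at most 4er}, which cuts the list of candidates down to $\Ac(6,1),\Ac(8,1),\Ac(9,1)$ and then to the unique representative realizing the required Coxeter graph. Without this size bound the localizations could a priori be any $\Ac(2n,1)$ with $n\geq 4$, for which neither the label-$4$ cocycle $c^K_{12}c^K_{21}=2$ nor the triangle cocycle is guaranteed.
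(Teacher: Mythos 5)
Your overall architecture matches the paper's: both arguments rest on Proposition \ref{Coxeter graphs rk4}, on identifying the rank-$3$ localizations as $\Ac(6,1)$, $\Ac(8,1)$ or $\Ac(9,1)$ via Theorem \ref{Thm:at most 4er}, Theorem \ref{classification rk 3} and Corollary \ref{Coxeter graph rk3}, and on the known crystallographic data of these arrangements. Your Part (1), organized around the rescaling invariants $c^K_{ij}c^K_{ji}$ and the holonomy of the triangle in $\Gamma_4^3$, is a legitimate repackaging of the paper's construction, which instead pulls back a crystallographic basis from $\Ac^K_{123}$ and extends it across the label-$3$ edge $\{3,4\}$. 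One slip: for $\Gamma_4^2$ the localization $\Ac^K_{123}$ can be $\Ac(8,1)$ as well as $\Ac(9,1)$ (both have chambers with a label-$4$ edge, and the paper allows both); this is harmless since either gives $c^K_{12}c^K_{21}=2$, but your uniqueness claim is false as stated.

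The genuine gap is in Part (2). The assertion that only the entries $c^{K_i}_{jk}$ with both $j$ and $k$ adjacent to $i$ in $\Gamma(K)$ can change does not follow from Lemmas \ref{cij=cij'} and \ref{cij=0}, and it is false for simplicial arrangements in general: Lemma \ref{cij=0} freezes the rows indexed by vertices \emph{not} adjacent to $i$, but says nothing about the entry $c^{K_i}_{jk}$ when $j$ is adjacent to $i$ and $k$ is not. Already inside $\Ac_3^2$ such an entry does change: by Lemma \ref{diagram graph change}, crossing wall $1$ from the chamber with the all-label-$3$ path on $1,2,3$ leads to the chamber whose graph carries the label $4$ on $\{2,3\}$, so $m(2,3)$ jumps from $3$ to $4$ and $c_{23}$ changes from $-1$ to $-2$, although $3$ is not adjacent to $1$. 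In the rank-$4$ setting this scenario has to be excluded, and excluding it is not automatic: one must apply Proposition \ref{Coxeter graphs rk4} (equivalently Lemma \ref{no Type F4 D4_4}) to the \emph{adjacent} chamber $K_i$ to rule out the forbidden graph $\Gamma_5$; only then is $m^{K_i}(j,k)=3$ pinned down, so that Lemma \ref{cij m=3} recovers $c^{K_i}_{jk}$ from the frozen $c^{K_i}_{kj}$. This is exactly the step the paper's case-by-case analysis performs, together with explicit roots of the localization such as $(\alpha_1+2\alpha_2+\alpha_3)^\perp\in\Ac^K_{123}$ to fix the last coefficients when $\Ac^K_{123}$ is $\Ac(8,1)$. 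As written, your finite check omits this input, so the entries $c^{K_i}_{jk}$ with $j$ adjacent to $i$ and $k$ not adjacent to $i$ are not actually determined by the data you propose to use.
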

\begin{proof}
Part (1). By Proposition \ref{Coxeter graphs rk4} the Coxeter graph $\Gamma(K)$
is one of the graphs of Figure \ref{fig:Coxeter graphs rk4}.
Let $W^K = \{H_1,\ldots,H_4\}$, and $\Gamma(K) = (\Vg,\Eg)$ with numbering of the walls corresponding
to the numbering of the vertices of the graphs in Figure \ref{fig:Coxeter graphs rk4}.

Firstly suppose that $\Gamma(K) = \Gamma_4^1$.
By Lemma \ref{SSS loc res} and Lemma \ref{loc subgraph} the localization $\Ac^K_{123}$ adjacent to $K$ is an
\isss $3$-arrangement with a modular rank $2$ intersection of size at most $4$ by Theorem \ref{Thm:at most 4er}.
Hence by Theorem \ref{classification rk 3} and Corollary \ref{Coxeter graph rk3} it is the arrangement $\Ac(6,1)$ or $\Ac(8,1)$ and in particular crystallographic (see Example \ref{ex:Akl_cryst}).
By choosing a crystallographic root system for $\Ac^K_{123}$ and taking the corresponding basis for the chamber in the localization
by Example \ref{ex:Akl_cryst} there are $\alpha_1,\alpha_2,\alpha_3 \in (\RR^4)^*$ such that $\alpha_i^\perp = H_i$, $K \subseteq \alpha_i^+$,
$(\alpha_1 + \alpha_2)^\perp \in W^{K_1}, W^{K_2}$, and $(\alpha_2 + \alpha_3)^\perp \in W^{K_2}, W^{K_3}$.
Let $\tilde{\alpha_4} \in (\RR^4)^*$ such that $\tilde{\alpha_4}^\perp = H_4$ and $\tilde{\alpha_4}^+ \supseteq K$.
Since $\{3,4\} \in \Eg$ with label $m^K(3,4) = 3$ there is a unique $\lambda \in \RR_{>0}$ such that
$(\alpha_3 + \lambda\tilde{\alpha_4})^\perp \in W^{K_3}, W^{K_4}$.
But then with $\alpha_4 := \lambda\tilde{\alpha_4}$ we have $(\alpha_3 + \alpha_4)^\perp \in W^{K_3}, W^{K_4}$.
Hence $B^K := \{\alpha_1,\alpha_2,\alpha_3,\alpha_4\}$ is a locally crystallographic basis for $K$ with Cartan matrix $C^K = (c^K_{ij})$ of type $A$.

The same arguments work for the Coxeter graphs $\Gamma_4^3$ and $\Gamma_4^4$ since the vertex denoted as $4$
is only connected with the vertex $3$ and the localization $\Ac^K_{123}$ is $\Ac(6,1)$ or $\Ac(8,1)$ by Theorem \ref{classification rk 3}.
So similarly there is a locally crystallographic basis $B^K$ for $K$
such that the Cartan matrix is of type $D'$ if $\Gamma(K) =\Gamma_4^3$, or of type $D$ if $\Gamma(K) = \Gamma_4^4$.

Now assume that $\Gamma(K)  = \Gamma_4^2$. Then $\Ac^K_{123}$ is $\Ac(8,1) = \Ac_3^2$ or $\Ac(9,1) = \Ac(B_3)$.
Then there are $\alpha_1,\alpha_2,\alpha_3 \in (\RR^4)^*$ such that $\alpha_i^\perp = H_i$, $K \subseteq \alpha_i^+$,
$(2\alpha_1 + \alpha_2)^\perp \in W^{K_2}$, $(\alpha_1 + \alpha_2)^\perp \in W^{K_1}$, and $(\alpha_2+\alpha_3)^\perp \in W^{K_2},W^{K_3}$
(by choosing a proper crystallographic root system for the localization and taking the corresponding basis for the chamber in the localization).
Again it is clear that we can find an $\alpha_4 \in (\RR^4)^*$, $K \subseteq \alpha_4^+$ such that $(\alpha_3+\alpha_4)^\perp \in W^{K_3},W^{K_4}$
and hence $B^K := \{\alpha_1,\alpha_2,\alpha_3,\alpha_4\}$ is a locally crystallographic basis for $K$ with Cartan matrix $C^K = (c^K_{ij})$
of type $C$.

Part (2). For the second part we use Proposition \ref{Coxeter graphs rk4},
Lemma \ref{cij=cij'}, Lemma \ref{cij=0}, Lemma \ref{cij m=3},
Lemma \ref{label not connected}, and Lemma \ref{diagram graph change} to obtain the Coxeter graphs
for the adjacent chamber $K_i$ and deduce the claimed property of the induced basis $B^{K_i}$ and the coefficients $c^K_{ij}$:

We check the cases in turn.
First assume that $\Gamma(K) = \Gamma_4^1$, $B^K$ is locally crystallographic and $C^K$ is of type $A$.
As we have seen in the proof of Part (1), the localization $\Ac^K_{123}$ is the arrangement $\Ac(6,1)$ or $\Ac(8,1)$.

Let $i=1$.
By Proposition \ref{Coxeter graphs rk4} The Coxeter graph $\Gamma(K_1)$ is one of the four graphs of Figure \ref{fig:Coxeter graphs rk4} and
by Lemma \ref{cij=0} and Lemma \ref{label not connected} only $\Gamma_4^1$ is possible. Thus $\Gamma(K_1) = \Gamma(K)$
and by Lemma \ref{cij=cij'}, Lemma \ref{cij=0}, Lemma \ref{cij m=3}, and Lemma \ref{label not connected}
the basis $B^{K_1}$ induced by $C^K$ and $B^K$ is locally crystallographic with Cartan matrix $C^{K_1}=C^K$ of type $A$.
Note that from this case the statement also follows for $\Gamma(K)=\Gamma_4^1$ and $i=4$ by symmetry.

Next let $i=2$.
If the localizations $\Ac^K_{123}$ and $\Ac^K_{234}$ are both isomorphic to $\Ac(6,1)$ then using the same lemmas from
Subsection \ref{ssec:simplArr} as above,
the basis $B^{K_2}$ defined by $C^K$ is locally crystallographic with Cartan matrix $C^{K_2} = C^K$ of type $A$.
If $\Ac^K_{123}$ is the arrangement $\Ac(8,1)$ then $\Ac^K_{234}$ has to be the arrangement $\Ac(6,1)$ and $\Gamma(K_2) = \Gamma_4^3$.
Otherwise by Lemma \ref{diagram graph change} we would get a forbidden Coxeter graph of Figure \ref{fig:imp_graph3} for $K_2$.
With the lemmas from Subsection \ref{ssec:simplArr} and Section \ref{sec:Coxeter graphs}
we again obtain all coefficients $c^{K_2}_{ij}$ except the ones with $\{i,j\} = \{1,3\}$.
But $\Ac^K_{123} = \Ac^{K_2}_{123}$ is the arrangement $\Ac(8,1)$ for which we know that with respect to the basis
$B^K_{123} = \{\alpha_1,\alpha_2,\alpha_3\} \subseteq B^K$ we have $(\alpha_1 + 2\alpha_2+\alpha_3)^\perp \in \Ac^K_{123}$ and in particular
$(\alpha_1 + 2\alpha_2+\alpha_3)^\perp = (\sigma^K_2(\alpha_1)+\sigma^K_2(\alpha_3))^\perp \in W^{(K_2)_1}, W^{(K_2)_3}$.
So $c^{K_2}_{13} = c^{K_2}_{31} = -1$ and $B^{K_2}$ is locally crystallographic with Cartan matrix $C^{K_2}$ of type $D'$.
Note that from this case the statement also follows for $\Gamma(K)=\Gamma_4^3$ and $i=1,2$ by symmetry.

Now let $i=3$.
If the localizations $\Ac^K_{123}$ and $\Ac^K_{234}$ are both isomorphic to $\Ac(6,1)$
or if $\Ac^K_{234}$ is the arrangement $\Ac(8,1)$ then by symmetry we already handled these cases.
So suppose that $\Ac^K_{123}$ is the arrangement $\Ac(8,1)$.
Then by Lemma \ref{diagram graph change} and the lemmas from Subsection \ref{ssec:simplArr} and Section \ref{sec:Coxeter graphs} we have
$\Gamma(K_3) = \Gamma_4^2$ of Figure \ref{fig:Coxeter graphs rk4} and we also obtain all $c^{K_3}_{ij}$ except $c^{K_3}_{21}$.
But with respect to the basis $B^K_{123} = \{\alpha_1,\alpha_2,\alpha_3\} \subseteq B^K$ we have
$(2\alpha_1+ \alpha_2 +\alpha_3)^\perp = (2\sigma^K_3(\alpha_1) + \sigma^K_3(\alpha_2))^\perp \in \Ac^K_{123}$ so
$c^{K_3}_{21} = -2$ and $B^{K_3}$ is locally crystallographic with Cartan matrix $C^{K_3}$ of type $C$.
Note that from this case the statement also follows for $\Gamma(K)=\Gamma_4^2$ and $i=3$ by symmetry.
%

The other remaining cases, i.e.\
\begin{itemize}
\item $\Gamma(K) = \Gamma_4^2$ and $i\in \{1,2,4\}$,
\item $\Gamma(K) = \Gamma_4^3$ and $i\in \{3,4\}$,
\item $\Gamma(K) = \Gamma_4^4$ and $i \in \{1,2,3,4\}$,
\end{itemize}
can be handled completely analogously.
\end{proof}

Proposition \ref{Cartan matrices rk4} immediately tells us that an \isss $4$-ar\-rangement defines a Weyl groupoid and thus a crystallographic arrangement (cf.\ \cite[Thm.\ 1.1]{MR2820159}). Since we did not introduce the notion of a Weyl groupoid, we repeat the argument without this terminology:

\begin{propo}\label{rk4 crys rootsystem}
Let $\Ac$ be an \isss $4$-ar\-range\-ment, and fix a chamber $K^0 \in \Kc(\Ac)$.
Then there exists a basis $B^{K^0}$ for $K^0$ such that
$$
R := \bigcup_{G \in \Gc(K^0,\Ac)} B_G
$$
is a crystallographic root system for $\Ac$.
\end{propo}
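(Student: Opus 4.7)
The plan is to construct $R$ by propagating a locally crystallographic basis from $K^0$ to every other chamber along galleries, invoking Proposition \ref{Cartan matrices rk4} at each step, and then verifying the root system and crystallographic conditions.

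First, I would fix a locally crystallographic basis $B^{K^0}$ for $K^0$ whose Cartan matrix is of type $A$, $C$, $D$, or $D'$; this exists by Proposition \ref{Cartan matrices rk4}(1). For any gallery $G = (K^0, K^1, \ldots, K^n) \in \Gc(K^0,\Ac)$, the induced basis $B_G = B^{K^n}$ is already defined recursively by $B^{K^{i+1}} = \sigma_{\mu_i}^{K^i}(B^{K^i})$. A straightforward induction using Proposition \ref{Cartan matrices rk4}(2) then shows that each intermediate basis $B^{K^i}$ is locally crystallographic with Cartan matrix of one of these four types; in particular every vector occurring in any $B_G$ is an integer linear combination of $B^{K^0}$.

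The central step is to verify that $B_G$ depends only on the endpoint $e(G) = K^n$, and not on the gallery chosen. By a standard reduction on the chamber adjacency graph, this amounts to showing that every elementary cycle of length $2m^K(i,j)$ around a rank-2 intersection $\alpha_i^\perp \cap \alpha_j^\perp$ restores the basis. Restricting via Lemma \ref{loc subgraph} to the $2 \times 2$ sub-Cartan matrix of $C^K$ at the indices $i,j$, and noting that this submatrix must be of Weyl type $A_1 \times A_1$, $A_2$, or $B_2$ (these being the only rank-2 submatrices appearing in the types $A$, $C$, $D$, $D'$ listed in Proposition \ref{Cartan matrices rk4}), the dihedral relation $(\sigma_i^K \sigma_j^K)^{m^K(i,j)} = \id$ can be checked by a direct matrix computation in each of the three cases. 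I expect this well-definedness verification to be the main technical step.

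Once well-definedness is known, I would set $B^K := B_G$ for any $G \in \Gc(K^0,\Ac)$ with $e(G) = K$, and put $R = \bigcup_K B^K$. Since galleries reach every chamber and each $B^K$ consists of one inward-pointing normal to each wall of $K$, the identity $\Ac = \{\alpha^\perp \mid \alpha \in R\}$ holds. By Lemma \ref{cij=cij'}, crossing $\alpha_i^\perp$ from $K$ to $K_i$ sends the $i$-th basis vector to its negative, so both $\alpha_i$ and $-\alpha_i$ appear in $R$; combined with well-definedness this forces $\RR\alpha \cap R = \{\pm\alpha\}$ for every $\alpha \in R$, making $R$ a root system for $\Ac$. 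Finally, because $B^K$ is locally crystallographic for every chamber $K$, and the same propagation procedure started from $K$ produces the same set $R$, every element of $R$ is an integer linear combination of $B^K = B_R^K$, yielding the crystallographic condition $R \subseteq \sum_{\alpha \in B_R^K} \ZZ\alpha$.
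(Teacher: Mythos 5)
Your overall architecture (propagate a locally crystallographic basis of type $A,C,D,D'$ along galleries via Proposition \ref{Cartan matrices rk4}, prove the induced basis is independent of the gallery, then read off reducedness and the crystallographic condition from integrality) matches the paper's, and the first and last parts are essentially fine. The gap is in your central well-definedness step. First, the reduction of gallery-independence to elementary cycles around rank-$2$ flats is a genuine topological fact about real arrangements (simple connectivity of the gallery complex, plus removal of backtracks via $\sigma_i^{K}\circ\sigma_i^{K_i}=\id$ from Lemma \ref{cij=cij'}) which you assert as ``standard'' without argument. More seriously, the elementary loop around $X=\alpha_i^\perp\cap\alpha_j^\perp$ is a composition of $2m$ reflections $\sigma_{\mu_t}^{K^t}$ taken at $2m$ \emph{different} chambers, so it is not literally $(\sigma_i^K\sigma_j^K)^{m}$ for a fixed pair of matrices. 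Each of these maps preserves the plane $\langle\alpha_i,\alpha_j\rangle$ and induces the identity on the quotient $V^*/\langle\alpha_i,\alpha_j\rangle$, so the product is block-triangular: your $2\times2$ dihedral computation only shows that the two diagonal blocks are identities, while the two transverse basis vectors $\alpha_k$ ($k\neq i,j$) each pick up a correction in $\langle\alpha_i,\alpha_j\rangle$ at every crossing, and you never show that these corrections cancel. This is repairable --- after the loop the $k$-th induced vector is an inward wall-normal of $K$ of the form $\alpha_k+v$ with $v\in\langle\alpha_i,\alpha_j\rangle$, and linear independence of $B^K$ then forces $v=0$ --- but that observation is missing, and without it the step as written does not establish what you need.

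For comparison, the paper sidesteps gallery homotopy entirely: for two galleries $G,\tilde G$ with the same endpoints it forms the transition matrices $S,\tilde S$, which by Proposition \ref{Cartan matrices rk4}(2) and induction are integral of determinant $\pm1$; since both induced bases consist of inward normals to the walls of the same chamber, $S\tilde S^{-1}$ is a positive diagonal matrix times a permutation, and integrality plus $\det=\pm1$ forces the diagonal entries (positive integers with product $1$) to equal $1$. That argument is shorter and removes both difficulties above at once; I would recommend replacing your homotopy reduction by it, or else supplying the two missing justifications.
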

\begin{proof}
By Proposition \ref{Cartan matrices rk4}(1) for $K^0$ there exists a locally crystallographic basis $B^{K^0}$ with
Cartan matrix of type $A,C,D'$ or $D$. Such a basis will have the desired property and from now on we fix it.

First we show that for $K \in \Kc(\Ac)$ the basis $B^K_G$
does not depend on the chosen $G \in \Gc(K^0,\Ac)$ with $e(G) = K$.
Let $G,\tilde{G} \in \Gc(K^0,\Ac)$ with $e(G) = e(\tilde{G}) = K$,
say
$$
G = (K^0,K^1,\ldots,K^{n-1},K^n=K),
$$
and
$$
\tilde{G} = (K^0,\tilde{K}^1,\ldots,\tilde{K}^{m-1},\tilde{K}^m=K).
$$
Then
$$
B_G = (\sigma_{\mu_{n-1}}^{K^{n-1}} \circ \ldots \circ \sigma_{\mu_{0}}^{K^{0}})(B^{K^0}),
$$
where the linear map $\sigma_{\mu_{n-1}}^{K^{n-1}} \circ \ldots \circ \sigma_{\mu_{0}}^{K^{0}}$ is represented with respect to $B^{K^0}$ by a product of reflection matrices
$$
S_{\mu_{n-1}}^{K^{n-1}} \cdots S_{\mu_{0}}^{K^{0}} =: S.
$$
By Proposition \ref{Cartan matrices rk4}(2) and an easy induction over the length $n$ of $G$
all reflection matrices $S_{\mu_i}^{K^i}$ are integral matrices with determinant $-1$.
Hence the product $S$ has only entries in $\ZZ$ and has determinant $\pm1$.
Similarly for $\tilde{G}$ we have
$$
B_{\tilde{G}} = (\sigma_{\tilde{\mu}_{m-1}}^{\tilde{K}^{m-1}} \circ \ldots \circ \sigma_{\tilde{\mu}_{0}}^{K^{0}})(B^{K^0}),
$$
where the linear map is represented with respect to $B^{K^0}$ by a product of integral reflection matrices
$$
S_{\tilde{\mu}_{m-1}}^{\tilde{K}^{n-1}} \cdots S_{\tilde{\mu}_{0}}^{K^{0}} =: \tilde{S},
$$
and $\tilde{S}$ also has only entries in $\ZZ$ and determinant equal to $\pm1$.
Now $S \tilde{S}^{-1} = \diag(\lambda_1,\ldots,\lambda_4)P$ where $P$ is a permutation matrix and $\lambda_i \in \RR_{>0}$
because $\{\alpha^\perp \mid \alpha \in B_G\} = \{\beta^\perp \mid \beta \in B_{\tilde{G}} \}$.
But $S \tilde{S}^{-1}$ is an integer matrix entries and has determinant $\pm 1$. Hence $\diag(\lambda_1,\ldots,\lambda_4)$ has
determinant $1$ so $\lambda_1 = \ldots = \lambda_4 = 1$ and thus $B_G = B_{\tilde{G}}$
(up to a permutation 
of the basis elements).

From the above consideration we obtain
$$
B_G \subseteq \sum_{\alpha \in B_{G'}} \ZZ\alpha,
$$
for $G,G' \in \Gc(K^0,\Ac)$.
Hence for $R$ we have
$$
R \subseteq \sum_{\alpha \in B^K_R} \ZZ\alpha,
$$
for all $K \in \Kc(\Ac)$ since $B^K_R = B_G$ for some $G \in \Gc(K^0,\Ac)$ with $e(G) = K$ and each $\beta \in R$ is contained in some
$B_{G'}$, $G' \in \Gc(K^0,\Ac)$.

It remains to show that $R$ is reduced, i.e.\ that for $\beta \in R$ we have $R \cap \RR\beta = \{\pm\beta\}$.
So suppose that $\beta \in R$ and $\lambda\beta \in R$ for some $\lambda \in \RR\setminus \{0\}$.
Then there are $G,G' \in \Gc(K^0,\Ac)$ such that $\beta \in B_G$ and $\lambda\beta \in B_{G'}$.
But as above $\lambda\beta \in \ZZ\beta$ and $\beta \in \ZZ(\lambda\beta)$. Hence $\lambda \in \{\pm1\}$.
\end{proof}

\begin{theor}\label{classification rk4}
Let $\Ac$ be an \isss $4$-ar\-rangement. Then $\Ac$ is isomorphic to exactly one of the reflection arrangements
$\Ac(A_4)$, $\Ac(C_4)$, or $\Ac_{4}^3 = \Ac(C_4) \setminus  \{ \{x_1=0\} \}$.
In particular $\Ac$ is crystallographic.
\end{theor}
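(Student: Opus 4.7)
The plan is essentially to combine two major results already established in the paper: the crystallographicity of an irreducible supersolvable simplicial $4$-arrangement (Proposition \ref{rk4 crys rootsystem}) and the classification of irreducible supersolvable crystallographic $\ell$-arrangements for $\ell \geq 4$ (Theorem \ref{classification ss cryst rk geq 4}). So the bulk of the work is already done, and the theorem reduces to an assembly of these components.

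First, I would invoke Proposition \ref{rk4 crys rootsystem}: starting from an arbitrary base chamber $K^0 \in \Kc(\Ac)$ and the locally crystallographic basis $B^{K^0}$ guaranteed by Proposition \ref{Cartan matrices rk4}(1), the set
\[
R = \bigcup_{G \in \Gc(K^0,\Ac)} B_G
\]
is a crystallographic root system for $\Ac$. Hence $\Ac$ is a crystallographic arrangement, which gives the final assertion of the theorem ("In particular $\Ac$ is crystallographic") directly.

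Second, I would apply Theorem \ref{classification ss cryst rk geq 4} in the case $\ell = 4$. Since $\Ac$ is by assumption irreducible and supersolvable, and crystallographic by the preceding step, this theorem forces $\Ac$ to be isomorphic to one of $\Ac(A_4)$, $\Ac(C_4)$, or $\Ac_4^3 = \Ac(C_4) \setminus \{\{x_1=0\}\}$. Thus the classification is complete once uniqueness is justified, which amounts to observing that these three arrangements have distinct numbers of hyperplanes ($10$, $16$, and $15$ respectively) and therefore are pairwise non-isomorphic, so $\Ac$ is isomorphic to \emph{exactly} one of them.

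Conversely, one should briefly remark that each of $\Ac(A_4)$, $\Ac(C_4)$, and $\Ac_4^3$ is indeed an irreducible supersolvable simplicial $4$-arrangement: simpliciality of $\Ac_\ell^k$ (including $\Ac(A_\ell) = \Ac_\ell^0$ via its type-$A$ structure and $\Ac(C_\ell) = \Ac_\ell^\ell$) was noted in the introductory examples, irreducibility follows from the connectedness of the corresponding Coxeter graphs (Lemma \ref{simpl irred graph}), and supersolvability is standard for these arrangements (and is built into Theorem \ref{classification ss cryst rk geq 4} being sharp). There is no real obstacle in this proof: all the hard combinatorial geometry—restricting Coxeter graphs via Proposition \ref{Coxeter graphs rk4}, producing locally crystallographic bases via Proposition \ref{Cartan matrices rk4}, and assembling them into an integral root system—has already been carried out, so what remains is essentially a one-line citation of the two preceding propositions.
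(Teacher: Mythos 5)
Your proposal is correct and follows essentially the same route as the paper: the paper's proof is precisely the two-step citation of Proposition \ref{rk4 crys rootsystem} (to obtain crystallographicity) followed by Theorem \ref{classification ss cryst rk geq 4} with $\ell=4$. Your additional remarks on pairwise non-isomorphism via hyperplane counts and on the converse are harmless elaborations that the paper leaves implicit.
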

\begin{proof}
By Proposition \ref{rk4 crys rootsystem} there exists a crystallographic root system for $\Ac$,
 so the arrangement $\Ac$ is crystallographic.
By Theorem \ref{classification ss cryst rk geq 4}
the only irreducible crystallographic $4$-arrangements which are supersolvable are the three arrangements
$\Ac(A_4)$, $\Ac(C_4)$, and $\Ac_{4}^3 = \Ac(C_4) \setminus  \{ \{x_1=0\} \}$.
\end{proof}

\begin{corol}\label{no type D}
Let $\Ac$ be an \isss $4$-arrange\-ment and  $K \in \Kc(\Ac)$.
Then $\Gamma(K)$ is not the Coxeter graph $\Gamma_4^4$ of Figure \ref{fig:Coxeter graphs rk4}.
\end{corol}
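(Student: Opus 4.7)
My plan is to invoke Theorem \ref{classification rk4}, according to which $\Ac$ is linearly isomorphic to exactly one of $\Ac(A_4)$, $\Ac(C_4)$, or $\Ac_4^3$. Since a linear isomorphism takes chambers to chambers and preserves the sizes of all rank-$2$ localizations, it preserves the Coxeter graph of each chamber, so it is enough to rule out $\Gamma_4^4$ as the Coxeter graph of any chamber of any of these three arrangements.

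For the two Weyl arrangements $\Ac(A_4)$ and $\Ac(C_4)$ the corresponding finite Coxeter group acts simply transitively on $\Kc(\Ac)$, so every chamber has the same Coxeter graph, equal to the Dynkin diagram of the group: $\Gamma_4^1$ for $A_4$ and $\Gamma_4^2$ for $C_4$. Neither equals $\Gamma_4^4$.

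For $\Ac_4^3 = \Ac(C_4)\setminus\{\ker(x_4)\}$ my plan is to pin down the Coxeter graph of one explicit chamber and then propagate along chamber adjacency. Specifically, I would take the chamber $K_0 = \{x_1>x_2>x_3>|x_4|\}$, the union of the fundamental Weyl chamber of $\Ac(C_4)$ and its mirror image across the removed wall $\ker(x_4)$; its walls have inward normals $\alpha_1 = x_1-x_2$, $\alpha_2 = x_2-x_3$, $\alpha_3 = x_3-x_4$, $\alpha_4 = x_3+x_4$. Enumerating the hyperplanes through each intersection $\alpha_i^\perp\cap\alpha_j^\perp$ from the explicit list of $15$ hyperplanes of $\Ac_4^3$ gives $\Gamma(K_0) = \Gamma_4^3$, so $K_0$ carries a locally crystallographic basis with Cartan matrix of type $D'$. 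Then, using Proposition \ref{Cartan matrices rk4}(2) together with Lemmas \ref{cij=cij'}, \ref{cij=0}, and \ref{label not connected} and the adjacency diagrams of Lemma \ref{diagram graph change}, I would verify that the set $\{A,C,D'\}$ of Cartan types is closed under adjacency transitions, i.e.\ that crossing any wall from a chamber of type $A$, $C$, or $D'$ produces again a chamber of type $A$, $C$, or $D'$ and never one of type $D$. Since $\Kc(\Ac_4^3)$ is connected under adjacency and contains the chamber $K_0$ of type $D'$, every chamber has Coxeter graph in $\{\Gamma_4^1,\Gamma_4^2,\Gamma_4^3\}$, excluding $\Gamma_4^4$.

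The hard part is the closure statement in the previous paragraph. The proof of Proposition \ref{Cartan matrices rk4}(2) only worked out the transitions starting from $\Gamma(K) = \Gamma_4^1$ explicitly and indicated that the remaining starting graphs are handled completely analogously. To finish the present argument I would complete those analogous sub-cases: in each of them the adjacent chamber's Cartan type is forced by which of $\Ac(6,1)\cong\Ac(A_3)$, $\Ac(8,1)\cong\Ac_3^2$, or $\Ac(9,1)\cong\Ac(B_3)$ occurs as the corresponding rank-$3$ localization $\Ac^K_{ijk}$, combined with the graph-change rules of Lemma \ref{diagram graph change}. The verification is routine but has to be done exhaustively for every starting graph in $\{\Gamma_4^2,\Gamma_4^3\}$ and every index $i \in \{1,2,3,4\}$.
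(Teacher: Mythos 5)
Your first step coincides with the paper's: Corollary \ref{no type D} is stated without proof precisely because it is meant to follow from Theorem \ref{classification rk4} by inspecting the three arrangements, and your handling of $\Ac(A_4)$ and $\Ac(C_4)$, as well as your computation that the chamber $\{x_1>x_2>x_3>|x_4|\}$ of $\Ac_4^3$ has graph $\Gamma_4^3$, are correct. The gap is the propagation step for $\Ac_4^3$, and it is not the ``routine'' verification you describe. Proposition \ref{Cartan matrices rk4}(2) only gives that the four-element set of types $\{A,C,D',D\}$ is closed under wall-crossing; the closure of $\{A,C,D'\}$ that you need is essentially the assertion of the corollary itself and does not follow from the cited lemmas. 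Concretely, take a chamber $K$ of type $D'$ with triangle on $\{1,2,3\}$ and tail $3$--$4$, and cross the wall at the degree-$3$ vertex $3$. The induced subgraph of $\Gamma(K_3)$ on $\{1,2,3\}$ is governed by $\Ac^K_{123}\cong\Ac(8,1)$: by Lemma \ref{diagram graph change} the triangle survives only if $3$ plays the role of the distinguished triangle vertex (the one whose reflection loops); otherwise it becomes the path $1$--$3$--$2$. In the latter case vertex $3$ is adjacent to $1$, $2$ and $4$ with $1$ and $2$ non-adjacent, so $\Gamma(K_3)$ is $\Gamma_4^3$ or $\Gamma_4^4$ according to whether an edge $\{1,4\}$ or $\{2,4\}$ appears, and this is decided by whether the localizations $\Ac^K_{134}$, $\Ac^K_{234}$ are $\Ac(6,1)$ (the path stays a path) or $\Ac(8,1)$ (the path becomes a triangle). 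Nothing in Lemmas \ref{cij=cij'}, \ref{cij=0}, \ref{label not connected} or \ref{diagram graph change} rules out the combination producing $\Gamma_4^4$; note that $\Gamma_4^4$ survives all the local sieves of Section \ref{sec:rk4} (it appears in Proposition \ref{Coxeter graphs rk4} and even as an intermediate graph in the chamber sequences of Figure \ref{Fig_chamber_sequences}), so some genuinely non-local input is required to exclude it.

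The repair is to drop the propagation. Either quote the description of the Cartan matrices of chambers of $\Ac_\ell^k$ from \cite[Prop.~3.8]{MR3341467}, already invoked in Example \ref{ex:Akl_cryst}, which for $k=\ell-1$ yields only the types $A$, $C$, $D'$; or argue directly on $\Ac_4^3=\Ac(C_4)\setminus\{\ker(x_4)\}$: every chamber is either an unmerged Weyl chamber of $C_4$, whose graph is $\Gamma_4^2$ or (when the removed hyperplane passes through its unique rank-two localization of size $4$) $\Gamma_4^1$, or one of the $48$ chambers obtained by merging two Weyl chambers across $\ker(x_4)$, all of which are equivalent to your $K_0$ under the symmetry group of $\Ac_4^3$ and hence of type $D'$. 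Either way no chamber of any of the three classified arrangements has Coxeter graph $\Gamma_4^4$.
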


\section{The rank $\geq 5$ case}

\begin{lemma}\label{no big circles rk ell}
Let $\Ac$ be an irreducible simplicial supersolvable $\ell$-ar\-range\-ment and let $K \in \Kc(\Ac)$ be a chamber.
Then $\Gamma(K)$ has no cycles with more than $3$ vertices.
\end{lemma}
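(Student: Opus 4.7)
The proof proceeds by induction on $\ell$. For $\ell \leq 3$ the statement is vacuous, and for $\ell = 4$ it is the combination of Lemmas \ref{no chordal circles} and \ref{no big circles}. Fix $\ell \geq 5$, assume the lemma holds for every smaller rank, and suppose for contradiction that $\Gamma(K_0)$ contains a cycle of length at least $4$. I would choose such a cycle $C$ of minimal length $k \geq 4$; since for $k \geq 5$ any chord of $C$ would produce a sub-cycle of length in $[4, k-1]$ contradicting minimality, $C$ is induced.

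If $k \leq \ell - 1$, I localize $\Ac$ at the $k$ hyperplanes indexing $C$, obtaining the intersection $X \in L_k(\Ac)$. By Lemma \ref{loc subgraph} the Coxeter graph of the corresponding chamber of $\Ac_X/X$ is the induced subgraph on these $k$ vertices and hence contains $C$, so it is connected and by Lemma \ref{simpl irred graph} $\Ac_X/X$ is irreducible; it is supersolvable and simplicial by Lemma \ref{SSS loc res}. Since $k < \ell$, applying the inductive hypothesis (or Lemmas \ref{no chordal circles} and \ref{no big circles} when $k = 4$) to $\Ac_X/X$ contradicts the presence of $C$.

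The main obstacle is the case $k = \ell$. Here $C$ uses all $\ell$ vertices, and by minimality $\Gamma(K_0)$ has no chord of $C$ (otherwise a shorter cycle of length in $[4, \ell-1]$ would appear), so $\Gamma(K_0)$ is the pure $\ell$-cycle $v_1 - v_2 - \cdots - v_\ell - v_1$. I then prove by induction on the gallery distance from $K_0$ that $\Gamma(K)$ is this same pure $\ell$-cycle for every chamber $K$ of $\Ac$. Passing from a chamber $K'$ with $\Gamma(K')$ the pure $\ell$-cycle to an adjacent chamber $K'' = K'_{v_i}$, the cycle edges incident to $v_i$ persist by Lemma \ref{cij=cij'}, and the remaining cycle edges $\{v_j, v_{j+1}\}$ persist by Lemma \ref{graph adj chamber}, since for $\ell \geq 4$ the intersection $\{i-1, i+1\} \cap \{i-2, i\}$ is empty, so at least one of $v_j, v_{j+1}$ is a non-neighbor of $v_i$. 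Lemma \ref{cij=0} applied at each non-neighbor of $v_i$ shows that no edge incident to such a vertex changes; thus the only edge which can appear in $\Gamma(K'')$ but not in $\Gamma(K')$ is $\{v_{i-1}, v_{i+1}\}$. If this edge were present, the induced subgraph of $\Gamma(K'')$ on $\{v_{i-1}, v_{i+1}, v_{i+2}, \ldots, v_{i-2}\}$ would be a pure $(\ell-1)$-cycle, and localizing at those $\ell - 1$ hyperplanes would produce a rank-$(\ell-1)$ irreducible supersolvable simplicial arrangement violating the inductive hypothesis.

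To conclude, after essentialising $\Ac$ if necessary, Theorem \ref{gen Silvester} supplies an $X \in L_{\ell-1}(\Ac)$ with $\Ac_X/X$ reducible, so for any chamber $K$ of $\Ac$ adjacent to $X$ Lemma \ref{simpl irred graph} forces $\Gamma(K_X)$ to be disconnected. But by the claim just established $\Gamma(K)$ is the pure $\ell$-cycle, and by Lemma \ref{loc subgraph} $\Gamma(K_X)$ is its induced subgraph on $\ell - 1$ of its vertices, namely a path and hence connected --- the desired contradiction.
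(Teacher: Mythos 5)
Your proof is correct, but in the decisive case it follows a genuinely different route from the paper's. The paper's argument is a short two-step reduction: localize at the intersection of the walls indexing the cycle to reach a rank-$k$ arrangement whose Coxeter graph contains a $k$-cycle, then apply Lemma \ref{res subgraph} repeatedly --- each restriction contracts an edge of the cycle and shortens it by one --- until one reaches an irreducible supersolvable simplicial $4$-arrangement whose Coxeter graph contains a $4$-cycle, contradicting Lemma \ref{no big circles}. Your handling of a non-Hamiltonian minimal cycle (localize and invoke the inductive hypothesis on the rank) is essentially the paper's localization step, but for a Hamiltonian cycle you avoid restrictions entirely: you propagate the pure $\ell$-cycle to every chamber --- your bookkeeping via Lemmas \ref{cij=cij'}, \ref{cij=0} and \ref{graph adj chamber} correctly isolates $\{v_{i-1},v_{i+1}\}$ as the only edge that could be created when crossing the wall $v_i$, and you kill it by exhibiting an $(\ell-1)$-cycle in a rank-$(\ell-1)$ localization --- and then contradict Theorem \ref{gen Silvester}. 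This is exactly the strategy of the paper's proof of the rank-$4$ case (Lemma \ref{no big circles}), lifted to arbitrary rank. It is longer than the paper's contraction argument, but it buys independence from Lemma \ref{res subgraph}, the most delicate geometric input of Section \ref{sec:Coxeter graphs}; the paper's version buys brevity at the cost of invoking that lemma ``possibly several times.'' Both arguments are sound.
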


\begin{proof}
Suppose there is a chamber $K \in \Kc(\Ac)$ such that $\Gamma(K)$ has a cycle with more than three vertices.
Then we localize at the intersection of the walls corresponding to these vertices and use Lemma \ref{loc subgraph} and
Lemma \ref{res subgraph} (possibly several times) to arrive at a $4$-arrangement which is irreducible by Lemma \ref{res irred},
simplicial and supersolvable by Lemma \ref{SSS loc res}, and contains a chamber $K'$ such that the Coxeter graph $\Gamma(K')$
contains a subgraph  of the form displayed in Figure \ref{fig:imp_subgraph2}.
This is a contradiction to Lemma \ref{no big circles}.
\end{proof}

\begin{lemma}\label{imp subgraph middle triangle}
Let $\Ac$ be an \isss $\ell$-ar\-range\-ment, $\ell\geq5$ and let $K \in \Kc(\Ac)$ be a chamber.
Then the Coxeter graph $\Gamma(K)$ does not contain a subgraph of the form shown in Figure \ref{fig:imp_subgraph_triangle}.
\begin{figure}
\begin{tikzpicture}
\filldraw [black]
	(0,0) circle [radius=2pt]
	(1,0) circle [radius=2pt]
	(1.5,0.71) circle [radius=2pt]
	(2,0) circle [radius=2pt]
	(3,0) circle [radius=2pt];
\draw
	(0,0) -- (1,0) -- (1.5,0.71) -- (2,0) -- (1,0)
	(2,0) -- (3,0);

\node[below] at (0,0)  {$i_1$};
\node[below] at (1,0)  {$i_2$};
\node[above] at (1.5,0.71)  {$i_3$};
\node[below] at (2,0)  {$i_4$};
\node[below] at (3,0)  {$i_5$};

\end{tikzpicture}\hspace{1cm}
\caption{Forbidden subgraph}\label{fig:imp_subgraph_triangle}
\end{figure}
\end{lemma}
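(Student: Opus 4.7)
The plan is to localize down to a rank $5$ setting and then pass to a rank $4$ restriction, where Proposition \ref{Coxeter graphs rk4} together with Corollary \ref{no type D} force a contradiction.

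First I will reduce to $\ell = 5$. Assuming $\Gamma(K)$ contains the depicted subgraph on vertices $i_1,\ldots,i_5$, set $X := \bigcap_{j=1}^{5}\alpha_{i_j}^\perp \in L_5(\Ac)$. By Lemma \ref{SSS loc res} the localization $\Ac_X/X$ is supersolvable and simplicial, and Lemma \ref{loc subgraph} identifies the Coxeter graph of the corresponding chamber with the induced subgraph on $\{i_1,\ldots,i_5\}$, that is, with the forbidden graph itself. This graph is connected, so by Lemma \ref{simpl irred graph} the arrangement $\Ac_X/X$ is irreducible. Replacing $\Ac$ by $\Ac_X/X$ and $K$ by the corresponding chamber, I may assume $\ell = 5$ and that $\Gamma(K)$ is precisely the graph of Figure \ref{fig:imp_subgraph_triangle}.

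Next I would restrict along the triangle edge $\{i_2, i_3\}$. Set $H := \sigma^K_{\alpha_{i_2}}(\alpha_{i_3})^\perp$. By Lemma \ref{SS restriction}, Lemma \ref{loc res simpl}(2), and Lemma \ref{res irred}, the restriction $\Ac^H$ is an \isss $4$-arrangement. Hence $\Gamma((K_{i_2})^H)$ has exactly $4$ vertices and, by Proposition \ref{Coxeter graphs rk4} combined with Corollary \ref{no type D}, must be one of $\Gamma_4^1, \Gamma_4^2, \Gamma_4^3$ of Figure \ref{fig:Coxeter graphs rk4}. Now Lemma \ref{res subgraph}(1) shows that the contracted graph $\Gamma^{\alpha_{i_2}\alpha_{i_3}}$, namely the $4$-vertex path $i_1 - i_{23} - i_4 - i_5$, is a subgraph of $\Gamma((K_{i_2})^H)$. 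Since $\{i_2,i_3\}, \{i_2,i_4\}, \{i_3,i_4\}$ are all edges of $\Gamma(K)$ (each with label $3$), Lemma \ref{res subgraph}(3) applied with $\gamma = \alpha_{i_4}$ forces the label of the edge $\{i_{23}, i_4\}$ in $\Gamma((K_{i_2})^H)$ to be at least $3 + 3 - 2 = 4$.

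It then remains to inspect the three admissible graphs. The graph $\Gamma_4^1$ carries no label $\geq 4$, so it is excluded. The graph $\Gamma_4^3$ has all labels equal to $3$, so it is excluded as well. And while $\Gamma_4^2$ is a $4$-vertex path with a single label $4$, that label sits on a \emph{terminal} edge of the path, whereas $\{i_{23}, i_4\}$ is the \emph{middle} edge of the path $i_1 - i_{23} - i_4 - i_5$; hence $\Gamma((K_{i_2})^H) \neq \Gamma_4^2$ either. This contradiction proves the lemma.

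The main obstacle is the label bookkeeping: one must verify that the forbidden label lands on the middle edge of the contracted path, which relies crucially on the triangle on $\{i_2, i_3, i_4\}$ through Lemma \ref{res subgraph}(3), and one must also check that the potentially extra edges allowed by Lemma \ref{res subgraph}(1) cannot rescue $\Gamma_4^3$, which fails due to its uniform label $3$.
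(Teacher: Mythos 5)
Your proposal is correct and follows essentially the same route as the paper: localize at $\bigcap_{j}\alpha_{i_j}^\perp$, restrict to $H=\sigma^K_{i_2}(\alpha_{i_3})^\perp$, and contradict the list of admissible rank-$4$ Coxeter graphs from Proposition \ref{Coxeter graphs rk4}. You merely make explicit the label bookkeeping via Lemma \ref{res subgraph}(3) (the contracted path acquires a label $\geq 4$ on its middle edge) that the paper's one-line proof leaves implicit.
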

\begin{proof}
Let $B^K = \{\alpha_1,\ldots,\alpha_\ell\}$, and suppose that $\Gamma(K)$ has a subgraph of this form containing the vertices
$\{i_1,\ldots,i_5\} \subseteq \{1,\ldots,\ell\}$.
By Lemma \ref{loc subgraph} and Lemma \ref{res subgraph} localizing $\Ac^K_{i_1\cdots i_5}$
and restricting to $H = \sigma_{i_2}^K(\alpha_{i_3})^\perp$ gives the \isss $4$-arrange\-ment $(\Ac^K_{i_1\cdots i_5})^H$
which contains a chamber with a Coxeter graph not included in the list from Proposition \ref{Coxeter graphs rk4}.
Hence $\Gamma(K)$ could not have such a subgraph in the first place.
\end{proof}

\begin{propo}\label{Coxeter graphs rk geq4}
Let $\Ac$ be an \isss $\ell$-ar\-rangement, $\ell\geq4$ and let $K \in \Kc(\Ac)$ be a chamber.
Then $\Gamma(K)$ is one of the Coxeter graphs of Figure \ref{fig:Coxeter graphs rk geq4}.
\begin{figure}
\begin{tikzpicture}
\node at (-0.5,0)  {$\Gamma_\ell^1$};
\filldraw [black]
	(0,0) circle [radius=2pt]
	(1,0) circle [radius=2pt]
	(2,0) circle [radius=2pt]
	(3,0) circle [radius=2pt]
	(4,0) circle [radius=2pt];
\draw
	(0,0) -- (1,0) -- (2,0) -- (2.2,0)
	(2.8,0) -- (3,0) -- (4,0);
\node at (2.5,0)  {$\ldots$};

\node[below] at (0,0)  {$1$};
\node[below] at (1,0)  {$2$};
\node[below] at (2,0)  {$3$};
\node[below] at (3,0)  {$\ell-1$};
\node[below] at (4,0)  {$\ell$};
\end{tikzpicture}\hspace{1cm}
\begin{tikzpicture}
\node at (-0.5,0)  {$\Gamma_\ell^2$};
\filldraw [black]
	(0,0) circle [radius=2pt]
	(1,0) circle [radius=2pt]
	(2,0) circle [radius=2pt]
	(3,0) circle [radius=2pt]
	(4,0) circle [radius=2pt];
\draw
	(0,0) -- (1,0) -- (2,0) -- (2.2,0)
	(2.8,0) -- (3,0) -- (4,0);
\node at (2.5,0)  {$\ldots$};
\node[above] at (0.5,0)  {$4$};

\node[below] at (0,0)  {$1$};
\node[below] at (1,0)  {$2$};
\node[below] at (2,0)  {$3$};
\node[below] at (3,0)  {$\ell-1$};
\node[below] at (4,0)  {$\ell$};
\end{tikzpicture}

\vspace{0.5cm}

\begin{tikzpicture}
\node at (-0.5,0)  {$\Gamma_\ell^3$};
\filldraw [black]
	(0,0.5) circle [radius=2pt]
	(0,-0.5) circle [radius=2pt]
	(1,0) circle [radius=2pt]
	(2,0) circle [radius=2pt]
	(3,0) circle [radius=2pt]
	(4,0) circle [radius=2pt];
\draw
	(0,0.5) -- (0,-0.5) -- (1,0) -- (0,0.5)
	(1,0) -- (2,0) -- (2.2,0)
	(2.8,0) -- (3,0) -- (4,0);
\node at (2.5,0)  {$\ldots$};

\node[below] at (0,-0.5)  {$1$};
\node[above] at (0,0.5)  {$2$};
\node[below] at (1,0)  {$3$};
\node[below] at (2,0)  {$4$};

\node[below] at (3,0)  {$\ell-1$};
\node[below] at (4,0)  {$\ell$};
\end{tikzpicture}
\caption{Possible Coxeter graphs for an \isss $\ell$-arrangement ($\ell \geq 4$).}\label{fig:Coxeter graphs rk geq4}
\end{figure}
\end{propo}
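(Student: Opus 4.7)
The plan is to prove this by induction on $\ell \geq 4$. The base case $\ell = 4$ is Proposition~\ref{Coxeter graphs rk4} combined with Corollary~\ref{no type D}, which rules out the type-$D$ graph $\Gamma_4^4$ and leaves exactly $\Gamma_4^1, \Gamma_4^2, \Gamma_4^3$. For the inductive step with $\ell \geq 5$, I set $\Gamma := \Gamma(K)$ and exploit three kinds of constraints: connectedness from irreducibility (Lemma~\ref{simpl irred graph}); the local obstructions of Lemma~\ref{no big circles rk ell} (no cycles of length $>3$) and Lemma~\ref{imp subgraph middle triangle} (no kite subgraph), together with the label bound $\leq 4$ from Theorem~\ref{Thm:at most 4er}; and the induction hypothesis, applied both to localizations through Lemma~\ref{loc subgraph} and Lemma~\ref{SSS loc res}, and to restrictions arising from edge contractions via Lemma~\ref{res subgraph}, Lemma~\ref{SSS loc res}, and Lemma~\ref{res irred}.

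The argument splits according to whether $\Gamma$ contains a triangle. If $\Gamma$ is a tree, then the base-case exclusion of $K_{1,3} = \Gamma_4^4$ forces every vertex to have degree at most $2$, so $\Gamma$ is a path. Examining four consecutive vertices around any label-$4$ edge shows that such an edge must lie at an endpoint, and contracting an intermediate label-$3$ edge via Lemma~\ref{res subgraph}(2) shows that $\Gamma$ has at most one label-$4$ edge; otherwise the shorter path obtained from the contraction would be a subgraph of the Coxeter graph of an \isss $(\ell-1)$-arrangement carrying two label-$4$ edges, contradicting the induction hypothesis. This yields $\Gamma \in \{\Gamma_\ell^1, \Gamma_\ell^2\}$.

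If $\Gamma$ contains a triangle $T = \{a,b,c\}$, I first argue that $T$ is the unique triangle. Two triangles sharing a vertex would produce an induced $K_{1,3}$, while two vertex-disjoint triangles would yield, after contracting an edge of one of them using Lemma~\ref{res subgraph}(3), an \isss $(\ell-1)$-Coxeter graph simultaneously containing a triangle and an edge of label $\geq 4$, contradicting the induction hypothesis. Next, Lemma~\ref{imp subgraph middle triangle} forces only one vertex of $T$, say $c$, to have neighbours outside $T$, and any $4$-vertex induced subgraph consisting of $T$ together with such an external vertex must equal $\Gamma_4^3$, so all triangle edges carry label $3$. Removing $a$ then yields a connected tree on $\ell - 1$ vertices, which by induction must be $\Gamma_{\ell-1}^1$ or $\Gamma_{\ell-1}^2$ (not $\Gamma_{\ell-1}^3$, which contains a triangle). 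The first case gives $\Gamma = \Gamma_\ell^3$ directly. In the second case, either the label-$4$ edge of $\Gamma - a$ is the one incident to $b$, producing a forbidden $4$-vertex subgraph (triangle with a label-$4$ edge plus a pendant), or it lies at the far end of the path, and then contracting a triangle edge via Lemma~\ref{res subgraph}(3) yields a path with two label-$4$ edges, again contradicting the induction hypothesis.

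The main obstacle will be keeping this case analysis organised and selecting the correct edge to contract in each subcase. The decisive tool throughout is Lemma~\ref{res subgraph}(3): it converts a triangle with two label-$3$ edges into an edge of label $\geq 4$ after contraction, systematically transforming triangle configurations in rank $\ell$ into label-$4$ configurations in rank $\ell - 1$ where the induction hypothesis can be brought to bear.
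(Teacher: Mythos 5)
Your overall strategy --- induction on $\ell$ with the rank-$4$ case as base, localizations via Lemma \ref{loc subgraph} to control induced subgraphs, and contractions via Lemma \ref{res subgraph} to control the rest --- is exactly the toolkit behind the paper's proof, which is itself only a two-line sketch citing these ingredients; your case analysis is considerably more explicit than the original. Two of your steps, however, do not hold up as written. First, the claim that two triangles sharing a vertex produce an induced $K_{1,3}$ is false. If the triangles are $\{v,a,b\}$ and $\{v,c,d\}$ meeting only in $v$ (a bowtie), then every triple of neighbours of $v$ among $a,b,c,d$ contains one of the edges $ab$ or $cd$, every connected $4$-vertex induced subgraph is $\Gamma_4^3$, and (for $\ell=5$, say) there is no $4$-cycle and no kite, so none of the obstructions you invoke applies. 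The configuration is still impossible, but for the reason you give in the \emph{disjoint} case: contract $\{a,b\}$ and use Lemma \ref{res subgraph}(3) to find, in the Coxeter graph of a chamber of the \isss $(\ell-1)$-arrangement $\Ac^H$, the surviving triangle $\{v,c,d\}$ together with an edge of label $\geq 4$ at the merged vertex --- and no graph in the inductive list carries both a triangle and a label-$4$ edge. So the sub-case is salvageable, but the justification you wrote for it fails.

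Second, in the tree case (two label-$4$ edges at opposite ends of the path) and in your final triangle sub-case (label-$4$ edge at the far end of $\Gamma_{\ell-1}^2$), you need a label-$4$ edge that is \emph{disjoint} from the contracted edge to retain label $\geq 4$ in the restriction. Lemma \ref{res subgraph}(1) only guarantees that such an edge survives ``disregarding the labels''; parts (2) and (3) apply exclusively to edges meeting the contracted pair. The fact you need is true: if $\gamma,\delta$ are fixed by $\sigma^K_\alpha$, then $H=\sigma^K_\alpha(\beta)^\perp$ cannot contain $\gamma^\perp\cap\delta^\perp$ (three walls of a simplicial chamber cannot share a rank-$2$ flat), so the hyperplanes of $\Ac_{\gamma^\perp\cap\delta^\perp}$ restrict to pairwise distinct hyperplanes of $\Ac^H$ through $\gamma^\perp\cap\delta^\perp\cap H$ and the label can only grow. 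But this is an addendum you must prove, not something you can read off the cited lemma. With these two repairs your argument goes through and coincides in substance with the paper's intended (but unwritten) case analysis.
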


\begin{proof}
By Lemma \ref{imp subgraph middle triangle} the Coxeter graph $\Gamma(K)$ cannot have a triangle  somewhere in the middle.

The statement then follows by induction on $\ell$, Lemma \ref{loc subgraph}, Proposition \ref{Coxeter graphs rk4},
Corollary \ref{no type D}, and Lemma \ref{no big circles rk ell}.
\end{proof}

\begin{propo}\label{Cartan matrices rk geq4}
Let $\Ac$ be an \isss $\ell$-ar\-rangement, $\ell \geq4$ and $K \in \Kc(\Ac)$.
\begin{enumerate}
\item There exists a locally crystallographic basis $B^K$ for $K$ such that the Cartan matrix $C^K$
	is of type $A,C$ or $D'$.
\item If $B^K = \{\alpha_1,\ldots,\alpha_4\}$ is a locally crystallographic basis for $K$ with $C^K$ of type $A,C$ or $D'$, then for $1 \leq i \leq \ell$ the basis
$B^{K_i} = \sigma_i^K(B^K) = \{ \alpha_j + c^K_{ij}\alpha_i \mid 1 \leq j \leq \ell\}$
is a locally crystallographic basis with Cartan matrix $C^{K_i}$ of type $A,C$ or $D'$.
\end{enumerate}
\end{propo}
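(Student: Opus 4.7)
The plan is to prove both parts by reducing to the rank 3 and rank 4 cases already established. For $\ell = 4$ the statement merely strengthens Proposition \ref{Cartan matrices rk4} by excluding type $D$, and this follows at once from Corollary \ref{no type D}; thus the substantive case is $\ell \geq 5$, on which I focus.

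For Part (1), Proposition \ref{Coxeter graphs rk geq4} reduces $\Gamma(K)$ to one of $\Gamma_\ell^1$, $\Gamma_\ell^2$, $\Gamma_\ell^3$. I then mimic the construction from the proof of Proposition \ref{Cartan matrices rk4}(1): pick any normal $\alpha_1$ to $H_1$ with $K \subseteq \alpha_1^+$, and inductively rescale each subsequent normal $\alpha_i$ using a suitable rank 3 localization $\Ac^K_{i-1,i,\cdot}$ (or $\Ac^K_{1,2,3}$ in the initial step for $\Gamma_\ell^3$). By Lemma \ref{SSS loc res}, Lemma \ref{loc subgraph}, and Theorem \ref{classification rk 3}, every irreducible rank 3 localization that appears is one of $\Ac(A_3)$, $\Ac(B_3)$, $\Ac_3^2$; these are crystallographic, so the scaling can be chosen to realize a crystallographic root system inside the localization, pinning down $c^K_{i-1,i}$ as an integer in $\{-1,-2\}$. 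For non-adjacent $j,k$ in $\Gamma(K)$ Lemma \ref{cij m=3} gives $c^K_{jk}=0$. Assembling these coefficients yields a Cartan matrix of type $A_\ell$, $C_\ell$, or $D'_\ell$ matching the graph.

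For Part (2), I determine each entry $c^{K_i}_{jk}$ individually. Lemma \ref{cij=cij'} gives the entire $i$th row of $C^{K_i}$ immediately. When $j \neq i$ and $c^K_{ij}=0$, Lemma \ref{cij=0} furnishes $c^{K_i}_{jk}=c^K_{jk}$ for all $k$. In the remaining cases $\{i,j\}$ is an edge of $\Gamma(K)$, and the unknown coefficient $c^{K_i}_{jk}$ (for $k$ not yet covered) depends only on the rank 3 localization $\Ac^K_{ijk}$; its Coxeter graph is the induced subgraph of $\Gamma(K)$ on $\{i,j,k\}$ by Lemma \ref{loc subgraph}, so again it is one of $\Ac(A_3)$, $\Ac(B_3)$, $\Ac_3^2$ (or a reducible product, giving a trivial coefficient). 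A direct case analysis in each such localization, analogous to the proof of Proposition \ref{Cartan matrices rk4}(2) and guided by Lemma \ref{diagram graph change}, produces an integer value for $c^{K_i}_{jk}$ and the new Coxeter graph $\Gamma(K_i)$; Proposition \ref{Coxeter graphs rk geq4} ensures that the latter is again of type $\Gamma_\ell^1$, $\Gamma_\ell^2$, or $\Gamma_\ell^3$, which together with the integer entries forces $C^{K_i}$ to be of type $A$, $C$, or $D'$.

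The main obstacle is the bookkeeping in Part (2): one must verify that coefficients computed in different but overlapping rank 3 localizations are consistent, and that a type $D$ Cartan matrix (corresponding to the forbidden graph $\Gamma_4^4$) never arises under reflection. Consistency reduces to the uniqueness clause of Lemma \ref{cij unique} applied inside each localization. The absence of type $D$ is forced by Proposition \ref{Coxeter graphs rk geq4} together with Lemma \ref{imp subgraph middle triangle}, which prevent the star diagram $\Gamma_4^4$ from appearing as an induced subgraph of $\Gamma_\ell^1$, $\Gamma_\ell^2$, or $\Gamma_\ell^3$ in rank $\ell \geq 5$.
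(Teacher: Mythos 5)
Your proposal is correct and rests on the same toolkit as the paper's proof (Lemmas \ref{cij=cij'}, \ref{cij=0}, \ref{cij m=3}, \ref{loc subgraph}, \ref{diagram graph change}, Proposition \ref{Coxeter graphs rk geq4}, and Corollary \ref{no type D} for the base case), but it is organized differently. The paper argues by induction on $\ell$: the induction hypothesis hands over the locally crystallographic basis, respectively the reflected Cartan matrix, on the rank-$(\ell-1)$ localization $\Ac^K_{1\cdots(\ell-1)}=\Ac^{K_i}_{1\cdots(\ell-1)}$ wholesale, and only the one extra vertex $\ell$ — which in each of $\Gamma_\ell^1,\Gamma_\ell^2,\Gamma_\ell^3$ is joined solely to $\ell-1$ by a label-$3$ edge — has to be treated by hand. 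You instead unroll this induction and compute every coefficient inside a rank-$3$ localization. Both routes work, because each entry $c^{K_i}_{jk}$ is indeed determined by $\Ac^K_{ijk}$, and by Theorem \ref{Thm:at most 4er} and Theorem \ref{classification rk 3} every irreducible rank-$3$ localization occurring here is $\Ac(6,1)$, $\Ac(8,1)$ or $\Ac(9,1)$. The paper's induction keeps the bookkeeping lighter; your version makes the local nature of the coefficients more explicit.

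One phrase should be tightened. In Part (2) you conclude that the admissible Coxeter graphs ``together with the integer entries'' force $C^{K_i}$ to be of type $A$, $C$ or $D'$. For a label-$3$ edge this is fine ($c_{jk}c_{kj}=1$ with both entries nonpositive integers forces $-1,-1$), but for the label-$4$ edge integrality and the graph alone do not suffice: for instance $c_{12}=-1$, $c_{21}=-3$ is an integral pair compatible with a four-line rank-$2$ localization, since the cross-ratio of four concurrent lines is a free modulus. What actually pins down the pair $\{-1,-2\}$ is the explicit identification of the ambient rank-$3$ localization with $\Ac(8,1)$ or $\Ac(9,1)$ together with its known crystallographic basis — exactly as in the computations of $c^{K_2}_{13}$ and $c^{K_3}_{21}$ in the proof of Proposition \ref{Cartan matrices rk4}(2). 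Since your case analysis does pass through these localizations, this is a matter of wording rather than a genuine gap.
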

\begin{proof}
We argue by induction on $\ell \geq 4$.
For $\ell=4$ this is Proposition \ref{Cartan matrices rk4}.
Let $\ell \geq 5$ and assume both statements are true for $\ell-1$.
By Proposition \ref{Coxeter graphs rk geq4} the Coxeter graph $\Gamma(K)$
is one of the graphs of Figure \ref{fig:Coxeter graphs rk geq4}. Let $W^K = \{ H_1,\ldots,H_\ell\}$ where the numbering of
the walls corresponds to the numbering of the vertices in $\Gamma_\ell^1, \Gamma_\ell^2, \Gamma_\ell^3$ respectively.

By the induction hypothesis for the localization $\Ac^K_{12\cdots(\ell-1)}$ there are
$\{\alpha_1,\ldots,\alpha_{\ell-1}\} \subseteq (\RR^\ell)^*$ which form a locally crystallographic basis for the corresponding chamber
in $\Ac^K_{12\cdots(\ell-1)}$. Furthermore
$\alpha_i^\perp = H_i$ for $1 \leq i \leq \ell-1$, there are $c^K_{ij} \in \ZZ, 1 \leq i,j\leq \ell-1$
such that $(\alpha_j - c^K_{ij}\alpha_i)^\perp \in W^{K_i}$,
and the matrix $C'^K = (c^K_{ij})_{1 \leq i,j \leq \ell-1}$ is a Cartan matrix of type $A,C$, or $D'$.
But in $\Gamma(K)$ the vertex $\ell$ is only connected to $\ell-1$ by an edge with label $3$.
Hence there is an $\alpha_\ell \in (\RR^\ell)^*$ such that $\alpha_\ell^\perp = H_\ell$, $K \subseteq \alpha_\ell^+$,
$(\alpha_{\ell-1}+\alpha_\ell)^\perp \in W^{K_{\ell-1}},W^{K_\ell}$.
This is to say for $B^K := \{\alpha_1,\ldots,\alpha_\ell\}$ we have $c^K_{(\ell-1)\ell} = c^K_{\ell(\ell-1)} = -1$,
$c^K_{\ell j} = c^K_{j\ell} =0$ for $j \notin \{\ell-1,\ell\}$, the other $c^K_{ij}$ are
given by the localization $\Ac^K_{12\cdots(\ell-1)}$, and hence $B^K$
is a locally crystallographic basis for $K$ with Cartan matrix of type $A,C$, or $D'$
if $\Gamma(K)$ is $\Gamma_\ell^1, \Gamma_\ell^2$, or $\Gamma_\ell^3$ respectively.

Now let $B^K$ be a locally crystallographic basis with Cartan matrix of type $A,C$, or $D'$
and $B^{K_i}$ the induced basis for $K_i$.

If $i=\ell$ then in each case
$\Gamma(K_i)=\Gamma(K)$ by Lemma \ref{graph adj chamber}, Lemma \ref{label not connected}
and Proposition \ref{Coxeter graphs rk geq4} where the vertex $k$ in $\Gamma(K_i)$ corresponds
to the root $\sigma_\ell^K(\alpha_k)$.
In all graphs $\Gamma_\ell^k$ the vertex $\ell$ is not connected with the vertex $j$ for $j \leq \ell-2$, and
$m^K(i,j) =3$ for all $i,j \in \{1,\ldots,\ell\}$ except possibly for $\{i,j\} = \{1,2\}$.
So by Lemma \ref{cij=cij'}, and Lemma \ref{cij=0} the induced basis $B^{K_\ell}$
is locally crystallographic with Cartan matrix $C^{K_i} = C^{K}$ of type $A,C$, or $D'$.

For $i \in \{1, \ldots, \ell-1\}$ we have $\Ac^K_{1\cdots(\ell-1)} = \Ac^{K_i}_{1\cdots(\ell-1)}$.
So at least $C'^{K_i} =(c^{K_i})_{1 \leq i,j \leq \ell-1}$ is a Cartan matrix of type $A,C$, or $D'$ by
the induction hypothesis.
If $C'^{K_i}$ is of type $C$, or $D$ then by Proposition \ref{Coxeter graphs rk geq4}
the Coxeter graph $\Gamma(K_i)$ is $\Gamma_\ell^2$, or $\Gamma_\ell^3$ respectively
with the numbering of the vertices corresponding to $B^{K_i} = \{\sigma^K_i(\alpha_1),\ldots,\sigma^K_i(\alpha_\ell) \}$.
If $C'^{K_i}$ is of type $A$ we may also assume that $\Gamma(K_i)$ is the Coxeter graph $\Gamma_\ell^1$ since otherwise
we can renumber the bases $B^K$ and $B^{K_i}$ respectively such that $C'^{K_i}$ is of type $C$, or $D'$
and we actually are in one of the above cases.
We observe next that in $\Gamma(K_i)$ the vertex $\ell$
is not connected with the vertex $j$ for $j \leq \ell-2$.
So $c^{K_i}_{\ell j} = c^{K_i}_{j\ell} = 0$ for $1 \leq j \leq \ell-2$.

If $i \in \{1,\ldots,\ell-2\}$ we have $c^K_{i\ell}=0$ and then by Lemma \ref{cij=0} we get
$c^{K_i}_{\ell (\ell-1)} = c^K_{\ell (\ell-1)}$. But $m^{K_i}(\ell-1,\ell)=3$ and by Lemma \ref{cij m=3}
for the last remaining coefficient we obtain $c^{K_i}_{(\ell-1)\ell} =-1$ and $B^{K_i}$ is a locally crystallographic
basis with Cartan matrix of type $A,C$, or $D'$.

Finally for $i={\ell-1}$ by Lemma \ref{cij=cij'} we also have $c^{K_{\ell-1}}_{(\ell-1)\ell} = c^K_{(\ell-1)\ell} = -1$.
Again since $m^{K_{\ell-1}}(\ell-1,\ell)=3$, by Lemma \ref{cij m=3}
for the remaining coefficient we get $c^{K_{\ell-1}}_{(\ell-1)\ell} =-1$ and $B^{K_{\ell-1}}$ is a locally crystallographic
basis with Cartan matrix $C^{K_{\ell-1}}$ of type $A,C$, or $D'$.
This finishes the proof.
\end{proof}

\begin{propo}\label{rk geq4 crys rootsystem}
Let $\Ac$ be an \isss $\ell$-ar\-range\-ment, $\ell \geq 4$, and fix a chamber $K^0 \in \Kc(\Ac)$.
Then there exists a basis $B^{K^0}$ for $K^0$ such that
$$
R := \bigcup_{G \in \Gc(K^0,\Ac)} B_G
$$
is a crystallographic root system for $\Ac$.
\end{propo}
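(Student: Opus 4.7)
The plan is to copy the structure of the proof of Proposition \ref{rk4 crys rootsystem} almost verbatim, replacing Proposition \ref{Cartan matrices rk4} with its higher-rank analogue Proposition \ref{Cartan matrices rk geq4}. The key point is that the argument in rank $4$ never used $\ell = 4$ explicitly; it used only that every basis obtained by reflecting along a gallery is locally crystallographic with Cartan matrix of one of a fixed list of types (so in particular the transition matrices are integral with determinant $-1$). Since Proposition \ref{Cartan matrices rk geq4} provides exactly this for arbitrary $\ell \geq 4$, the proof goes through.

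First I would fix, using Proposition \ref{Cartan matrices rk geq4}(1), a locally crystallographic basis $B^{K^0}$ for $K^0$ whose Cartan matrix is of type $A$, $C$, or $D'$. Then for any chamber $K \in \Kc(\Ac)$ and any gallery $G = (K^0, K^1, \ldots, K^n = K) \in \Gc(K^0,\Ac)$, the induced basis $B_G$ equals $S \cdot B^{K^0}$ where $S = S^{K^{n-1}}_{\mu_{n-1}} \cdots S^{K^0}_{\mu_0}$ is a product of the reflection matrices from Proposition \ref{Cartan matrices rk geq4}(2). An easy induction on the length of $G$, using Proposition \ref{Cartan matrices rk geq4}(2) at each step, shows that every $S^{K^i}_{\mu_i}$ has integer entries and determinant $-1$, so $S$ has integer entries and $\det S = \pm 1$.

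Next I would show that $B_G$ depends only on $K$, not on the gallery $G$. If $\tilde{G}$ is a second gallery from $K^0$ to $K$ giving matrix $\tilde{S}$, then the hyperplanes $\{\alpha^\perp : \alpha \in B_G\}$ and $\{\alpha^\perp : \alpha \in B_{\tilde{G}}\}$ coincide (both equal $W^K$), and both bases select the roots pointing into $K$, so $S\tilde{S}^{-1} = \diag(\lambda_1,\ldots,\lambda_\ell)P$ for some permutation matrix $P$ and positive reals $\lambda_i$. Since $S\tilde{S}^{-1}$ is an integer matrix of determinant $\pm 1$, we must have $\lambda_1 = \cdots = \lambda_\ell = 1$, hence $B_G = B_{\tilde{G}}$ (up to reordering). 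In particular $B^K_R = B_G$ for any such $G$, and since every $\beta \in R$ lies in some $B_{G'}$, the containment $B_{G'} \subseteq \sum_{\alpha \in B^K_R} \ZZ\alpha$ derived above yields $R \subseteq \sum_{\alpha \in B^K_R} \ZZ\alpha$ for every $K \in \Kc(\Ac)$.

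Finally I would check that $R$ is reduced: if $\beta, \lambda \beta \in R$ with $\lambda \in \RR \setminus \{0\}$, then both lie in some common $\ZZ$-lattice $\sum_{\alpha \in B^K_R} \ZZ\alpha$, forcing $\lambda \in \ZZ$ and $\lambda^{-1} \in \ZZ$, hence $\lambda \in \{\pm 1\}$. This establishes that $R$ is a crystallographic root system for $\Ac$. There is no real obstacle here since all the combinatorial and geometric work has already been done in Proposition \ref{Cartan matrices rk geq4}; the only thing to be careful about is the permutation ambiguity in defining ``the'' induced basis, which is handled by treating bases as sets and observing that the determinant argument kills the scaling factors.
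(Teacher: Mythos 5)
Your proposal is correct and matches the paper's approach exactly: the paper's proof of this proposition is the single sentence that the argument of Proposition \ref{rk4 crys rootsystem} goes through verbatim with Proposition \ref{Cartan matrices rk4} replaced by Proposition \ref{Cartan matrices rk geq4}, which is precisely the substitution you carry out (and you correctly identify that the rank-$4$ argument never used $\ell=4$ beyond the availability of locally crystallographic bases of the listed types).
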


\begin{proof}
This is exactly the same as the proof of Proposition \ref{rk4 crys rootsystem}
using Proposition \ref{Cartan matrices rk geq4} instead of Proposition \ref{Cartan matrices rk4}.
\end{proof}

\begin{theor}
Let $\Ac$ be an irreducible simplicial supersolvable $\ell$-ar\-rangement, $\ell \geq 4$.
Then $\Ac$ is isomorphic to exactly one of the reflection arrangements
$\Ac(A_\ell)$, $\Ac(C_\ell)$, or $\Ac_{\ell}^{\ell-1} = \Ac(C_\ell) \setminus \{ \{x_1=0\} \}$.
In particular $\Ac$ is crystallographic.
\end{theor}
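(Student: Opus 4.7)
The plan is essentially to combine the two substantial results established earlier in the section: the existence of a crystallographic root system and the pre-existing classification in the crystallographic case. The theorem will follow with very little additional work, so the proposal is short.

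First, I would apply Proposition \ref{rk geq4 crys rootsystem} to the arrangement $\Ac$. That proposition, after fixing an arbitrary base chamber $K^0 \in \Kc(\Ac)$, produces a basis $B^{K^0}$ for $K^0$ such that the set
\[
R \;:=\; \bigcup_{G \in \Gc(K^0,\Ac)} B_G
\]
is a crystallographic root system for $\Ac$. In particular, $\Ac$ is crystallographic in the sense of the definition recalled in Subsection \ref{ssec:simplArr}. This already proves the ``In particular'' clause of the theorem.

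Next, with crystallographicity in hand, I would invoke Theorem \ref{classification ss cryst rk geq 4}, the classification of irreducible supersolvable crystallographic $\ell$-arrangements for $\ell \geq 4$ due to Amend--Hoge--R\"ohrle (together with Theorem \ref{cryst arr rk geq 4}). That theorem states that up to linear isomorphism the only such arrangements are $\Ac(A_\ell)$, $\Ac(C_\ell)$, and $\Ac_{\ell}^{\ell-1} = \Ac(C_\ell) \setminus \{\{x_1=0\}\}$. Uniqueness of the isomorphism class (``exactly one'') is part of that classification, since the three arrangements have pairwise non-isomorphic intersection lattices (for instance, their characteristic polynomials, computed via Theorem \ref{ss charpoly fact}, are pairwise distinct: $\prod_{i=1}^{\ell}(t-i)$ for $\Ac(A_\ell)$, $\prod_{i=1}^{\ell}(t-(2i-1))$ for $\Ac(C_\ell)$, and a third distinct factorisation for $\Ac_\ell^{\ell-1}$).

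There is no real obstacle here: all the substantive work has already been done in Sections \ref{sec:rk4} and the inductive arguments of the current section, culminating in Propositions \ref{Coxeter graphs rk geq4}, \ref{Cartan matrices rk geq4} and \ref{rk geq4 crys rootsystem}. The only mild care needed is to verify the disjointness of the three isomorphism classes, which as noted is immediate from Theorem \ref{ss charpoly fact} applied to the modular chains coming from $\Ac(A_\ell), \Ac(C_\ell)$ and $\Ac_\ell^{\ell-1}$ respectively. The proof itself is thus just two lines: \emph{By Proposition \ref{rk geq4 crys rootsystem}, $\Ac$ is crystallographic; the claim now follows from Theorem \ref{classification ss cryst rk geq 4}.}
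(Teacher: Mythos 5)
Your proposal is correct and follows exactly the paper's own argument: Proposition \ref{rk geq4 crys rootsystem} gives crystallographicity, and Theorem \ref{classification ss cryst rk geq 4} then yields the classification. The extra remark on distinguishing the three arrangements by their characteristic polynomials is a harmless addition the paper leaves implicit.
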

\begin{proof}
By Proposition \ref{rk geq4 crys rootsystem} there exists a crystallographic root system for $\Ac$,
so the arrangement $\Ac$ is crystallographic.
By Theorem \ref{classification ss cryst rk geq 4}
the only irreducible crystallographic $\ell$-arrangements, $\ell \geq 4$ which are supersolvable are the arrangements
$\Ac(A_\ell)$, $\Ac(C_\ell)$, and $\Ac_{\ell}^{\ell-1} = \Ac(C_\ell) \setminus  \{ \{x_1=0\} \}$.
\end{proof}


\renewcommand{\refname}{References}

\providecommand{\bysame}{\leavevmode\hbox to3em{\hrulefill}\thinspace}
\providecommand{\MR}{\relax\ifhmode\unskip\space\fi MR }
\providecommand{\MRhref}[2]{%
  \href{http://www.ams.org/mathscinet-getitem?mr=#1}{#2}
}
\providecommand{\href}[2]{#2}

\end{document}